\documentclass[10pt, reqno]{amsart}
\usepackage{amsfonts,latexsym,enumerate}
\usepackage{amsmath}
\usepackage{amscd}
\usepackage{float,amsmath,amssymb,mathrsfs,bm,multirow,graphics, mathtools}
\usepackage[dvips]{graphicx}
\usepackage[percent]{overpic}
\usepackage[pdftex]{color}
\usepackage{amsaddr}
\usepackage[numbers,sort&compress]{natbib}

\addtolength{\topmargin}{-10ex}
\addtolength{\oddsidemargin}{-3em}
\addtolength{\evensidemargin}{-3em}
\addtolength{\textheight}{15ex}
\addtolength{\textwidth}{6em}

\newcommand{\R}{{\Bbb R}}

\newcommand{\C}{{\Bbb C}}
\newcommand{\Z}{{\Bbb Z}}

\newcommand{\proofbegin}{\noindent{\it Proof. }}
\newcommand{\proofendcontinue}{\hfill \raisebox{.8mm}[0cm][0cm]{$\bigtriangledown$}\bigskip}


\newcommand{\res}{\text{\upshape Res\,}}
\newcommand{\diag}{\text{\upshape diag\,}}
\newcommand{\re}{\text{\upshape Re\,}}

\newcommand{\im}{\text{\upshape Im\,}}

\newcommand{\ntlim}{\lim^\angle}

\DeclareMathOperator{\sgn}{sgn}
\DeclareMathOperator{\dist}{dist}


\def\XXint#1#2#3{{\setbox0=\hbox{$#1{#2#3}{\int}$}
\vcenter{\hbox{$#2#3$}}\kern-.5\wd0}}


\newtheorem{theorem}{Theorem}[section]

\newtheorem{lemma}[theorem]{Lemma}
\newtheorem{definition}[theorem]{Definition}
\newtheorem{assumption}[theorem]{Assumption}
\newtheorem{remark}[theorem]{Remark}

\newtheorem{figuretext}[theorem]{Figure}

\numberwithin{equation}{section}


\input epsf
\date{\today}
\title[Construction of solutions and asymptotics]
{Construction of solutions and asymptotics for the sine-Gordon equation in the quarter plane}

\author{Lin Huang and Jonatan Lenells}
\address{Department of Mathematics, KTH Royal Institute of Technology, \\ 100 44 Stockholm, Sweden.}
\email{linhuang@kth.se, jlenells@kth.se}

\begin{document}
\begin{abstract} 
\noindent
We consider the sine-Gordon equation in laboratory coordinates in the quarter plane.
The first part of the paper considers the construction of solutions via Riemann-Hilbert techniques. In addition to constructing solutions starting from given initial and boundary values, we also construct solutions starting from an independent set of spectral (scattering) data. 
The second part of the paper establishes asymptotic formulas for the quarter-plane solution $u(x,t)$ as $(x,t) \to \infty$. Assuming that $u(x,0)$ and $u(0,t)$ approach integer multiples of $2\pi$ as $x \to \infty$ and $t \to \infty$, respectively, we show that the asymptotic behavior is described by four asymptotic sectors. In the first sector (characterized by $x/t \geq 1$), the solution approaches a multiple of $2\pi$ as $x \to \infty$. In the third sector (characterized by $0 \leq x/t \leq 1$ and $t|x-t| \to \infty$), the solution asymptotes to a train of solitons superimposed on a radiation background. The second sector (characterized by $0 \leq x/t \leq 1$ and $x/t \to 1$) is a transition region and the fourth sector (characterized by $x/t \to 0$) is a boundary region. We derive precise asymptotic formulas in all sectors. In particular, we describe the interaction between the asymptotic solitons and the radiation background, and derive a formula for the solution's topological charge.
\end{abstract}

\maketitle

\noindent
{\small{\sc AMS Subject Classification (2010)}: 37K15, 41A60, 35Q15.}

\noindent
{\small{\sc Keywords}: Initial-boundary value problem, long-time asymptotics, Riemann-Hilbert problem, soliton, nonlinear steepest descent, topological charge.}

\setcounter{tocdepth}{1}
\tableofcontents

\section{Introduction}
One of the most interesting aspects of the sine-Gordon equation is its integrability, which manifests itself in the presence of a Lax pair and an infinite number of conservation laws. The Lax pair can be used to define a nonlinear Fourier transform (the inverse scattering transform) which linearizes the time evolution of the equation. Considering the sine-Gordon equation in laboratory coordinates
\begin{align}\label{sg}
u_{tt}- u_{xx} + \sin u=0,
\end{align}
and assuming appropriate smoothness and decay, the inverse scattering transform implies that the solution of the {\it initial-value problem} on the line endowed with the boundary conditions $\lim_{|x| \to \infty} u(x,t) = 0$ (mod $2\pi$)
can be expressed in terms of the solution of a $2 \times 2$-matrix Riemann-Hilbert (RH) problem \cite{K1975, ZTF1974}; see also \cite{FT1986} and Appendix A of \cite{BM2008}. 
By applying the nonlinear steepest descent approach of Deift and Zhou \cite{DZ1993} to this RH problem, it is possible to analyze the long-time asymptotics of the solution. For pure radiation solutions (i.e. in the absence of solitons), asymptotic formulas were derived in this way in \cite{CVZ1999}. More precisely, it was shown in \cite{CVZ1999} that if the initial data $u_0(x) := u(x,0)$ and $u_1(x) := u_t(x,0)$ are in the Schwartz class $\mathcal{S}(\R)$ and the corresponding solution $u(x,t)$ of the initial-value problem for (\ref{sg}) is solitonless, then 
\begin{enumerate}[$(i)$]
\item $\sin u \to 0$ rapidly as $t \to \infty$ in the sector $|x/t| \geq 1$,
\item $\sin u$ approaches an explicitly given modulated sinusoidal traveling wave with amplitude of order $O(t^{-1/2})$ in the sector $|x/t| < 1$ provided that $t|x-t| \to \infty$.
\end{enumerate}

In this paper, we study the {\it initial-boundary value (IBV) problem} for equation (\ref{sg}) posed in the quarter-plane domain
\begin{align}\label{quarterplane}
\{(x,t) \in \R^2 \, | \, x \geq 0, t \geq 0\}.
\end{align}
We denote the initial data by $u_0, u_1$ and the Dirichlet and Neumann boundary values by $g_0$ and $g_1$, respectively:
\begin{align}\label{uugg}
\begin{cases} u_0(x) = u(x,0), \\ u_1(x) = u_t(x,0), \end{cases}  x \geq 0; \qquad\qquad
\begin{cases} g_0(t) = u(0,t), \\ g_1(t) = u_x(0,t), \end{cases}  t \geq 0.
\end{align} 

Studies of the quarter-plane IBV problem for (\ref{sg}) were initiated by Fokas and Its \cite{FI1992} under the assumption that $u_0(x)$ and $g_0(t)$ approach integer multiples of $2\pi$ as $x \to \infty$ and as $t \to \infty$, respectively.
The presence of the boundary at $x = 0$ means that the inverse scattering transform formalism cannot be applied in its standard form. However, it was shown in \cite{FI1992} that the solution $u(x,t)$ can be related to the solution of a RH problem whose formulation involves both the $x$- and $t$-parts of the Lax pair. 
This approach was later refined and developed into a unified approach to IBV problems for integrable PDEs \cite{F1997,F2002}. In this unified framework, which can be viewed as a generalization of the inverse scattering transform to the setting of IBV problems, the $x$- and $t$-parts of the Lax pair are combined into a single differential form and thereby put on an equal footing. The RH problem of \cite{F2002} associated with equation (\ref{sg}) is formulated in terms of four spectral functions $a,b,A,B$, where $a(k)$ and $b(k)$ are defined in terms of the initial data $\{u_j(x)\}_{0}^1$, whereas $A(k)$ and $B(k)$ are defined in terms of the boundary values $\{g_j(t)\}_{0}^1$. Boundary value problems for the elliptic version of (\ref{sg}) have been studied in \cite{FP2012, FLP2013, PP2010, P2009}.

Equation (\ref{sg}) is second order in both space and time.
This has the implication that both $u_0(x)$ and $u_1(x)$, but only one of the functions $g_0(t)$ and $g_1(t)$  (or a linear combination of these two functions) can be independently prescribed for a well-posed problem; the second boundary condition lies at $x = \infty$. Thus, in order to correspond to a well-posed problem, the four functions $u_0, u_1, g_0, g_1$ need to satisfy a consistency condition. 
When expressed at the level of the spectral functions, this consistency condition is called the {\it global relation}.

The purpose of the present paper is twofold. The first objective is to construct quarter-plane solutions of (\ref{sg}) via RH techniques. This includes a rigorous implementation of the approach of \cite{F2002} under limited regularity and decay assumptions. However, in addition to constructing solutions starting from the initial and boundary values $u_0, u_1, g_0, g_1$ as in \cite{F2002}, we also consider the construction of solutions starting from an independent collection of spectral data. 

The second objective is to establish asymptotic formulas for the quarter-plane solution $u(x,t)$ as $(x,t) \to \infty$. Assuming that $u_0(x)$ and $g_0(t)$ rapidly approach integer multiples of $2\pi$ as $x \to \infty$ and as $t \to \infty$, respectively, 
\begin{align}\label{NxNtdef}
\lim_{x \to \infty} u_0(x) = 2\pi N_x,  \qquad
\lim_{t \to \infty} g_0(t) = 2\pi N_t, \qquad N_x, N_t \in \Z,
\end{align}
we show that the asymptotic behavior is described by four asymptotic sectors which we refer to as Sectors I-IV (see Figure \ref{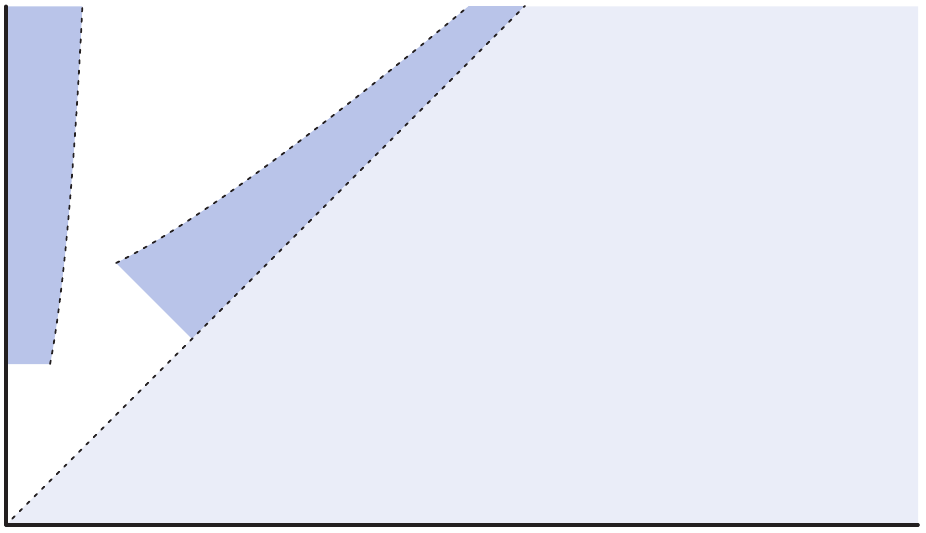}):

\begin{enumerate}[$-$]
\item In Sector I (characterized by $x/t \geq 1$), the solution tends to $2\pi N_x$ as $x \to \infty$. 

\item In Sector II (the transition sector, characterized by $0 \leq x/t \leq 1$ and $x/t \to 1$), the solution satisfies $u(x,t) = 2\pi N_x + O\big((1-x/t)^N + t^{-N}\big)$ for any integer $N \geq 1$. 

\item In Sector III (characterized by $0 \leq x/t \leq 1$ and $t|x-t| \to \infty$), the solution asymptotes, to leading order, to a train of solitons traveling at speeds $v_j \in (0,1)$. These solitons are generated by the poles of the RH problem; in general, they cause the solution to carry a nonzero topological charge (also known as winding number). The subleading contribution to the asymptotics is a modulated sinusoidal traveling wave whose amplitude is $O(t^{-1/2})$ in the bulk of the sector. The amplitude of the sinusoidal wave vanishes as $(x,t)$ approaches the boundary at $x = 0$. 

\item In Sector IV (the boundary sector, characterized by $0 \leq x/t \leq 1$ and $x/t \to 0$), the solution satisfies $u(x,t) = 2\pi N_t + O\big((x/t)^N + t^{-N}\big)$ for any integer $N \geq 1$. 
\end{enumerate}

\begin{figure}
\bigskip\begin{center}
\begin{overpic}[width=.65\textwidth]{sectors.pdf}
      \put(102,0){\small $x$}
      \put(0.3,59){\small $t$}
      \put(60,26){\small Sector I}
      \put(54.5,21){\small (Rapid decay)}
      \put(31,36){\makebox(0,0){\rotatebox{42}{{\small Sector II (Transition)}}}}
      \put(4.2,38.3){\makebox(0,0){\rotatebox{88}{{\small Sector IV}}}}
      \put(16,54){\small Sector III}
      \put(13,49.5){\small (Solitons and}
      \put(16,45){\small radiation)}
    \end{overpic}
     \begin{figuretext}\label{sectors.pdf}
       The four asymptotic sectors in the $xt$-plane.
     \end{figuretext}
     \end{center}
\end{figure}

Our approach relies on RH techniques and nonlinear steepest descent ideas.
For didactic purposes, we first consider pure radiation solutions. Our main results for such solutions are summarized in four theorems as follows:
\begin{itemize}
\item Theorem \ref{existenceth} and Theorem \ref{existenceth2} are concerned with the {\it construction} of quarter-plane solutions of (\ref{sg}). They show how solutions can be constructed starting from some given spectral functions, or from some given initial and boundary values satisfying the global relation, respectively.

\item Theorem \ref{asymptoticsth} and Theorem \ref{asymptoticsth2} are concerned with the {\it asymptotics} of quarter-plane solutions of (\ref{sg}). They provide asymptotic formulas for the solution $u(x,t)$ constructed from some given spectral functions, or from some given initial and boundary values satisfying the global relation, respectively.

\end{itemize}

Later, in Section \ref{solitonsec}, we extend the above theorems to the setting when solitons are present. We describe how kinks, antikinks, and breathers emerge asymptotically for large $t$ in the presence of a point spectrum. By demonstrating how the train of asymptotic solitons is superimposed on the radiation background, we complete a large part of the program for the sine-Gordon equation envisioned in \cite{CVZ1999}, which sought for the calculation of the solution and its winding number in the presence of both radiation and solitons. The calculation is somewhat involved because each of the continuous and discrete spectra influences both the soliton and radiation terms; thus there are in total four different effects to take into account. 
The computation of the subleading radiation term is the most difficult. However, by consistently keeping track of the terms of subleading order in $t$ in each of the algebraic systems generated by the discrete spectrum, and by appealing to the results of Section \ref{mainsec}, we are able to compute this term in detail.

For the problem on the line, the topological charge (or winding number) of the solution is defined as the integer
$$\frac{1}{2\pi}\Big(\lim_{x \to \infty} u(x,t) - \lim_{x \to -\infty} u(x,t)\Big).$$
The simplest class of solutions of (\ref{sg}) exhibiting a nonzero topological charge is the class of traveling waves
\begin{align}\label{kink}
u(x,t) = 4 \arctan\Big(e^{\sigma \frac{x - vt - x_0}{\sqrt{1 - v^2}}}\Big),
\end{align}
where $x_0 \in \R$,  $\sigma = \pm 1$ is the topological charge, and $v \in (-1,1)$ denotes the wave velocity. The solutions in (\ref{kink}) are one-solitons and are referred to either as kinks (for $\sigma = 1$) or as antikinks (for $\sigma = -1$). 

For the quarter-plane problem endowed with the boundary conditions (\ref{NxNtdef}), we refer to the integer $N_x - N_t$ as the topological charge. By patching the asymptotic formulas in the four different asymptotic sectors together, we are able to compute the overall topological charge of the solution. For this purpose, the asymptotic analysis in the transition region, although being a relatively narrow sector, is critical. One conclusion of our analysis is that only solitons can generate a nonzero topological charge. In other words, the topological charge of a pure radiation solution always vanishes.

Although all theorems in this paper are derived and stated for the half-line problem, the analogous results for the problem on the line can be obtained essentially as special cases by ignoring the contributions stemming from the boundary. For example, it follows in this way that the asymptotics in the transition sector takes the same form on the line as on the half-line. Since the asymptotics in the transition sector appears not to have been considered before even on the line, this yields an interesting result also for the pure initial-value problem. 
A similar remark applies to the interaction between the asymptotic solitons and the radiation background: Both on the line and half-line, the effect of the asymptotic solitons on the subleading radiation term can be expressed as in Theorem \ref{solitonasymptoticsth}.

Apart from the limit of large time, another limit of great interest for equation (\ref{sg}) is the semiclassical limit. This limit has been carefully studied in the context of the initial-value problem \cite{BM2008, BM2013}. At the expense of changing the initial and boundary values, the semiclassical limit can be related to the large $(x,t)$ limit via a rescaling of the variables $x$ and $t$. Indeed, if the function $v(x,t; \epsilon)$ satisfies the equation
\begin{align}\label{semiclassicalsg}
\epsilon^2 v_{tt} - \epsilon^2 v_{xx} + \sin v = 0,
\end{align}
then $u(x,t) := v(\epsilon x, \epsilon t; \epsilon)$ satisfies (\ref{sg}). Hence, for a fixed $(x,t)$, the semiclassical limit $\epsilon \to 0$ of $v(x,t; \epsilon) = u(x/\epsilon,t/\epsilon)$ translates into the large $(x,t)$ limit of $u(x,t)$. However, unless the initial and boundary values are invariant with respect to the rescaling, the two limits demand separate treatment.
It would be of interest to consider the semiclassical limit also for the IBV problem.

Our results are constructive and independent of any assumption of wellposedness. Nevertheless, let us briefly comment on the wellposedness of equation (\ref{sg}) in the quarter-plane domain (\ref{quarterplane}). A number of similar nonlinear evolution PDEs have been shown to be wellposed in appropriate Sobolev spaces when posed in the quarter plane with a Dirichlet or Neumann boundary condition imposed at $x = 0$; examples include the KdV equation \cite{BSZ2002, H2006}, the generalised KdV equation \cite{CK2002}, the nonlinear Schr\"odinger equation \cite{H2005}, and the good Boussinesq equation \cite{HM2015}.
The proofs in the listed references involve solving the linearized version of the IBV problem with an additional forcing term. Replacing the forcing in the linear solution formula by the nonlinear term and using appropriate linear estimates, a contraction mapping argument leads to the wellposedness. It seems likely that the Dirichlet and Neumann problems for (\ref{sg}) can be analyzed in a similar way, but we know of no reference.

\subsection{Organization of the paper}
In Section \ref{specsec}, after recalling the Lax pair formulation of (\ref{sg}), we define the spectral functions relevant for the quarter-plane problem and recall some properties of these functions. 

The four main theorems are stated (for the pure radiation case) in Section \ref{mainsec}: Theorem \ref{existenceth} and Theorem \ref{existenceth2} are concerned with the construction of quarter-plane solutions (\ref{sg}), whereas Theorem \ref{asymptoticsth} and Theorem \ref{asymptoticsth2} provide asymptotic formulas for such solutions as $(x,t)$ goes to infinity.

The proofs of Theorems \ref{existenceth} and \ref{existenceth2} rely on RH techniques and are presented in Sections \ref{existenceproofsec} and \ref{existence2proofsec}, respectively. 

Sections \ref{overviewsec}-\ref{sectorIVsec} are devoted to the proof of Theorem \ref{asymptoticsth}. The proof is based on the nonlinear steepest descent approach of Deift and Zhou \cite{DZ1993} and is rather involved. An overview of the proof is provided in Section \ref{overviewsec}. Then, in Sections \ref{sectorIsec}-\ref{sectorIVsec}, we consider the derivation of the asymptotics in each of the four asymptotic sectors I-IV, respectively. Given Theorem \ref{asymptoticsth}, it is straightforward to establish Theorem \ref{asymptoticsth2}; the required argument is presented already in Section \ref{mainsec}.

Section  \ref{solitonsec} presents versions of the main theorems which are valid also in the presence of solitons. 

\subsection{Notation}
The following notation will be used throughout the paper.
\begin{enumerate}[$-$]
\item For a $2 \times 2$ matrix $A$, we let $[A]_1$ and $[A]_2$ denote the first and second columns of $A$.

\item If $A$ is an $n \times m$ matrix, we define $|A| \geq 0$ by
$|A|^2 = \sum_{i,j} |A_{ij}|^2$. Then $|A + B| \leq |A| + |B|$ and $|AB| \leq |A| |B|$.

\item For a (piecewise smooth) contour $\gamma \subset \C$ and $1 \leq p \leq \infty$, we write $A \in L^p(\gamma)$  if $|A|$ belongs to $L^p(\gamma)$. Then $A \in L^p(\gamma)$ iff each entry $A_{ij}$ belongs to $L^p(\gamma)$. We write $\|A\|_{L^p(\gamma)} := \| |A|\|_{L^p(\gamma)}$. 

\item For a complex-valued function $f(k)$ of $k \in \C$, we let $f^*(k) := \overline{f(\bar{k})}$ denote the Schwartz conjugate of $f(k)$.

\item We let $\{\sigma_j\}_1^3$ denote the three Pauli matrices defined by
\begin{align*}
\sigma_1=\begin{pmatrix}
           0 & 1 \\
           1 & 0
         \end{pmatrix},\quad \sigma_2=\begin{pmatrix}
                                        0 & -i \\
                                        i & 0
                                      \end{pmatrix},\quad \sigma_3=\begin{pmatrix}
                                                                     1 & 0 \\
                                                                     0 & -1
                                                                   \end{pmatrix}.
\end{align*}

\item The operator $\hat{\sigma}_3$ acts on a $2\times 2$ matrix $A$ by $\hat{\sigma}_3A = [\sigma_3, A]$, i.e., $e^{\hat{\sigma}_3} A = e^{\sigma_3} A e^{-\sigma_3}$.

\item We write $\C_\pm = \{k \in \C \, | \, \im k \gtrless 0\}$ for the open upper and lower half-planes. The closed half-planes are denoted by $\bar{\C}_+ = \{k \in \C \, | \, \im k \geq 0\}$ and $\bar{\C}_- = \{k \in \C \, | \, \im k \leq 0\}$.

\item The open domains $\{D_j\}_1^4$ of the complex $k$-plane are defined by (see Figure \ref{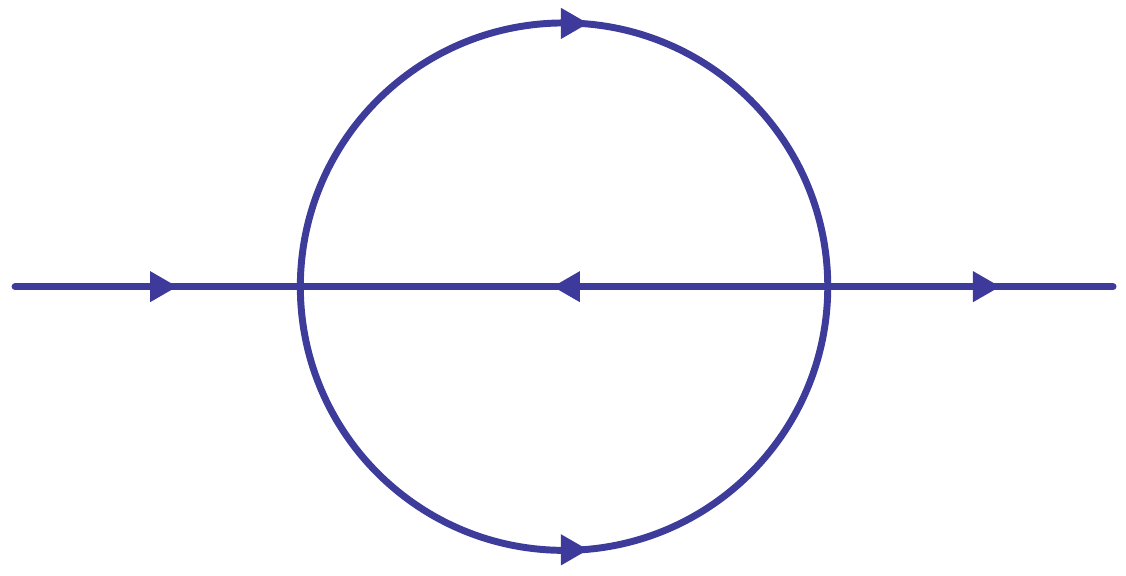})
\begin{align}\nonumber
D_1 = \{\im k > 0\} \cap \{|k|>1\},  \qquad
D_2 = \{\im k > 0\} \cap \{|k|<1\},
	\\ \nonumber
D_3 = \{\im k < 0\} \cap \{|k|<1\},  \qquad
D_4 = \{\im k < 0\} \cap \{|k|>1\}.
\end{align}

\item We let $\Gamma = \R \cup \{|k| = 1\}$ denote the contour which separates the domains $D_j$, oriented so that $D_1 \cup D_3$ lies to the left of the contour as in Figure \ref{Gamma.pdf}. 

\item The notation $k \in (D_i, D_j)$ indicates that the first and second columns of the equation under consideration hold for $k \in D_i$ and $k \in D_j$, respectively.

\item If $D$ is an open connected subset of $\C$ bounded by a piecewise smooth curve $\gamma \subset \hat{\C} := \C \cup \{\infty\}$ and $z_0 \in \C \setminus \bar{D}$, then we write $\dot{E}^p(D)$, $1 \leq p < \infty$, for the space of all analytic functions $f:D \to \C$ with the property that there exist piecewise smooth curves $\{C_n\}_1^\infty$ in $D$ tending to $\Gamma$ in the sense that $C_n$ eventually surrounds each compact subset of $D$ and such that
$$\sup_{n \geq 1} \int_{C_n} |z - z_0|^{p-2} |f(z)|^p |dz| < \infty.$$
If $D = D_1 \cup \cdots \cup D_n$ is a finite union of such open subsets, then $\dot{E}^p(D)$ denotes the space of analytic functions $f:D\to \C$ such that $f|_{D_j} \in \dot{E}^p(D_j)$ for each $j$.

\item For a (piecewise smooth) contour $\Gamma \subset \hat{\C}$ and a function $h$ defined on $\Gamma$, the Cauchy transform $\mathcal{C}h$ is defined by
\begin{align}\label{Cauchytransform}
(\mathcal{C}h)(z) = \frac{1}{2\pi i} \int_\Gamma \frac{h(z')dz'}{z' - z}, \qquad z \in \C \setminus \Gamma,
\end{align}
whenever the integral converges. 
If $h \in L^2(\Gamma)$, then the left and right nontangential boundary values of $\mathcal{C}h$, which we denote by $\mathcal{C}_+ h$ and $\mathcal{C}_- h$ respectively, exist a.e. on $\Gamma$ and lie in $L^2(\Gamma)$; moreover, $\mathcal{C}_\pm \in \mathcal{B}(L^2(\Gamma))$ and $\mathcal{C}_+ - \mathcal{C}_- = I$, where $\mathcal{B}(L^2(\Gamma))$ denotes the space of bounded linear operators on $L^2(\Gamma)$.

\item The function $\theta := \theta(x,t,k)$ is defined by $\theta = \theta_1x + \theta_2 t$, where
\begin{align}\label{thetadef}
\theta_1 := \theta_1(k) = \frac{1}{4}\bigg(k - \frac{1}{k}\bigg) \quad \text{and} \quad \theta_2 := \theta_2(k) = \frac{1}{4}\bigg(k + \frac{1}{k}\bigg).
\end{align}

\item We use $C > 0$ (and sometimes $c > 0$) to denote generic constants which may change within a computation.
\end{enumerate}

\begin{figure}
\begin{center}
\bigskip\bigskip\bigskip
\begin{overpic}[width=.65\textwidth]{Gamma.pdf}
      \put(74.5,22){\small $1$}
      \put(21,22){\small $-1$}
      \put(49,57){\small $D_1$}
      \put(49,35){\small $D_2$}
      \put(49,13){\small $D_3$}
      \put(49,-7){\small $D_4$}
      \put(102,24.5){\small $\Gamma$}
      \end{overpic}\bigskip\bigskip\bigskip
     \begin{figuretext}\label{Gamma.pdf}
       The contour $\Gamma$ and the domains $\{D_j\}_1^4$ in the complex $k$-plane.
     \end{figuretext}
     \end{center}
\end{figure}

\section{Spectral functions}\label{specsec}
This section introduces several spectral functions and reviews how these functions can be combined to set up a RH problem suitable for solving equation (\ref{sg}) in the quarter plane. 

\subsection{Lax pair}
Equation (\ref{sg}) is the compatibility condition of the Lax pair
\begin{align}\label{lax}
\begin{cases}
\mu_x +\frac{i}{4} (k-\frac{1}{k})[\sigma_3, \mu] = Q(x,t,k) \mu,
	\\
\mu_t + \frac{i}{4} (k+\frac{1}{k} )[\sigma_3, \mu] = Q(x,t,-k)\mu,
\end{cases} 
\end{align}
where $k \in \C$ is the spectral parameter, $\mu(x,t,k)$ is a $2\times 2$-matrix valued eigenfunction, and $Q$ is defined by
\begin{align}\label{Qdef}
  Q(x,t,k) = Q_0(x,t) + \frac{Q_1(x,t)}{k},
  \end{align}
with
$$  Q_0(x,t) = -\frac{i(u_x+u_t)}{4}\sigma_2, \qquad
 Q_1(x,t) = \frac{i\sin u}{4}\sigma_1 + \frac{i(\cos u-1)}{4}\sigma_3.$$
  
\subsection{Spectral functions}
Let $m \geq 1$, $n \geq 1$, $N_x \in \Z$, and $N_t \in \Z$ be integers.
Let $u_0(x)$, $u_1(x)$, $g_0(t)$, and $g_1(t)$ be functions satisfying the following regularity and decay assumptions (cf. (\ref{NxNtdef})):
\begin{subequations}\label{ujgjassump}
\begin{align}\label{ujassump}
\begin{cases}
\text{$(1+x)^{n} (u_0(x) - 2\pi N_x) \in L^1([0,\infty))$},
	\\
(1+x)^{n}\partial^i u_0(x) \in L^1([0,\infty)), \qquad  i = 1, \dots, m+2,
	\\
(1+x)^{n}\partial^i u_1(x) \in L^1([0,\infty)), \qquad  i = 0,1, \dots, m+1,
\end{cases}
\end{align}
and
\begin{align}\label{gjassump}
\begin{cases}
\text{$(1+t)^{n}(g_0(t) - 2\pi N_t) \in L^1([0,\infty))$},
	\\
(1+t)^{n}\partial^i g_0(t) \in L^1([0,\infty)), \qquad i = 1, \dots, m+2,
	\\
(1+t)^{n}\partial^i g_1(t) \in L^1([0,\infty)), \qquad i = 0,1, \dots, m+1.
\end{cases}
\end{align}
\end{subequations}
We define two spectral functions $\{a(k),b(k)\}$ in terms of the initial data $\{u_j(x)\}_0^1$ and two spectral functions $\{A(k),B(k)\}$ in terms of the boundary values $\{g_j(t)\}_0^1$ as follows \cite{F2002}:
Let $\mathsf{U}(x,k)$ and $\mathsf{V}(t,k)$ be given by (cf. definition (\ref{Qdef}) of $Q(x,t,k)$)
\begin{align}\label{mathsfUdef}
& \mathsf{U}(x,k) = -\frac{i(u_{0x}+u_1)}{4}\sigma_2 + \frac{i\sin u_0}{4k}\sigma_1 + \frac{i(\cos u_0-1)}{4k}\sigma_3,
	\\ \label{mathsfVdef}
& \mathsf{V}(t,k)= -\frac{i(g_1+g_{0t})}{4}\sigma_2 - \frac{i\sin g_0}{4k}\sigma_1 - \frac{i(\cos g_0-1)}{4k}\sigma_3,
\end{align}
and define the $2 \times 2$-matrix valued functions $X(x,k)$ and $T(t,k)$ as the unique solutions of the linear Volterra integral equations
\begin{align}\label{Xvolterra}
& X(x,k) = I + \int_{\infty}^x e^{i\theta_1(x'-x)\hat{\sigma}_3} (\mathsf{U}X)(x',k) dx',
	\\ \label{Tvolterra}
&  T(t,k) = I + \int_{\infty}^t e^{i\theta_2(t'-t)\hat{\sigma}_3} (\mathsf{V}T)(t',k) dt',
\end{align}
where $\theta_1$, $\theta_2$ are the functions in (\ref{thetadef}).
The symmetries
$$X(x,k) =\sigma_2 \overline{X(x, \bar{k})}\sigma_2, \qquad  T(t,k)=\sigma_2 \overline{T(t, \bar{k})}\sigma_2,$$
imply that we can define $a,b,A,B$ by
\begin{align}\label{abABdef}
s(k) = \begin{pmatrix}
\overline{a(\bar{k})} 	&	b(k)	\\
-\overline{b(\bar{k})}	&	a(k)
\end{pmatrix}, \qquad
S(k) = \begin{pmatrix}
\overline{A(\bar{k})} 	&	B(k)	\\
-\overline{B(\bar{k})}	&	A(k)
\end{pmatrix},
\end{align}
where $s(k) := X(0,k)$ and $S(k) := T(0,k)$. We will need several properties of the spectral functions $a,b,A,B$.
Proofs of the following lemmas can be found in \cite{HLNonlinearFourier}.

\begin{lemma}[Properties of $a(k)$ and $b(k)$]\label{ablemma}
Suppose $\{u_0(x), u_1(x)\}$ satisfy (\ref{ujassump}) for some integers $m \geq 1$, $n \geq 1$, and $N_x \in \Z$.
Then the spectral functions  $a(k)$ and $b(k)$ defined in (\ref{abABdef}) have the following properties:
\begin{enumerate}[$(a)$]
\item $a(k)$ and $b(k)$ are continuous for $\im k \geq 0$ and analytic for $\im k > 0$.

\item For $j = 1, \dots n$, the derivatives $a^{(j)}(k)$ and $b^{(j)}(k)$ are well-defined and continuous on $\bar{\C}_+ \setminus \{0\}$.

\item There exist complex constants $\{a_i, b_i, \hat{a}_i, \hat{b}_i\}_{i=1}^m$ such that the following expansions hold uniformly for each $j = 0, 1, \dots n$:
\begin{align*}\nonumber
\begin{pmatrix} b^{(j)}(k) \\ a^{(j)}(k) \end{pmatrix}
= \begin{pmatrix} \frac{d^j}{dk^j}\big(\frac{b_1}{k} + \cdots + \frac{b_m}{k^m}\big) \\
\frac{d^j}{dk^j}\big(1 + \frac{a_1}{k} + \cdots + \frac{a_m}{k^m}\big)
\end{pmatrix} + O\bigg(\frac{1}{k^{m+1}}\bigg),\quad k \to \infty, \ \im k \geq 0,
\end{align*}
and
\begin{align*}
\begin{pmatrix} b^{(j)}(k)  \\ a^{(j)}(k) \end{pmatrix}
= G_0(0) \begin{pmatrix} \frac{d^j}{dk^j}\big(\hat{b}_1k + \cdots + \hat{b}_mk^m\big) \\
\frac{d^j}{dk^j}\big(1 + \hat{a}_1k + \cdots + \hat{a}_mk^m\big)
\end{pmatrix} + O\big(k^{m+1-2j}\big),\quad k \to 0, \ \im k \geq 0,
\end{align*}
where the matrix $G_0(x)$ is defined by
\begin{align}\label{G0xdef}
G_0(x)= (-1)^{N_x}\begin{pmatrix}
  \cos \frac{u_0(x)}{2} & -\sin \frac{u_0(x)}{2} \\
  \sin \frac{u_0(x)}{2}  & \cos \frac{u_0(x)}{2}
 \end{pmatrix}.
\end{align}
 
\item $a(k) = \overline{a(-\bar{k})}$ and $b(k) = \overline{b(-\bar{k})}$ for $\im k \geq 0$.

\item $|a(k)|^2 +|b(k)|^2 = 1$ for $k \in \R$.

\end{enumerate}\end{lemma}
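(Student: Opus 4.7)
The plan is to analyze the Volterra integral equation (\ref{Xvolterra}) for $X(x,k)$ by standard Neumann-series techniques and then read off the properties of $a,b$ from $s(k)=X(0,k)$ via the symmetry (\ref{abABdef}). A direct computation shows $\im\theta_1(k) = \frac{1}{4}\sin(\arg k)(|k|+|k|^{-1})$, so the exponential factor governing the relevant column of $X$ in (\ref{Xvolterra}) stays bounded on the appropriate closed half-plane (away from $k=0$). After writing $u_0(x) = (u_0(x)-2\pi N_x) + 2\pi N_x$ so that the first piece is integrable, hypothesis (\ref{ujassump}) gives $\mathsf{U}(\cdot,k) \in L^1([0,\infty))$ locally uniformly in $k$ away from $\{0\}$, and the Neumann series converges uniformly on compacts, yielding continuity in $k$ on the closed half-plane minus $\{0\}$ and analyticity in its interior. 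The Schwartz symmetry $X(x,k) = \sigma_2\overline{X(x,\bar k)}\sigma_2$ (which follows from uniqueness in (\ref{Xvolterra}) once one checks that $\sigma_2\overline{\mathsf{U}(\cdot,\bar k)}\sigma_2 = \mathsf{U}(\cdot,k)$ and that $\sigma_2$-conjugation converts $e^{-i\theta_1(\bar k)\hat\sigma_3}$ into $e^{i\theta_1(k)\hat\sigma_3}$) then transfers these properties to $a,b$ as claimed in (a); removability of the potential singularity at $k=0$ is subsumed by (c).

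For (b), differentiating (\ref{Xvolterra}) $j$ times in $k$ produces iterated integrals whose integrands acquire extra factors $(x'-x)^\ell$ (from differentiating $e^{\pm 2i\theta_1(x'-x)}$) and negative powers of $k$ (from differentiating $\mathsf{U}$). The polynomial weight $(1+x)^n$ in (\ref{ujassump}) absorbs the $(x'-x)^\ell$ factors for all $\ell\leq n$, so the series for the $j$-th $k$-derivative of $s(k)$ converges and is continuous in $k\in\bar{\C}_+\setminus\{0\}$ for each $j\leq n$. Property (d) follows from $\theta_1(-\bar k) = -\overline{\theta_1(k)}$ together with $\mathsf{U}(x,-\bar k) = \overline{\mathsf{U}(x,k)}$ (the $\sigma_2$-coefficient is $k$-independent and purely imaginary, while the $1/k$-coefficients flip sign under $k\to -k$ and conjugate under $k\to\bar k$); uniqueness in (\ref{Xvolterra}) forces $X(x,-\bar k) = \overline{X(x,k)}$, hence the stated symmetry of $a,b$. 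Property (e) is immediate: each $\sigma_j$ is traceless, so $\tr\mathsf{U}=0$, $\det X\equiv\det X(\infty,\cdot)=1$, and evaluating $\det s = |a|^2+|b|^2 = 1$ on $\R$ yields (e).

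The asymptotic expansions in (c) split into two regimes. The $k\to\infty$ expansion is obtained by repeated integration by parts in the Neumann series using $e^{\mp 2i\theta_1(x'-x)} = \mp(2i\theta_1'(k))^{-1}\partial_{x'}e^{\mp 2i\theta_1(x'-x)}$ together with $\theta_1'(k)\to\frac{1}{4}$; the smoothness of $u_0,u_1$ up to orders $m+2,m+1$ provided by (\ref{ujassump}) allows $m$ integrations by parts, giving the remainder $O(1/k^{m+1})$. The $k\to 0$ expansion is more delicate because of the $1/k$-singularity in $\mathsf{U}$. I would conjugate by the gauge matrix $G_0(x)$ of (\ref{G0xdef}), which satisfies $G_0(\infty)=I$ (thanks to the $(-1)^{N_x}$ prefactor and $u_0(\infty)=2\pi N_x$) and is engineered so that the $x$-derivative $G_0^{-1}G_0'$ exactly cancels the singular $k^{-1}$-term in the Lax operator. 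The transformed eigenfunction $Y := G_0^{-1}X$ then satisfies a Volterra equation regular at $k=0$, whose Neumann series can be expanded as a power series in $k$. Back-substituting $X = G_0 Y$ and evaluating at $x=0$ produces the expansion with prefactor $G_0(0)$, and the $-2j$ loss in the error exponent after $j$ derivatives reflects the fact that $\partial_k$ interacts nontrivially with the $O(k)$- and $O(k^{-1})$-terms introduced along the conjugation.

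The main obstacle is the $k\to 0$ expansion: pinning down the correct gauge $G_0$, verifying that the conjugated kernel is genuinely regular at $k=0$, and carefully tracking how $\partial_k^j$ acts through the conjugation to produce the sharp $O(k^{m+1-2j})$ remainder. Once this is in hand, the $k\to\infty$ expansion, the differentiability in (b), the symmetry (d), and the determinant identity (e) all follow from the general Volterra-series framework set up above.
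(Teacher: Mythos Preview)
The paper does not actually prove this lemma; it merely states before Lemmas~\ref{ablemma}--\ref{cdlemma} that ``Proofs of the following lemmas can be found in \cite{HLNonlinearFourier}.'' So there is nothing in the present paper to compare against beyond that citation. Your outline is the standard Volterra--Neumann approach and is essentially what one expects the cited paper to contain: Neumann series for analyticity/continuity, $k$-differentiation with the weight $(1+x)^n$ absorbing the polynomial factors for (b), the obvious symmetries for (d), $\det X\equiv 1$ for (e), and integration by parts / gauge conjugation for the two asymptotic expansions in (c).

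One point in your $k\to 0$ discussion is not quite right and would trip you up if you tried to carry it out literally. You write that $G_0$ ``is engineered so that the $x$-derivative $G_0^{-1}G_0'$ exactly cancels the singular $k^{-1}$-term.'' But $G_0^{-1}G_0'=-\tfrac{u_0'}{2}\,i\sigma_2$ is $k$-independent and cannot cancel a $1/k$ term. What actually happens is that \emph{conjugation} by $G_0$ diagonalizes the singular part: writing the $x$-equation for $\psi=Xe^{-i\theta_1 x\sigma_3}$ as $\psi_x=(-i\theta_1\sigma_3+\mathsf U)\psi$, the coefficient of $1/k$ on the right is $\tfrac{i}{4}(\cos u_0\,\sigma_3+\sin u_0\,\sigma_1)$, and a direct computation gives
\[
G_0^{-1}\bigl(\cos u_0\,\sigma_3+\sin u_0\,\sigma_1\bigr)G_0=\sigma_3.
\]
Thus under $\psi=G_0\tilde\psi$ the $1/k$-term becomes $\tfrac{i}{4k}\sigma_3$, which is exactly the $1/k$-part of $-i\theta_1\sigma_3$; the transformed equation therefore has the \emph{same} structure $\tilde\psi_x=(-i\theta_1\sigma_3+\tilde{\mathsf U})\tilde\psi$ with $\tilde{\mathsf U}$ regular at $k=0$ (the $G_0^{-1}G_0'$ contribution lands in the regular part). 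Once you correct this mechanism, the rest of your plan for (c) goes through: expand the regular Volterra equation for $\tilde X=G_0^{-1}X$ near $k=0$, back-substitute, and evaluate at $x=0$ to pick up the prefactor $G_0(0)$.
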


\begin{lemma}[Properties of $A(k)$ and $B(k)$]\label{ABlemma}
Suppose $\{g_0(t), g_1(t)\}$ satisfy (\ref{gjassump}) for some integers $m \geq 1$, $n \geq 1$, and $N_t \in \Z$.
Then the spectral functions  $A(k)$ and $B(k)$ defined in (\ref{abABdef}) have the following properties:
\begin{enumerate}[$(a)$]
\item $A(k)$ and $B(k)$ are continuous for $k \in \bar{D}_1 \cup \bar{D}_3$ and analytic for $k \in D_1 \cup D_3$.

\item For $j = 1, \dots n$, the derivatives $A^{(j)}(k)$ and $B^{(j)}(k)$ are well-defined and continuous on $(\bar{D}_1 \cup \bar{D}_3) \setminus \{0\}$.

\item There exist complex constants $\{A_i, B_i, \hat{A}_i, \hat{B}_i\}_{i=1}^m$ such that the following expansions hold uniformly for each $j = 0, 1, \dots n$:
\begin{align*}\nonumber
\begin{pmatrix} B^{(j)}(k) \\ A^{(j)}(k) \end{pmatrix}
= \begin{pmatrix} \frac{d^j}{dk^j}\big(\frac{B_1}{k} + \cdots + \frac{B_m}{k^m}\big)
\\
\frac{d^j}{dk^j}\big(1 + \frac{A_1}{k} + \cdots + \frac{A_m}{k^m}\big)
\end{pmatrix} + O\bigg(\frac{1}{k^{m+1}}\bigg),\quad k \to \infty, \ k \in \bar{D}_1 \cup \bar{D}_3,
\end{align*}
and
\begin{align*}
\begin{pmatrix} B^{(j)}(k) \\ A^{(j)}(k) \end{pmatrix}
= \mathcal{G}_0(0) \begin{pmatrix} \frac{d^j}{dk^j}\big(\hat{B}_1k + \cdots + \bar{B}_mk^m\big) \\
\frac{d^j}{dk^j}\big(1 + \hat{A}_1k + \cdots + \hat{A}_mk^m\big)
\end{pmatrix} + O\big(k^{m+1-2j}\big),\quad k \to 0, \ k \in \bar{D}_1 \cup \bar{D}_3,
\end{align*}
where the matrix $\mathcal{G}_0(t)$ is defined by
\begin{align}\label{G0tdef}
\mathcal{G}_0(t)= (-1)^{N_t} \begin{pmatrix}
  \cos \frac{g_0(t)}{2} & -\sin \frac{g_0(t)}{2} \\
  \sin \frac{g_0(t)}{2}  & \cos \frac{g_0(t)}{2}
 \end{pmatrix}.
\end{align}

\item $A(k) = \overline{A(-\bar{k})}$ and $B(k) = \overline{B(-\bar{k})}$ for $k \in \bar{D}_1 \cup \bar{D}_3$.

\item $A(k)\overline{A(\bar k)}+B(k)\overline{B(\bar k)}= 1$ for $k \in \R \cup \{|k| = 1\}$.

\end{enumerate}
\end{lemma}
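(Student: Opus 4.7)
The plan is to analyze the Volterra integral equation (\ref{Tvolterra}) for $T(t, k)$ columnwise, and to read off $A(k) = T_{22}(0, k)$ and $B(k) = T_{12}(0, k)$ from the second column of $S(k) = T(0, k)$. For $t' \geq t$, the kernel $e^{i\theta_2(t'-t)\hat{\sigma}_3}$ multiplies the $(1,2)$ entry of $\mathsf{V}T$ by $e^{2i\theta_2(t'-t)}$, which is bounded precisely when $\im \theta_2(k) \geq 0$. With $k = re^{i\phi}$, a direct computation gives $\im \theta_2(k) = \tfrac{1}{4}\sin\phi\,(r - 1/r)$, so the natural domain of analyticity of the second column is exactly $\bar D_1 \cup \bar D_3$. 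On compact subsets of this domain, the Neumann series for $[T(t,k)]_2$ converges: integrability of the coefficients of $\mathsf{V}(t, k)$ (from (\ref{gjassump}), noting that $\sin g_0$ and $\cos g_0 - 1$ inherit the $L^1$-decay of $g_0 - 2\pi N_t$) bounds the iterated integrals uniformly. This proves (a). For (b), I differentiate the Volterra equation $j \leq n$ times in $k$; each differentiation extracts powers of $t$ that are absorbed by the weights $(1+t)^n$, and running the same iteration gives continuity of $A^{(j)},B^{(j)}$ on $(\bar D_1 \cup \bar D_3) \setminus \{0\}$.

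For (c), the $k \to \infty$ expansion comes from substituting the ansatz $T(t, k) = \sum_{j=0}^m T^{(j)}(t)/k^j + O(k^{-m-1})$ into (\ref{Tvolterra}) and using the structure $\mathsf{V}(t, k) = \mathsf{V}_0(t) + \mathsf{V}_1(t)/k$ from (\ref{mathsfVdef}). The coefficients are determined by a triangular recursion solved by iterated integration by parts, and the remainder estimate uses the weighted $L^1$ bounds on $g_0, g_1$. The expansion as $k \to 0$ is the main obstacle, because $\mathsf{V}(t, k)$ itself has a $1/k$ singularity that must be resolved before any regular expansion is possible. The remedy is a gauge transformation $T(t, k) = \mathcal{G}_0(t) \hat T(t, k)$ with $\mathcal{G}_0(t)$ as in (\ref{G0tdef}); a direct calculation shows that $\mathcal{G}_{0,t} + \tfrac{i}{4k}[\sigma_3, \mathcal{G}_0]$ cancels the leading singular part of $\mathsf{V}\mathcal{G}_0$, so that $\hat T$ satisfies a Volterra equation that is regular at $k = 0$. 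Iterating with the ansatz $\hat T(t, k) = \sum_{j=0}^m \hat T^{(j)}(t)\, k^j + O(k^{m+1})$ yields the expansion, and evaluation at $t = 0$ produces the prefactor $\mathcal{G}_0(0)$ in the asymptotic formula.

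Parts (d) and (e) are algebraic. For (d), one combines the identity $T(t, k) = \sigma_2 \overline{T(t, \bar k)} \sigma_2$ (stated just before (\ref{abABdef})) with a second involution $T(t, k) = \sigma_2 T(t, -k) \sigma_2$, proved by verifying that $\sigma_2 T(t, -k) \sigma_2$ satisfies the same Volterra equation as $T(t, k)$; this reduces to the identities $\sigma_2 \mathsf{V}(t, -k) \sigma_2 = \mathsf{V}(t, k)$ (direct from $\sigma_2 \sigma_j \sigma_2 = -\sigma_j$ for $j \in \{1, 3\}$ and $k \mapsto -k$ negating the $1/k$ terms) and $\theta_2(-k) = -\theta_2(k)$. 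Composing the two involutions yields $S(k) = \overline{S(-\bar k)}$, from which (d) is read off entrywise. For (e), $\mathsf{V}$ is traceless and the commutator with $\sigma_3$ also contributes zero trace, so $\det T(t, k)$ is conserved in $t$; the normalisation $T(t, k) \to I$ as $t \to \infty$ then forces $\det T \equiv 1$ throughout the domain where both columns of $T$ are defined. On $\Gamma = \R \cup \{|k| = 1\}$, $\im \theta_2 = 0$ and both columns exist simultaneously, and expanding $\det S(k) = 1$ via (\ref{abABdef}) yields the claimed relation.
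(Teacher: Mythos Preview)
The paper does not actually prove this lemma: immediately before Lemma~\ref{ablemma} it states that ``Proofs of the following lemmas can be found in \cite{HLNonlinearFourier}'', and no argument is given in the text for Lemma~\ref{ABlemma}. So there is no in-paper proof to compare against; your sketch is a genuine proof where the paper defers to an external reference.

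That said, your outline follows exactly the standard route used for such spectral functions (and presumably the one in \cite{HLNonlinearFourier}): columnwise Neumann iteration of (\ref{Tvolterra}) with the sign of $\im\theta_2$ selecting the domain $\bar D_1\cup\bar D_3$, differentiation in $k$ absorbing powers of $t$ via the weights in (\ref{gjassump}), a formal $1/k$-expansion for large $k$, and the gauge $T=\mathcal G_0\hat T$ to regularise the potential at $k=0$. One small wording issue: after the gauge you write that ``$\hat T$ satisfies a Volterra equation that is regular at $k=0$''. The transformed potential $\hat{\mathsf V}=\mathcal G_0^{-1}(\mathsf V\mathcal G_0-\mathcal G_{0,t}-i\theta_2[\sigma_3,\mathcal G_0])$ is indeed regular at $k=0$ (your cancellation computation is correct), but the phase $e^{i\theta_2(t'-t)\hat\sigma_3}$ in the Volterra kernel still contains $1/k$ through $\theta_2$. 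The point is rather that with $\hat{\mathsf V}$ regular one can substitute the ansatz $\hat T=\sum_{j=0}^m\hat T^{(j)}k^j$, solve the resulting hierarchy, and then bound the remainder using the decay of the off-diagonal entries of $\hat{\mathsf V}$ combined with the oscillatory exponential; this is the step that uses the $(1+t)^n$ weights. Your symmetry arguments for (d) and the $\det S=1$ argument for (e) are clean and correct.
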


We will also need the spectral functions $c(k)$ and $d(k)$ defined by
\begin{align}\label{cddef}
& c(k) = b(k)A(k) - a(k)B(k), \qquad k \in \bar{D}_1 \cup \R,
	\\
& d(k) = a(k)\overline{A(\bar{k})} +  b(k) \overline{B(\bar{k})}, \qquad k \in \bar{D}_2 \cup \R.
\end{align}
These definitions are motivated by the fact that
$$S(k)^{-1}s(k) = \begin{pmatrix}
\overline{d(\bar{k})} 	&	c(k)	\\
-\overline{c(\bar{k})}	&	d(k)
\end{pmatrix}, \qquad k \in \R.$$

\begin{definition}\upshape
We say that the initial and boundary values $u_0, u_1, g_0, g_1$ are {\it compatible with equation (\ref{sg}) to order $m$ at $x=t=0$} if all of the following relations which involve derivatives of at most $m$th order of $u_0, g_0$ and at most $(m-1)$th order of $u_1, g_1$ hold:
\begin{align*}
& g_0(0) = u_0(0), \quad g_0'(0) = u_1(0), \quad g_1(0) = u_0'(0), \quad g_1'(0) = u_1'(0), 
	\\
&g_0''(0) - u_0''(0) + \sin u_0(0) = 0, \quad g_1''(0) - u_0'''(0) + (\sin u_0)'(0) = 0, \quad \text{etc.}
\end{align*}
\end{definition}

\begin{lemma}[Properties of $c(k)$ and $d(k)$]\label{cdlemma}
Suppose $\{u_j(x), g_j(t)\}_0^1$ satisfy (\ref{ujgjassump}) for some integers $m \geq 1$, $n \geq 1$, $N_x \in \Z$, and $N_t \in \Z$. 
Suppose $\{u_j(x), g_j(t)\}_0^1$ are compatible with equation (\ref{sg}) to order $m+1$ at $x=t=0$.
Then the spectral functions  $c(k)$ and $d(k)$ defined in (\ref{cddef}) have the following properties:
\begin{enumerate}[$(a)$]
\item $c(k)$ is continuous for $k \in \bar{D}_1 \cup \R$ and analytic for $k \in D_1$.

\item $d(k)$ is continuous for $k \in \bar{D}_2 \cup \R$ and analytic for $k \in D_2$.

\item For $j = 1, \dots n$, the derivatives $c^{(j)}(k)$ and $d^{(j)}(k)$ are well-defined and continuous on $(\bar{D}_1 \cup \R) \setminus \{0\}$ and $(\bar{D}_2 \cup \R) \setminus \{0\}$, respectively.

\item There exist complex constants $\{d_i, \hat{d}_i\}_{i=1}^m$ such that the following expansions hold uniformly for each  $j = 0, 1, \dots, n$:
\begin{subequations}\label{cdexpansions}
\begin{align}\label{cdexpansionsa}
& c^{(j)}(k) = O\big(k^{-m-1}\big), \qquad k \to \infty, \ k \in \bar{D}_1,
	\\\label{cdexpansionsb}
& d^{(j)}(k) = \frac{d^j}{dk^j}\bigg(1+ \frac{d_1}{k} + \cdots + \frac{d_m}{k^m}\bigg) + O\big(k^{-m-1}\big), \qquad |k| \to \infty, \ k \in \R,	
\end{align}
and
\begin{align}\label{cdexpansionsc}
& c^{(j)}(k) = O\big(k^{m+1-2j}\big), \qquad |k| \to 0, \ k \in \R,
	\\\label{cdexpansionsd}
& d^{(j)}(k) = (-1)^{N_x - N_t}\frac{d^j}{dk^j}\big(1 + \hat{d}_1k + \cdots + \hat{d}_mk^m\big) + O\big(k^{m+1-2j}\big), \qquad k \to 0, \ k \in \bar{D}_2.	
\end{align}
\end{subequations}

\item $c(k) = \overline{c(-\bar{k})}$ for $k \in \bar{D}_1 \cup \R$ and $d(k) = \overline{d(-\bar{k})}$ for $k \in \bar{D}_2 \cup \R$.

\item $|c(k)|^2 + |d(k)|^2 = 1$ for $k \in \R$.

\end{enumerate}
\end{lemma}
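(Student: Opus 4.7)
\noindent\emph{Proof plan.} Most parts are direct algebraic consequences of Lemmas \ref{ablemma} and \ref{ABlemma}; the non-obvious decay of $c(k)$ in part (d) is the crux and relies on the compatibility hypothesis.

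For parts (a)--(c), $c(k)=b(k)A(k)-a(k)B(k)$ is bilinear in functions whose regularity is already known: $a,b$ are analytic on $\C_+$ and continuous on $\bar\C_+$, while $A,B$ are analytic on $D_1\cup D_3$ with continuous extensions. Intersecting with $\C_+$ gives analyticity of $c$ on $D_1$ and continuity on $\bar D_1\cup\R$. For $d(k)=a(k)\overline{A(\bar k)}+b(k)\overline{B(\bar k)}$, the Schwartz conjugates are analytic on the reflection $D_2\cup D_4$, so the intersection with $\C_+$ yields analyticity on $D_2$ and continuity on $\bar D_2\cup\R$. Differentiability up to order $n$ is the Leibniz rule. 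Part (e) follows from the Schwartz symmetries of $a,b,A,B$: e.g.\ $\overline{c(-\bar k)}=\overline{b(-\bar k)}\,\overline{A(-\bar k)}-\overline{a(-\bar k)}\,\overline{B(-\bar k)}=bA-aB=c(k)$. For (f), on $\R$ one has $\overline{A(\bar k)}=\overline{A(k)}$; the four cross terms in $|c|^2+|d|^2$ cancel pairwise, leaving $(|a|^2+|b|^2)(|A|^2+|B|^2)=1\cdot 1$ by Lemmas \ref{ablemma}(e) and \ref{ABlemma}(e).

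For the $d$-expansions in (d), substitute the expansions of $a,b,A,B$ from Lemmas \ref{ablemma}(c), \ref{ABlemma}(c) into the definition of $d$ and multiply out. At infinity one uses $a,\overline{A(\bar k)}=1+O(1/k)$ and $b,\overline{B(\bar k)}=O(1/k)$ to obtain the polynomial form in $1/k$ with remainder $O(1/k^{m+1})$. At zero, the constant term is $(-1)^{N_x+N_t}\cos((u_0(0)-g_0(0))/2)$, which by the zeroth-order compatibility $g_0(0)=u_0(0)$ collapses to $(-1)^{N_x-N_t}$; the higher powers of $k$ are obtained by direct multiplication of the series, with the stated $O(k^{m+1-2j})$ error after $j$-fold differentiation.

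The main obstacle is the $c$-expansion. Naive multiplication yields only $c=(b_1-B_1)/k+O(1/k^2)$ at infinity and $c=-(-1)^{N_x+N_t}\sin((u_0(0)-g_0(0))/2)+O(k)$ near zero, so one must prove a cascade of algebraic cancellations forced by the higher-order compatibility relations -- this is the \emph{global relation}. My approach would be: using compatibility of order $m+1$, build a formal Taylor expansion of a would-be solution $u(x,t)$ of (\ref{sg}) at the corner to order $m+1$, and from it a formal Lax pair eigenfunction $\hat\mu(x,t,k)$ whose coefficients satisfy the $x$- and $t$-equations of (\ref{lax}) to the same order. The two factorisations of $\hat\mu(0,0,k)$ obtained by integrating along the $x$- and $t$-axes out to infinity via (\ref{Xvolterra}) and (\ref{Tvolterra}) must agree to the required order in $k$, giving $S(k)^{-1}s(k)=I+O(k^{-m-1})$ as $k\to\infty$; the $(1,2)$-entry is $c(k)$, yielding the claimed decay. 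The near-zero bound is derived analogously, using the $k\to0$ expansions from Lemmas \ref{ablemma}(c), \ref{ABlemma}(c) together with the identity $\mathcal{G}_0(0)=(-1)^{N_t-N_x}G_0(0)$ implied by $g_0(0)=u_0(0)$. The delicate part is the combinatorial bookkeeping of the powers of $k$, the orders of compatibility, and the derivative counts through the Volterra series; this is where off-by-one errors are most likely.
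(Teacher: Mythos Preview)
Your treatment of (a)--(c), (e), (f), and the $d$-expansions in (d) is correct and essentially complete. The paper itself gives no argument here---it defers the entire lemma to \cite{HLNonlinearFourier}---so there is no in-paper proof to compare against; what follows concerns the soundness of your sketch for the $c$-expansions.

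There is a genuine error in the conclusion you draw for $c$ at infinity. You claim that compatibility forces $S(k)^{-1}s(k)=I+O(k^{-m-1})$, but this is false: the $(2,2)$ entry of $S^{-1}s$ is $d(k)=1+d_1/k+\cdots$, and already $d_1=a_1+\overline{A_1}$ involves integrals of $u_0,u_1$ and $g_0,g_1$ over the whole half-line $[0,\infty)$, not just corner values. Compatibility at $x=t=0$ has no bearing on such integrals and cannot make $d_1$ vanish. What compatibility does force is that $S^{-1}s$ is \emph{diagonal} modulo $O(k^{-m-1})$; only the off-diagonal entries $c$ and $-c^*$ acquire the extra decay. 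Your ``two factorisations of $\hat\mu(0,0,k)$'' formulation conflates the local and non-local parts of the WKB expansion and so overshoots.

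The repair is to isolate a quantity that is genuinely local. Writing the second column of $X(x,k)$ in WKB form, the ratio $\rho(x,k)=X_{12}(x,k)/X_{22}(x,k)$ satisfies a Riccati equation whose coefficients involve only $\mathsf{U}(x,k)$; its formal large-$k$ solution $\rho\sim\sum_{j\ge1}p_j(x)k^{-j}$ therefore has each $p_j(x)$ a differential polynomial in $u_0,u_1$ at the point $x$, independent of the normalisation at $x=\infty$. Hence $b/a=\rho(0,k)$ is determined to order $k^{-m}$ by $\{u_0^{(i)}(0),u_1^{(i)}(0)\}$. The identical recursion applied to the $t$-equation gives $B/A$ in terms of $\{g_0^{(i)}(0),g_1^{(i)}(0)\}$, and because $Q(x,t,k)$ and $Q(x,t,-k)$ have the same leading structure at large $k$, the two differential polynomials coincide under the $x\leftrightarrow t$ relabelling. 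Compatibility of order $m+1$ then yields $b/a-B/A=O(k^{-m-1})$, whence
\[
c=aA\Big(\tfrac{b}{a}-\tfrac{B}{A}\Big)=O(k^{-m-1}).
\]
The $k\to 0$ case is analogous after factoring out $G_0(0)$ and $\mathcal{G}_0(0)$ and using $g_0(0)=u_0(0)$. Your acknowledged ``off-by-one'' worry is real but secondary; the structural point above is what your sketch is missing.
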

\begin{proof}
See \cite{HLNonlinearFourier}.
\end{proof}

\subsection{Riemann-Hilbert problem}
As motivation for what follows, we briefly review the formulation of \cite{F2002} of a RH problem suitable for analyzing equation (\ref{sg}) in the quarter plane. 

Assume $u(x,t)$ is a sufficiently smooth\footnote{Since the discussion in the present subsection serves only as motivation for what follows, we will not be precise with regularity and decay assumptions.} solution of (\ref{sg}) for $x \geq 0$ and $t \geq 0$ corresponding to some initial and boundary values $u_0, u_1, g_0, g_1$ satisfying (\ref{ujgjassump}) with $m \geq 1$ and $n \geq 1$.
Define three eigenfunctions $\{\mu_j(x,t,k)\}_1^3$ of (\ref{lax}) as the solutions of the Volterra integral equations
\begin{align}\label{muVolterra}
\mu_j(x,t,k) = I + \int_{(x_j, t_j)}^{(x,t)} e^{i(\theta_1(x' - x) + \theta_2(t' - t))\hat{\sigma}_3}(Q\mu_j dx' + \tilde{Q}\mu_jdt')
\end{align}
where $\tilde{Q}(x,t,k) = Q(x,t,-k)$, $(x_1, t_1) = (0,\infty)$, $(x_2, t_2) = (0,0)$, and $(x_3,t_3) = (\infty, t)$.
Define the spectral functions $r_1(k)$ and $h(k)$ by
\begin{align}\label{r1hdef}
& r_1(k) = \frac{\overline{b(\bar{k})}}{a(k)}, \quad k \in \R; \qquad\quad
 h(k) = -\frac{ \overline{B(\bar{k})}}{a(k) d(k)}, \quad k \in \bar{D}_2 \cup \R,
\end{align}
and let $r(k)$ denote their sum given by
\begin{align}\label{rdef}
& r(k) = r_1(k) + h(k) = \frac{\overline{c(\bar{k})}}{d(k)}, \qquad k \in \R.
\end{align}

Then it can be shown \cite{F2002} that the sectionally meromorphic function $M(x,t,k)$ defined by
\begin{align}\label{originalMdef}
  M(x,t,k) = \begin{cases}
  \begin{pmatrix} \frac{[\mu_2(x,t,k)]_1}{a(k)} & [\mu_3(x,t,k)]_2 \end{pmatrix}, & k \in D_1,
   \vspace{.1cm}	 \\ 
  \begin{pmatrix} \frac{[\mu_1(x,t,k)]_1}{d(k)} & [\mu_3(x,t,k)]_2 \end{pmatrix}, & k \in D_2,
   \vspace{.1cm}	\\ 
  \begin{pmatrix} [\mu_3(x,t,k)]_1 & \frac{[\mu_1(x,t,k)]_2}{d^*(k)} \end{pmatrix}, & k \in D_3,
   \vspace{.1cm}	\\ 
  \begin{pmatrix} [\mu_3(x,t,k)]_1 & \frac{[\mu_2(x,t,k)]_2}{a^*(k)} \end{pmatrix}, & k \in D_4,
\end{cases}	
\end{align}
satisfies the normalization condition  $\lim_{k \to \infty} M(x,t,k) = I$ as well as the jump condition $M_+ = M_-J$ across the contour $\Gamma = \R \cup \{|k| = 1\}$, where the jump matrix $J$  is given by
\begin{align}\label{Jdef}
&J(x,t,k) = \begin{cases} \begin{pmatrix} 1 & 0 \\  -h(k) e^{2i\theta} & 1 \end{pmatrix}, & k \in \bar{D}_1 \cap \bar{D}_2,	\\
\begin{pmatrix} 1 & - \overline{r(\bar{k})} e^{-2i\theta} \\
- r(k)e^{2i\theta}& 1 + |r(k)|^2\end{pmatrix}, & k \in \bar{D}_2 \cap \bar{D}_3,	\\
\begin{pmatrix} 1 & - \overline{h(\bar{k})} e^{-2i\theta} \\ 0 & 1 \end{pmatrix}, & k \in \bar{D}_3 \cap \bar{D}_4, \\
\begin{pmatrix} 1 + |r_1(k)|^2 & \overline{r_1(\bar{k})} e^{-2i\theta} \\
 r_1(k)e^{2i\theta} & 1 \end{pmatrix}, & k \in \bar{D}_4 \cap \bar{D}_1,
 \end{cases}
\end{align}
with $\theta := \theta_1x + \theta_2 t$. In particular, if $a(k)$  and $d(k)$ have no zeros, then $M$ satisfies the RH problem
\begin{align}\label{RHM}
\begin{cases}
M(x, t, \cdot) \in I + \dot{E}^2(\C \setminus \Gamma),\\
M_+(x,t,k) = M_-(x, t, k) J(x, t, k) \quad \text{for a.e.} \ k \in \Gamma.
\end{cases}
\end{align}
Roughly speaking, the functions $r_1(k)$ and $r(k)$ play the roles of `reflection coefficients' for the initial half-line $\{x \geq 0, t = 0\}$ and for the union $\{x \geq 0, t = 0\} \cup \{x = 0, t \geq 0\}$ of the initial half-line and the boundary, respectively.

\section{Main results}\label{mainsec}
This section presents the four main theorems of the paper in the pure radiation case. The theorems are extended to the case when solitons are present in Section \ref{solitonsec}. 

\subsection{Construction of solutions}

\begin{assumption} \label{r1hassumption}
Suppose $r_1:\R \to \C$ and $h:\partial D_2 \to \C$ are continuous functions with the following properties:
\begin{enumerate}[$(a)$]
  \item There exist complex constants $\{r_{1,j}\}_{j=1}^2$ such that 
\begin{align}\label{r1expansion}
& r_1(k) = \frac{r_{1,1}}{k} + \frac{r_{1,2}}{k^2} + O\Big(\frac{1}{k^{3}}\Big), \qquad |k| \to \infty, \ k \in \R.
\end{align}

\item $r_1(k)$ and $h(k)$ obey the symmetries
\begin{align}\label{r1hsymm}
r_1(k) = \overline{r_1(-\bar{k})}, \qquad h(k) = \overline{h(-\bar{k})}.
\end{align}

 \item The function $r:[-1,1] \to \C$  defined by $r(k) = r_1(k) + h(k)$ satisfies $r(\pm1) = 0$ and $r(k) = O(k^3)$ as $k \to 0$.
\end{enumerate} 
\end{assumption}

\begin{theorem}[Construction of quarter-plane solutions]\label{existenceth}
Let $r_1:\R \to \C$ and $h:\partial D_2 \to \C$ be functions satisfying Assumption \ref{r1hassumption}. Define the jump matrix $J(x,t,k)$ by (\ref{Jdef}).
Then the RH problem (\ref{RHM})
has a unique solution for each $(x,t) \in [0,\infty) \times [0, \infty)$. Moreover, the nontangential limit
\begin{align}\label{m0def}
\hat{M}(x,t) := \ntlim_{k \to 0} M(x,t,k)
\end{align}
exists for each $(x,t) \in [0,\infty) \times [0, \infty)$ and the function $u(x,t)$ defined by 
\begin{align}\label{ulim}
  u(x,t) = 2 \arg\big(\hat{M}_{11}(x,t) + i \hat{M}_{21}(x,t)\big)
\end{align}
belongs to $C^2([0,\infty) \times [0, \infty), \R)$ and satisfies the sine-Gordon equation \eqref{sg} for $x \geq 0$ and $t \geq 0$. 
\end{theorem}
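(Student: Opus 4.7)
The plan is to solve the RH problem (\ref{RHM}) by a vanishing-lemma plus Fredholm argument, upgrade the solution to a smooth family in $(x,t)$ via differentiation of the Cauchy integral, and then verify that $M$ satisfies the Lax pair (\ref{lax}) so that $u$ defined by (\ref{ulim}) solves (\ref{sg}) via the compatibility condition. The jump matrix $J$ of (\ref{Jdef}) has the standard structural features that make Zhou's vanishing lemma applicable: on the real-line arcs of $\Gamma$ it is Hermitian positive definite (trace $2 + |r|^2$, determinant $1$, and similarly for the $|k|\geq 1$ arcs); on the unit-circle arcs $\bar{D}_1 \cap \bar{D}_2$ and $\bar{D}_3 \cap \bar{D}_4$ it is unipotent with Schwartz-conjugate off-diagonal entries; and the symmetries (\ref{r1hsymm}) yield $J(x,t,k) = \sigma_2 \overline{J(x,t,-\bar{k})} \sigma_2$. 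These properties imply that the only $\dot{E}^2(\C \setminus \Gamma)$ solution of the homogeneous problem $N_+ = N_- J$ is $N \equiv 0$. Fredholm theory for the Cauchy singular integral operator $I - \mathcal{C}_w$ with $w = J - I$ (which is in $L^2 \cap L^\infty(\Gamma)$ thanks to (\ref{r1expansion}) and the continuity and vanishing conditions on $r$) has index zero by a standard homotopy, and the vanishing lemma then upgrades injectivity to bijectivity, giving unique solvability in $I + \dot{E}^2(\C \setminus \Gamma)$. The requirements $r(\pm 1) = 0$ and $r(k) = O(k^3)$ at the self-intersections of $\Gamma$ guarantee continuity of $J$ across these points so that no local parametrices are needed.

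Smoothness in $(x,t)$ is inherited from the smooth and uniformly bounded dependence of $J$ on $(x,t)$ through the exponentials $e^{\pm 2i\theta}$, which are bounded on $\Gamma$ for $(x,t) \in [0,\infty) \times [0,\infty)$ because $\im\theta$ has the correct sign on each arc; differentiating under the Cauchy integral and iterating then gives $M \in C^\infty$ jointly in $(x,t,k)$ for $k \notin \Gamma$. To verify the Lax pair I would follow the classical dressing argument: since $J$ depends on $(x,t)$ only through conjugation by $e^{i\theta \hat{\sigma}_3}$, the expressions
\begin{align*}
P := \Big(M_x + \tfrac{i}{4}\big(k - \tfrac{1}{k}\big) \hat{\sigma}_3 M\Big) M^{-1}, \qquad
\tilde{P} := \Big(M_t + \tfrac{i}{4}\big(k + \tfrac{1}{k}\big) \hat{\sigma}_3 M\Big) M^{-1}
\end{align*}
have trivial jumps across $\Gamma$ and are therefore rational in $k$. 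Their expansions at $k = \infty$ (from $M = I + M^{(1)}(x,t)/k + O(k^{-2})$) and at $k = 0$ (from the nontangential limit $\hat{M}(x,t)$) force $P = Q(x,t,k)$ and $\tilde{P} = Q(x,t,-k)$ with $Q$ of the form (\ref{Qdef}), where $Q_0$ and $Q_1$ are explicit expressions in $M^{(1)}$ and $\hat{M}$. The compatibility condition $P_t - \tilde{P}_x + [P, \tilde{P}] = 0$, matched power by power in $k$, yields (\ref{sg}) for $u = 2\arg(\hat{M}_{11} + i\hat{M}_{21})$, while the symmetries (\ref{r1hsymm}), which propagate to $M(x,t,k) = \sigma_2 \overline{M(x,t,-\bar{k})} \sigma_2$, force $Q$ into the correct sine-Gordon form with $u$ real-valued.

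The main obstacle is the local analysis at $k = 0$, which is a singular point of the Lax pair and the very point from which $u$ is reconstructed. The assumptions $r(\pm 1) = 0$ and $r(k) = O(k^3)$ in Assumption \ref{r1hassumption}(c) are tailored precisely to this step: together with the symmetries they allow a local Birkhoff-type factorization $J = J_+^{-1} J_-$ near $k = 0$, from which one reads off that the sectorial nontangential limits of $M$ from each $D_j$ exist and are $C^\infty$ in $(x,t)$. The symmetry $M(x,t,k) = \sigma_2 \overline{M(x,t,-\bar{k})} \sigma_2$ together with the jump relations then collapses the four sectorial limits into a single matrix $\hat{M}(x,t)$ with $\det \hat{M} = 1$ and $\hat{M} \hat{M}^\dagger = I$; in particular $|\hat{M}_{11} + i \hat{M}_{21}| = 1$, so the argument in (\ref{ulim}) is well-defined and real, and the joint smoothness of $\hat{M}$ gives $u \in C^\infty \subset C^2$. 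Combined with the Lax-pair verification above, this completes the construction.
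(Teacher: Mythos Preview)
Your overall strategy---vanishing lemma, Fredholm alternative, then the dressing argument to recover the Lax pair---is exactly the route the paper takes. However, there is a genuine gap: you attempt to run the entire argument directly on the jump $J$, whereas the paper first performs a preliminary transformation $M \mapsto m$ (see (\ref{mxtkdef})) that subtracts a rational approximant $r_{1,a}$ of $r_1$ in $D_1 \cup D_4$. This step is not cosmetic; without it your differentiability claim fails.

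Concretely, on the real arc $\bar{D}_4 \cap \bar{D}_1$ one has $w = J - I = O(k^{-1})$ as $k \to \infty$, so $w \in L^2$ and the existence argument can be made to work. But each $(x,t)$-derivative of $e^{\pm 2i\theta}$ brings down a factor $2i\theta_1 = \frac{i}{2}(k - k^{-1})$ or $2i\theta_2 = \frac{i}{2}(k + k^{-1})$, so $\partial_x w$ is already only $O(1)$ on $(1,\infty)$ and hence \emph{not} in $L^2(\Gamma)$. Your sentence ``differentiating under the Cauchy integral and iterating then gives $M \in C^\infty$'' therefore breaks down at the very first derivative. The paper's choice of $r_{1,a}$ with $r_1 - r_{1,a} = O(k^{-3})$ is designed precisely so that two derivatives still leave $v - I$ in $L^2$; this is the content of Lemma~\ref{C2lemma}. (Note also that Assumption~\ref{r1hassumption} only gives two coefficients in the expansion of $r_1$ and $r(k) = O(k^3)$ at $0$, so only $C^2$ is available, not $C^\infty$.) The same mechanism at $k = 0$---expanding $\frac{1}{s-k}$ in the Cauchy integral and using $r(k) = O(k^3)$ to keep $s^{-1}w, s^{-2}w \in L^1 \cap L^2$---is what the paper uses in Lemma~\ref{m0lemma} to obtain $\hat{M}$ and its $C^2$ regularity; no Birkhoff factorization is needed or used. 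The transformation also has the side benefit of making the jump continuous at the self-intersection points $k = \pm 1$ (the paper notes explicitly that $J$ ``could have discontinuities'' there), which cleans up the Fredholm setup.
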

\begin{proof}
See Section \ref{existenceproofsec}.
\end{proof}

The definition (\ref{ulim}) of $u(x,t)$ deserves some comments. The function $\arg:\C \setminus \{0\} \to \R$ is multiple-valued, so in order to define $u(x,t)$ by (\ref{ulim}), we need to pick a branch for this function. The proof of Theorem \ref{existenceth} will show that the function $\hat{M}(x,t)$ defined in (\ref{m0def}) is continuous and that $\hat{M}_{11} + i \hat{M}_{21}$ takes values on the unit circle. 
When writing equation (\ref{ulim}), we assume that the branch of $\arg$ is chosen continuously so that $u(x,t)$ also becomes continuous. Then (\ref{ulim}) defines $u(x,t)$ uniquely up to the addition of an integer multiple of $4\pi$. This nonuniqueness of $u(x,t)$ is consistent with the fact that the sine-Gordon equation (\ref{sg}) is invariant under the shifts $u \to u + 2\pi j$, $j \in \Z$. 
For the purpose of dealing with boundary conditions of the form (\ref{NxNtdef}), it will be convenient to define $u(x,t)$ by
\begin{align}\label{ulim2}
  u(x,t) = 2 \arg\big(\hat{M}_{11}(x,t) + i \hat{M}_{21}(x,t)\big) + 2\pi j,
\end{align}
where the integer $j \in \Z$ is fixed by the requirement that $u(x,0) \to 2\pi N_x$ as $x \to \infty$.

\begin{assumption}\label{assumption1}
Assume the following conditions are satisfied:
\begin{itemize}
\item The spectral functions $a(k)$, $b(k)$, $A(k)$, $B(k)$ defined in (\ref{abABdef}) satisfy the global relation 
\begin{align}\label{GR}
& A(k)b(k) - B(k)a(k) = 0, \qquad k \in \bar{D}_1.
\end{align}

\item $a(k)$ and $d(k)$ have no zeros in $\bar{D}_1 \cup \bar{D}_2$ and $\bar{D}_2$, respectively.

\end{itemize}
\end{assumption}

\begin{theorem}[Construction of quarter-plane solutions with given initial and boundary values]\label{existenceth2}
Let $u_0, u_1, g_0, g_1$ be functions satisfying (\ref{ujgjassump}) for $n = 1$, $m = 2$, and some integers $N_x, N_t \in \Z$. Suppose also that $u_0, u_1, g_0, g_1$ are compatible with equation (\ref{sg}) to third order at $x=t=0$. Define the spectral functions $a(k)$, $b(k)$, $A(k)$, $B(k)$ by (\ref{abABdef}) and suppose Assumption \ref{assumption1} holds. 

Then the spectral functions $r_1(k)$  and $h(k)$ defined by (\ref{r1hdef}) satisfy Assumption \ref{r1hassumption}. Moreover, the associated solution $u(x,t)$ of sine-Gordon defined by (\ref{ulim2}) has, for an appropriate choice of the integer $j \in \Z$, initial and boundary values given by $u_0, u_1, g_0, g_1$, i.e.,
\begin{itemize}
\item $u(x,0) = u_0(x)$ and $u_t(x,0)=u_1(x)$ for $x \geq 0$,

\item $u(0,t) = g_0(t)$ and $u_x(0,t) = g_1(t)$ for $t \geq 0$.
\end{itemize}
\end{theorem}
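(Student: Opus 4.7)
The argument proceeds in two stages. First, I verify that $r_1$ and $h$ defined by (\ref{r1hdef}) fulfill Assumption \ref{r1hassumption}, so that Theorem \ref{existenceth} produces a solution $u(x,t)$. Second, I show that this solution, for an appropriate choice of the integer shift $j$, has the prescribed initial and boundary data.

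For the first stage, continuity of $r_1$ on $\R$ and of $h$ on $\partial D_2 = \R \cup \{|k|=1\}$ follows from Lemmas \ref{ablemma}, \ref{ABlemma}, \ref{cdlemma} together with the hypothesis that $a$ has no zeros in $\bar{D}_1 \cup \bar{D}_2$ and $d$ has no zeros in $\bar{D}_2$. The expansion (\ref{r1expansion}) follows from the large-$k$ expansions of $a, b$ in Lemma \ref{ablemma}(c) with $m=2$, and the symmetries (\ref{r1hsymm}) are direct consequences of the corresponding symmetries in Lemmas \ref{ablemma}(d), \ref{ABlemma}(d). From (\ref{rdef}), $r(k) = \overline{c(\bar k)}/d(k)$; Lemma \ref{cdlemma}(d) with $m=2$ gives $c(k) = O(k^3)$ as $k \to 0$, and $d(0) = (-1)^{N_x - N_t} \neq 0$, so $r(k) = O(k^3)$ at the origin. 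Finally, the global relation (\ref{GR}) asserts $c \equiv 0$ on $\bar{D}_1$, and continuity extends this to $k = \pm 1$, yielding $r(\pm 1) = 0$.

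For the second stage, the plan is to identify $M(x, 0, k)$ and $M(0, t, k)$ explicitly with appropriate reorganizations of the Volterra eigenfunctions $X(x,k)$ and $T(t,k)$ of (\ref{Xvolterra})--(\ref{Tvolterra}), and then to extract the boundary values via the reconstruction formula (\ref{ulim2}) together with the asymptotics of these eigenfunctions at $k=0$ and $k=\infty$. To accomplish this at $t=0$, I would construct an auxiliary sectionally meromorphic function $M^{(x)}(x,k)$ built from $X(x,k)$ and the spectral functions $a, b$, satisfying a RH problem with jump across $\R$ driven by $r_1$ alone. The global relation allows $h$, restricted to $\bar{D}_1 \cap \partial D_2$, to be rewritten purely in terms of the initial-data spectral functions (using $B = Ab/a$ on $\bar{D}_1$), so that the $\{|k|=1\}$-part of the jump (\ref{Jdef}) matches precisely the jump one obtains by contour deformation of $M^{(x)}$. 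A Liouville argument applied to $M(x,0,k) M^{(x)}(x,k)^{-1}$, which is then jump-free and tends to $I$ at infinity, forces $M(x,0,\cdot) \equiv M^{(x)}(x,\cdot)$. The reconstruction formula combined with the small-$k$ asymptotics of $X(x,k)$ from Lemma \ref{ablemma}(c) and the matrix $G_0(x)$ of (\ref{G0xdef}) gives $u(x,0) = u_0(x)$ once $j$ is fixed by the normalization $\lim_{x\to\infty} u(x,0) = 2\pi N_x$. Reading the $1/k$-coefficient in the $k\to\infty$ expansion of $X(x,k)$ via Lemma \ref{ablemma}(c) identifies $u_t(x,0) = u_1(x)$ through the definition (\ref{mathsfUdef}) of $\mathsf{U}$.

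The argument at $x=0$ is symmetric: one constructs $M^{(t)}(t,k)$ from $T(t,k)$ and $A, B$, uses the global relation to rewrite $r_1$ on the portion of $\R$ where it needs to be matched against $h$, and applies Liouville to conclude $M(0,t,\cdot) \equiv M^{(t)}(t,\cdot)$; the reconstruction at $k=0$ and the $1/k$-coefficient at $k=\infty$ then yield $u(0,t) = g_0(t)$ and $u_x(0,t) = g_1(t)$. The principal obstacle is carrying out this jump-matching rigorously. One must verify that the global-relation rewritings used to pass between $h$ and initial-data expressions (and vice versa for $r_1$) are consistent on the overlap where real line and unit circle meet, with the identity $r(\pm 1) = 0$ from the first stage playing the decisive role; that the no-zeros hypotheses on $a$ and $d$ prevent spurious singularities from obstructing the factorizations used in the auxiliary problems; and that the third-order compatibility of $u_0, u_1, g_0, g_1$ at the origin propagates enough regularity for the reconstructed $u$ to lie in $C^2$ and to realize both Dirichlet and Neumann data consistently at the corner $x=t=0$.
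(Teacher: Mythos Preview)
Your first stage is correct and matches the paper's Lemma~\ref{r1hlemma} essentially verbatim.

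Your second stage has the right architecture (build an auxiliary RH problem from the direct-scattering eigenfunctions, then invoke uniqueness), but the mechanism you propose for reconciling the two problems at $t=0$ does not work. You claim the global relation lets you rewrite $h$ on the upper semicircle in terms of $a,b$ via $B=Ab/a$; however $h(k)=-\overline{B(\bar k)}/(a(k)d(k))$ requires $B$ at $\bar k$, which for $k$ on the upper semicircle lies on the \emph{lower} semicircle, outside $\bar D_1$ where the global relation lives. The global relation simply does not furnish this rewriting. The paper's actual device is different and does not use the global relation at all for the initial condition: since $h$ is analytic in $D_2$ (from the no-zeros hypotheses on $a,d$) and $|e^{2i\theta_1 x}|\le 1$ there for $x\ge 0$, the triangular factor $\begin{psmallmatrix}1&0\\-he^{2i\theta_1 x}&1\end{psmallmatrix}$ belongs to $I+(\dot E^2\cap E^\infty)(D_2)$, so the semicircle jump of $M(x,0,\cdot)$ can be absorbed by right-multiplication. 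This collapses the problem to a pure real-line RH problem, which is then matched against the one built from the \emph{pair} $X,Y$ of Volterra eigenfunctions (you need both, normalized at $x=\infty$ and $x=0$ respectively, not just $X$).

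The boundary step is \emph{not} symmetric to the initial step, and here is where the global relation is actually used---but for a different purpose than you suggest. From $bA-aB=0$ on $\bar D_1$ together with $\det S=1$ one gets $d=a/A$ on $\partial D_1$, which provides analytic continuations of $A$, $B=bA/a$, and $[T]_2/A$ into $D_2$. Furthermore $A$ may vanish in $D_3$, so the paper first shrinks the circle to $|k|=\epsilon$ with $\epsilon$ small enough that $A\ne 0$ in $\bar{\mathcal D}_3$, and then conjugates $M(0,t,\cdot)$ by explicit matrices $G_j(t,k)$ built from $a,b,d,A$ in each region $\mathcal D_j$ to manufacture the $t$-side RH problem. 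Your proposal does not anticipate either the domain deformation or the nontrivial $G_j$ conjugation, so as written the boundary half would not go through.
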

\begin{proof}
See Section \ref{existence2proofsec}.
\end{proof}

\subsection{Asymptotics}
If $r_1:\R \to \C$ and $h:\partial D_2 \to \C$ are two functions fulfilling the assumptions of Theorem \ref{existenceth}, then the function $u(x,t)$ defined in (\ref{ulim}) is a quarter-plane solution of sine-Gordon. Our next result provides asymptotic formulas for this solution as $(x,t) \to \infty$. As displayed in Figure \ref{sectors.pdf}, there are four different asymptotic sectors. The proof requires slightly stronger assumptions on the functions $r_1$ and $h$ than before. For simplicity, we will work in the smooth setting and assume the following.

\begin{assumption} \label{r1hassumption2}
Suppose $r_1:\R \to \C$ and $h:\bar{D}_2 \to \C$ have the following properties:
\begin{enumerate}[$(1)$]
 \item $r_1 \in C^\infty(\R)$. 
 
 \item There exist complex constants $\{r_{1,j}\}_{j=1}^\infty$ such that, for each integer $N \geq 1$, 
\begin{subequations}\label{r1expansion2}
\begin{align}
& r_1(k) = \sum_{j=1}^N \frac{r_{1,j}}{k^j} + O\Big(\frac{1}{k^{N+1}}\Big), \qquad |k| \to \infty, \ k \in \R,
\end{align}
and this expansion can be differentiated termwise any number of times in the sense that
\begin{align}
\frac{d^nr_1}{dk^n}(k) = \frac{d^n}{dk^n}\bigg(\sum_{j=1}^N \frac{r_{1,j}}{k^j}\bigg) + O\bigg(\frac{1}{k^{N+1}}\bigg), \qquad |k| \to \infty, \ k \in \R,
\end{align}
\end{subequations}
for any integers $n, N \geq 1$.

 \item $h \in C^\infty(\bar{D}_2)$ and $h(k)$ is analytic for $k \in D_2$.

\item $r_1(k)$ and $h(k)$ obey the symmetries (\ref{r1hsymm}).

\item The function $r:[-1,1] \to \C$  defined by $r(k) = r_1(k) + h(k)$ vanishes at $k = \pm 1$ and vanishes to all orders at $k = 0$, i.e., $r(1) = r(-1) = 0$ and $r^{(j)}(0) = 0$ for every integer $j \geq 0$.
\end{enumerate} 
\end{assumption}

\begin{theorem}[Asymptotics of quarter-plane solutions]\label{asymptoticsth}
  Let $r_1:\R \to \C$ and $h:\bar{D}_2 \to \C$ be two functions satisfying Assumption \ref{r1hassumption2} and let $u(x,t)$ be the associated sine-Gordon quarter-plane solution defined in (\ref{ulim}). 
  
Then $u \in C^\infty([0,\infty) \times [0,\infty), \R)$ and there exists a choice of the branch of $\arg$ in (\ref{ulim}) such that $u(x,t) \to 0$ uniformly as $(x,t) \to \infty$. 
In fact, for this choice of branch, the following asymptotic formulas hold for any $N \geq 1$:
\begin{subequations}\label{uasymptotics}
\begin{align} \label{uasymptoticsI}
\text{\upshape Sector I:} \quad & u(x,t) = O\big(x^{-N}\big), \qquad 1 \leq \zeta < \infty, \ x > 1,	
	\\\label{uasymptoticsII}
\text{\upshape Sector II:} \quad & u(x,t) = O\big((1-\zeta)^N + t^{-N}\big), \qquad 1/2 \leq \zeta \leq 1, \ t > 1,
	\\\label{uasymptoticsIII}
\text{\upshape Sector III:} \quad & u(x,t) = -2 \sqrt{\frac{2(1 + k_0^2)\nu}{k_0t}}   \sin \alpha+ O\bigg(\frac{1}{k_0 t} + \frac{\ln{t}}{t}\bigg), \qquad 0 \leq \zeta < 1, \ t > 2,
	\\\label{uasymptoticsIV}
\text{\upshape Sector IV:} \quad & u(x,t) = O\big(\zeta^2 + t^{-1}\big), \qquad 0 \leq \zeta \leq 1/2, \ t > 1,
\end{align}
\end{subequations}
where the error terms are uniform in the given ranges, the constant $c> 0$ is independent of $(x,t)$, the variable $\zeta \geq 0$ is defined by $\zeta := x/t$, and the functions $k_0 := k_0(\zeta)$, $\nu := \nu(\zeta)$, and $\alpha := \alpha(\zeta,t)$ are defined by
\begin{align}\nonumber
& k_0 = \sqrt{\frac{1-\zeta}{1+\zeta}}, \qquad \nu = \frac{1}{2\pi} \ln(1 + |r(k_0)|^2),
	\\ \nonumber
& \alpha(\zeta, t) = \frac{2tk_0}{1 + k_0^2} - \nu\ln\bigg(\frac{8tk_0}{1 + k_0^2}\bigg) 
 - \frac{\pi}{4} + \arg r(k_0) + \arg\Gamma(i\nu)
 	\\ \label{alphadef}
& \qquad\qquad - \frac{1}{\pi} \int_{-k_0}^{k_0} \ln (k_0-s) \frac{d\ln(1 + |r(s)|^2)}{ds} ds.
\end{align}

If $r(k)$ vanishes to some higher order $Z \geq 1$ at $k = 1$, i.e., $r^{(j)}(1) = 0$ for $j = 0, \dots, Z$, then (\ref{uasymptoticsIV}) can be replaced with
\begin{align}\label{uasymptoticsIV2}
\text{\upshape Sector IV:} \quad  u(x,t) = O\big(\zeta^{Z+2} + t^{-\frac{Z}{2}-1}\big), \qquad 0 \leq \zeta \leq 1/2, \ t > 1.
\end{align}
\end{theorem}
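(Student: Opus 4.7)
The plan is to apply the nonlinear steepest descent method of Deift and Zhou to the RH problem (\ref{RHM}) sector by sector. The controlling phase is $\theta = \theta_1(k)x + \theta_2(k)t$, whose real stationary points are $\pm k_0$ with $k_0(\zeta) = \sqrt{(1-\zeta)/(1+\zeta)} \in (0,1)$ for $\zeta \in [0,1)$ and purely imaginary for $\zeta > 1$. The signature table of $\im \theta$ in $\C$ dictates the admissible contour deformations, and the zeros of the jump data at $k = 0$, $k = \pm 1$, and $k = \pm k_0$ govern the nontrivial contributions. Since $u(x,t)$ is extracted from $M$ by the nontangential limit at $k = 0$ via (\ref{ulim}), I will need to track $M(x,t,k)$ uniformly near the origin throughout.

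In Sector I, there are no real stationary points, so one deforms $\Gamma$ off $\R$ (and off $\{|k|=1\}$ in the proper direction) into regions where $\im\theta$ has a favorable sign. Since $r_1, h$ are smooth and $r_1$ has rapid decay by (\ref{r1expansion2}), the jump matrices on the deformed contour are $I + O(e^{-cx})$ in $L^1 \cap L^\infty$ with $c > 0$ independent of $\zeta \geq 1$. A small-norm RH argument then yields $M = I + O(x^{-N})$ for every $N$, and evaluating at the nontangential limit $k=0$ gives (\ref{uasymptoticsI}).

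Sector III is the technical heart of the proof. After introducing the scalar factor
\[
\delta(k) = \exp\Bigl(\frac{1}{2\pi i}\int_{-k_0}^{k_0} \frac{\ln(1 + |r(s)|^2)}{s - k}\,ds\Bigr),
\]
which diagonalizes the central jump on $(-k_0, k_0)$, I would perform the standard lower/upper triangular factorizations on the complementary parts of $\R$ and open lenses at $\pm k_0$ pointing into the regions of exponentially decaying $e^{\pm 2i\theta}$. Because $r$ is not analytic off $\R$, I would split $r = R + (r - R)$ with $R$ a rational approximant that matches $r$ to sufficient order at $\pm k_0$; the non-analytic remainder is absorbed into a $\bar\partial$-type error contributing to the small-norm estimate. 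A local parametrix in terms of parabolic cylinder functions is then constructed in a fixed neighborhood of each $\pm k_0$, and the contribution of these parametrices to $M(x,t,0)$ is computed explicitly. The phase and amplitude bookkeeping (including the factor $\arg\Gamma(i\nu)$ from the parabolic cylinder model and the boundary values $\delta(\pm k_0)$ rewritten via integration by parts into the integral appearing in (\ref{alphadef})) produces precisely the quantities $\nu$ and $\alpha$ in the theorem, and the global error RH problem has jump of size $O(\ln t/\sqrt{t k_0})$ away from the critical points and $O(1/(tk_0))$ at them, matching (\ref{uasymptoticsIII}).

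Sectors II and IV are transition regions where the formula of Sector III degenerates because $k_0$ collides with a zero of $r$. In Sector II, $k_0 \to 0$, and Assumption \ref{r1hassumption2}(5) (vanishing of $r$ to all orders at $0$) implies that on a small contour surrounding $[-k_0, k_0]$ the $r$-entries of the jump are $O(k^N)$, while on the rest of the deformed contour the exponentials decay like $e^{-ct}$; combining these yields the $O((1-\zeta)^N + t^{-N})$ bound of (\ref{uasymptoticsII}). Sector IV is analogous with $k_0 \to 1$: the stationary points approach $\pm 1$, where $r(\pm 1) = 0$ so $r(k) = O(|k \mp 1|)$ generically and $r(k) = O(|k\mp 1|^{Z+1})$ under the higher-order vanishing hypothesis; an integration-by-parts argument against the oscillatory exponential then gives (\ref{uasymptoticsIV}) and its refinement (\ref{uasymptoticsIV2}). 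The main obstacle throughout is uniformity in $\zeta$: the parametrix construction of Sector III deteriorates as $k_0 \to 0$ or $k_0 \to 1$, so the sector boundaries must be chosen carefully and every constant in the small-norm arguments tracked as a function of $k_0$ to ensure that the estimates from Sector III join continuously with the high-order bounds of Sectors II and IV.
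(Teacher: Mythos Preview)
Your outline is correct and follows essentially the same nonlinear steepest descent strategy as the paper: the sector decomposition, the scalar conjugation by $\delta$, the opening of lenses at $\pm k_0$, the parabolic-cylinder local model in Sector III, and the use of the vanishing of $r$ at $0$ and at $\pm 1$ to handle Sectors II and IV are all exactly what the paper does.

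Two technical points where your sketch diverges slightly from the paper's implementation are worth noting. First, for the non-analyticity of $r_1$ (and of $h$, $r_2$) the paper does not use a $\bar\partial$ extension; instead it constructs explicit analytic approximations by passing to the phase variable $\phi = -i\Phi$ and taking a truncated inverse Fourier transform (Lemmas \ref{decompositionlemmaI}, \ref{decompositionlemmaII}, \ref{hdecompositionlemmaIII}, \ref{decompositionlemmaIII}), leaving a small remainder on the original real contour rather than a $\bar\partial$ term in the plane. Second, your Sector I description (``the jump matrices on the deformed contour are $I + O(e^{-cx})$'') is too strong near $k=0$: there the phase does not provide exponential decay, and the paper instead uses that $r = r_1 + h$ vanishes to all orders at $0$ to get $|w| \le C|k|^{N+1}e^{-\frac{3x}{4}|\re\tilde\Phi|}$ on the rays through the origin, which yields only $O(x^{-N})$ after integration (Lemma \ref{w2lemmaI}). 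The same mechanism---polynomial rather than exponential suppression coming from zeros of $r$---is what drives the estimates in Sectors II and IV, and in Sector IV the paper obtains (\ref{uasymptoticsIV2}) by a direct pointwise bound on the jump (Lemma \ref{w2lemmaIV}) rather than an integration-by-parts argument.
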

\begin{proof}
See Sections \ref{overviewsec}-\ref{sectorIVsec}.
\end{proof}

Our last theorem provides the asymptotics as $(x,t) \to \infty$ of the solution $u(x,t)$ of the sine-Gordon equation in the quarter-plane for consistently given initial and boundary values such that $u_0 - 2\pi N_x$, $u_1$, $g_0 - 2\pi N_t$, and $g_1$ belong to the Schwartz class $\mathcal{S}([0,\infty))$.

\begin{theorem}[Asymptotics of quarter-plane solutions with given initial and boundary values]\label{asymptoticsth2}
Let $u_0, u_1, g_0, g_1$ be functions satisfying 
\begin{align}\label{ujgjschwartz}
u_0 - 2\pi N_x, u_1, g_0 - 2\pi N_t, g_1 \in \mathcal{S}([0,\infty)).
\end{align}
for some integers $N_x, N_t \in \Z$. Suppose $u_0, u_1, g_0, g_1$ are compatible with equation (\ref{sg}) to all orders at $x=t=0$. Define $a(k)$, $b(k)$, $A(k)$, $B(k)$ by (\ref{abABdef}) and suppose Assumption \ref{assumption1} holds. Define $r_1(k)$ and $h(k)$ by (\ref{r1hdef}) and let $M(x,t,k)$ be the associated solution of the RH problem (\ref{RHM}). 
 
Then, for an appropriate choice of the integer $j \in \Z$, the function $u(x,t)$ defined in (\ref{ulim2}) is a smooth solution of (\ref{sg}) in the quarter-plane $\{x \geq 0, t \geq 0\}$ such that  
\begin{align}\label{initialboundaryvalues}
\begin{cases}
u(x,0)=u_0(x), \quad u_t(x,0)=u_1(x),  & x \geq 0, 
	\\
u(0,t)=g_0(t), \quad u_x(0,t)=g_1(t), & t \geq 0,
\end{cases}
\end{align}
and $u(x,t)$ satisfies the asymptotic formulas (\ref{uasymptotics}) and (\ref{uasymptoticsIV2}) for any $N \geq 1$ and $Z \geq 1$.
\end{theorem}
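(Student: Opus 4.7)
The plan is to derive Theorem \ref{asymptoticsth2} by combining Theorems \ref{existenceth2} and \ref{asymptoticsth}. The Schwartz condition (\ref{ujgjschwartz}) immediately implies (\ref{ujgjassump}) for arbitrary $n, m \geq 1$, and all-orders compatibility subsumes the order-$3$ compatibility required by Theorem \ref{existenceth2}; hence that theorem applies directly, producing an integer $j$ and a smooth quarter-plane solution $u(x,t)$ of (\ref{sg}) via (\ref{ulim2}) that realizes the prescribed data (\ref{initialboundaryvalues}). What remains is to establish the large-$(x,t)$ asymptotics.

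For the asymptotic formulas I would invoke Theorem \ref{asymptoticsth} applied to the reflection coefficients $r_1$ and $h$ defined by (\ref{r1hdef}). The essential task is to upgrade from Assumption \ref{r1hassumption} (which was verified in the course of proving Theorem \ref{existenceth2}) to the stronger Assumption \ref{r1hassumption2}. The $C^\infty$ smoothness of $r_1$ on $\R$ and of $h$ on $\bar{D}_2$, together with the analyticity of $h$ in $D_2$, follow from the smoothness/analyticity of $a$, $b$, $A$, $B$, $d$ delivered by Lemmas \ref{ablemma}, \ref{ABlemma}, and \ref{cdlemma} (applied with arbitrarily large $n, m$, which is permissible thanks to our Schwartz and all-orders compatibility assumptions) combined with the non-vanishing of $a$ in $\bar{D}_1 \cup \bar{D}_2$ and of $d$ in $\bar{D}_2$ furnished by Assumption \ref{assumption1}. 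The full asymptotic expansion (\ref{r1expansion2}) and its termwise differentiability at $k = \infty$ are inherited directly from the expansion of $b/a$ in Lemma \ref{ablemma}(c), while the symmetries (\ref{r1hsymm}) are immediate from parts (d) of Lemmas \ref{ablemma} and \ref{ABlemma}.

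The most delicate point is condition (5) of Assumption \ref{r1hassumption2}, namely $r(\pm 1) = 0$ and the flat vanishing of $r$ at $k = 0$. For the former, I would use that $\pm 1 \in \partial D_1 \subset \bar D_1$, so the global relation (\ref{GR}) together with the definition $c(k) = b(k)A(k) - a(k)B(k)$ gives $c(\pm 1) = 0$; since $\pm 1 \in \bar D_2$ where $d$ is non-vanishing, we conclude $r(\pm 1) = \overline{c(\pm 1)}/d(\pm 1) = 0$. For the flatness at the origin, expansion (\ref{cdexpansionsc}) yields $c^{(j)}(k) = O(k^{m+1-2j})$ as $k \to 0$ with $m$ arbitrary, so $c$ and all its derivatives vanish there; combined with $d(0) = (-1)^{N_x - N_t} \neq 0$ from (\ref{cdexpansionsd}), this transfers to $r(k) = \overline{c(\bar k)}/d(k)$. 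Once Assumption \ref{r1hassumption2} is in place, Theorem \ref{asymptoticsth} applies and delivers (\ref{uasymptotics}) and (\ref{uasymptoticsIV2}), with the additive constant $2\pi j$ in (\ref{ulim2}) absorbed into the branch choice.

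The main conceptual hurdle is the vanishing $r(\pm 1) = 0$: one must recognize that although the global relation is usually phrased as an analytic identity on $D_1$, its extension by continuity to $\bar D_1$ contains the key information that $c$ vanishes on the unit-circle portion of $\partial D_2$, which is precisely what produces the zeros of $r$ demanded by Assumption \ref{r1hassumption2}. Everything else is essentially bookkeeping from the spectral lemmas in Section \ref{specsec}, propagated to arbitrary order using the Schwartz/all-orders inputs.
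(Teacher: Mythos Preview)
Your overall strategy is correct and matches the paper's: verify Assumption~\ref{r1hassumption2} for the pair $(r_1,h)$ using Lemmas~\ref{ablemma}--\ref{cdlemma} under the Schwartz/all-orders hypotheses and Assumption~\ref{assumption1}, then invoke Theorem~\ref{asymptoticsth} for the asymptotics and Theorem~\ref{existenceth2} for the initial/boundary values. Most of your verification of Assumption~\ref{r1hassumption2} is fine.

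There is, however, one genuine gap. You only establish $r(\pm 1)=0$, which is exactly what item~(5) of Assumption~\ref{r1hassumption2} demands; but this is \emph{not} sufficient to conclude (\ref{uasymptoticsIV2}) for every $Z\ge 1$. Theorem~\ref{asymptoticsth} states (\ref{uasymptoticsIV2}) only under the extra hypothesis that $r^{(j)}(1)=0$ for $j=0,\dots,Z$, which is not part of Assumption~\ref{r1hassumption2}. So you must show that $r$ (equivalently $c$) vanishes to \emph{all orders} at $k=\pm 1$. The observation you are missing is that the global relation (\ref{GR}) holds on the full closed set $\bar D_1$, and in particular gives $c(k)\equiv 0$ on $\R\setminus(-1,1)$, not merely $c(\pm 1)=0$. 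Since, by Lemma~\ref{cdlemma}(c) applied with arbitrarily large $n$, the derivatives $c^{(j)}$ are continuous on $\R\setminus\{0\}$, the identical vanishing of $c$ on $[1,\infty)$ (resp.\ $(-\infty,-1]$) forces $c^{(j)}(1)=0$ (resp.\ $c^{(j)}(-1)=0$) for every $j\ge 0$; combined with $d(\pm1)\neq 0$ this yields $r^{(j)}(\pm 1)=0$ for all $j$, and hence (\ref{uasymptoticsIV2}) for every $Z\ge 1$. This is precisely the point the paper's proof singles out (``the global relation implies that $c(k)$, and hence also $r(k)$, vanishes to all orders at $k=\pm 1$'').
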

\begin{proof}
Suppose $u_0, u_1, g_0, g_1$ lie in the Schwartz class $\mathcal{S}([0,\infty))$. Then Lemmas \ref{ablemma} and \ref{ABlemma} together with Assumption \ref{assumption1} imply that $r_1(k)$ and $h(k)$ satisfy Assumption \ref{r1hassumption2}.
The global relation (\ref{GR}) implies that $c(k)$, and hence also $r(k)$, vanishes to all orders at $k = \pm1$.   
Thus, by Theorem \ref{asymptoticsth}, $u(x,t)$ is a smooth solution which satisfies the asymptotic formulas (\ref{uasymptotics}) and (\ref{uasymptoticsIV2}) for any $N \geq 1$ and $Z \geq 1$. By Theorem \ref{existenceth2}, $u(x,t)$ satisfies the initial and boundary values (\ref{initialboundaryvalues}).
\end{proof}

\begin{remark}[Matching]\upshape
  The asymptotic formulas (\ref{uasymptoticsI}) and (\ref{uasymptoticsII}) overlap on the line $\zeta = 1$ (i.e. $x = t$) and they both state that $u(x,t) = O(t^{-N})$ on this line. 
  
  The two formulas (\ref{uasymptoticsII}) and (\ref{uasymptoticsIII}) are both uniformly valid for $(x,t)$ such that $0 \leq \zeta < 1$ and $t > 2$. 
Formula (\ref{uasymptoticsII}) is useful if $\zeta \to 1$ while formula (\ref{uasymptoticsIII}) is useful if $k_0^{3/2}t \to \infty$ (because then the errors are small). The two formulas are consistent in the overlap since the amplitude of the modulated sine wave in (\ref{uasymptoticsIII}) vanishes as $\zeta \to 1$. Indeed, the function $\nu$ vanishes to all orders as $\zeta \to 1$ as a consequence of the fact that $r(k)$ vanishes to all orders at $k = 0$.

Thanks to the vanishing of $r(k)$  at $k=1$, the amplitude of the sine wave in (\ref{uasymptoticsIII}) vanishes also as $\zeta \to 0$. This shows that the asymptotics in (\ref{uasymptoticsIII}) is consistent with the estimate in (\ref{uasymptoticsIV}), which in turn is consistent with the decay of the boundary function $g_0(t)$ as $t \to \infty$.
\end{remark}

\section{Proof of Theorem \ref{existenceth}}\label{existenceproofsec}
Suppose $r_1:\R \to \C$ and $h:\partial D_2 \to \C$ are continuous functions satisfying Assumption \ref{r1hassumption} and let $J(x,t,k)$ be the jump matrix defined in (\ref{Jdef}). 
The matrix $J$ could have discontinuities at the points $k = \pm 1$. Moreover, in general, $J(x,t,k)$ only decays like $O(k^{-1})$ as $k \in \Gamma$ tends to infinity. Our first step is therefore to transform the RH problem (\ref{RHM}) in such a way that the new jump matrix is continuous on $\Gamma$ and of order $O(k^{-3})$ as $k \to \infty$. 

Define $m(x,t,k)$ by
\begin{align}\label{mxtkdef}
m(x,t,k)
= \begin{cases} M(x,t,k)  \begin{pmatrix} 1 & 0 \\ -r_{1,a}(k) e^{2i\theta} & 1 \end{pmatrix}, & k \in D_1,
	\\
M(x,t,k)  \begin{pmatrix} 1 & \overline{r_{1,a}(\bar{k})} e^{-2i\theta} \\ 0 & 1 \end{pmatrix}, & k \in D_4,
	\\
M(x,t,k), & \text{otherwise},		
\end{cases}
\end{align}
where $r_{1,a}(k)$ is a rational function with the following properties:
\begin{itemize}
\item $r_{1,a}(k)$ has no poles in $\bar{D}_1$,

\item $r_{1,a}(k) = \frac{r_{1,1}}{k} + \frac{r_{1,2}}{k^2} + O(k^{-3})$ as $k \to \infty$, 

\item $r_{1,a}(1) = r_1(1)$,

\item $r_{1,a}(k) = \overline{r_{1,a}(-\bar{k})}$ for $k \in \bar{D}_1$.
\end{itemize}
It is easy to see that a function with the above properties exists.

Note that $\im \theta \geq 0$ for $k \in D_1$ and $\im \theta \leq 0$ for $k \in D_4$ whenever $x \geq 0$ and $t \geq 0$. Together with the properties of $r_{1,a}$, this implies that $r_{1,a} e^{2i\theta} \in (\dot{E}^2 \cap E^\infty)(D_1)$ and $r_{1,a}^* e^{-2i\theta} \in (\dot{E}^2 \cap E^\infty)(D_4)$.
It follows that (see Lemma A.5 in \cite{LNonlinearFourier}) that $M$ satisfies the RH problem (\ref{RHM}) iff $m$ satisfies the RH problem
\begin{align}\label{RHm}
\begin{cases}
m(x, t, \cdot) \in I + \dot{E}^2(\C \setminus \Gamma),\\
m_+(x,t,k) = m_-(x, t, k) v(x, t, k) \quad \text{for a.e.} \ k \in \Gamma,
\end{cases}
\end{align}
where
\begin{align}\label{vdef}
&v(x,t,k) = \begin{cases} \begin{pmatrix} 1 & 0 \\  -(h + r_{1,a}) e^{2i\theta} & 1 \end{pmatrix}, & k \in \bar{D}_1 \cap \bar{D}_2,	\\
\begin{pmatrix} 1 & - r^* e^{-2i\theta} \\
- r e^{2i\theta}& 1 + |r|^2\end{pmatrix}, & k \in \bar{D}_2 \cap \bar{D}_3,	\\
\begin{pmatrix} 1 & - (h^* + r_{1,a}^*) e^{-2i\theta} \\ 0 & 1 \end{pmatrix}, & k \in \bar{D}_3 \cap \bar{D}_4, \\
\begin{pmatrix} 1 + |r_1 - r_{1,a}|^2 & (r_1^* - r_{1,a}^*) e^{-2i\theta} \\
 (r_1 - r_{1,a})e^{2i\theta} & 1 \end{pmatrix}, & k \in \bar{D}_4 \cap \bar{D}_1,
 \end{cases}
\end{align}
and we recall that $r:[-1,1] \to \C$  is defined by $r(k) = r_1(k) + h(k)$.

The jump matrix $v(x,t,k)$ is continuous at $k = \pm 1$ with $v(x,t,\pm 1) = I$ as a consequence of the relations $r_{1,a}(\pm 1) = r_1(\pm 1)$ and the assumption $r(\pm 1) = 0$.
Moreover,
\begin{align}\label{r1r1ainfty}
r_1(k) - r_{1,a}(k) = O(k^{-3}), \qquad |k| \to \infty, \ k \in \R.
\end{align}
The symmetries (\ref{r1hsymm}) imply that $v$ satisfies
\begin{align}\label{vsymm}
v(x,t,k)&= \overline{v(x, t, \bar{k})}^T = v(x,t,-k)^T, \qquad k \in \Gamma.
\end{align}

\begin{lemma}[Vanishing Lemma]\label{vanishinglemma}
Fix $(x,t) \in [0, \infty) \times [0,\infty)$. If $\mathcal{N}(k)$ is a solution of the homogeneous RH problem determined by $(\Gamma, v(x,t,\cdot))$, i.e., if
\begin{align}\label{homogenousRH}
\begin{cases}
\mathcal{N} \in \dot{E}^2(\C \setminus \Gamma),\\
\mathcal{N}_+(k) = \mathcal{N}_-(k) v(x, t, k) \quad \text{for a.e.} \ k \in \Gamma,
\end{cases}
\end{align}
then $\mathcal{N}(k)$ vanishes identically.
\end{lemma}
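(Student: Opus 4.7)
The plan follows the standard Schwarz-reflection pairing argument used in the $L^2$-theory of Riemann--Hilbert problems. I would introduce the auxiliary matrix
\begin{align*}
H(k) := \mathcal{N}(k)\,\overline{\mathcal{N}(\bar k)}^T, \qquad k \in \C\setminus\Gamma,
\end{align*}
and exploit the symmetry $v(x,t,k)=\overline{v(x,t,\bar k)}^T$ recorded in (\ref{vsymm}) to show that $H$ has no jump across either component of $\Gamma$. Since complex conjugation swaps the $+$ and $-$ sides of both $\R$ and the unit circle, a direct computation on any smooth piece of $\Gamma$ (away from $\{0,\pm 1\}$) gives
\begin{align*}
H_+(k) &= \mathcal{N}_+(k)\,\overline{\mathcal{N}_-(\bar k)}^T = \mathcal{N}_-(k)\,v(k)\,\overline{\mathcal{N}_-(\bar k)}^T,
\\
H_-(k) &= \mathcal{N}_-(k)\,\overline{\mathcal{N}_+(\bar k)}^T = \mathcal{N}_-(k)\,\overline{v(\bar k)}^T\,\overline{\mathcal{N}_-(\bar k)}^T,
\end{align*}
and the two right-hand sides agree by the symmetry of $v$.

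The next step is to promote $H$ to an entire function that vanishes at infinity. The only candidates for non-removable singularities are the three intersection points $0$ and $\pm 1$ of $\Gamma$. Since $\mathcal{N} \in \dot{E}^2(\C\setminus\Gamma)$, the product $H$ is locally $L^1$ near each of these points; combined with analyticity of $H$ in a punctured neighborhood and the finer $L^2$-type bounds afforded by $\dot{E}^2$, this excludes both essential singularities and poles, so $H$ is entire. The $\dot{E}^2$ decay also forces $H(k)\to 0$ as $k\to\infty$, and Liouville's theorem then yields $H \equiv 0$.

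The final step converts $H \equiv 0$ into $\mathcal{N} \equiv 0$. Taking nontangential boundary values on $\R$, where $\bar k = k$, the equality $H_+(k)=0$ becomes $\mathcal{N}_-(k)\,v(x,t,k)\,\overline{\mathcal{N}_-(k)}^T = 0$. Inspection of the second and fourth cases of (\ref{vdef}) shows that for $k\in\R$ the jump matrix is Hermitian with $v_{11}>0$ and $\det v = 1$, hence positive definite. Writing $v = L L^*$ with $L$ invertible, the identity reads $(\mathcal{N}_- L)(\mathcal{N}_- L)^* = 0$, which forces $\mathcal{N}_-(k) = 0$ a.e.\ on $\R$, and then $\mathcal{N}_+(k) = \mathcal{N}_-(k)\,v(k) = 0$ on $\R$ as well. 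Since each restriction $\mathcal{N}|_{D_j}$ is an $E^2$ function whose trace on the real-line arc of $\partial D_j$ vanishes, the uniqueness of $E^2$ functions with vanishing boundary values on a set of positive measure yields $\mathcal{N}|_{D_j} \equiv 0$ for every $j\in\{1,2,3,4\}$.

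The main obstacle is analytic rather than algebraic: one needs to justify, near the intersection points $k = 0,\pm 1$, both that $H$ admits a genuinely entire extension (rather than an extension with mild isolated singularities) and that the boundary values of the $\dot{E}^2$ function $\mathcal{N}$ on $\R$ have the $L^2$ regularity needed for the positive-definite factorization argument. Both issues are controlled by a careful examination of $\dot{E}^2$ spaces adapted to contours with self-intersections, as developed in the nonlinear-Fourier reference cited in the excerpt.
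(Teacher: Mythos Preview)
Your approach is essentially the paper's: form $H(k)=\mathcal{N}(k)\overline{\mathcal{N}(\bar k)}^T$, use the symmetry $v=\overline{v(\bar k)}^T$ to kill the jump, deduce $H\equiv 0$, and then exploit positive definiteness of $v$ on the real axis to force $\mathcal{N}_-=0$ there. The differences are in the technical execution of two steps.

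For the step $H\equiv 0$, the paper avoids entirely the obstacle you single out. Rather than arguing that $H$ extends to an entire function (removability at $0,\pm1$, decay at $\infty$, Liouville), it simply notes that $\mathcal{N}\in\dot E^2(\C\setminus\Gamma)$ implies $H\in\dot E^1(\C\setminus\Gamma)$, and then invokes the Cauchy representation $H=\mathcal{C}(H_+-H_-)$ valid for $\dot E^1$ functions on Carleson contours (Theorem~4.1 of the reference \cite{LCarleson}). Since $H_+=H_-$ a.e.\ on $\Gamma$, this gives $H\equiv 0$ in one line, with no separate analysis near the self-intersections. Your route would work too, but the justifications you defer to ``a careful examination of $\dot E^2$ spaces'' are exactly what this representation formula packages.

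For the final step, the paper works only on $(-1,1)$, then uses an explicit local Cauchy-integral argument to show $\mathcal{N}$ extends analytically across that interval to a disk inside $\{|k|<1\}$; ordinary analytic continuation then kills $\mathcal{N}$ in the whole unit disk, and a second pass across the unit circle handles $\{|k|>1\}$. Your variant---use all of $\R$ at once and appeal to Hardy-space uniqueness (an $E^2$ function with boundary trace vanishing on a set of positive measure is zero) in each $D_j$---is a legitimate alternative and arguably tidier, provided the relevant uniqueness theorem for the $\dot E^2$ classes on the unbounded domains $D_1,D_4$ is available.
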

\begin{proof}
Define the sectionally analytic function $\mathcal{G}(k)$ by
$$\mathcal{G}(k) = \mathcal{N}(k)\overline{\mathcal{N}(\bar{k})}^T, \qquad k \in \C \setminus \Gamma.$$
Since $\mathcal{N} \in \dot{E}^2(\C \setminus \Gamma)$, we have $\mathcal{G} \in \dot{E}^1(\C \setminus \Gamma)$.
The first symmetry in (\ref{vsymm}) together with the identities
\begin{align*}
  \mathcal{G}_+(k)&=\mathcal{N}_-(k)v(x,t,k) \overline{\mathcal{N}_-(\bar{k})}^T \quad \text{for a.e.} \ k \in \Gamma, \\
  \mathcal{G}_-(k)&=\mathcal{N}_-(k)\overline{v(x,t,\bar k)}^T \overline{\mathcal{N}_-(\bar{k})}^T \quad \text{for a.e.} \ k \in \Gamma,
\end{align*}
shows that $\mathcal{G}_+(k)=\mathcal{G}_-(k)$ for a.e. $k \in \Gamma$. 
But the fact that $\mathcal{G} \in \dot{E}^1(\C \setminus \Gamma)$ implies (see Theorem 4.1 in \cite{LCarleson})
$$\mathcal{G}(k) = [\mathcal{C}(\mathcal{G}_+ - \mathcal{G}_-)](k), \qquad k \in \C \setminus \Gamma.$$
Hence $\mathcal{G}$ vanishes identically. In particular, by evaluating $\mathcal{G}_+$ on the open interval $(-1,1) \subset \R$, we find
$$\mathcal{N}_-(k)\overline{v(x,t,k)}^T \overline{\mathcal{N}_-(k)}^T = 0 \quad \text{for a.e.} \ k \in (-1,1).$$
But the matrix $v(x,t,k)$ is positive definite for each $k \in (-1,1)$, because it is a Hermitian matrix with unit determinant and strictly positive $(11)$ entry. We deduce that $\mathcal{N}_-(k)=0$ for a.e. $k \in (-1,1)$, which means that $\mathcal{N}_+ = \mathcal{N}_- v$ also vanishes a.e. on $(-1,1)$.
Let $B_r$ be an open disk centered at some point in $(-1,1)$ and contained in the open unit disk. The function $\tilde{\mathcal{N}}(k)$ defined by
$$\tilde{\mathcal{N}}(k) = \frac{1}{2\pi i} \int_{\partial B_r} \frac{\mathcal{N}(s)}{s-k}ds$$
is analytic in $B_r$ and equals $\mathcal{N}(k)$ for $k \in B_r\setminus \R$. Indeed, if $k \in B_r \cap \C_+$, 
$$\tilde{\mathcal{N}}(k) = \frac{1}{2\pi i} \int_{\partial (B_r \cap \C_+)} \frac{\mathcal{N}_+(s)}{s-k}ds + \frac{1}{2\pi i} \int_{\partial (B_r \cap \C_-)} \frac{\mathcal{N}_-(s)}{s-k}ds
= \mathcal{N}(k) + 0,$$
and a similar argument applies if $k \in B_r \cap \C_-$. We infer that $\mathcal{N}(k)$ is analytic in $B_r$. Since $\mathcal{N} = 0$ on $B_r \cap \R$, it follows by analytic continuation that $\mathcal{N}$ vanishes identically for $|k| < 1$. But this means that $\mathcal{N}_\pm = 0$ on the unit circle, so by a similar argument we find that $\mathcal{N}$ vanishes also for $|k| \geq 1$.
\end{proof}

\begin{remark}\upshape
  We could have used either of the intervals $(-\infty, -1)$ and $(1,\infty)$ instead of $(-1,1)$ in the proof of Lemma \ref{vanishinglemma}. Indeed, the jump matrix $v(x,t,k)$ is positive definite also on these intervals. 
\end{remark}

The vanishing result of Lemma \ref{vanishinglemma} together with Fredholm theory can be used to infer the existence of a unique solution of the RH problem (\ref{RHm}). 
To see this, we define the matrix-valued functions $w^\pm(x,t,k)$ by
$$w^- = \begin{cases}
\begin{pmatrix} 0 & 0 \\ -(h + r_{1,a}) e^{2i\theta} & 0 \end{pmatrix}, & k \in \bar{D}_1 \cap \bar{D}_2,
	\\
\begin{pmatrix} 0 & 0 \\
- r(k)e^{2i\theta} & 0\end{pmatrix}, & k \in \bar{D}_2 \cap \bar{D}_3,
	\\
0, & k \in \bar{D}_3 \cap \bar{D}_4,
	\\
\begin{pmatrix} 0 & (r_1^* - r_{1,a}^*) e^{-2i\theta}  \\ 0 & 0 \end{pmatrix},  & k \in \bar{D}_4 \cap \bar{D}_1,
\end{cases}
$$
and
$$w^+ = \begin{cases} 0, & k \in \bar{D}_1 \cap \bar{D}_2,
	\\
\begin{pmatrix} 0 & - r^* e^{-2i\theta}\\
0& 0\end{pmatrix}, & k \in \bar{D}_2 \cap \bar{D}_3,
	\\
\begin{pmatrix} 0 & - (h^* + r_{1,a}^*) e^{-2i\theta} \\ 0 & 0 \end{pmatrix}, & k \in \bar{D}_3 \cap \bar{D}_4,
	\\
\begin{pmatrix} 0 & 0 \\ (r_1-r_{1,a}) e^{2i\theta} & 0 \end{pmatrix}, & k \in \bar{D}_4 \cap \bar{D}_1.
\end{cases}
$$
We also define the operator $\mathcal{C}_{w}: L^2(\Gamma) + L^\infty(\Gamma) \to L^2(\Gamma)$ by
\begin{align}\label{Cwdef}
\mathcal{C}_{w}(f) = \mathcal{C}_+(f w^-) + \mathcal{C}_-(f w^+).
\end{align}

\begin{lemma}[Existence of solution of RH problem]\label{existencemlemma}
For each  $(x,t) \in [0,\infty) \times [0,\infty)$, the RH problem (\ref{RHm}) has a unique solution $m(x,t,\cdot) \in I + \dot{E}^2(\C \setminus \Gamma)$. Moreover, this solution has unit determinant and admits the representation $m = I + \mathcal{C}(\mu (w^+ + w^-))$, that is,
\begin{align}\label{m1representation}
m(x,t,k) = I + \frac{1}{2\pi i}\int_\Gamma \frac{(\mu (w^+ + w^-))(x,t,s)}{s-k}ds, \qquad k \in \C \setminus \Gamma,
\end{align}
where the function $\mu(x,t,k) \in I + L^2(\Gamma)$  is defined by
\begin{align}\label{mudef}
  \mu = I + (I - \mathcal{C}_w)^{-1}\mathcal{C}_w I.
\end{align}
\end{lemma}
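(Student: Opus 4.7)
The plan is to follow the standard Beals--Coifman framework, using Lemma \ref{vanishinglemma} as the essential input. First I would verify that $w^\pm \in L^2(\Gamma) \cap L^\infty(\Gamma)$, so that $\mathcal{C}_w$ is bounded on $L^2(\Gamma)$. This uses: the estimate \eqref{r1r1ainfty} on $r_1 - r_{1,a}$ at infinity (giving integrability of the relevant entries on the real line beyond $\pm 1$); the assumption $r(\pm 1) = 0$ together with $r_{1,a}(\pm 1) = r_1(\pm 1)$ and continuity of $h$ (ensuring $v \to I$ continuously at the self-intersection points $\pm 1$); and the fact that $|e^{\pm 2i\theta}| = 1$ on $\R$ and is bounded on the unit circle. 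Inspecting \eqref{vdef} piece by piece shows in addition that $v$ is continuous on $\Gamma$ with $v(x,t,\pm 1) = I$ and $v - I \in L^1 \cap L^2$ uniformly at infinity.

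The second step is to invoke the general theory of singular integral operators (Zhou's theorem) to conclude that $I - \mathcal{C}_w$ is a Fredholm operator of index zero on $L^2(\Gamma)$. To establish injectivity, suppose $f \in L^2(\Gamma)$ satisfies $f = \mathcal{C}_w f$ and set $\mathcal{N}(k) := \mathcal{C}(f(w^+ + w^-))(k)$. Then $\mathcal{N} \in \dot{E}^2(\C \setminus \Gamma)$ by mapping properties of the Cauchy operator on our contour. Writing $v^\pm := I \pm w^\pm$ (so that $v = (v^-)^{-1} v^+$), a short calculation with $\mathcal{C}_+ - \mathcal{C}_- = I$ and the relation $f = \mathcal{C}_w f$ yields $\mathcal{N}_\pm = f v^\pm$, whence $\mathcal{N}_+ = \mathcal{N}_- v$. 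The vanishing lemma then forces $\mathcal{N} \equiv 0$, and therefore $f = \mathcal{N}_+ - \mathcal{N}_- = 0$. By Fredholm alternative, $I - \mathcal{C}_w$ is invertible on $L^2(\Gamma)$.

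Having invertibility, the function $\mu$ defined by \eqref{mudef} lies in $I + L^2(\Gamma)$ and satisfies $\mu = I + \mathcal{C}_+(\mu w^-) + \mathcal{C}_-(\mu w^+)$; then $m$ defined by \eqref{m1representation} belongs to $I + \dot{E}^2(\C \setminus \Gamma)$, and the Plemelj formulas yield $m_\pm = \mu v^\pm$, hence the jump relation $m_+ = m_- v$. Uniqueness: if $\tilde m$ is any other solution, then $m - \tilde m \in \dot{E}^2(\C \setminus \Gamma)$ with trivial jump across $\Gamma$, so a second application of the vanishing lemma (viewed for the difference together with $\tilde m$) forces $m = \tilde m$. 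For $\det m \equiv 1$, one checks from \eqref{vdef} that $\det v = 1$ on each component of $\Gamma$; thus $\det m - 1 \in \dot{E}^1(\C \setminus \Gamma)$ has no jump and normalizes to $0$ at infinity, hence vanishes.

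The main obstacle in this plan is the Fredholm step: verifying that $I - \mathcal{C}_w$ has index zero requires controlling the behavior of Cauchy-type operators at the self-intersections of $\Gamma$ at $k = \pm 1$. The reason this works is that the transformation \eqref{mxtkdef} was designed precisely so that $v$ is continuous there (the twist by $r_{1,a}$ produces the key cancellation making $v(x,t,\pm 1) = I$), and because $v - I$ has the decay $O(k^{-3})$ at infinity.
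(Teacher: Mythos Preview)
Your proposal is correct and follows essentially the same Beals--Coifman/Fredholm route as the paper, which simply records that $w^\pm$ are nilpotent with $v^\pm \in C(\Gamma)$ and $v^\pm - I \in L^2\cap L^\infty$, and then cites the vanishing lemma together with Lemmas A.1--A.2 of \cite{LNonlinearFourier} for the Fredholm step, invertibility of $I-\mathcal{C}_w$, and the representation formula. One small slip: in your uniqueness argument, $m-\tilde m$ does not have \emph{trivial} jump but rather satisfies the homogeneous problem \eqref{homogenousRH} with jump $v$, which is precisely the hypothesis of the vanishing lemma.
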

\begin{proof}
Note that $v = (v^-)^{-1}v^+$, where $v^+ = I + w^+$, $v^- = I - w^-$, and the matrices $w^\pm$ are nilpotent. For each $(x,t) \in [0,\infty) \times [0,\infty)$, we have $v^\pm \in C(\Gamma)$ (because $r(0) = r(\pm1) = 0$) and $v^{\pm}, (v^{\pm})^{-1} \in I + L^2(\Gamma) \cap L^\infty(\Gamma)$. 
Hence Lemma \ref{vanishinglemma} together with a Fredholm argument (see e.g. Lemmas A.1 and A.2 in \cite{LNonlinearFourier}) implies that the operator $I - \mathcal{C}_w \in \mathcal{B}(L^2(\Gamma))$ is bijective and that the RH problem (\ref{RHm}) has a unique solution $m(x,t,\cdot)$ given by the stated formula. Since $\det v = 1$, we have $\det m = 1$.
\end{proof}

\begin{lemma}\label{C2lemma}
The following maps are $C^2$:
\begin{subequations}\label{xtww}
\begin{align}\label{xtwwa}
& (x,t) \mapsto (w^+(x,t,\cdot), w^-(x,t,\cdot)):[0, \infty) \times [0, \infty) \to L^2(\Gamma) \times L^2(\Gamma),
	\\ \label{xtwwb}
& (x,t) \mapsto (w^+(x,t,\cdot), w^-(x,t,\cdot)):[0, \infty) \times [0, \infty) \to L^\infty(\Gamma) \times L^\infty(\Gamma),
	\\ \label{xttomuI}
& (x,t) \mapsto \mu(x,t, \cdot) - I: [0,\infty) \times [0, \infty) \to L^2(\Gamma).
\end{align}
\end{subequations}
\end{lemma}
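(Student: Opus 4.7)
The plan is to exploit the fact that the entire $(x,t)$-dependence of $w^\pm$ is concentrated in the exponentials $e^{\pm 2i\theta(x,t,k)}$ with $\theta = \theta_1(k)x + \theta_2(k)t$ linear in $(x,t)$. The candidate first and second partial derivatives of $w^\pm$ are then obtained simply by multiplying the matrix entries by monomials $(2i\theta_1)^a(2i\theta_2)^b$ with $a+b \leq 2$ (with signs depending on whether the relevant entry carries $e^{+2i\theta}$ or $e^{-2i\theta}$), so the claims (\ref{xtwwa}) and (\ref{xtwwb}) reduce to two verifications: that these candidates lie in $L^2(\Gamma)$ and in $L^\infty(\Gamma)$, and that the corresponding difference quotients converge to them in the respective norm.

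First I would establish the required envelopes piece by piece on $\Gamma = \R \cup \{|k|=1\}$. On the compact unit circle all ingredients of $w^\pm$ are continuous and $\theta_1, \theta_2$ are bounded; moreover $|e^{\pm 2i\theta}| \leq 1$ uniformly for $(x,t) \in [0,\infty)^2$, since at $k = e^{i\alpha}$ one has $\theta_1 = \tfrac{i}{2}\sin\alpha$ and $\theta_2 = \tfrac{1}{2}\cos\alpha$, so that on the upper arc $|e^{2i\theta}| = e^{-x\sin\alpha} \leq 1$ and on the lower arc $|e^{-2i\theta}| = e^{x\sin\alpha} \leq 1$. On the interval $[-1,1]$ of the real line, the coefficients multiplying $e^{\pm 2i\theta}$ vanish to all orders at $k = 0,\pm1$ by Assumption \ref{r1hassumption2}(5), which absorbs the $O(k^{-2})$ singularity of $\theta_j^2$ at the origin and yields smooth (hence $L^2 \cap L^\infty$) envelopes. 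On the two tails $\R \setminus [-1,1]$ the key input is $r_1(k) - r_{1,a}(k) = O(k^{-3})$, which follows from Assumption \ref{r1hassumption2}(2) together with the matching conditions imposed on $r_{1,a}$; combined with $|\theta_j(k)| = O(|k|)$, this yields $|\theta_j\theta_\ell(r_1-r_{1,a})| = O(|k|^{-1})$ at infinity, which is both square-integrable against $dk$ and bounded in absolute value.

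To pass from pointwise envelopes to norm differentiability I would use the integral Taylor identity
\begin{equation*}
e^{\pm 2i\theta(x+\delta,t,k)} - e^{\pm 2i\theta(x,t,k)} \mp 2i\delta\theta_1(k)e^{\pm 2i\theta(x,t,k)} = \int_0^\delta (\delta-s)\bigl(\pm 2i\theta_1(k)\bigr)^2 e^{\pm 2i\theta(x+s,t,k)}\,ds
\end{equation*}
and its analogues in $t$ and at second order. After multiplying by the spectral coefficient of $w^\pm$ and dividing by $\delta$, the integrand is majorised by the $L^2 \cap L^\infty$ envelope of the second-order candidate constructed in the previous paragraph, uniformly for $s$ in a small neighbourhood of $0$; dominated convergence then delivers convergence of the difference quotients to the candidate derivatives both in $L^2(\Gamma)$ and in $L^\infty(\Gamma)$. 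The same envelope together with the pointwise continuity of $e^{\pm 2i\theta(x,t,k)}$ in $(x,t)$ yields continuity of the derivatives as maps into $L^2(\Gamma)$ and $L^\infty(\Gamma)$. This proves (\ref{xtwwa}) and (\ref{xtwwb}).

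For (\ref{xttomuI}) I would start from the representation $\mu - I = (I - \mathcal{C}_w)^{-1}\bigl(\mathcal{C}_+(w^-) + \mathcal{C}_-(w^+)\bigr)$. The map $(f,g) \mapsto \mathcal{C}_+(f) + \mathcal{C}_-(g)$ is bounded and linear from $L^2(\Gamma) \times L^2(\Gamma)$ into $L^2(\Gamma)$, so composing with (\ref{xtwwa}) makes $(x,t) \mapsto \mathcal{C}_+(w^-) + \mathcal{C}_-(w^+)$ a $C^2$ map into $L^2(\Gamma)$. Likewise $(f,g) \mapsto \mathcal{C}_w$ is bounded and linear from $L^\infty(\Gamma) \times L^\infty(\Gamma)$ into $\mathcal{B}(L^2(\Gamma))$, so composing with (\ref{xtwwb}) makes $(x,t) \mapsto I - \mathcal{C}_w$ a $C^2$ map into $\mathcal{B}(L^2(\Gamma))$; Lemma \ref{existencemlemma} guarantees pointwise invertibility, and since inversion is $C^\infty$ on the open set of invertible elements of the Banach algebra $\mathcal{B}(L^2(\Gamma))$, $(x,t) \mapsto (I - \mathcal{C}_w)^{-1}$ is $C^2$. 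Applying a $C^2$ operator-valued map to a $C^2$ vector-valued map gives (\ref{xttomuI}). The main technical hurdle throughout is the $L^\infty$ bound in the second paragraph: the precise balance between the $|k|^2$ growth of $\theta_j\theta_\ell$ at infinity on $\R$ and the $O(k^{-3})$ decay of $r_1 - r_{1,a}$ is exactly what forces the construction of $r_{1,a}$ matching two asymptotic terms of $r_1$ rather than just one.
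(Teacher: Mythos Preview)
Your argument is correct and follows essentially the same route as the paper: control the singularities of $\theta_j$ at $0$ and $\infty$ via $r(k)=O(k^3)$ and $r_1-r_{1,a}=O(k^{-3})$, apply dominated convergence for (\ref{xtwwa})--(\ref{xtwwb}), and then compose with the bounded bilinear maps $(w^+,w^-)\mapsto\mathcal{C}_w\in\mathcal{B}(L^2)$, $(w^+,w^-)\mapsto\mathcal{C}_wI\in L^2$ and smooth inversion to obtain (\ref{xttomuI}). One correction: this lemma lives in the proof of Theorem~\ref{existenceth}, which operates under Assumption~\ref{r1hassumption}, not Assumption~\ref{r1hassumption2}; you should invoke only $r(k)=O(k^3)$ from Assumption~\ref{r1hassumption}$(c)$ and the two-term matching (\ref{r1expansion}) rather than vanishing to all orders, since that is exactly what is available (and exactly enough, as your own balance $k^{-2}\cdot k^{3}$ and $k^{2}\cdot k^{-3}$ shows).
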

\begin{proof}
Since $2i\theta = \frac{i}{2}(k - \frac{1}{k})x + \frac{i}{2}(k + \frac{1}{k})t$, differentiation of $e^{\pm 2i\theta}$ with respect to $x$ and $t$ generates factors of $k \pm k^{-1}$, which are singular at $k = 0$ and $k = \infty$. However, by assumption $r(k) = O(k^3)$ as $k \to 0$ and, by (\ref{r1r1ainfty}), $r_1 - r_{1,a}$ is of $O(k^{-3})$ as $k \to \infty$. Thus, using the dominated convergence theorem, we find that the maps (\ref{xtwwa}) and (\ref{xtwwb}) are $C^2$.
 Furthermore, the map
\begin{align}
(w^+, w^-) \mapsto I - \mathcal{C}_w: L^\infty(\Gamma) \times L^\infty(\Gamma) \to \mathcal{B}(L^2(\Gamma))
\end{align}
is smooth by the estimate
\begin{align*}
\|\mathcal{C}_w\|_{\mathcal{B}(L^2(\Gamma))} \leq C \max\big\{\|w^+\|_{L^\infty(\Gamma)}, \|w^-\|_{L^\infty(\Gamma)} \big\},
\end{align*}
and the bilinear map
\begin{align}\label{wwCwI}
(w^+, w^-) \mapsto \mathcal{C}_wI:  L^2(\Gamma) \times L^2(\Gamma) \to L^2(\Gamma)
\end{align}
is smooth by the estimate
$$\|\mathcal{C}_wI\|_{L^2(\Gamma)} \leq C \max\big\{\|w^+\|_{L^2(\Gamma)}, \|w^-\|_{L^2(\Gamma)} \big\}.$$
By the open mapping theorem, $(I - \mathcal{C}_w)^{-1} \in \mathcal{B}(L^2(\Gamma))$ for each  $(x,t)$.
Since (\ref{xttomuI}) can be viewed as a composition of maps of the form (\ref{xtww})-(\ref{wwCwI}) together with the smooth inversion map $I - \mathcal{C}_w \mapsto (I - \mathcal{C}_w)^{-1}$, it follows that the map (\ref{xttomuI}) also is $C^2$.
\end{proof}

\begin{lemma}[Definition of $\hat{M}(x,t)$]\label{m0lemma}
The nontangential limit in (\ref{m0def}) exists for each $(x,t) \in [0,\infty) \times [0, \infty)$. Moreover, the entries of the function $\hat{M}(x,t)$ defined by (\ref{m0def}) are $C^2$ and satisfy the relations
\begin{align}\label{m0entries}
\hat{M}_{11} = \hat{M}_{22}, \qquad  \hat{M}_{21} = -\hat{M}_{12}, \qquad |\hat{M}_{11} + i \hat{M}_{21}| = 1,
\end{align}
for all $(x,t)$.
\end{lemma}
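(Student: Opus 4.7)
My plan is to combine the integral representation (\ref{m1representation}) of $m$ with the two symmetries of the jump matrix $v$ recorded in (\ref{vsymm}). Since $m = M$ on $D_2 \cup D_3$ by (\ref{mxtkdef}), every nontangential limit of $M$ at $k = 0$ coincides with the corresponding limit of $m$. Existence and $C^2$-smoothness of $\hat{M}(x,t)$ will come from dominated convergence applied to (\ref{m1representation}), and the algebraic identities in (\ref{m0entries}) will come from the uniqueness assertion of Lemma \ref{existencemlemma} together with $\det m \equiv 1$.

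For existence, I will use that $\Gamma$ meets the origin only along the real axis, and on the interval $(-1,1)$ the only nonzero entries of $w^\pm$ involve $r(k)$, which by Assumption \ref{r1hassumption}(c) satisfies $r(k) = O(k^3)$ as $k \to 0$. Hence $(w^+ + w^-)(x,t,s) = O(s^3)$ as $s \to 0$ along $\Gamma$, while $(w^+ + w^-)(s) = O(s^{-3})$ as $|s| \to \infty$ along $\mathbb{R}$ by (\ref{r1r1ainfty}). For $k$ approaching $0$ nontangentially (necessarily through $D_2 \cup D_3$), the integrand $\mu(w^+ + w^-)(s)/(s-k)$ is therefore dominated by an $(x,t)$-locally uniform $L^1$ majorant in $s$, and dominated convergence yields
$$\hat{M}(x,t) = I + \frac{1}{2\pi i}\int_\Gamma \frac{(\mu(w^+ + w^-))(x,t,s)}{s}\,ds;$$
the relation $v(x,t,0) = I$ ensures that the limits from $D_2$ and $D_3$ agree. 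To upgrade this to $C^2$ dependence on $(x,t)$ I will differentiate under the integral, using that $(w^+ + w^-)/s \in L^1(\Gamma) \cap L^2(\Gamma)$, together with the $C^2$ dependence of $\mu - I$ in $L^2(\Gamma)$ and of $w^\pm$ in $L^2(\Gamma) \cap L^\infty(\Gamma)$ provided by Lemma \ref{C2lemma}.

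For the three relations in (\ref{m0entries}) I will extract two functional symmetries of $m$ from (\ref{vsymm}) via uniqueness. First, the symmetry $v(k) = \overline{v(\bar k)}^T$ implies that $n(k) := (\overline{m(\bar k)}^T)^{-1}$ solves the same RH problem as $m$, so uniqueness gives $m(k)\,\overline{m(\bar k)}^T = I$. Second, combining the two identities in (\ref{vsymm}) yields $v(k) = \overline{v(-\bar k)}$, and applying the same argument to $n(k) := \overline{m(-\bar k)}$ gives $m(k) = \overline{m(-\bar k)}$. Evaluating both identities at $k = 0$ shows that $\hat{M}(x,t)$ is real and orthogonal, and since $\det \hat{M} = 1$ it must be a proper rotation
$$\hat{M}(x,t) = \begin{pmatrix} \cos\phi & -\sin\phi \\ \sin\phi & \cos\phi \end{pmatrix}$$
for some $\phi = \phi(x,t) \in \mathbb{R}$. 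The three relations $\hat{M}_{11} = \hat{M}_{22}$, $\hat{M}_{21} = -\hat{M}_{12}$, and $|\hat{M}_{11} + i\hat{M}_{21}| = 1$ follow at once.

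The main technical subtlety will be verifying that the candidates $(\overline{m(\bar k)}^T)^{-1}$ and $\overline{m(-\bar k)}$ are admissible for Lemma \ref{existencemlemma}: one has to confirm that they lie in $I + \dot{E}^2(\mathbb{C} \setminus \Gamma)$ and that the maps $k \mapsto \bar k$ and $k \mapsto -\bar k$ interact with the orientation of $\Gamma$ so as to reproduce the jump $v$ rather than $v^{-1}$. For $k \mapsto \bar k$ this is standard, since conjugation exchanges $\mathbb{C}_+$ and $\mathbb{C}_-$, reversing the orientation of $\Gamma$, and the resulting sign is compensated by the transpose in $v(k) = \overline{v(\bar k)}^T$. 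For $k \mapsto -\bar k$ I will use that reflection across the imaginary axis sends each $D_j$ to itself (so the $\pm$ labels are preserved), and the single Schwartz conjugation reverses them exactly once, producing the required jump relation with no transpose.
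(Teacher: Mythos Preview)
Your proposal is correct and follows essentially the same route as the paper: existence of $\hat{M}$ from the integral representation (\ref{m1representation}) together with $r(k)=O(k^3)$, $C^2$ smoothness from Lemma \ref{C2lemma}, and the algebraic identities from uniqueness applied to symmetries of $v$. The only cosmetic difference is that the paper uses the pair $m(k)=\sigma_2\overline{m(\bar k)}\sigma_2$ and $m(k)=\sigma_2 m(-k)\sigma_2$ (the latter from $v(k)=v(-k)^T$, via $k\mapsto -k$), whereas you combine the two relations in (\ref{vsymm}) into $m(k)=\overline{m(-\bar k)}$ (via $k\mapsto -\bar k$); both choices yield that $\hat M$ is a real rotation, so the conclusions coincide.
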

\begin{proof}
Let $W_0$ be a nontangential sector at $0$ with respect to $\Gamma$, that is, $W_0 = \{k \in \C \, | \, \alpha \leq \arg k \leq \beta, 0 < |k| < \epsilon\}$, where $\alpha, \beta, \epsilon$ are such that $W_0 \subset \C \setminus \Gamma$. 
Fixing $(x,t) \in [0, \infty) \times [0,\infty)$, we write (\ref{m1representation}) as
\begin{align*}
m(x,t,k)
 = &\; I + \frac{1}{2\pi i}\int_\Gamma (\mu(w^+ + w^-))(x,t,s) \bigg(\frac{1}{s} + \frac{k}{s^2} + \frac{k^2}{s^2(s - k)}\bigg) ds
 	\\
= &\; \hat{M}(x,t) + k E(x,t,k), \qquad k \in \C \setminus \Gamma,
\end{align*}
where $\hat{M}$ and $E$ are given by
\begin{align}\label{m0representation}
\hat{M}(x,t) = I + \int_\Gamma \frac{(\mu(w^+ + w^-))(x,t,s)}{s}ds
\end{align}
and
\begin{align*}
E(x,t,k) = &\; \frac{1}{2\pi i}\int_\Gamma ((\mu-I)(w^+ + w^-))(x,t,s) \bigg(\frac{1}{s^2} + \frac{k}{s^2(s - k)}\bigg) ds
	\\
& + \frac{1}{2\pi i}\int_\Gamma (w^+ + w^-)(x,t,s) \bigg(\frac{1}{s^2} + \frac{k}{s(s - k)}\bigg) ds.
\end{align*}
By Lemma \ref{C2lemma}, we have $\mu(x,t, \cdot) - I \in L^2(\Gamma)$. 
Moreover, since $r(k) = O(k^3)$ as $k \to 0$, the $L^1(\Gamma)$ and $L^2(\Gamma)$ norms of $s^{-j}(w^+ + w^-)(x,t,s)$, $j = 1,2$, and of $\frac{k}{s(s - k)}(w^+ + w^-)(x,t,s)$ are bounded (the latter uniformly with respect to all small $k \in W_0$). This shows that the integral defining $\hat{M}$ converges and that $|E(x,t,k)| < C$ for all small $k \in W_0$. Consequently,
\begin{align}\label{matzero}
m(x,t,k) = \hat{M}(x,t) + O(k), \qquad k \to 0, \ k \in W_0,
\end{align}
where the error term is uniform with respect to $k \in W_0$. Since $\ntlim_{k \to 0} M(x,t,k) = \ntlim_{k \to 0} m(x,t,k)$, this shows that the nontangential limit in (\ref{m0def}) exists and is given by (\ref{m0representation}). 

We next show that $\hat{M}$ is $C^2$. 
Using again that $r(k) = O(k^3)$ as $k \to 0$, the same type of argument that proved (\ref{xtwwa}) shows that the maps
\begin{align*}
& (x,t) \mapsto \bigg(s \mapsto \frac{w^\pm(x,t,s)}{s}\bigg):[0, \infty) \times [0, \infty) \to L^p(\Gamma)\end{align*}
are $C^2$ for $p =1,2$. Since the map (\ref{xttomuI}) is $C^2$, we conclude that $\hat{M}$ also is $C^2$.

It remains to prove (\ref{m0entries}). 
The symmetries (\ref{vsymm}) show that the functions
$$\sigma_2 \overline{m(x,t,\bar{k})} \sigma_2 = (\overline{m(x,t,\bar{k})}^T)^{-1} \quad \text{and}\quad \sigma_2 m(x,t,-k) \sigma_2 = (m(x,t,-k)^T)^{-1}$$ 
satisfy the same RH problem as $m(x,t,k)$. By uniqueness, it follows that $m$ obeys the symmetries
\begin{align}\label{msymm}
m(x,t,k) = \sigma_2 \overline{m(x,t,\bar{k})} \sigma_2 = \sigma_2 m(x,t,-k) \sigma_2, \qquad k \in \C \setminus \Gamma.
\end{align} 
Since $m_+(x,t,0) = m_-(x,t,0) = \hat{M}(x,t)$, taking the limit $k \to 0$ in the symmetries (\ref{msymm}) yields
$$\hat{M}(x,t) =  \sigma_2 \overline{\hat{M}(x,t)} \sigma_2 =  \sigma_2 \hat{M}(x,t) \sigma_2.$$
That is, the entries $\hat{M}_{ij}$, $i,j = 1,2$, of $\hat{M}$ are real and satisfy
$$\hat{M}_{11} = \hat{M}_{22}, \qquad  \hat{M}_{21} = -\hat{M}_{12}.$$
The relation $\det \hat{M}(x,t) = 1$ then gives $\hat{M}_{11}^2 + \hat{M}_{21}^2 = 1$. This proves (\ref{m0entries}).
\end{proof}

We define $u(x,t)$ by equation (\ref{ulim}), where the branch of $\arg$ is chosen so that $u(x,t)$ is continuous. This defines $u(x,t)$ uniquely up to the addition of a constant $4\pi n$, $n \in \Z$. In what follows, we assume that a choice for this irrelevant constant has been made. 

\begin{lemma}[Properties of $u(x,t)$]\label{propertiesulemma}
The function $u(x,t)$ is $C^2$ from $[0,\infty) \times [0,\infty)$ to $\R$. Moreover,
\begin{align}\label{hatmuxt}
\hat{M}(x,t) = \begin{pmatrix} \cos\frac{u(x,t)}{2} & -\sin \frac{u(x,t)}{2} \\
\sin \frac{u(x,t)}{2} & \cos \frac{u(x,t)}{2} 
\end{pmatrix}, \qquad (x,t) \in [0,\infty) \times [0,\infty).
\end{align}
\end{lemma}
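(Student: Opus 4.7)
The plan is to read off essentially everything from Lemma \ref{m0lemma}. That lemma already supplies $\hat M \in C^2$ and the three identities $\hat M_{22} = \hat M_{11}$, $\hat M_{12} = -\hat M_{21}$, and $\hat M_{11}^2 + \hat M_{21}^2 = 1$, together with the fact that all four entries are real. Setting $f(x,t) := \hat M_{11}(x,t) + i \hat M_{21}(x,t)$, this gives a $C^2$ map $f : [0,\infty) \times [0,\infty) \to S^1 \subset \C$, and by construction (see the discussion following (\ref{ulim})) $u$ is a continuous choice of $2 \arg f$.

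First I would establish existence and $C^2$ regularity of the continuous lift. Since the quarter plane is simply connected and $f$ is continuous with values in $S^1$, a continuous $u$ satisfying $f = e^{iu/2}$ exists globally and is unique up to an additive constant $4\pi n$, which is precisely the branch ambiguity already acknowledged after (\ref{ulim}). For regularity, I would differentiate $f = e^{iu/2}$ formally to obtain $f_x = (iu_x/2)\,f$, hence $u_x = -2 i \bar f f_x$; writing this in terms of the real components and using $\hat M_{11}^2 + \hat M_{21}^2 = 1$ to cancel the imaginary part yields the intrinsic formula
\begin{align*}
u_x = 2\bigl(\hat M_{11}\, \partial_x \hat M_{21} - \hat M_{21}\, \partial_x \hat M_{11}\bigr),
\end{align*}
and analogously for $u_t$. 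I would then turn this around: the right-hand side above is $C^1$ on $[0,\infty)\times[0,\infty)$ by Lemma \ref{m0lemma}, and on any open set where $f$ avoids a fixed point of $S^1$ one may locally write $u$ as a smooth function of $f$, which shows that the intrinsic formula is actually the classical derivative of $u$. This gives $u \in C^2([0,\infty)\times[0,\infty),\R)$ globally.

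Finally, the identity $f = e^{iu/2}$ reads $\hat M_{11} + i \hat M_{21} = \cos(u/2) + i \sin(u/2)$, so $\hat M_{11} = \cos(u/2)$ and $\hat M_{21} = \sin(u/2)$; combined with the two symmetries $\hat M_{22} = \hat M_{11}$ and $\hat M_{12} = -\hat M_{21}$ from Lemma \ref{m0lemma}, this produces exactly the matrix representation (\ref{hatmuxt}). The one genuine point of the argument is the promotion of continuity of the lift to $C^2$ smoothness, but given the simple connectedness of the quarter plane and the $C^2$ regularity of $\hat M$ already established, this is essentially forced; everything else is bookkeeping on what Lemma \ref{m0lemma} has already delivered.
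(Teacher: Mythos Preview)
Your proof is correct and follows essentially the same approach as the paper: both rely entirely on Lemma \ref{m0lemma} and the identity $e^{iu/2} = \hat{M}_{11} + i\hat{M}_{21}$. The paper's proof is a single sentence (``This follows immediately from Lemma \ref{m0lemma} noting that $e^{\frac{iu}{2}} = \hat{M}_{11} + i \hat{M}_{21}$''), leaving implicit the passage from $C^2$ regularity of $\hat{M}$ to $C^2$ regularity of the lift $u$; you have simply spelled out that step via the intrinsic formula $u_x = 2(\hat{M}_{11}\,\partial_x\hat{M}_{21} - \hat{M}_{21}\,\partial_x\hat{M}_{11})$.
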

\begin{proof}
This follows immediately from Lemma \ref{m0lemma} noting that $e^{\frac{iu}{2}} = \hat{M}_{11} + i \hat{M}_{21}$.
\end{proof}

\begin{lemma}[Lax pair for $m$]\label{laxlemma}
For each $k \in \C \setminus \Gamma$, the map $(x,t) \mapsto m(x,t, k)$ lies in $C^2([0,\infty) \times [0,\infty),\C^{2\times 2})$ and satisfies the Lax pair equations
\begin{align}\label{mlax}
\begin{cases}
m_x +\frac{i}{4} (k-\frac{1}{k})[\sigma_3, m] = Q(x,t,k) m,
	\\
m_t + \frac{i}{4} (k+\frac{1}{k} )[\sigma_3, m] = Q(x,t,-k) m,
\end{cases} \quad (x,t) \in [0,\infty) \times [0, \infty), \ k \in \C \setminus \Gamma,
\end{align}
where $Q(x,t,k)$ is defined in terms of $u(x,t)$ by (\ref{Qdef}).
\end{lemma}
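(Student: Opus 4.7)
The plan is a standard dressing/Liouville argument: (i) show $m$ is $C^2$ in $(x,t)$; (ii) introduce the conjugated function $\phi := m e^{-i\theta\sigma_3}$, whose jump becomes $(x,t)$-independent so that its logarithmic derivatives in $x$ and $t$ extend to meromorphic functions of $k$; and (iii) use Liouville's theorem, together with the symmetries of $m$ and the compatibility $\phi_{xt} = \phi_{tx}$, to identify these derivatives with the Lax pair operators.

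First I would establish smoothness of $m$ in $(x,t)$. By Lemma \ref{C2lemma}, the map $(x,t) \mapsto \mu(x,t,\cdot) - I$ is $C^2$ into $L^2(\Gamma)$. Each $x$- or $t$-differentiation of $e^{\pm 2i\theta}$ in the integrand of (\ref{m1representation}) generates a factor of $\pm 2i\theta_{1,2} = O(|k| + |k|^{-1})$, which is absorbed by the vanishing $r(k) = O(k^3)$ at $k = 0$ and by $r_1 - r_{1,a} = O(k^{-3})$ at $\infty$ from (\ref{r1r1ainfty}), justifying differentiation twice under the integral sign by dominated convergence. Next, setting $\phi(x,t,k) := m(x,t,k) e^{-i\theta\sigma_3}$, a direct inspection of (\ref{vdef}) gives $v = e^{-i\theta\hat{\sigma}_3}v_0$ for a matrix $v_0(k)$ independent of $(x,t)$; therefore $\phi$ has the $(x,t)$-independent jump $v_0$ across $\Gamma$, and $L_1 := \phi_x\phi^{-1}$, $L_2 := \phi_t\phi^{-1}$ extend to meromorphic functions of $k \in \C$ with possible singularities only at $0$ and $\infty$. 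The expansion $m = I + m_1(x,t)/k + O(k^{-2})$ at $\infty$ together with the limit $m \to \hat{M}$ at $k = 0$ (Lemma \ref{m0lemma}) and the explicit form of $\theta_1,\theta_2$ yield $L_1, L_2 = -\tfrac{ik}{4}\sigma_3 + O(1)$ as $k \to \infty$ and $L_1 = \tfrac{i}{4k}\hat{M}\sigma_3\hat{M}^{-1} + O(1)$, $L_2 = -\tfrac{i}{4k}\hat{M}\sigma_3\hat{M}^{-1} + O(1)$ as $k \to 0$. Liouville's theorem, combined with the identity $\tfrac{i}{4}\hat{M}\sigma_3\hat{M}^{-1} = \tfrac{i}{4}\sigma_3 + Q_1$ (which follows from $\hat{M} = e^{-iu\sigma_2/2}$ via Lemma \ref{propertiesulemma}), then gives
\begin{align*}
L_1 &= -\tfrac{i}{4}(k - k^{-1})\sigma_3 + A(x,t) + Q_1(x,t)/k,
	\\
L_2 &= -\tfrac{i}{4}(k + k^{-1})\sigma_3 + A^{(t)}(x,t) - Q_1(x,t)/k,
\end{align*}
with $A, A^{(t)}$ residual matrices independent of $k$.

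The main obstacle is the identification $A = A^{(t)} = Q_0 := -\tfrac{i(u_x+u_t)}{4}\sigma_2$. The symmetries (\ref{msymm}) and $\det m = 1$ (Lemma \ref{existencemlemma}) propagate to $L_1, L_2$ and force both $A$ and $A^{(t)}$ to be trace-free real matrices commuting with $\sigma_2$; hence each has the form $i\gamma\sigma_2$ with $\gamma$ a real-valued function of $(x,t)$. Evaluating $L_1 + L_2 = (m_x + m_t)m^{-1} - \tfrac{ik}{2}m\sigma_3m^{-1}$ at $k = 0$ (where the second term vanishes) and comparing with the Liouville form above yields $A + A^{(t)} = (\hat{M}_x + \hat{M}_t)\hat{M}^{-1} = -\tfrac{i(u_x+u_t)}{2}\sigma_2 = 2Q_0$. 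Finally, because $\phi \in C^2$ in $(x,t)$, the identity $\phi_{xt} = \phi_{tx}$ forces the zero-curvature relation $L_{1,t} - L_{2,x} + [L_1, L_2] = 0$ for every $k$; extracting the coefficient of $k^1$ gives $\tfrac{i}{4}[\sigma_3, A - A^{(t)}] = 0$, so $A - A^{(t)}$ is diagonal. But the symmetry analysis already constrains $A - A^{(t)}$ to be a multiple of the off-diagonal $\sigma_2$, whence $A = A^{(t)}$; together with $A + A^{(t)} = 2Q_0$ this gives $A = A^{(t)} = Q_0$. The resulting equalities $L_1 = -\tfrac{i}{4}(k-k^{-1})\sigma_3 + Q(x,t,k)$ and $L_2 = -\tfrac{i}{4}(k+k^{-1})\sigma_3 + Q(x,t,-k)$ are, once translated back via $\phi = m e^{-i\theta\sigma_3}$, exactly the Lax pair (\ref{mlax}) for $m$.
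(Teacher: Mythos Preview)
Your argument is correct and follows the same overall dressing/Liouville strategy as the paper. The one genuine difference is in how you pin down the constant term. The paper reads off the $O(1)$ coefficient of $L_1$ and $L_2$ directly from the expansion $m = I + n_1/k + O(k^{-2})$ at $k\to\infty$: since $\theta_1$ and $\theta_2$ have the same leading behavior $k/4$, both constant terms equal $\tfrac{i}{4}[\sigma_3,n_1]$, so $F_0=G_0$ (your $A=A^{(t)}$) is immediate, and the symmetries of $n_1$ then give $F_0=G_0=-\tfrac12 a_1\sigma_2$. You instead stop the $\infty$-expansion at leading order and recover $A=A^{(t)}$ from the $k^1$-coefficient of the zero-curvature relation $L_{1,t}-L_{2,x}+[L_1,L_2]=0$, which is legitimate because $\phi$ is $C^2$ so mixed partials commute. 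Both routes work; the paper's is one step shorter, while yours foregrounds the compatibility that is used again in the next lemma to derive the equation itself. One minor point: the paper makes the Liouville step rigorous by showing that, after subtracting the principal parts at $0$ and $\infty$, the resulting function lies in $\dot E^1(\C\setminus\Gamma)$ (using $m\in I+\dot E^2$, $m_x\in\dot E^2$) and hence equals $\mathcal{C}(f_+-f_-)=0$; your phrase ``extends to a meromorphic function'' is correct in spirit but skips this analytic justification.
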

\begin{proof}
For each $k \in \C \setminus \Gamma$, the linear map
$$f \mapsto (\mathcal{C}f)(k) = \frac{1}{2\pi i} \int_\Gamma \frac{f(s)ds}{s - k}: L^2(\Gamma) \to \C$$
is bounded. Also, using that the three maps in (\ref{xtww}) are $C^2$, we find that
$$(x,t) \mapsto \mu(w^+ + w^-) = (\mu- I)(w^+ + w^-) + (w^+ + w^-): [0,\infty) \times [0,\infty) \to L^2(\Gamma)$$
is $C^2$.
Hence, for each $k \in \C \setminus \Gamma$, the map
\begin{align*}
(x,t) \mapsto m(x,t,k) = &\; I + \frac{1}{2\pi i}\int_\Gamma \frac{\mu(x,t,s)(w^+ + w^-)(x,t,s)}{s-k} ds
\end{align*}
is $C^2$ on $[0,\infty) \times [0,\infty)$.
Since the derivatives $\{\partial_x^j(\mu(w^+ + w^-))\}_{j=1}^2$ and $\{\partial_t^j(\mu(w^+ + w^-))\}_{j=1}^2$ exist in $L^2(\Gamma)$, we obtain
\begin{subequations}\label{MxMt}
\begin{align}\label{MxMta}
\begin{cases}
(\partial_x^jm)(x,t,\cdot) = \mathcal{C}(\partial_x^j(\mu(w^+ + w^-))) \in \dot{E}^2(\C \setminus \Gamma), 
	\\
(\partial_t^jm)(x,t,\cdot) = \mathcal{C}(\partial_t^j(\mu(w^+ + w^-))) \in \dot{E}^2(\C \setminus \Gamma),
\end{cases} \quad j = 1,2,
\end{align}
\end{subequations}
for each  $(x,t) \in [0,\infty) \times [0,\infty)$.
Introducing the function $\psi$ by 
$$\psi(x,t,k) = m(x,t,k) e^{-\frac{i}{4}((k-\frac{1}{k})x+ (k+\frac{1}{k})t)\sigma_3},$$ 
we see that
$$\psi_x \psi^{-1} = \bigg(m_x-\frac{i}{4}\bigg(k-\frac{1}{k}\bigg) m\sigma_3\bigg)m^{-1}.$$
Pick a point $p \in \C \setminus \Gamma$. Since $m \in I + \dot{E}^2(\C \setminus \Gamma)$ and $m_x \in \dot{E}^2(\C \setminus \Gamma)$, we find (cf. Lemmas A.4 and A.6 in \cite{LNonlinearFourier}) that there exist functions $\{f_j(x,t)\}_1^3$ such that the function $f$ defined by
\begin{align*}
f(x,t,k)  =\frac{k}{(k - p)^3}\bigg(m_x -\frac{i}{4}\bigg(k-\frac{1}{k}\bigg) m\sigma_3\bigg)m^{-1}
- \frac{f_3(x,t)}{(k-p)^3} - \frac{f_2(x,t)}{(k-p)^2} - \frac{f_1(x,t)}{k-p}
\end{align*}
lies in $\dot{E}^1(\C \setminus \Gamma)$. Here the factor $\frac{k}{(k -p)^3}$ has been included to move the poles at zero and infinity to the finite point $p \notin \Gamma$. The jump matrix $v$ satisfies $v_x = -i\theta_1[\sigma_3, v]$. Hence the jump condition (\ref{RHm}) for $m$ implies that $f_+ = f_-$ a.e. on $\Gamma$ and hence that $f$ vanishes identically:
$$f(x,t,\cdot) = \mathcal{C}(f_+ - f_-) = 0.$$
We conclude that there exist functions $F_j(x,t)$, $j=-1,0,1$, such that
\begin{subequations}\label{mxmtFFF}
\begin{align}\label{mxFFF}
m_x-\frac{i}{4}\bigg(k-\frac{1}{k}\bigg) m\sigma_3 = \bigg(k F_1(x,t)+ F_0(x,t) + \frac{F_{-1}(x,t)}{k}\bigg)m, \qquad k \in \C \setminus \Gamma.
\end{align}
A similar argument starting with the equation
$$\psi_t \psi^{-1} = \bigg(m_t - \frac{i}{4}\bigg(k+\frac{1}{k}\bigg) m\sigma_3\bigg)m^{-1}$$
shows that there exist functions $G_j(x,t)$, $j=-1,0,1$, such that
\begin{align}\label{mtFFF}
m_t-\frac{i}{4}\bigg(k+ \frac{1}{k}\bigg) m\sigma_3 = \bigg(k G_1(x,t)+ G_0(x,t) + \frac{G_{-1}(x,t)}{k}\bigg)m, \qquad k \in \C \setminus \Gamma.
\end{align}
\end{subequations}
We will perform an asymptotic analysis of the equations (\ref{mxFFF}) and (\ref{mtFFF}) as $k \to \infty$ and as $k \to 0$ to conclude that
\begin{subequations}\label{FGQidentities}
\begin{align}
& k F_1(x,t)+ F_0(x,t) + \frac{F_{-1}(x,t)}{k} = Q(x,t,k) - \frac{i}{4}\bigg(k - \frac{1}{k}\bigg) \sigma_3, 	
	\\
& k G_1(x,t)+ G_0(x,t) + \frac{G_{-1}(x,t)}{k} = Q(x,t,-k) - \frac{i}{4}\bigg(k + \frac{1}{k}\bigg) \sigma_3.
\end{align}
\end{subequations}
This will complete the proof of the lemma. 

We first consider the limit as $k$ approaches infinity. Let $W_\infty$ be a nontangential sector at $\infty$ with respect to $\Gamma$. 
We write (\ref{m1representation}) as
\begin{align*}
m(x,t,k)
 = I - \frac{1}{2\pi i}\int_\Gamma (\mu(w^+ + w^-))(x,t,s) \bigg(\frac{1}{k} + \frac{s}{k^2} + \frac{s^2}{k^2(k - s)}\bigg) ds.
\end{align*}
By Lemma \ref{C2lemma}, we have $\mu(x,t, \cdot) - I \in L^2(\Gamma)$. 
Since $r_1(k) - r_{1,a}(k) = O(k^{-3})$ as $k \to \infty$, the $L^1$ and $L^2$ norms of $s(w^+ + w^-)(x,t,s)$ and $\frac{s^2}{k - s} (w^+ + w^-)(x,t,s)$ are bounded (the latter uniformly with respect to all large $k \in W_\infty$). Hence,
\begin{align}\label{Mexpansiona}
m(x,t,k) = I + \frac{n_1(x,t)}{k} + O(k^{-2}), \qquad k \to \infty, \ k \in W_\infty,
\end{align}
where the error term is uniform with respect to $k \in W_\infty$ and the coefficient $n_1$ is given by
$$n_1(x,t) = - \frac{1}{2\pi i}\int_\Gamma (\mu(w^+ + w^-))(x,t,s) ds.$$

Applying a similar argument to the derivatives $m_x$  and $m_t$, we find
\begin{align}\label{Mexpansionb}
\begin{cases}
 m_x = O(k^{-1}),
	\\
 m_t = O(k^{-1}),
\end{cases}	
\quad k \to \infty, \ k \in W_\infty.
\end{align}
Indeed, the $L^2$ norms of $\mu_x$ and $\mu_t$ are bounded by Lemma \ref{C2lemma}. Furthermore, the definition of $w^\pm$ implies that the $L^1$ and $L^2$ norms of $(1 + \frac{s}{k-s})(w^+ + w^-)_x$ and $(1 + \frac{s}{k-s})(w^+ + w^-)_t$ are bounded. This leads to (\ref{Mexpansionb}).

Substituting (\ref{Mexpansiona}) and (\ref{Mexpansionb}) into (\ref{mxmtFFF}) the terms of $O(k)$ and $O(1)$ yield
\begin{align}\label{F1G1}
 F_1 = G_1 =& -\frac{i}{4}\sigma_3 \quad  \text{and} \quad F_0 = G_0 = \frac{i}{4} [\sigma_3, n_1],
\end{align}
respectively.
Let us now write $n_1 = a_1 \sigma_1 + b_1 \sigma_2 + c_1 \sigma_3 + d_1I$, where $a_1,b_1,c_1,d_1$ are complex-valued functions of $(x,t)$. The symmetry $m(x,t,k) = \sigma_2 m(x,t,-k) \sigma_2$ (see (\ref{msymm})) yields $b_1 = d_1 = 0$, so that $n_1 = a_1 \sigma_1 + c_1 \sigma_3$. Moreover, the symmetry $m(x,t,k) = \sigma_2 \overline{m(x,t,\bar{k})} \sigma_2$ shows that the coefficients $a_1$ and $c_1$ are pure imaginary. In particular,
\begin{align}\label{F0G0a1}
F_0 = G_0 = -\frac{1}{2}a_1(x,t)\sigma_2,
\end{align}
where $a_1(x,t) \in i \R$.

We now consider the limit as $k$ approaches zero. In this limit, $m$ obeys the asymptotics (\ref{matzero}). Using the fact that $r(k) = O(k^3)$ as $k \to 0$, an argument similar to the one leading to (\ref{Mexpansionb}) shows that
\begin{align}\label{mxmtatzero}
\begin{cases}
 m_x = \hat{M}_{x}(x,t) + O(k),
	\\
m_t = \hat{M}_{x}(x,t) + O(k),
\end{cases}	
\quad k \to 0, \ k \in W_0.
\end{align}
Substituting (\ref{matzero}) and (\ref{mxmtatzero}) into (\ref{mxFFF}) and (\ref{mtFFF}) the terms of $O(1/k)$ yield
\begin{align*}
\frac{i}{4} \hat{M} \sigma_3 = F_{-1}\hat{M}
\quad \text{and} \quad - \frac{i}{4}\hat{M}\sigma_3 = G_{-1}\hat{M},
\end{align*}
respectively. Hence 
\begin{align}\label{FGminus1}
F_{-1} = -G_{-1} = \frac{i}{4} \hat{M} \sigma_3 \hat{M}^{-1} 
= \frac{i}{4} \begin{pmatrix} \cos u(x,t) & \sin u(x,t) \\
\sin u(x,t) & -\cos u(x,t) \end{pmatrix}.
\end{align}
Taking the sum of (\ref{mxFFF}) and (\ref{mtFFF}) and using (\ref{FGminus1}), we find
\begin{align*}
& m_x + m_t -\frac{i}{2} k m\sigma_3 = 2(k F_1+ F_0)m.
\end{align*}
The terms of $O(1)$ of this equation yield $\hat{M}_{x} + \hat{M}_{t} = 2F_0\hat{M}$, which by Lemma \ref{propertiesulemma} and (\ref{F0G0a1}) gives
\begin{align}\label{a1uxut}
a_1(x,t) = \frac{i}{2}(u_x + u_t).
\end{align}
The identities (\ref{FGQidentities}) now follow from (\ref{F1G1}), (\ref{F0G0a1}), (\ref{FGminus1}), and (\ref{a1uxut}). 
\end{proof}

In order to complete the proof of Theorem \ref{existenceth}, it only remains to show the following lemma which is an easy consequence of the compatibility of the Lax pair equations (\ref{mlax}).

\begin{lemma}
 $u(x,t)$ satisfies the sine-Gordon equation \eqref{sg} for $x \geq 0$ and $t \geq 0$.
\end{lemma}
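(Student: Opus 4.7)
\emph{Proof plan.} The argument is the standard zero-curvature computation that gave rise to the Lax pair in the first place. By Lemma \ref{laxlemma}, for each fixed $k \in \C \setminus \Gamma$ the matrix $m(x,t,k)$ is $C^2$ in $(x,t)$ and satisfies the linear system (\ref{mlax}). I will first convert (\ref{mlax}) into ordinary left-multiplicative form by introducing
$$\psi(x,t,k) := m(x,t,k)\, e^{-i\theta(x,t,k)\sigma_3}, \quad U(x,t,k) := Q(x,t,k) - i\theta_1(k)\sigma_3, \quad V(x,t,k) := Q(x,t,-k) - i\theta_2(k)\sigma_3,$$
so that (\ref{mlax}) becomes $\psi_x = U\psi$ and $\psi_t = V\psi$.

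Since $\psi$ inherits $C^2$-regularity from $m$, equality of mixed partials $\psi_{xt} = \psi_{tx}$ produces the zero-curvature relation
$$U_t(x,t,k) - V_x(x,t,k) + [U(x,t,k),V(x,t,k)] = 0, \qquad (x,t) \in [0,\infty)\times[0,\infty), \ k \in \C \setminus \Gamma.$$
Substituting $Q(x,t,\pm k) = Q_0(x,t) \pm Q_1(x,t)/k$ from (\ref{Qdef}) and using $[\sigma_3,\sigma_3] = 0$, the left-hand side is a Laurent polynomial in $k$ in which the $k^{\pm 1}$-coefficients cancel identically, so the relation reduces to the two $k$-independent matrix equations
$$Q_{0,t} - Q_{0,x} + \tfrac{i}{2}[\sigma_3, Q_1] = 0, \qquad Q_{1,t} + Q_{1,x} + \tfrac{i}{2}[\sigma_3, Q_0] - 2[Q_0, Q_1] = 0.$$

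Inserting the explicit expressions $Q_0 = -\tfrac{i(u_x+u_t)}{4}\sigma_2$ and $Q_1 = \tfrac{i\sin u}{4}\sigma_1 + \tfrac{i(\cos u-1)}{4}\sigma_3$ together with the Pauli-matrix identities $[\sigma_3,\sigma_1] = 2i\sigma_2$, $[\sigma_3,\sigma_2] = -2i\sigma_1$, $[\sigma_2,\sigma_1] = -2i\sigma_3$, and $[\sigma_2,\sigma_3] = 2i\sigma_1$, the first matrix equation becomes
$$-\tfrac{i}{4}(u_{tt} - u_{xx})\sigma_2 - \tfrac{i}{4}\sin u\,\sigma_2 = 0,$$
which is precisely (\ref{sg}). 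The second matrix equation is automatically satisfied by the same $Q_0,Q_1$: a direct expansion shows that its $\sigma_1$- and $\sigma_3$-components each cancel using $\cos u - 1$ and $\sin u$ trigonometric identities.

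There is no genuine obstacle here beyond the routine bookkeeping of separating powers of $k$ in the zero-curvature identity. The only nontrivial ingredient, namely the $C^2$-regularity in $(x,t)$ and the validity of the Lax pair (\ref{mlax}) for this particular $Q$ defined through $u$, has already been established in Lemma \ref{laxlemma}; the compatibility of (\ref{mlax}) then forces $u$ to solve sine-Gordon by the very construction of the Lax pair in Section \ref{specsec}.
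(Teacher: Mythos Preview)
Your proof is correct and takes the same zero-curvature approach as the paper, which simply states that $A_t - B_x + [A,B] = \tfrac{i}{4}(u_{xx} - u_{tt} - \sin u)\sigma_2$ by direct computation while you supply more detail by separating powers of $k$. One phrasing slip: the $k^{-1}$-coefficient does not ``cancel identically'' --- it is precisely your second displayed equation (which, as you correctly verify, holds for any $u$).
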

\begin{proof}
Fix $k \in \C \setminus \Gamma$.  By Lemma \ref{laxlemma}, $m(x,t,k)$ is $C^2$ as a function of $x$ and $t$. Hence the mixed partials $m_{xt}$ and $m_{tx}$ exist and are equal for $x \geq0$ and $t \geq 0$. This implies that
$$A_t - B_x + [A,B] = 0, \qquad x \geq 0, \ t \geq 0,$$
where 
$$A(x,t,k) = Q(x,t,k) - \frac{i}{4}\bigg(k - \frac{1}{k}\bigg) \sigma_3, \qquad
B(x,t,k) = Q(x,t,-k) - \frac{i}{4}\bigg(k + \frac{1}{k}\bigg) \sigma_3.$$
Since, by a long but straightforward computation,
$$A_t - B_x + [A,B] = \frac{i}{4}(u_{xx} - u_{tt} - \sin u)\sigma_2,$$
it follows that $u(x,t)$ satisfies (\ref{sg}).
\end{proof}

\begin{remark}\upshape
Equation (\ref{a1uxut}) shows that the solution $u(x,t)$ defined in (\ref{ulim}) satisfies the following identity which will prove useful later in order to recover the derivatives of $u(x,t)$ from $M$:
\begin{align}\label{uxutrecover}
& u_x(x,t) + u_t(x,t) = -2i\ntlim_{k\to\infty} \big(kM(x,t,k)\big)_{12}.
\end{align}
\end{remark}

\section{Proof of Theorem \ref{existenceth2}}\label{existence2proofsec}
Suppose $u_0, u_1, g_0, g_1$ are functions which satisfy (\ref{ujgjassump}) with $n = 1$, $m = 2$, and which are compatible with (\ref{sg}) to third order at $x=t=0$.
Let $r_1(k)$ and $h(k)$  be the associated spectral functions defined in (\ref{r1hdef}). Suppose Assumption \ref{assumption1} holds.

\begin{lemma}\label{r1hlemma}
The functions $r_1(k)$ and $h(k)$ satisfy Assumption \ref{r1hassumption}. Moreover, $h(k)$ is an analytic function of $k \in D_2$.
\end{lemma}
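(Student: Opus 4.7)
The plan is to verify each condition of Assumption \ref{r1hassumption} in turn, drawing on the three lemmas (Lemma \ref{ablemma}, Lemma \ref{ABlemma}, and Lemma \ref{cdlemma}), the global relation (\ref{GR}), and the zero-free assumption on $a$ and $d$.

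First I would address the analyticity of $h$ in $D_2$. For $k \in D_2$ we have $\bar{k} \in D_3$; by Lemma \ref{ABlemma}$(a)$, $B$ is analytic in $D_3$, so the Schwartz conjugate $B^*(k) = \overline{B(\bar{k})}$ is analytic in $D_2$. Since $a$ is analytic in $\C_+ \supset D_2$ (Lemma \ref{ablemma}$(a)$) and $d$ is analytic in $D_2$ (Lemma \ref{cdlemma}$(b)$), and neither vanishes there by Assumption \ref{assumption1}, the quotient $h = -B^*/(ad)$ is analytic in $D_2$. Continuity up to $\partial D_2$ follows from the continuity statements in Lemmas \ref{ablemma}, \ref{ABlemma}, \ref{cdlemma}, together with the fact that the denominators stay bounded away from zero on the compact set $\bar{D}_2$ by Assumption \ref{assumption1}.

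Next, the expansion (\ref{r1expansion}) follows by dividing the asymptotic expansions of Lemma \ref{ablemma}$(c)$. With $m=2$, the lemma gives $a(k) = 1 + a_1/k + a_2/k^2 + O(k^{-3})$ and $b(k) = b_1/k + b_2/k^2 + O(k^{-3})$ as $k \to \infty$, so $\overline{b(\bar{k})}/a(k)$ admits a two-term expansion in powers of $1/k$ with an $O(k^{-3})$ remainder; this gives the coefficients $r_{1,1}, r_{1,2}$. The symmetries (\ref{r1hsymm}) follow from Lemma \ref{ablemma}$(d)$, Lemma \ref{ABlemma}$(d)$, and Lemma \ref{cdlemma}$(e)$: for example, since $b(k) = \overline{b(-\bar{k})}$ one has $b(-k) = \overline{b(\bar{k})}$ and likewise $a(-\bar k) = \overline{a(k)}$, whence
\begin{equation*}
\overline{r_1(-\bar k)} = \frac{b(-k)}{\overline{a(-\bar k)}} = \frac{\overline{b(\bar k)}}{a(k)} = r_1(k),
\end{equation*}
and the same computation applied to $h = -B^*/(ad)$ gives $h(k) = \overline{h(-\bar k)}$ using in addition the symmetries of $B$ and $d$.

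Finally, for condition $(c)$: the behavior of $r$ at $0$ is immediate from $r = c^*/d$ together with Lemma \ref{cdlemma}$(d)$, which (with $m = 2$) gives $c(k) = O(k^3)$ as $k \to 0$ on $\R$, while $d(0) = (-1)^{N_x-N_t} \neq 0$; thus $r(k) = O(k^3)$. The vanishing at $k = \pm 1$ is where the global relation enters and is the main point to verify. The global relation (\ref{GR}) states $A(k)b(k) = B(k)a(k)$ for $k \in \bar D_1$, which by continuity persists at $k = \pm 1 \in \partial D_1$; comparing with the definition $c(k) = b(k)A(k) - a(k)B(k)$ shows that $c(\pm 1) = 0$, hence $r(\pm 1) = \overline{c(\pm 1)}/d(\pm 1) = 0$ since $d(\pm 1) \neq 0$ by Assumption \ref{assumption1}. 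The delicate point, and the only one requiring genuine care, is the regularity of $c$ and $d$ at $\pm 1$ needed to take these boundary limits; this is supplied by the compatibility hypothesis up to order $3$ and Lemma \ref{cdlemma}$(a)$--$(b)$. With this, all parts of Assumption \ref{r1hassumption} are verified.
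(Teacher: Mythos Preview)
Your proof is correct and follows essentially the same approach as the paper's: both derive the continuity, analyticity, asymptotics, and symmetries of $r_1$ and $h$ directly from Lemmas \ref{ablemma}--\ref{cdlemma} and Assumption \ref{assumption1}, and both use $r = c^*/d$ together with the global relation to obtain $r(\pm 1)=0$ and $r(k)=O(k^3)$ near $0$. The only cosmetic difference is that the paper observes the slightly stronger fact $c(k)=0$ for all $k \in \R \setminus (-1,1)$ (so $r$ vanishes identically there), whereas you extract only the values at $\pm 1$; either suffices.
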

\begin{proof}
It follows immediately from Lemmas \ref{ablemma} and \ref{ABlemma} together with Assumption \ref{assumption1} that $r_1$ and $h$ are continuous functions satisfying $(a)$ and $(b)$ of Assumption \ref{r1hassumption} and that $h(k)$ is an analytic function of $k \in D_2$. It remains to prove property $(c)$ of Assumption \ref{r1hassumption}, i.e., that $r(k) = r_1(k) + h(k)$ satisfies $r(\pm 1) = 0$ and $r(k) = O(k^3)$ as $k \to 0$. 
By (\ref{rdef}), we have $r = r_1 + h = c^*/d$ for $k \in \R$. It follows from (\ref{cdexpansionsc}) that $c(k) = O(k^{m+1}) = O(k^3)$ as $k \to 0$. Moreover, the global relation (\ref{GR}) implies that $c(k) =0$ for $k \in \R \setminus (-1,1)$, so we find $r(k) = 0$ for $k \in \R \setminus (-1,1)$; in particular, $r(\pm 1) = 0$.
\end{proof}

By Lemma \ref{r1hlemma} and Theorem \ref{existenceth}, there exists a unique solution $M(x,t,k)$ of the RH problem (\ref{RHM}) formulated in terms of $r_1$ and $h$. Define $u(x,t)$ in terms of $M(x,t,k)$ by (\ref{ulim2}). By Theorem \ref{existenceth}, $u:[0,\infty) \times [0,\infty) \to \R$ is a $C^2$-solution of  equation (\ref{sg}).
It only remains to verify that $u(x,t)$ satisfies the appropriate initial and boundary conditions for an appropriate choice of the integer $j$ in (\ref{ulim2}). We begin with the initial condition. 

\begin{lemma}\label{initiallemma}
For an appropriate choice of the integer $j$, the solution $u(x,t)$ defined by (\ref{ulim2}) satisfies $u(x,0) = u_0(x)$ and $u_t(x,0)=u_1(x)$ for $x \geq 0$.
\end{lemma}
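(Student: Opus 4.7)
The plan is to reduce the RH problem (\ref{RHM}) at $t=0$ to the standard RH problem for the pure initial-value problem on $[0,\infty)$ with data $(u_0,u_1)$, and then to read off $u_0$ and $u_1$ from the reduced problem via the reconstruction formulas (\ref{ulim}) and (\ref{uxutrecover}).

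At $t=0$ the phase reduces to $\theta(x,0,k)=\theta_1(k)x$, and since $\im\theta_1(k)\geq 0$ throughout $\bar{\C}_+$ and $\im\theta_1(k)\leq 0$ throughout $\bar{\C}_-$ for $x\geq 0$, while by Lemma \ref{r1hlemma} the function $h$ is analytic in $D_2$ (and its Schwarz conjugate $h^*$ in $D_3$), the products $h(k)e^{2i\theta_1 x}$ and $h^*(k)e^{-2i\theta_1 x}$ are bounded and analytic on $D_2$ and $D_3$, respectively. The first step is then to introduce the triangular dressing
\begin{align*}
\tilde M(x,k) = \begin{cases} M(x,0,k)\begin{pmatrix} 1 & 0\\ h(k)e^{2i\theta_1(k)x} & 1\end{pmatrix}, & k\in D_2,\\
M(x,0,k)\begin{pmatrix} 1 & h^*(k)e^{-2i\theta_1(k)x}\\ 0 & 1\end{pmatrix}, & k\in D_3,\\
M(x,0,k), & k\in D_1\cup D_4. \end{cases}
\end{align*}
A direct computation with the jumps (\ref{Jdef}) at $t=0$ shows that the dressing cancels the jumps of $\tilde M$ across both arcs of $\{|k|=1\}$, so that $\tilde M(x,\cdot)\in I+\dot E^2(\C\setminus\R)$. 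Using $r=r_1+h$ (together with a secondary triangular factorization on $(-1,1)$ that is absorbable into $\tilde M$), the residual jump of $\tilde M$ on $\R$ is the pure initial-value RH jump built from $r_1$ alone.

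The reduced problem is uniquely solvable by the same vanishing-lemma and Fredholm argument as Lemmas \ref{vanishinglemma} and \ref{existencemlemma}, and its solution is expressible in terms of the Volterra eigenfunction $X(x,k)$ from (\ref{Xvolterra}) in complete analogy with (\ref{originalMdef}). Applying (\ref{ulim}) to $\ntlim_{k\to 0}\tilde M(x,k)$ and using the small-$k$ expansion of $\{a,b\}$ given by Lemma \ref{ablemma}(c), with its explicit factor $G_0(x)$ from (\ref{G0xdef}), identifies the matrix $\hat M(x,0)$ with the form (\ref{hatmuxt}) and $u(x,0)=u_0(x)$, up to a $2\pi\Z$ ambiguity in $\arg$. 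Choosing $j$ in (\ref{ulim2}) so that $u(x,0)\to 2\pi N_x$ as $x\to\infty$ removes this ambiguity and yields $u(x,0)=u_0(x)$ on $[0,\infty)$. Finally, the identity (\ref{uxutrecover}) evaluated at $t=0$, combined with the large-$k$ expansion of $X(x,k)$ from Lemma \ref{ablemma}(c), delivers $u_x(x,0)+u_t(x,0)=u_0'(x)+u_1(x)$, which together with $u_x(x,0)=u_0'(x)$ gives $u_t(x,0)=u_1(x)$.

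The hard part will be verifying, at the algebraic level, that the residual jump of $\tilde M$ across the interval $(-1,1)$ does indeed reduce to the initial-value jump after absorbing a suitable triangular factor into $\tilde M$. This computation is delicate because the jump $v_2$ mixes $r$, $\bar r$, and $1+|r|^2$, and the dressing factors $V_2, V_3$ only contain $h$ and $h^*$; the cancellation relies on the identity $r=r_1+h$ combined with the Schwarz symmetries $h(k)=\overline{h(-\bar k)}$ and $r_1(k)=\overline{r_1(-\bar k)}$. Once this algebraic identity is in place, the remaining verifications (preservation of $\dot E^2$-membership under the dressing, continuity of the nontangential limits at $0$ and $\infty$, and matching of symmetries) are routine consequences of the properties developed in Sections \ref{specsec} and \ref{existenceproofsec}.
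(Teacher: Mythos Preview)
Your approach is the paper's approach: at $t=0$, dress $M(x,0,\cdot)$ in $D_2\cup D_3$ to remove the circle jumps, reduce to the $r_1$-only RH problem on $\R$, and identify that with the direct eigenfunction construction from $(u_0,u_1)$. But several steps are off. First, the sign in your dressing is wrong: with the correct factor $\begin{psmallmatrix}1&0\\-h\,e^{2i\theta_1 x}&1\end{psmallmatrix}$ in $D_2$ (and its conjugate in $D_3$), a direct computation shows the jump on $(-1,1)$ collapses \emph{exactly} to $\begin{psmallmatrix}1+|r_1|^2 & r_1^* e^{-2i\theta_1 x}\\ r_1 e^{2i\theta_1 x}& 1\end{psmallmatrix}$, matching the jump on $|k|>1$. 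No ``secondary triangular factorization'' is needed; the algebra you flag as the hard part is in fact immediate once the sign is right. Second, the reduced problem is not built from $X$ alone: the paper introduces a second Volterra eigenfunction $Y(x,k)$ normalized at $x=0$ and sets $m^{(x)}=([Y]_1/a,[X]_2)$ for $\im k>0$. The identification $\hat M^{(x)}(x)=G_0(x)$ comes from the $k\to 0$ asymptotics of $X$ and $Y$ (quoted from \cite{HLNonlinearFourier}), not from Lemma~\ref{ablemma}(c), which concerns only $a,b$ and yields $G_0(0)$, not $G_0(x)$.

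The genuine missing idea is the link between $\tilde M$ and $\hat M(x,0)$. Your dressing modifies $M(x,0,\cdot)$ precisely in $D_2\cup D_3$, the regions abutting $k=0$, so $\ntlim_{k\to 0}\tilde M(x,k)=\hat M(x,0)$ is not automatic. The paper's key observation is that for $x>0$ one has $e^{2i\theta_1(k)x}\to 0$ as $k\to 0$ nontangentially in $D_2$ (since $2i\theta_1\sim -\tfrac{i}{2k}$ has large negative real part there), so the dressing factor tends to $I$ and the two limits coincide. This step fails at $x=0$, where the nontangential limits from $D_2$ and $D_3$ differ, and $u(0,0)=u_0(0)$ is recovered only by continuity from $x>0$. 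That decay argument---not the jump algebra on $(-1,1)$---is the substantive point, and it is absent from your outline.
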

\begin{proof}
Define $X(x,k)$ by (\ref{Xvolterra}) and let $Y(x,k)$ be the eigenfunction defined by
$$Y(x,k) = I + \int_{0}^x e^{i\theta_1(x'-x)\hat{\sigma}_3} (\mathsf{U}Y)(x',k) dx'.$$
Detailed asymptotic formulas for $X$ and  $Y$ as $k \to \infty$ and $k \to 0$ were derived in \cite{HLNonlinearFourier} and we refer to this reference for proofs of some of the facts used below. 
The properties of $X$ and $Y$ imply that the function $m^{(x)}(x,k)$ defined by
\begin{align}
m^{(x)}(x,k) =
\begin{cases}
 \left(\frac{[Y(x,k)]_1}{a(k)}, [X(x,k)]_2\right), \qquad \im k > 0,
	\\
 \left([X(x,k)]_1, \frac{[Y(x,k)]_2}{\overline{a(\bar{k})}}\right), \qquad \im k < 0,
\end{cases}
\end{align}
satisfies the RH problem
\begin{align}\label{mxRH}
\begin{cases}
m^{(x)}(x, \cdot) \in I + \dot{E}^2(\C \setminus \R),\\
m_+^{(x)}(x,k) = m_-^{(x)}(x, k) \begin{pmatrix} 1 + |r_1(k)|^2 & r_1^*(k) e^{-2i\theta_1x} \\
r_1(k) e^{2i\theta_1x} & 1 \end{pmatrix}  \quad \text{for a.e.} \ k \in \R.
\end{cases}
\end{align}
As $k \to 0$, $X$ satisfies
$$\ntlim_{k \to 0} X(x,k) = G_0(x), \qquad x \geq 0, \ k \in (\C_-, \C_+),$$
where $G_0(x)$ is the matrix defined in (\ref{G0xdef}) and the notation indicates that the limit of the first (second) column is taken from the lower (upper) half-plane.
The behavior of $Y$ as $k \to 0$ depends on whether $x > 0$ or $x = 0$. 
For $x > 0$, we have
$$\ntlim_{k \to 0} Y(x,k) = (-1)^{N_x}\cos \frac{u_0(0)}{2}G_0(x), \qquad x > 0, \ k \in (\C_+, \C_-).$$
For $x = 0$, we have $Y(0,k) = I$ for all $k \in \C \setminus \{0\}$. 

If $x > 0$, the asymptotics of $X$ and $Y$ together with the fact that $a(0) = (-1)^{N_x}\cos \frac{u_0(0)}{2}$ shows that the function $\hat{M}^{(x)}(x) := \ntlim_{k\to 0} m^{(x)}(x,k)$ is well-defined (i.e., the nontangential limits of $m^{(x)}(x,k)$  as $k \to 0$ from the upper and lower half-planes exist and coincide) and is given by $\hat{M}^{(x)}(x) = G_0(x)$. This gives the first of the following equations; the second follows from the asymptotics of $X$ and  $Y$ as $k \to \infty$: 
\begin{subequations}\label{ulim0}
\begin{align}\label{ulim0a}
&  u_0(x) = 2 \arg\big[(-1)^{N_x}(\hat{M}_{11}^{(x)}(x) + i \hat{M}_{21}^{(x)}(x))\big], \qquad x > 0.
	\\\label{ulim0b}
& u_1(x)= - u_{0}'(x)-2i\ntlim_{k\to\infty}\big(km^{(x)}(x,k)\big)_{12}, \qquad x \geq 0.
\end{align}
\end{subequations}

On the other hand, since
$$h(k) e^{2i\theta_1x} \in E^\infty(D_2), \qquad \overline{h(\bar{k})} e^{-2i\theta_1x} \in E^\infty(D_3),$$
it follows that the function $M^{(x)}(x,k)$ defined by
\begin{align}\label{Mxdef}
M^{(x)}(x,k) = \begin{cases} M(x, 0, k), & k \in D_1 \cup D_4,
	\\
M(x,0,k) \begin{pmatrix} 1 & 0 \\- h(k) e^{2i\theta_1x} & 1 \end{pmatrix}, & k \in D_2,
	\\
M(x,0,k) \begin{pmatrix} 1 & \overline{h(\bar{k})} e^{-2i\theta_1x} \\ 0 & 1 \end{pmatrix}, & k \in D_3,
\end{cases}
\end{align}
also satisfies (\ref{mxRH}). By uniqueness, $M^{(x)} = m^{(x)}$. 
If $x > 0$, we have $\ntlim_{k \to 0} M^{(x)}(x,k) = \ntlim_{k \to 0} M(x,0,k) = \hat{M}(x,0)$, because
$$\ntlim_{\underset{\im k > 0}{k \to 0}} e^{2i\theta_1x} = 0, \qquad \ntlim_{\underset{\im k < 0}{k \to 0}} e^{-2i\theta_1x} = 0.$$
Comparing the definition (\ref{ulim}) of $u(x,t)$ with (\ref{ulim0a}) and fixing the integer $j$ in (\ref{ulim2}) appropriately, we conclude that $u(x,0) = u_0(x)$ for $x > 0$. By continuity of $u_0(x)$  and $u(x,0)$, we have also $u_0(0) = u(0,0)$. Finally, comparing the expression (\ref{uxutrecover}) for $u_x + u_t$ with (\ref{ulim0b}) gives $u_t(x,0)=u_1(x)$ for $x \geq 0$.
\end{proof}

\begin{remark}\upshape
If $x = 0$, the limits of $m^{(x)}$ and $M^{(x)}$ as $k \to 0$ are more subtle. In fact, for $x = 0$, we have
\begin{align*}
\begin{cases}
\displaystyle{\ntlim_{\underset{\im k > 0}{k \to 0}}} m^{(x)}(0,k) = (-1)^{N_x}\begin{psmallmatrix} \frac{1}{\cos \frac{u_0(0)}{2}} & - \sin \frac{u_0(0)}{2} \\
0 & \cos \frac{u_0(0)}{2} \end{psmallmatrix}, 
	\\
\displaystyle{\ntlim_{\underset{\im k < 0}{k \to 0}}} m^{(x)}(0,k) = (-1)^{N_x}\begin{psmallmatrix} \cos \frac{u_0(0)}{2} & 0 
\\ \sin \frac{u_0(0)}{2} & \frac{1}{\cos \frac{u_0(0)}{2}} \end{psmallmatrix},
\end{cases}
\end{align*}
and
$$\ntlim_{\underset{\im k > 0}{k \to 0}} M^{(x)}(0,k) = \hat{M}(0,0) \begin{pmatrix} 1 & 0 \\ - h(0) & 1 \end{pmatrix}, \qquad
\ntlim_{\underset{\im k < 0}{k \to 0}} M^{(x)}(0,k) = \hat{M}(0,0) \begin{pmatrix} 1 & \overline{h(0)} \\ 0 & 1 \end{pmatrix}.$$
These limits are consistent with the equality $u_0(0) = u(0,0)$, because $h(0) = \tan \frac{u(0,0)}{2}$ and the integer $j$ in (\ref{ulim2}) has the same parity as $N_x$.
\end{remark}

The following lemma completes the proof of Theorem \ref{existenceth2}. 

\begin{lemma} \label{boundarylemma}
The solution $u(x,t)$ defined by (\ref{ulim2}) satisfies $u(0,t) = g_0(t)$ and $u_x(0,t) = g_1(t)$ for $t \geq 0$.
\end{lemma}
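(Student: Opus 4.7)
The plan mirrors that of Lemma \ref{initiallemma} with the roles of $x$ and $t$ exchanged, using the $t$-part of the Lax pair. I would first introduce a companion eigenfunction $Z(t,k)$ to $T(t,k)$ via
$$Z(t,k) = I + \int_{0}^{t} e^{i\theta_2(t'-t)\hat{\sigma}_3}(\mathsf{V}Z)(t',k)\, dt',$$
and appeal to \cite{HLNonlinearFourier} for the asymptotic behavior of $T$ and $Z$ as $k\to 0$ and $k\to\infty$ (analogous to that of $X$ and $Y$). Gluing together the appropriate columns of $T$ and $Z$ divided by $A(k)$ (or $\overline{A(\bar k)}$) and $d(k)$ (or $d^{*}(k)$) yields a sectionally meromorphic function $m^{(t)}(t,k)$ which is analytic in each $D_{j}$ (thanks to the analyticity of $A,B$ in $D_{1}\cup D_{3}$) and satisfies a RH problem on $\Gamma$. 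The nontangential limits of $m^{(t)}$ at $k=0$ and at $k=\infty$ encode $g_{0}(t)$ and $g_{1}(t)$ in formulas analogous to (\ref{ulim0}), namely
\begin{align*}
g_{0}(t) &= 2\arg\bigl[(-1)^{N_t}\bigl(\hat{M}^{(t)}_{11}(t) + i\hat{M}^{(t)}_{21}(t)\bigr)\bigr], \qquad t>0, \\
g_{1}(t) &= -g_{0}'(t) - 2i\ntlim_{k\to\infty}\bigl(k\,m^{(t)}(t,k)\bigr)_{12}, \qquad t \geq 0.
\end{align*}

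Second, I would define $M^{(t)}(t,k)$ as a triangular transformation of $M(0,t,k)$, analogous to (\ref{Mxdef}), whose triangular factors contain $r_{1,a}(k)\,e^{-2i\theta_{2}t}$ in $D_{2}$ and $\overline{r_{1,a}(\bar k)}\,e^{2i\theta_{2}t}$ in $D_{3}$ (where the corresponding exponentials are bounded because $\im(-2i\theta_{2}t)\le 0$ in $D_{2}$ for $t\ge 0$, and conversely in $D_{3}$). The purpose of this dressing is to convert the $r_{1}$-type entries in $v$ on $\R$ into $h$-type entries; after the dressing one is left with a jump matrix expressible purely in terms of $h$ and $r=c^{*}/d$.

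Third — and this is the main obstacle — I would verify that $M^{(t)}$ and $m^{(t)}$ solve the same RH problem. This comparison rests squarely on the global relation (\ref{GR}): on $\bar D_{1}$ the identity $Ab=Ba$ gives $b/a=B/A$, and together with the definitions (\ref{r1hdef}), (\ref{cddef}) this allows one to rewrite the dressed jump of $M(0,t,k)$ solely in terms of the ``boundary reflection coefficient'' $\overline{B(\bar k)}/A(k)$, matching the jump produced by the construction from $T$ and $Z$. The vanishing lemma / uniqueness in the $t$-setting (parallel to Lemma \ref{existencemlemma}, applied to the contour $\Gamma$ with the new jump) forces $M^{(t)}=m^{(t)}$.

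Finally, comparison of nontangential limits yields $u(0,t)=g_{0}(t)$ for $t>0$ (with the same integer $j$ fixed in Lemma \ref{initiallemma}; continuity of $u$ and $g_{0}$ handles $t=0$), and combining $u_{t}(0,t)=g_{0}'(t)$ with (\ref{uxutrecover}) specialized to $x=0$ gives $u_{x}(0,t)=g_{1}(t)$. The delicate step will be tracking the behaviour at $k=0$ of the dressing factors, since $e^{\pm 2i\theta_{2}t}$ is singular there; as in the proof of Lemma \ref{initiallemma} one expects that the nontangential limits of the dressed and undressed matrices differ in a controlled triangular way, consistent with $g_{0}(0)=u_{0}(0)$ via the compatibility condition.
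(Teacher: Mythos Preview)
Your overall skeleton is right (swap the roles of $x$ and $t$, build an eigenfunction RH problem from $T$ and a companion $Z$, dress $M(0,t,\cdot)$ to match it, invoke uniqueness), but two of the three concrete steps you describe would not go through as stated.

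First, the dressing of $M(0,t,\cdot)$ is not a triangular perturbation by factors like $r_{1,a}(k)e^{-2i\theta_2 t}$. In the $x$-proof the triangular trick works because the $D_1$ and $D_4$ columns of $M$ are already $[\mu_2]/a$, which equals $[Y]/a$ at $t=0$; only the $D_2$, $D_3$ columns need correction, and a single $h$-factor does it. In the $t$-problem none of the columns of $M$ are built from $T$ or its companion; all four pieces must be converted. The paper first absorbs the unit-circle jump with $h$-factors (defining an intermediate $\mathcal{M}$) and then right-multiplies by explicit matrices
\[
G_1=\begin{pmatrix}a&0\\0&a^{-1}\end{pmatrix},\quad
G_2=\begin{pmatrix}d/\overline{A(\bar\cdot)}&-b\,e^{-2i\theta_2 t}\\0&\overline{A(\bar\cdot)}/d\end{pmatrix},\quad
G_3=\begin{pmatrix}A/\overline{d(\bar\cdot)}&0\\\overline{b(\bar\cdot)}\,e^{2i\theta_2 t}&\overline{d(\bar\cdot)}/A\end{pmatrix},\quad
G_4=\begin{pmatrix}\overline{a(\bar\cdot)}^{-1}&0\\0&\overline{a(\bar\cdot)}\end{pmatrix},
\]
one in each region. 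These are not simple reflection-coefficient triangles; the diagonal entries mix $a,d,A$ and are exactly what is needed so that, after using the global relation $Ab=Ba$ on $\bar D_1$, the resulting jump collapses to the single form involving $B/A$ that your $m^{(t)}$ has. A dressing with $r_{1,a}$ alone cannot produce this.

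Second, your $m^{(t)}$ is built by dividing columns of $T,Z$ by $A$ \emph{and} $d$, but $d$ is only analytic in $\bar D_2\cup\R$, so this does not define a sectionally analytic function on $\C\setminus\Gamma$. The paper divides only by $A$ and $A^{*}$. This forces a further subtlety you did not mention: nothing in the hypotheses prevents $A$ from vanishing in $D_3$, so $[T]_2/A$ may have poles there. The paper handles this by shrinking the unit circle to a small circle $\{|k|=\epsilon\}$ on which $A\neq 0$ (using $|A(0)|=|a(0)|\neq 0$), posing $m^{(t)}$ on the deformed contour $\Gamma_\epsilon$, and simultaneously using the global relation to analytically continue $A$, $B$ and $[T]_2/A$ from $\partial D_1$ into $D_2$. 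Without this deformation the comparison RH problem is not well posed.

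The endgame you describe (compare $k\to 0$ limits for $g_0$, use \eqref{uxutrecover} at $x=0$ for $g_1$, continuity plus compatibility to pin down the $2\pi$-ambiguity) is correct and matches the paper.
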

\begin{proof}
The proof is similar to that of Lemma \ref{initiallemma}; however, since the function $A(k)$  could have zeros in $D_3$, we introduce deformations $\mathcal{D}_j$ of the domains $D_j$ before formulating the RH problem. 

The assumption that $a(k)$  is nonzero for $\im k \geq 0$ implies that 
$$|A(0)| = \Big|\cos\frac{u_0(0)}{2}\Big| = |a(0)| \neq 0.$$
Thus, there exists an $\epsilon > 0$ such that $A(k)$ is nonzero for $k \in \bar{\mathcal{D}}_3$, where $\{\mathcal{D}_j\}_1^4$ are defined by (see Figure \ref{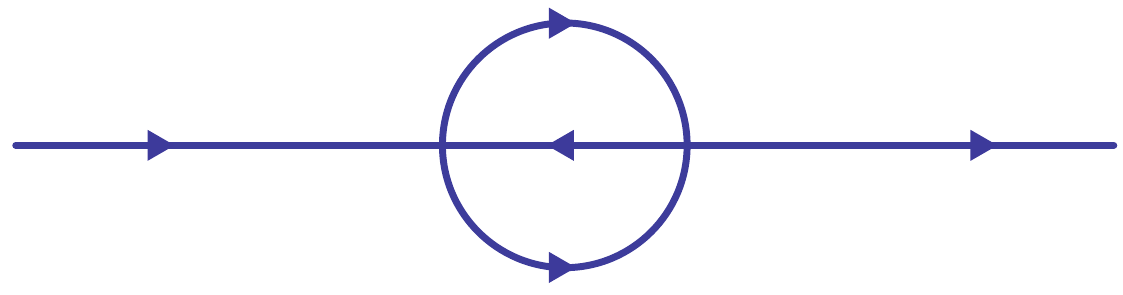})
\begin{align}\nonumber
\mathcal{D}_1 = \{\im k > 0\} \cap \{|k|>\epsilon\},  \qquad
\mathcal{D}_2 = \{\im k > 0\} \cap \{|k|<\epsilon\},
	\\ \nonumber
\mathcal{D}_3 = \{\im k < 0\} \cap \{|k|<\epsilon\},  \qquad
\mathcal{D}_4 = \{\im k < 0\} \cap \{|k|>\epsilon\}.
\end{align}
Let $\Gamma_\epsilon = \R \cup \{|k| = \epsilon\}$ denote the contour separating the $\mathcal{D}_j$ oriented as in Figure \ref{deformedDjs.pdf}.
\begin{figure}
\begin{center}
\bigskip
\begin{overpic}[width=.65\textwidth]{deformedDjs.pdf}
      \put(62,10.3){\small $\epsilon$}
      \put(33.8,10.3){\small $-\epsilon$}
      \put(48,29){\small $\mathcal{D}_1$}
      \put(48,17.5){\small $\mathcal{D}_2$}
      \put(48,6.5){\small $\mathcal{D}_3$}
      \put(48,-5){\small $\mathcal{D}_4$}
      \put(102,12){\small $\Gamma_\epsilon$}
      \end{overpic}\bigskip\bigskip
     \begin{figuretext}\label{deformedDjs.pdf}
       The contour $\Gamma_\epsilon$ and the deformed domains $\{\mathcal{D}_j\}_1^4$ in the complex $k$-plane.
     \end{figuretext}
     \end{center}
\end{figure}
Define $T(t,k)$ by (\ref{Tvolterra}) and let $U(t,k)$ be the eigenfunction defined by
$$U(t,k) = I + \int_{0}^t e^{i\theta_2(t'-t)\hat{\sigma}_3} (\mathsf{V}U)(t',k) dt'.$$
The global relation (\ref{GR}) and the condition $\det S(k) = 1$ gives
$$d(k) = a(k)\overline{A(\bar{k})} + \frac{B(k) a(k)}{A(k)} \overline{B(\bar{k})}
= \frac{a(k)}{A(k)}, \qquad k \in \partial D_1.$$
Thus $A(k)$ admits an analytic continuation to $D_2$. By (\ref{GR}), $B = bA/a$ in $\bar{D}_1$; hence $B(k)$ also admits an analytic continuation to $D_2$.
The relations (\ref{GR}) and $T = U e^{-i\theta_2t\hat{\sigma}_3}S(k)$ show that
$$\frac{[T(t,k)]_2}{A(k)} = \frac{b(k)}{a(k)} e^{-2i\theta_2t} [U(t,k)]_1 + [U(t,k)]_2, \qquad t \geq 0, \ k \in \bar{D}_1.$$
Since $U(t, \cdot)$ is analytic in $\C \setminus \{0\}$, this shows that $[T]_2/A$ also admits an analytic continuation to $D_2$. 
Hence we may define $m^{(t)}$ by
\begin{align}
m^{(t)}(t,k) =
\begin{cases}
 \left([U(t,k)]_1, \frac{[T(t,k)]_2}{A(k)} \right), \qquad k \in \mathcal{D}_1 \cup \mathcal{D}_3,
	\\
 \left(\frac{[T(t,k)]_1}{\overline{A(\bar{k})}}, [U(t,k)]_2 \right), \qquad k \in \mathcal{D}_2 \cup \mathcal{D}_4.
\end{cases}
\end{align}
The properties of $T$ and $U$ imply that $m^{(t)}(t,k)$ satisfies the RH problem
\begin{align}\label{mtRH}
\begin{cases}
m^{(t)}(t, \cdot) \in I + \dot{E}^2(\C \setminus \Gamma_\epsilon),\\
m^{(t)}_+(t,k) = m^{(t)}_-(t, k) \begin{pmatrix} 1 & \frac{B_+(k)}{A_+(k)} e^{-2i\theta_2t} \\
\frac{\overline{B_+(\bar{k})}}{\overline{A_+(\bar{k})}} e^{2i\theta_2t} & \frac{1}{A_+(k)\overline{A_+(\bar{k})}} \end{pmatrix}  \quad \text{for a.e.} \ k \in \Gamma_\epsilon.
\end{cases}
\end{align}
Moreover, from the asymptotics of $T$ and $U$ (see \cite{HLNonlinearFourier})
\begin{subequations}\label{gjlim}
\begin{align}\label{gjlima}
&  g_0(t) = 2 \ntlim_{k\to 0} \arg\big[m_{11}^{(t)}(t,k) + i m_{21}^{(t)}(t,k)\big] \quad \text{(mod $2\pi$)}, \qquad t > 0,
	\\\label{gjlimb}
& g_1(t)= -g_{0}'(t)-2i\ntlim_{k\to\infty}\big(km^{(t)}(t,k)\big)_{12}, \qquad t \geq 0.
\end{align}
\end{subequations}

On the other hand, the function $\mathcal{M}(x,t,k)$ defined by
$$\mathcal{M}(x,t,k) = \begin{cases} M \begin{pmatrix} 1 & 0 \\ -h(k) e^{2i\theta} & 1 \end{pmatrix}, & k \in \mathcal{D}_1 \cap D_2,
	\\
M \begin{pmatrix} 1 & \overline{h(\bar{k})} e^{-2i\theta} \\ 0 & 1 \end{pmatrix}, & k \in \mathcal{D}_4 \cap D_3,
 	\\
M, & \text{otherwise},
\end{cases}$$
satisfies the RH problem
\begin{align}\label{RHMcal}
\begin{cases}
\mathcal{M}(x, t, \cdot) \in I + \dot{E}^2(\C \setminus \Gamma_\epsilon),\\
\mathcal{M}_+(x,t,k) = \mathcal{M}_-(x, t, k) \mathcal{J}(x, t, k) \quad \text{for a.e.} \ k \in \Gamma_\epsilon,
\end{cases}
\end{align}
where $\mathcal{J}$ is given by the same equation (\ref{Jdef}) as $J$ except that the $D_j$ are replaced by $\mathcal{D}_j$.
Using that $a(k)$, $d(k)$, and $A(k)$ are nonzero in $\bar{\C}_+$, $\bar{\mathcal{D}}_2$, and $\bar{\mathcal{D}}_3$, respectively, we infer that
$$\bigg(\frac{d(k)}{\overline{A(\bar{k})}}\bigg)^{\pm 1}  \in E^\infty(\mathcal{D}_2), \qquad
a(k)^{\pm 1} \in 1 + (\dot{E}^2 \cap E^\infty)(\C_+).$$
Consequently, the functions
\begin{align}\nonumber
& G_1(t,k) =  \begin{pmatrix} a(k) & 0 \\ 0 & \frac{1}{a(k)} \end{pmatrix},
\qquad G_2(t,k) = \begin{pmatrix} \frac{d(k)}{\overline{A(\bar{k})}} & - b(k) e^{-2i\theta_2t} \\ 0 & \frac{\overline{A(\bar{k})}}{d(k)} \end{pmatrix},
	\\ \label{Gjdef}
& G_3(t,k) = \begin{pmatrix} \frac{A(k)}{\overline{d(\bar{k})}} & 0 \\ \overline{b(\bar{k})} e^{2i\theta_2t} & \frac{\overline{d(\bar{k})}}{A(k)} \end{pmatrix},
\qquad
G_4(t,k) = \begin{pmatrix} \frac{1}{\overline{a(\bar{k})}} & 0 \\ 0 & \overline{a(\bar{k})} \end{pmatrix},
\end{align}
satisfy $G_j(t,\cdot) \in I + (\dot{E}^2 \cap E^\infty)(\mathcal{D}_j)$ for $j = 1,\dots, 4$.
Using the expression (\ref{Jdef}) for $J$ and the global relation (\ref{GR}), long but straightforward computations show that the function $M^{(t)}(t,k)$ defined by
\begin{align}\label{Mtdef}
M^{(t)}(t,k) = \mathcal{M}(0,t,k)G_j(t,k), \qquad k \in \mathcal{D}_j, \quad j = 1,\dots,4,
\end{align}
also satisfies the RH problem (\ref{mtRH}).
Hence $M^{(t)} = m^{(t)}$ by uniqueness. 

Equation (\ref{Mtdef}) implies
$$\ntlim_{\underset{\im k > 0}{k \to 0}} [M^{(t)}(t,k)]_1 = \frac{d(0)}{A(0)} \ntlim_{k \to 0} [M(0,t,k)]_1, \qquad t\geq 0,$$
and so, by the definition (\ref{ulim2}) of $u(x,t)$,
$$u(0,t) = 2 \ntlim_{\underset{\im k > 0}{k \to 0}} \arg\big[M_{11}^{(t)}(t,k) + i M_{21}^{(t)}(t,k)\big] \quad  \text{(mod $2\pi$)}, \qquad t \geq 0.$$
Comparing this with (\ref{gjlima}), we obtain that $u(0,t)$ equals $g_0(t)$ up to an integer multiple of $2\pi$ for $t > 0$. By Lemma \ref{boundarylemma} and the compatibility at $x = t = 0$, we have $u(0,0) = u_0(0) = g_0(0)$. Since $u(0,t)$ and $g_0(t)$ are continuous for $t \geq 0$, we deduce that $u(0,t) = g_0(t)$ for $t \geq 0$.
Comparison of the expression (\ref{uxutrecover}) for $u_x + u_t$ with (\ref{gjlimb}) then gives $u_x(0,t) = g_1(t)$ for $t \geq 0$.
\end{proof}

\section{Proof of Theorem \ref{asymptoticsth}: Overview}\label{overviewsec}
The proof of Theorem \ref{asymptoticsth} utilizes nonlinear steepest descent techniques first developed for the mKdV equation on the line by Deift and Zhou \cite{DZ1993}, and later applied to the sine-Gordon equation (\ref{sg}) on the line by Cheng, Venakides, and Zhou \cite{CVZ1999}. In Sections \ref{sectorIsec}-\ref{sectorIVsec}, we present detailed derivations of the four asymptotic formulas in (\ref{uasymptotics}). In this section, we give a brief overview of the key elements that enter the proofs.

Assume $r_1:\R \to \C$ and $h:\bar{D}_2 \to \C$ are two arbitrary functions satisfying Assumption \ref{r1hassumption2} and define the jump matrix $J(x,t,k)$ by (\ref{Jdef}). According to Theorem \ref{existenceth}, the associated RH problem (\ref{RHM}) determined by $J$ has a unique solution $M(x,t,k)$. Our goal is to find the behavior as $(x,t) \to \infty$ of the function $u(x,t)$  defined in terms of $M$  by (\ref{ulim}). We will accomplish this by applying various transformations to the RH problem (\ref{RHM}) for $M$.

\subsection{Sector I}
We first consider Sector I, which is the easiest to deal with. 
The jump matrix (\ref{Jdef}) involves the exponentials $e^{\pm 2i\theta} = e^{\pm x\tilde{\Phi}}$,
where $\tilde{\Phi} := \tilde{\Phi}(\zeta, k)$ is defined by
$$\tilde{\Phi}(\zeta, k) 
= \frac{i}{2}\bigg(k - \frac{1}{k}\bigg) + \frac{i}{2}\bigg(k + \frac{1}{k}\bigg)\frac{1}{\zeta}.$$
Since $\zeta = x/t \geq 1$ in Sector I, we have $\re \tilde{\Phi} \lessgtr 0$ for $k \in \C_\pm$. Thus, as $x \to\infty$, the exponentials $e^{x\tilde{\Phi}}$ and  $e^{-x\tilde{\Phi}}$ are small for $k$ in the upper and lower half-planes, respectively. This suggests deforming the contour of the RH problem (\ref{RHM}) in such a way that all exponentials of the form $e^{x\tilde{\Phi}}$ ($e^{-x\tilde{\Phi}}$) are brought into the upper (lower) half-plane. We accomplish this in two steps. 
First, we introduce $M^{(1)}$ by deforming the jump across the unit circle involving the spectral function $h(k)$, so that it passes through the origin, see Figure \ref{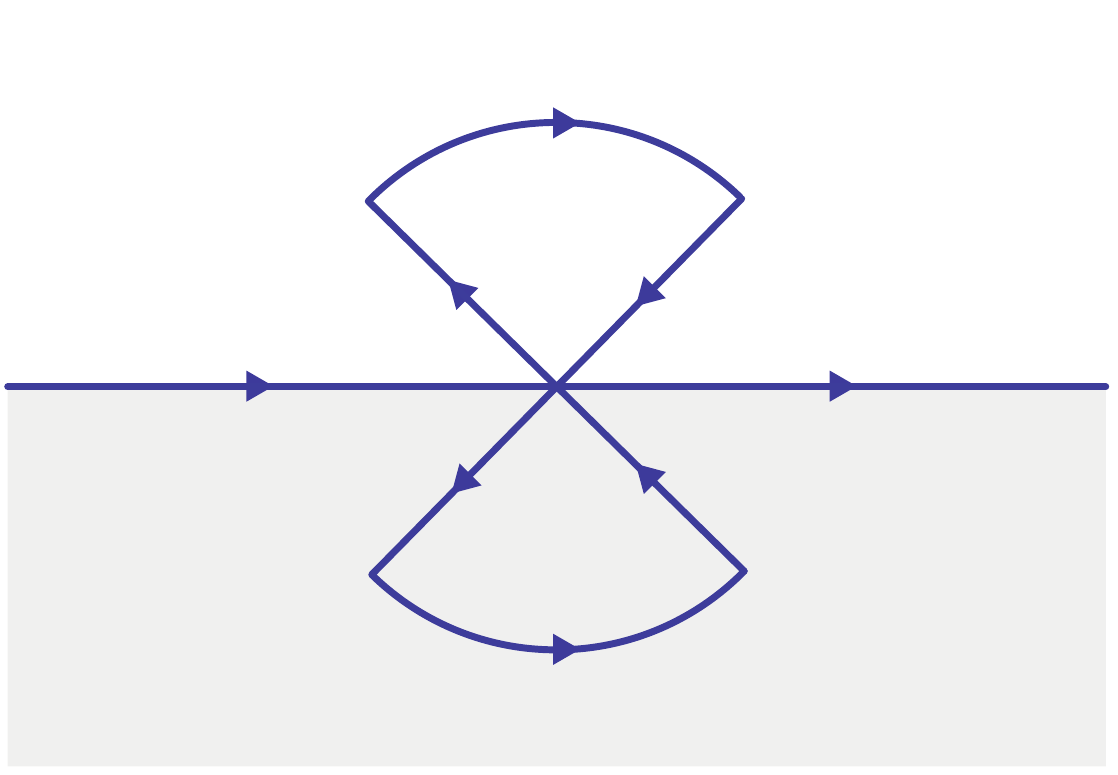}. Then, after introducing an analytic approximation of $r_1(k)$, we define $m(x,t,k)$ by deforming (the analytic part of) the jump of $M^{(1)}$ across $\R$ into the upper and lower half-planes in an appropriate way, see Figure \ref{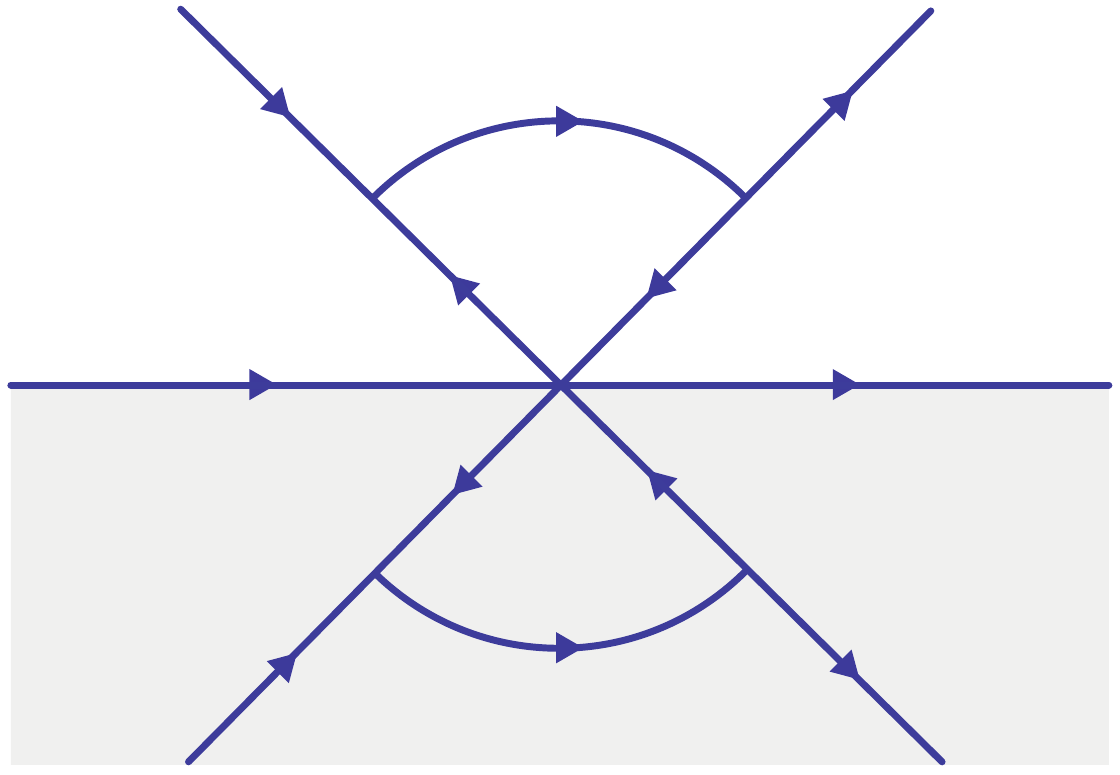}. The jump matrix $v$ of the RH problem for $m$ is exponentially close to the identity matrix except near $k = 0$. Since $r(k)$ vanishes to all orders at $k = 0$, the outcome is that $m(x,t,k)$, and hence also $u(x,t)$, is of $O(x^{-N})$ for each $N \geq 1$ as $x \to \infty$.

\subsection{Sector II}
In this case, we suppose $\zeta \in [0, 1)$ and write $e^{\pm 2i\theta} = e^{\pm t\Phi}$,
where $\Phi := \Phi(\zeta, k)$ is defined by
$$\Phi(\zeta, k) 
= \frac{i}{2}\bigg(k - \frac{1}{k}\bigg)\zeta + \frac{i}{2}\bigg(k + \frac{1}{k}\bigg)
= \frac{i(k^2 + k_0^2)}{k(1+ k_0^2)}$$
with
\begin{align}\label{k0def}
k_0 := k_0(\zeta) = \sqrt{\frac{1-\zeta}{1+\zeta}} \in (0,1].
\end{align}
The exponential  $e^{\pm t\Phi}$ is small for large $t$ if $\re \Phi \lessgtr 0$. The function $\re \Phi$ changes sign across $\R$ and across the circle $|k| = k_0$, see Figure \ref{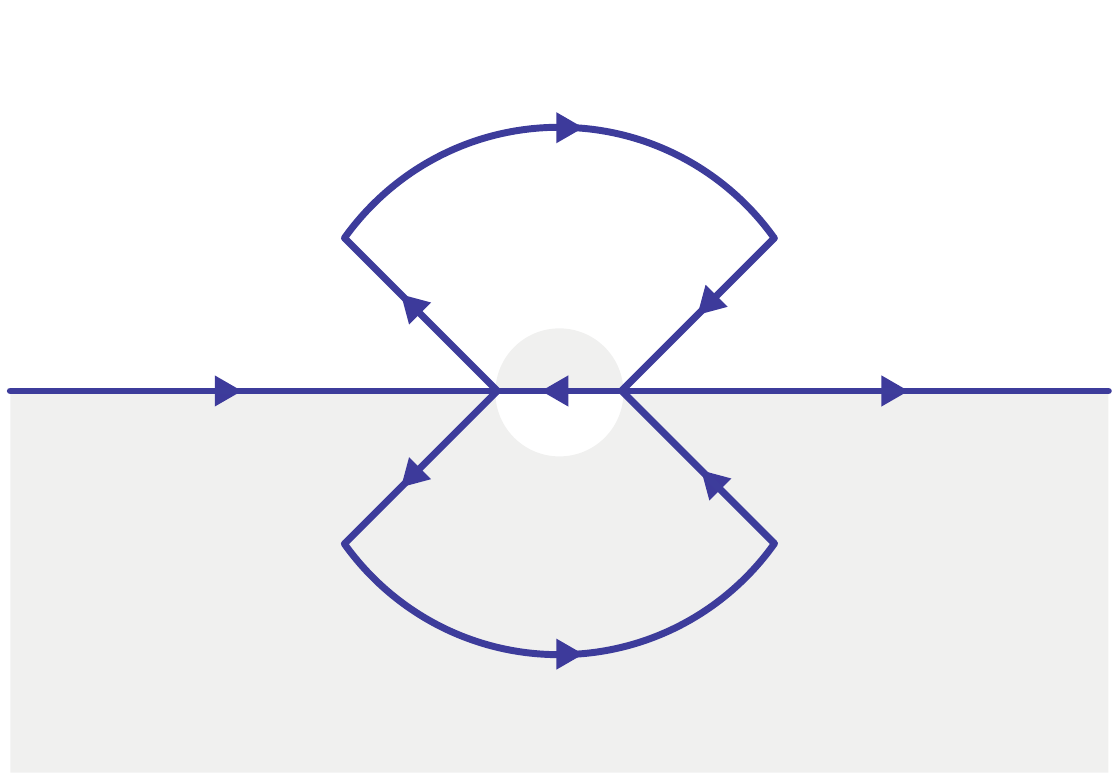}. This suggests performing the following transformations. First, we define $M^{(1)}$ by deforming the jump across the unit circle so that it passes through the two critical points $\pm k_0$. 
Then, after introducing an analytic approximation\footnote{As opposed to the analytic approximation employed in Sector I (which is a good approximation of $r_1(k)$ for large $x$), this approximation must be good for large $t$.}
of $r_1(k)$, we define $m(x,t,k)$ by deforming (the analytic part of) the jump of $M^{(1)}$ across $(-\infty, -k_0) \cup (k_0, \infty)$ into the upper and lower half-planes in an appropriate way, see Figure \ref{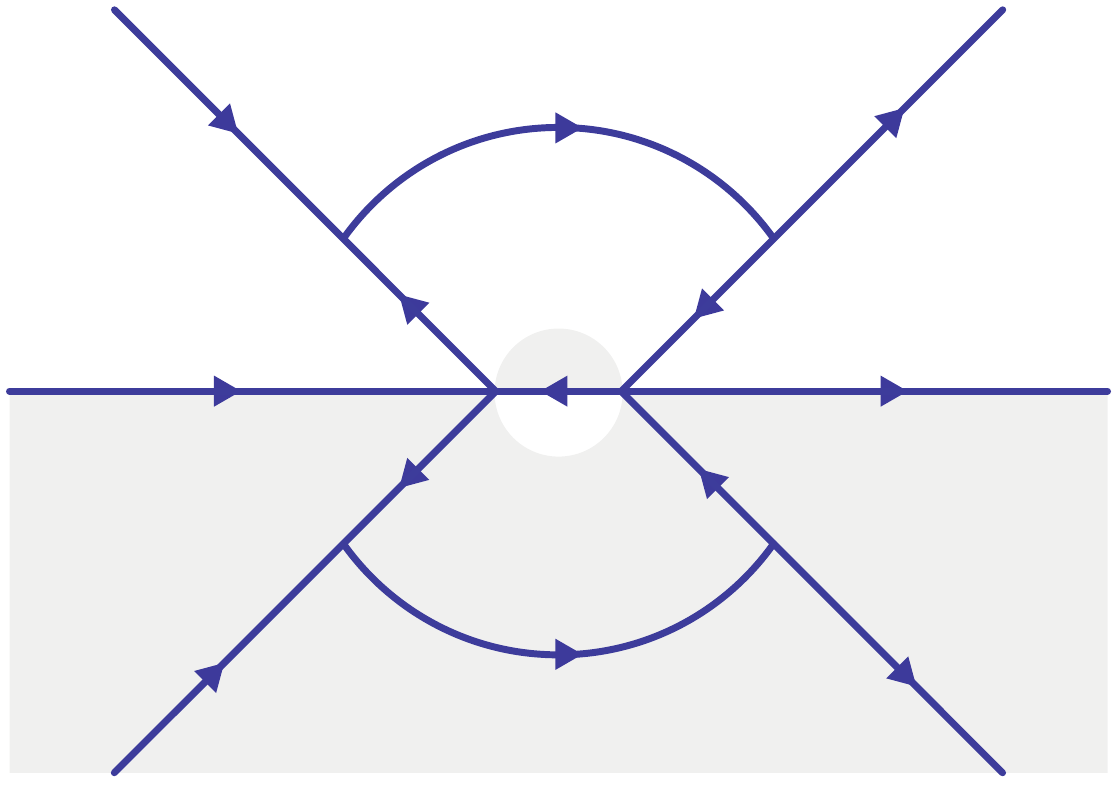}. 
Letting $w := v - I$ denote the departure of the jump matrix $v$ of $m$ from the identity matrix, it can be shown that $(1+|k|^{-1})w$ is $O(k_0^N)$ on the interval $(-k_0,k_0)$ and $O(k_0^N + t^{-N})$ on the diagonal lines emanating from $\pm k_0$. On the remaining part of the contour, $(1+|k|^{-1})w$ is either exponentially small or $O(t^{-N})$. This leads to the asymptotics in Sector II.

\subsection{Sector III}
The analysis of Sector III, which is the most complicated, begins in the same way as for Sector II. However, in Sector III, the critical points $\pm k_0$ may not approach the origin as $t \to \infty$. Therefore, we need to treat the jump across $(-k_0, k_0)$ in a different way. Thus, after introducing $M^{(1)}$ as in Sector II, we introduce $M^{(2)}$ by $M^{(2)} = M^{(1)}\delta^{-\sigma_3}$, where the complex-valued function $\delta(\zeta,k)$ is analytic in $\C \setminus [-k_0,k_0]$. The jump of $\delta$ across $[-k_0,k_0]$ is chosen so that the jump  $J^{(2)}$ of $M^{(2)}$ across the same interval can be appropriately factorized. Then, after introducing analytic approximations of $h(k)$, $r_1(k)$, and $r_2(k) := r^*(k)/(1+|r(k)|^2)$, we define $M^{(3)}(x,t,k)$ by appropriately deforming the analytic parts of the jumps, see Figure \ref{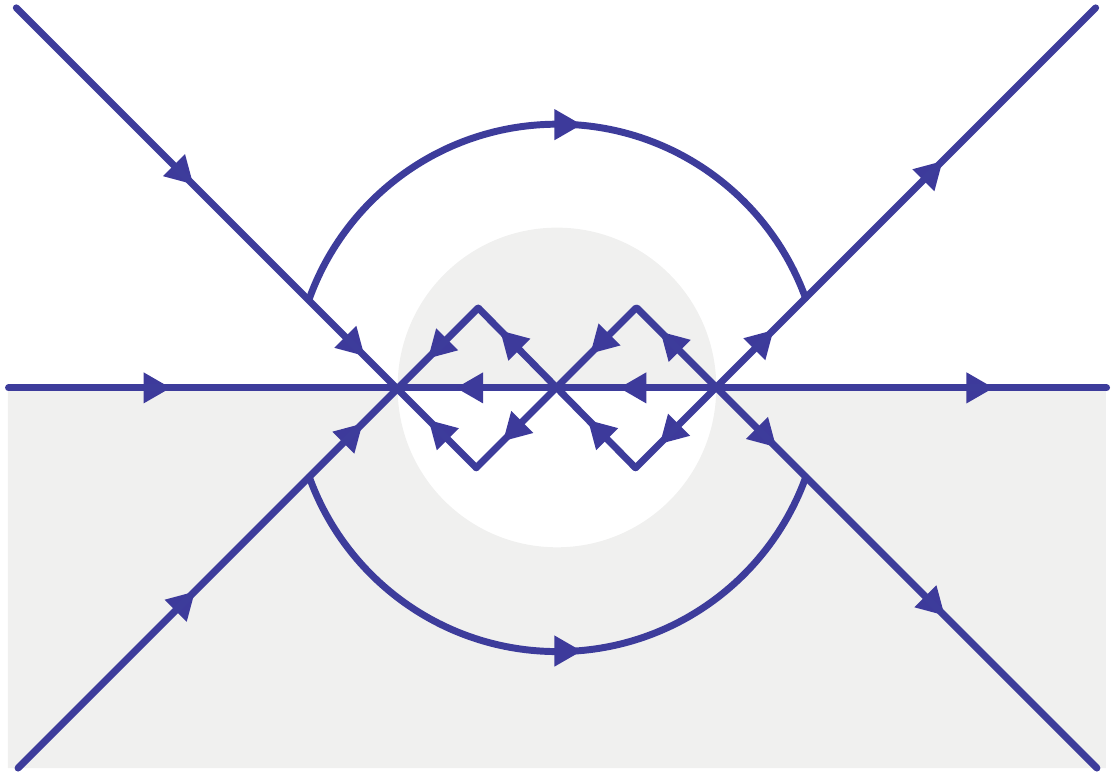}. The jump of the resulting RH problem is close to the identity, except across two small crosses centered at the critical points $\pm k_0$. By constructing an approximate solution $M^{app}$ whose jumps across these small crosses are close to those of $M^{(3)}$, and then considering the matrix quotient $m = M^{(3)} (M^{app} )^{-1}$, we arrive at the asymptotics in Sector III.

\subsection{Sector IV}
As in Sector III, the main contributions in Sector IV come from two small crosses centered on the critical points $\pm k_0$. However, in Sector IV, since $\pm k_0$ are close to the points $\pm 1$ at which $r(k)$ vanishes, these contributions are suppressed. The smaller $r(k)$ is near $\pm 1$, the more suppressed they are. In other words, the vanishing behavior of $r(k)$ at $k = \pm 1$ is tied to decay of $u(x,t)$  near the boundary. 

\begin{remark}\upshape
We know from Theorem \ref{existenceth} that $u$ is a $C^2$ solution of (\ref{sg}). Actually, using the stronger assumptions on $r_1(k)$ and $h(k)$ of Assumption \ref{r1hassumption2}, it follows easily from the proof of Theorem \ref{existenceth} that $u \in C^\infty([0,\infty) \times [0,\infty), \R)$. 
\end{remark}

\section{Proof of Theorem \ref{asymptoticsth}: Asymptotics in Sector I}\label{sectorIsec}
Let $\mathcal{I} = (1,\infty)$ and suppose $\zeta \in \mathcal{I}$.
Fix an integer $N \geq 1$.

\subsection{Transformations of the RH problem}
Let $\{V_j\}_1^2$ denote the open subsets
\begin{align*}
& V_1 = \{u e^{i\theta} \, | \, 0 < u < \infty, \theta \in (0,\pi/4) \cup (3\pi/4, \pi)\},
	\\
& V_2 = \{u e^{i\theta} \, | \, 0 < u < \infty, \theta \in (-\pi,-3\pi/4) \cup (-\pi/4, 0)\},
\end{align*}
and let $U_j := V_j \cap \{|k| < 1\}$, $j = 1,2$, denote the parts of these sets that lie in the unit disk, see Figure \ref{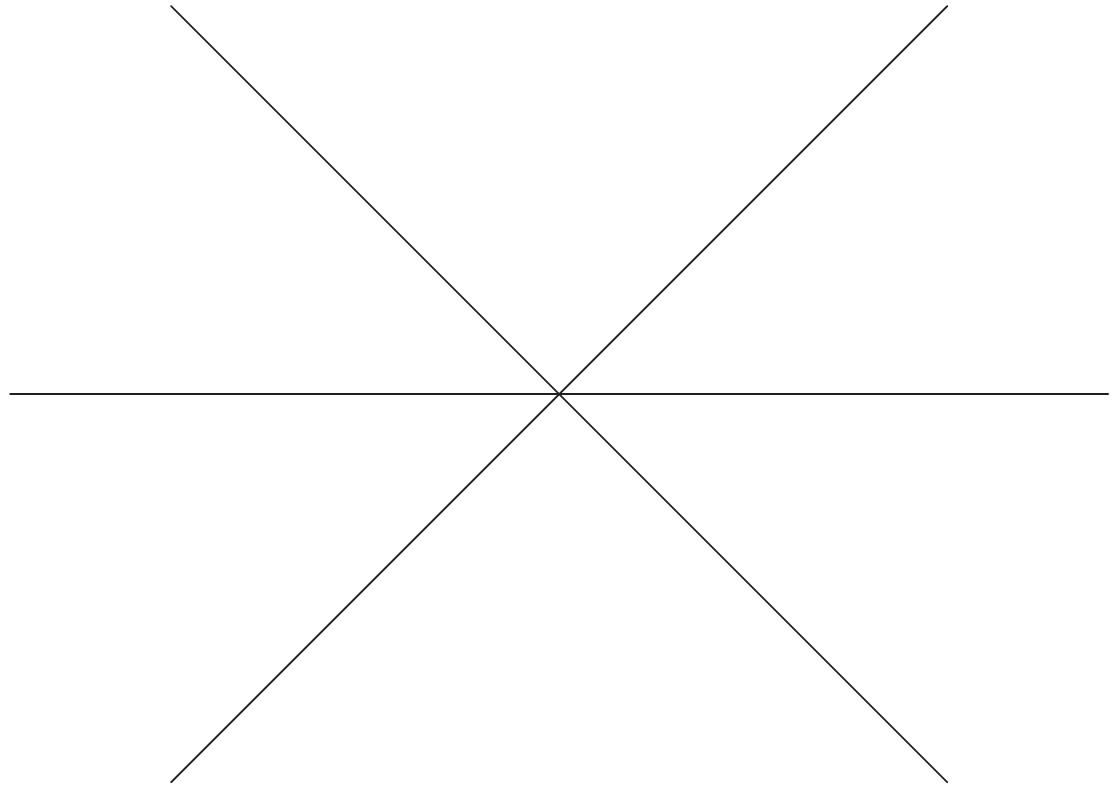}. 

\begin{figure}
\begin{center}
\begin{overpic}[width=.45\textwidth]{VjsI.pdf}
      \put(75,45){\small $V_1$}
      \put(22,45){\small $V_1$}
        \put(75,23){\small $V_2$}
      \put(22,23){\small $V_2$}
      \put(48.7,30){\small $0$}
    \end{overpic}
    \qquad
    \begin{overpic}[width=.45\textwidth]{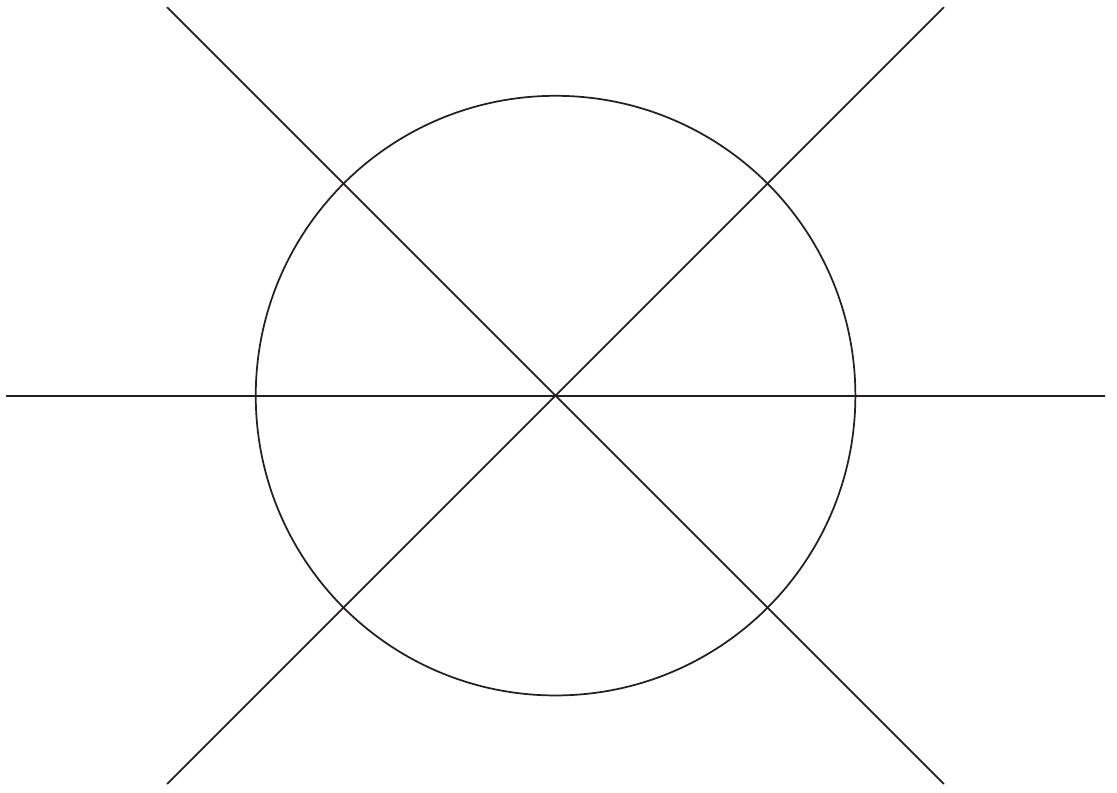}
      \put(63,41){\small $U_1$}
      \put(30,41){\small $U_1$}
        \put(63,27){\small $U_2$}
      \put(30,27){\small $U_2$}
      \put(48.3,30){\small $0$}
      \put(78,30){\small $1$}
      \put(15,30){\small $-1$}
    \end{overpic}\bigskip
     \begin{figuretext}\label{VjsI.pdf}
       The open subsets $\{V_j, U_j\}_1^2$ of the complex $k$-plane relevant for Sector I. 
     \end{figuretext}
     \end{center}
\end{figure}

\subsubsection{First transformation}\label{firsttransformationsubsecI}
Define $M^{(1)}(x,t,k)$ by 
\begin{align}\label{M1defI}
M^{(1)}(x,t,k) = M(x,t,k) \times \begin{cases} 
 \begin{pmatrix} 1 & 0 \\ -h(k) e^{x\tilde{\Phi}(\zeta, k)} & 1 \end{pmatrix}, & k \in U_1, 
 	\\
 \begin{pmatrix} 1 & h^*(k) e^{-x\tilde{\Phi}(\zeta, k)} \\ 0 & 1 \end{pmatrix}, & k \in U_2, 
 	\\
I, & \text{elsewhere}.
\end{cases}
\end{align}
The functions $h e^{x\tilde{\Phi}}$ and $h^*e^{-x\tilde{\Phi}}$ are bounded and analytic in $U_1$ and $U_2$, respectively. Hence $M$ satisfies the RH problem (\ref{RHM}) iff $M^{(1)}$ 
satisfies the RH problem 
\begin{align}\label{RHMj}
\begin{cases}
M^{(j)}(x, t, \cdot) \in I + \dot{E}^2(\C \setminus \Gamma^{(j)}),\\
M^{(j)}_+(x,t,k) = M^{(j)}_-(x, t, k) J^{(j)}(x, t, k) \quad \text{for a.e.} \ k \in \Gamma^{(j)},
\end{cases}
\end{align}
for $j = 1$, where the contour $\Gamma^{(1)}$ is displayed in Figure \ref{Gamma1I.pdf} and the jump matrix $J^{(1)}$ is given by
\begin{align}\nonumber
&J_1^{(1)} = \begin{pmatrix} 1 & 0 \\  -h e^{x\tilde{\Phi}} & 1 \end{pmatrix}, 
\qquad  J_3^{(1)} = \begin{pmatrix} 1 & - h^* e^{-x\tilde{\Phi}} \\ 0 & 1 \end{pmatrix}, 
\qquad
J_4^{(1)} = \begin{pmatrix} 1 + r_1 r_1^* & r_1^* e^{-x\tilde{\Phi}} \\
 r_1e^{x\tilde{\Phi}} & 1 \end{pmatrix}.
\end{align}
Here and below $J_i^{(1)}$ denotes the restriction of $J^{(1)}$ to the subcontour labeled by $i$ in Figure \ref{Gamma1I.pdf}.

\begin{figure}
\begin{center}
\begin{overpic}[width=.65\textwidth]{Gamma1I.pdf}
      \put(102,34){\small $\Gamma^{(1)}$}
      \put(76,60){\small $\re \tilde{\Phi} < 0$}
      \put(76,10){\small $\re \tilde{\Phi} > 0$}
      \put(50,60){\small $1$}
      \put(38,41){\small $1$}
      \put(60,41){\small $1$}
      \put(50,13){\small $3$}
      \put(38,26){\small $3$}
      \put(60,26){\small $3$}
      \put(74,37){\small $4$}
      \put(22,37){\small $4$}
      \put(49,30.5){\small $0$}
    \end{overpic}
     \begin{figuretext}\label{Gamma1I.pdf}
       The contour $\Gamma^{(1)}$ in the complex $k$-plane relevant for Sector I.
     \end{figuretext}
     \end{center}
\end{figure}

\subsubsection{Second transformation} 
In order to deform the jump along the real axis, we need to introduce an analytic approximation of $r_1$.

\begin{lemma}[Analytic approximation of $r_1$ for $1 < \zeta < \infty$]\label{decompositionlemmaI}
There exists a decomposition 
\begin{align*}
 r_1(k) = r_{1,a}(x, t, k) + r_{1,r}(x, t, k), \qquad k \in \R \setminus \{0\}, 
\end{align*}
where the functions $r_{1,a}$ and $r_{1,r}$ have the following properties:
\begin{enumerate}[$(a)$]
\item For each $\zeta \in \mathcal{I}$ and each $x > 0$, $r_{1,a}(x, t, k)$ is defined and continuous for $k \in \bar{V}_1$ and analytic for $k \in V_1$.

\item The function $r_{1,a}$ satisfies
\begin{align}\label{rjaestimatesI}
\begin{cases} 
\bigg|r_{1, a}(x, t, k) - \sum_{j=0}^{N} \frac{r_1^{(j)}(0)k^j}{j!}\bigg| \leq C |k|^{N+1} e^{\frac{x}{4}|\re \tilde{\Phi}(\zeta,k)|}, 
	\\  
|r_{1, a}(x, t, k)| \leq \frac{C}{1 + |k|} e^{\frac{x}{4}|\re \tilde{\Phi}(\zeta,k)|},
\end{cases}   k \in \bar{V}_1, \ \zeta \in \mathcal{I}, \ x > 0,
\end{align}
where the constant $C$ is independent of $x, t, k$.

\item The $L^1$ and $L^\infty$ norms on $\R$ of the function $k \mapsto (1+|k|^{-1}) r_{1,r}(x, t, k)$ are $O(x^{-N-\frac{1}{2}})$ as $x \to \infty$ uniformly with respect to $\zeta \in \mathcal{I}$.

\item $r_{1,a}$ obeys the symmetries 
\begin{align}\label{rsymmetriesI}
r_{1,a}(\zeta, t, k) = \overline{r_{1,a}(\zeta, t, -\bar{k})}, \quad
r_{1,r}(\zeta, t, k) = \overline{r_{1,r}(\zeta, t, -\bar{k})}.
\end{align}
\end{enumerate}
\end{lemma}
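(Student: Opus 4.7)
The plan is to construct $r_{1,a}$ by a Fourier-type analytic continuation keyed to the phase $\tilde\Phi$, in the spirit of Deift--Zhou \cite{DZ1993} and its sine-Gordon incarnation in \cite{CVZ1999}. The statement splits naturally into three tasks: an explicit ``local-data'' piece that matches $r_1$ near $0$ and near $\infty$, an analytic interior piece coming from Fourier analysis, and a small remainder. The key will be to tie the cutoff in frequency space to the phase $\tilde\Phi$ so that the analytic extension to $\bar V_1$ picks up exactly the exponential weight $e^{\frac{x}{4}|\re\tilde\Phi|}$.

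First, I would reduce to the Schwartz-like setting. Using the smoothness of $r_1 \in C^\infty(\R)$ and the asymptotic expansion (\ref{r1expansion2}), there is an explicit rational function $q(k)$ with no poles in $\bar V_1$ such that $q(k)$ captures the Taylor polynomial $\sum_{j=0}^N r_1^{(j)}(0)k^j/j!$ at $k=0$ and the expansion $\sum_{j=1}^N r_{1,j}/k^j$ at $k=\infty$, decays like $1/(1+|k|)$ on $\bar V_1$, and satisfies the symmetry $q(k)=\overline{q(-\bar k)}$. The function $g(k):=r_1(k)-q(k)$ is then smooth on $\R$, vanishes to order $N+1$ at $k=0$, decays as $O(|k|^{-N-1})$ at infinity together with many derivatives, and inherits the symmetry.

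Next, I would exploit the fact that on $\R\setminus\{0\}$ the phase is pure imaginary: write $\tilde\Phi(\zeta,k)=i\tilde\psi(\zeta,k)$, where for $\zeta\in\mathcal I$ the function $\tilde\psi(\zeta,\cdot)$ is a strictly increasing diffeomorphism of each of $(-\infty,0)$ and $(0,\infty)$ onto $\R$. Pulling back by $s=\tilde\psi(\zeta,k)$ turns $g$ (on each branch) into a Schwartz function $G(s)$ whose Fourier transform $\hat G$ decays faster than any polynomial, with norms uniform in $\zeta\in\mathcal I$. By Fourier inversion
\begin{equation*}
g(k)\;=\;\frac{1}{2\pi}\int_{\R} \hat G(\tau)\,e^{i\tau\tilde\psi(\zeta,k)}\,d\tau
\;=\;\frac{1}{2\pi}\int_{\R} \hat G(\tau)\,e^{\tau\tilde\Phi(\zeta,k)}\,d\tau,
\end{equation*}
and the split at $|\tau|=x/4$ produces $g=g_a+g_r$, with the low-frequency part $g_a$ analytically continuing to $\bar V_1$ since $e^{\tau\tilde\Phi}$ is entire in $k\in\C\setminus\{0\}$. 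Because $\re\tilde\Phi<0$ on $V_1$, the low-frequency integrand is bounded by $|\hat G(\tau)|e^{|\tau||\re\tilde\Phi|}$, which yields $|g_a(x,t,k)|\leq C\|\hat G\|_{L^1}\,e^{\frac{x}{4}|\re\tilde\Phi|}$, exactly the desired exponential growth. Setting $r_{1,a}:=q+g_a$ and $r_{1,r}:=g_r$ then gives the full decomposition; the $1/(1+|k|)$ factor in the second bound of (\ref{rjaestimatesI}) comes from $q$, while the Taylor-match estimate follows because all Taylor coefficients of order $\leq N$ at $0$ sit in $q$, with the error controlled by $g_a=O(k^{N+1})$ up to a factor that can be absorbed into the exponential weight.

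The remainder estimates on $r_{1,r}=g_r$ follow from tail bounds on $\hat G$: since $G$ is Schwartz, for any $M\geq 1$, $|\hat G(\tau)|\leq C_M (1+|\tau|)^{-M}$, giving $\|g_r\|_{L^1\cap L^\infty}=O(x^{-N-1/2})$ once $M$ is chosen larger than $N+3/2$; the $(1+|k|^{-1})$-weighted norms are treated identically after noting that $g$ and all its derivatives vanish at $k=0$ to order $N+1$. Finally, the symmetries (\ref{rsymmetriesI}) can be imposed by symmetrizing $q$ and $\hat G$ under $\tau\mapsto -\tau$ combined with $k\mapsto -\bar k$, which is consistent with the symmetry of $r_1$ and of $\tilde\Phi$. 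The main obstacle I expect is the first step: constructing the rational piece $q$ so that it simultaneously interpolates the Taylor data at $0$, the Laurent data at $\infty$, has no zeros/poles in $\bar V_1$, decays like $1/(1+|k|)$, and respects the symmetry---this is a Padé-type problem whose solvability is classical but for which the dependence of constants on $N$ and $\zeta$ requires careful bookkeeping to verify the uniformity in $\zeta\in\mathcal I$.
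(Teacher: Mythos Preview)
Your overall strategy is the paper's: subtract a rational piece matching $r_1$ at $0$ and at $\infty$, change variables to the phase $\phi=-i\tilde\Phi(\zeta,\cdot)$ (a bijection $(0,\infty)\to\R$), Fourier transform, and cut at the scale $x/4$. However, the execution has a real gap, and it concerns precisely how the pointwise $k$-weights in (b) and (c) are produced.

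The Fourier cutoff in the $\phi$-variable does not preserve the local behavior of $g$ in the $k$-variable. In particular, $g_a$ need not vanish to order $N+1$ at $k=0$ (so your claim ``$g_a=O(k^{N+1})$'' does not follow from $g=O(k^{N+1})$), $g_a$ need not decay at $|k|\to\infty$ (so ``the $1/(1+|k|)$ factor comes from $q$'' is false: $r_{1,a}=q+g_a$ and your bound on $g_a$ is only $Ce^{\frac{x}{4}|\re\tilde\Phi|}$), and $g_r$ need not vanish at $k=0$ (so the $(1+|k|^{-1})$-weighted norm in (c) is not controlled). A related point: the pulled-back function $G$ is not Schwartz---it is only $O(|\phi|^{-N-1})$ at $\pm\infty$---so $\hat G$ does not decay faster than every polynomial, and the tail bound $O(x^{-N-1/2})$ needs exactly $H^{N+1}$ control, which your matching (order $N$ at $0$) does not deliver.

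The paper repairs all of this with one device: it matches $r_1$ to order $2N+2$ (not $N$) at $k=0$, and then inserts an explicit weight before Fourier transforming, setting
\[
F(\zeta,\phi)=\frac{(k+i)^{N+3}}{k^{N+1}}\,f(k),
\]
so that $F\in H^{N+1}(\R)$ uniformly in $\zeta$. Reconstructing with the inverse prefactor $\frac{k^{N+1}}{(k+i)^{N+3}}$ is exactly what produces the factor $|k|^{N+1}$ in the first line of \eqref{rjaestimatesI}, the decay $\frac{C}{1+|k|}$ in the second line, and the $(1+|k|^{-1})$-boundedness of $r_{1,r}$ in (c). Without this weight (or an equivalent), the three estimates do not follow.
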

\begin{proof}
Let $V_1 = V_1^+ \cup V_1^-$, where $V_1^+$ and $V_1^-$ denote the parts of $V_1$ in the right and left half-planes, respectively. We will derive the decomposition of $r_1$ in $V_1^+$ and then use the symmetry (\ref{rsymmetriesI}) to extend it to $V_1^-$. 
Since $r_1 \in C^{\infty}(\R)$, we have
\begin{align*}
 r_1^{(n)}(k) = \frac{d^n}{dk^n}\bigg(\sum_{j=0}^{2N+2} \frac{r_1^{(j)}(0)}{j!} k^j\bigg) + O(k^{2N+3-n}), \qquad k \to 0, \  n = 0,1,\dots, 2N+3.
\end{align*}
Let $r_{1,j}$ be the coefficients in (\ref{r1expansion2}) describing the behavior of $r_1(k)$ as $k \to \infty$. Define the rational function $f_0(k)$ by
$$f_0(k) = \sum_{j=1}^{2N+5} \frac{a_j}{(k + i)^j},$$
where the complex constants $\{a_j\}_1^{2N+5}$ are chosen so that $f_0(k)$ coincides with $r_1(k)$ to order $2N+2$ as $k \to 0$ and to second order as $k \to \infty$, i.e.,
\begin{align}\label{linearconditionsI}
f_0(k) =
\begin{cases}
\sum_{j=0}^{2N+2} \frac{r_1^{(j)}(0)}{j!} k^j + O(k^{2N+3}), & k \to 0,
	\\
\sum_{j=1}^{2} r_{1,j} k^{-j} + O(k^{-3}), & k \to \infty.
\end{cases}
\end{align}
The coefficients $a_j$ exist and are unique, because (\ref{linearconditionsI}) imposes $2N+5$ independent linear conditions on the $a_j$. 
The function $f(k)$ defined by $f(k) = r_1(k) - f_0(k)$ satisfies
\begin{align}\label{fcoincideI}
 \frac{d^n f}{dk^n} (k) =
\begin{cases}
 O(k^{2N + 3 - n}), \quad& k \to 0, 
	\\
O(k^{-3}), &  k \to \infty, 
 \end{cases}
 \  n = 0,1,\dots, 2N+3.
\end{align}

The decomposition of $r_1(k)$ can now be derived as follows.
For each  $\zeta \in \mathcal{I}$, the map $k \mapsto \phi = \phi(\zeta, k)$ where
\begin{align*}
  \phi = -i\tilde{\Phi}(\zeta, k) = \frac{1}{2}\bigg(k - \frac{1}{k}\bigg) + \frac{1}{2\zeta}\bigg(k + \frac{1}{k}\bigg)
\end{align*}  
is an increasing bijection  $(0, \infty) \to (-\infty,\infty)$, so we may define a function $F(\zeta, \phi)$ by
\begin{align}\label{Fdef2I}
F(\zeta, \phi) = \frac{(k+i)^{N+3}}{k^{N+1}} f(k), \qquad \zeta \in \mathcal{I}, \ \phi \in \R.
\end{align}
For each $\zeta \in \mathcal{I}$, $F(\zeta, \phi)$ is a smooth function of $\phi \in \R$ and 
\begin{align*}
\frac{\partial^n F}{\partial \phi^n}(\zeta, \phi) = \bigg(\frac{1}{\partial \phi/\partial k} \frac{\partial }{\partial k}\bigg)^n 
\bigg[\frac{(k+i)^{N+3}}{k^{N+1}} f(k)\bigg], \qquad \phi \in \R,
\end{align*}
where
$$\frac{\partial \phi}{\partial k}(\zeta, k) = \frac{\zeta - 1 + k^2(\zeta + 1)}{2\zeta k^2}.$$
Using (\ref{fcoincideI}), we see that $\frac{\partial^n F}{\partial \phi^n} = O(k^{N+2-n}) = O(k)$ as $k \to 0$ and 
$\frac{\partial^n F}{\partial \phi^n} = O(k^{-1})$ as $k \to \infty$ uniformly with respect to $\zeta \in \mathcal{I}$ for $n = 0, 1, \dots, N+1$. 
Hence, for $n = 0, 1, \dots, N+1$,
$$\bigg\|\frac{\partial^n F}{\partial \phi^n}(\zeta, \cdot)\bigg\|_{L^2(\R)}^2
= \int_\R \bigg|\frac{\partial^n F}{\partial \phi^n}(\zeta, \phi)\bigg|^2 d\phi
= \int_0^\infty \bigg|\frac{\partial^n F}{\partial \phi^n}(\zeta, \phi)\bigg|^2 \frac{\partial \phi}{\partial k} dk < C, \qquad \zeta \in \mathcal{I}.$$
Thus $F(\zeta, \cdot)$ belongs to the Sobolev space $H^{N+1}(\R)$ and satisfies
\begin{align}\label{FSobolevboundI}
\sup_{\zeta \in \mathcal{I}} \|F(\zeta, \cdot)\|_{H^{N+1}(\R)} < \infty.
\end{align}
We conclude that the Fourier transform $\hat{F}(\zeta, s)$ defined by
\begin{align}\label{FourierdefI}
\hat{F}(\zeta, s) = \frac{1}{2\pi} \int_{\R} F(\zeta, \phi) e^{-i\phi s} d\phi,
\end{align}
satisfies 
\begin{align}\label{FFhatI}
F(\zeta, \phi) =  \int_{\R} \hat{F}(\zeta, s) e^{i\phi s} ds
\end{align}
and $\sup_{\zeta \in \mathcal{I}} \|s^{N+1} \hat{F}(\zeta, s)\|_{L^2(\R)} < \infty$.
By (\ref{Fdef2I}) and (\ref{FFhatI}), we have
$$ \frac{k^{N+1}}{(k+i)^{N+3}}\int_{\R} \hat{F}(\zeta, s) e^{s\tilde{\Phi}(\zeta,k)} ds 
= f(k), \qquad  k > 0, \ \zeta \in \mathcal{I}.$$
Hence we can write 
$$f(k) = f_a(x, t, k) + f_r(x, t, k), \qquad \zeta \in \mathcal{I}, \ x > 0, \ k >0,$$
where the functions $f_a$ and $f_r$ are defined by
\begin{align*}
& f_a(x,t,k) = \frac{k^{N+1}}{(k+i)^{N+3}}\int_{-\frac{x}{4}}^\infty \hat{F}(\zeta,s) e^{s\tilde{\Phi}(\zeta,k)} ds, \qquad \zeta \in \mathcal{I}, \  x > 0, \  k \in \bar{V}_1^+,  
	\\
& f_r(x,t,k) = \frac{k^{N+1}}{(k+i)^{N+3}}\int_{-\infty}^{-\frac{x}{4}} \hat{F}(\zeta,s) e^{s\tilde{\Phi}(\zeta,k)} ds,\qquad \zeta \in \mathcal{I}, \  x > 0, \   k > 0.
\end{align*}
The function $f_a(x, t, \cdot)$ is continuous in $\bar{V}_1^+$ and analytic in $V_1^+$. 
Moreover,
\begin{align*}\nonumber
 |f_a(x, t, k)| 
&\leq \frac{|k|^{N+1}}{|k + i|^{N+3}} \|\hat{F}(\zeta,\cdot)\|_{L^1(\R)}  \sup_{s \geq -\frac{x}{4}} e^{s \re \tilde{\Phi}(\zeta,k)}
\leq \frac{C|k|^{N+1}}{|k + i|^{N+3}}  e^{\frac{x}{4} |\re \tilde{\Phi}(\zeta,k)|} 
	\\ 
& \hspace{5cm} \zeta \in \mathcal{I}, \  x > 0, \  k \in \bar{V}_1^+,
\end{align*}
and
\begin{align*}\nonumber
|f_r(x, t, k)| & \leq \frac{|k|^{N+1}}{|k + i|^{N+3}}  \int_{-\infty}^{-\frac{x}{4}} s^{N+1} |\hat{F}(\zeta,s)| s^{-N-1} ds
	\\\nonumber
& \leq \frac{C|k|^{N+1}}{|k + i|^{N+3}} \| s^{N+1} \hat{F}(\zeta,s)\|_{L^2(\R)} \sqrt{\int_{-\infty}^{-\frac{x}{4}} s^{-2N-2} ds}  
 	\\ \nonumber
&  \leq \frac{C|k|^{N+1}}{|k + i|^{N+3}}x^{-N - 1/2}, \qquad \zeta \in \mathcal{I}, \  x > 0, \  k > 0.
\end{align*}
Hence the $L^1$ and $L^\infty$ norms of $(1 + |k|^{-1})f_r$ on $(0, \infty)$ are $O(x^{-N - 1/2})$ uniformly with respect to $\zeta \in \mathcal{I}$. Letting
\begin{align*}
& r_{1,a}(x, t, k) = f_0(\zeta, k) + f_a(x, t, k), \qquad k \in \bar{V}_1^+,
	\\
& r_{1,r}(x, t, k) = f_r(x, t, k), \qquad k > 0.
\end{align*}
we find a decomposition of $r_1$ for $k > 0$ with the properties listed in the statement of the lemma. We use the symmetry (\ref{rsymmetriesI}) to extend this decomposition to $k < 0$.
\end{proof}

We introduce $m(x,t,k)$ by
$$m(x,t,k) = M^{(1)}(x,t,k)H(x,t,k),$$
where the sectionally analytic function $H$ is defined by
\begin{align*}
H = \begin{cases} 
\begin{pmatrix} 1  & 0 \\ -r_{1,a} e^{x\tilde{\Phi}} & 1 \end{pmatrix}, & k \in V_1,
	\\
\begin{pmatrix} 1  & r_{1,a}^* e^{-x\tilde{\Phi}} \\ 0& 1 \end{pmatrix}, & k \in V_2,
	\\
I, & \text{elsewhere}.	
\end{cases}
\end{align*}

\begin{figure}
\begin{center}
\begin{overpic}[width=.65\textwidth]{SigmaI.pdf}
      \put(101.5,33.7){\small $\Sigma$}
      \put(44,66){\small $\re \tilde{\Phi} < 0$}
      \put(44,5){\small $\re \tilde{\Phi} > 0$}
       \put(38,41.5){\small $1$}
      \put(60.8,41.5){\small $1$}
      \put(38,26){\small $3$}
      \put(61,26){\small $3$}
      \put(22,37){\small $4$}
      \put(75,37){\small $4$}
     \put(77,10){\small $5$}
      \put(22,10){\small $5$}
     \put(77,58){\small $6$}
      \put(21,58){\small $6$}
       \put(50,54){\small $7$}
      \put(50,13){\small $8$}
    \end{overpic}
     \begin{figuretext}\label{SigmaI.pdf}
       The contour $\Sigma$ in the complex $k$-plane with the region $\re \tilde{\Phi} > 0$ shaded. 
     \end{figuretext}
     \end{center}
\end{figure}
By Lemma \ref{decompositionlemmaI}, we have  
$$H(x,t,\cdot)^{\pm1} \in I + (\dot{E}^2 \cap E^\infty)(\C \setminus \Sigma),$$
where $\Sigma \subset \C$ denotes the contour displayed in Figure \ref{SigmaI.pdf}.
It follows that $M$ satisfies the RH problem (\ref{RHM}) iff $m$ 
satisfies the RH problem 
\begin{align}\label{RHmSigma}
\begin{cases}
m(x, t, \cdot) \in I + \dot{E}^2(\C \setminus \Sigma),\\
m_+(x,t,k) = m_-(x, t, k) v(x, t, k) \quad \text{for a.e.} \ k \in \Sigma,
\end{cases}
\end{align} 
where the jump matrix $v$ is given by
\begin{align*}\nonumber
&v_1 = \begin{pmatrix} 1  & 0 \\ -(r_{1,a}+h) e^{x\tilde{\Phi}} & 1 \end{pmatrix},
	\\\nonumber
&v_3 =  \begin{pmatrix} 1  & -(r_{1,a}^* + h^*) e^{-x\tilde{\Phi}} \\ 0& 1 \end{pmatrix},
&&
v_4 = \begin{pmatrix} 1+ |r_{1,r}|^2  & r_{1,r}^* e^{-x\tilde{\Phi}} \\ r_{1,r}e^{x\tilde{\Phi}} & 1 \end{pmatrix},
	\\\nonumber
&v_5 = \begin{pmatrix} 1  & r_{1,a}^* e^{-x\tilde{\Phi}} \\ 0& 1 \end{pmatrix},
&&
v_6 = \begin{pmatrix} 1  & 0 \\ r_{1,a} e^{x\tilde{\Phi}} & 1 \end{pmatrix},
	\\
&v_7 = \begin{pmatrix} 1 & 0 \\  - h e^{x\tilde{\Phi}} & 1 \end{pmatrix},
&&
v_8 = \begin{pmatrix} 1 & - h^* e^{-x\tilde{\Phi}} \\ 0 & 1 \end{pmatrix}.
\end{align*}

\begin{lemma}\label{w2lemmaI}
The function $w := v - I$ satisfies
$$\|(1+|k|^{-1})w\|_{(L^1 \cap L^\infty)(\Sigma)} \leq Cx^{-N},$$
uniformly for $\zeta \in \mathcal{I}$ and $x > 1$.
\end{lemma}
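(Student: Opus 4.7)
The plan is to estimate $(1+|k|^{-1})w$ in $L^1\cap L^\infty$ subcontour-by-subcontour in Figure \ref{SigmaI.pdf}. The symmetries in (\ref{rsymmetriesI}) are inherited by $r_{1,a}$ and $r_{1,r}$, and a direct check of the defining formulas for $v_j$ shows that $v$ satisfies $v(x,t,k)=\overline{v(x,t,-\bar k)}^T$ on $\Sigma$. Consequently it suffices to treat four representative pieces: the real-axis piece $4$, the outer ray piece $6$, the arc piece $7$, and the inner ray piece $1$; pieces $3,5,8$ follow by reflection. For piece $4$ (the portion of the real axis with $|k|>1$) one has $|e^{\pm x\tilde\Phi}|=1$ and the entries of $v_4-I$ are pointwise controlled by $|r_{1,r}|+|r_{1,r}|^2$, so property $(c)$ of Lemma \ref{decompositionlemmaI} gives $\|(1+|k|^{-1})w\|_{(L^1\cap L^\infty)(\text{piece }4)}=O(x^{-N-1/2})$ uniformly in $\zeta$. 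On piece $6$ (outer rays $|k|\geq 1$ with $\arg k\in\{\pi/4,3\pi/4\}$) and piece $7$ (the arc $\{|k|=1,\ \arg k\in(\pi/4,3\pi/4)\}$), a direct computation shows $|\re\tilde\Phi(\zeta,k)|\geq c>0$ uniformly in $\zeta\in\mathcal{I}$; combined with $|r_{1,a}|\leq C(1+|k|)^{-1}e^{(x/4)|\re\tilde\Phi|}$ from Lemma \ref{decompositionlemmaI}$(b)$ and the uniform boundedness of $h$ on $\partial D_2$, this yields $(1+|k|^{-1})|w|\leq Ce^{-cx}$ on these pieces, which beats any negative power of $x$.

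The main work --- and the principal obstacle --- lies on piece $1$, the inner rays $\arg k\in\{\pi/4,3\pi/4\}$ with $|k|<1$ in the upper half-disk, on which the only nonzero entry of $w_1=v_1-I$ is $-(r_{1,a}(x,t,k)+h(k))e^{x\tilde\Phi(\zeta,k)}$. Here $|\re\tilde\Phi|$ is not bounded below uniformly in $\zeta\in\mathcal{I}$: the critical point $k=ir_0$ of $\tilde\Phi$, with $r_0=\sqrt{(\zeta-1)/(\zeta+1)}\in(0,1)$, lies on the imaginary axis and approaches the inner rays as $\zeta\to 1^+$, so that the exponential decay of $|e^{x\tilde\Phi}|$ alone would only give an $O(x^{-1})$ estimate. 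The way out is to observe that the combination $r_{1,a}+h$ (but neither term individually) vanishes to order $|k|^{N+1}$ at $k=0$. Let $p(k)=\sum_{j=0}^N r_1^{(j)}(0)k^j/j!$ denote the polynomial featured in Lemma \ref{decompositionlemmaI}$(a)$. Assumption \ref{r1hassumption2}$(5)$ forces $r=r_1+h$ to vanish to all orders at $0$, hence $h^{(j)}(0)=-r_1^{(j)}(0)$ for every $j\geq 0$; since $h\in C^\infty(\bar D_2)$, Taylor's theorem yields $h(k)+p(k)=O(|k|^{N+1})$ uniformly on $\bar U_1$. Writing $r_{1,a}+h=(r_{1,a}-p)+(p+h)$ and estimating the first summand by Lemma \ref{decompositionlemmaI}$(a)$ gives
\[
|r_{1,a}(x,t,k)+h(k)|\leq C|k|^{N+1}e^{\frac{x}{4}|\re\tilde\Phi(\zeta,k)|},\qquad k\in\bar U_1.
\]

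Since $\re\tilde\Phi<0$ throughout $U_1$, multiplication by $|e^{x\tilde\Phi}|=e^{x\re\tilde\Phi}$ produces
\[
(1+|k|^{-1})|w_1|\leq C|k|^N(1+|k|)\,e^{-\frac{3x}{4}|\re\tilde\Phi(\zeta,k)|}.
\]
The explicit formula $|\re\tilde\Phi(\zeta,re^{i\pi/4})|=(2\sqrt 2)^{-1}\bigl[(1+\zeta^{-1})r+(1-\zeta^{-1})r^{-1}\bigr]\geq r/(2\sqrt 2)$, valid uniformly in $\zeta\geq 1$, then gives $(1+|k|^{-1})|w_1|\leq C|k|^N e^{-c' x|k|}$. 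Maximizing over $|k|\in(0,1)$ delivers the $L^\infty$-bound $O(x^{-N})$ (attained at $|k|\sim N/(c'x)$), and integrating along the inner rays yields the $L^1$-bound $O(x^{-N-1})$, both uniformly in $\zeta>1$. Summing the four estimates and invoking the symmetry completes the proof. The conceptual point worth emphasizing is that the cancellation $r_{1,a}+h=O(|k|^{N+1})$ at $k=0$, made possible only by the all-orders vanishing of $r=r_1+h$ at $0$, is precisely what absorbs the degeneration of $|\re\tilde\Phi|$ as $\zeta\to 1^+$ and produces uniform $O(x^{-N})$ control on piece $1$.
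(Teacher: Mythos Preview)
Your proposal is correct and follows essentially the same route as the paper's proof: split $\Sigma$ into pieces, dispose of the real-axis piece via Lemma~\ref{decompositionlemmaI}$(c)$ and of the outer rays and arcs via a uniform positive lower bound on $|\re\tilde\Phi|$, and handle the inner rays through $0$ by combining the all-orders vanishing of $r=r_1+h$ at $k=0$ with the first estimate in Lemma~\ref{decompositionlemmaI}$(b)$ to obtain $|r_{1,a}+h|\leq C|k|^{N+1}e^{\frac{x}{4}|\re\tilde\Phi|}$, then use the linear lower bound $|\re\tilde\Phi(\zeta,ue^{i\pi/4})|\geq cu$. The paper writes the same lower bound as $|\re\tilde\Phi|\geq \frac{(\zeta+1)u}{2\sqrt{2}\zeta}$ and retains the additional nonnegative term $\frac{\zeta-1}{2\sqrt{2}\zeta u}$, but this makes no difference to the final estimate.
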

\begin{proof}
Let $\Sigma_j$ denote the part of $\Sigma$ labeled by $j$ in Figure \ref{SigmaI.pdf}.
The $L^1$  and $L^\infty$ norms of $(1+|k|^{-1})w$ on $\cup_{j=5}^8 \Sigma_j$ are $O(e^{-cx})$ because $|\re \tilde{\Phi}(\zeta, k)| > c > 0$ is uniformly bounded away from zero on these subcontours. Moreover,  $\|(1+|k|^{-1}) w\|_{(L^1 \cap L^\infty)(\R)}  \leq Cx^{-N}$ by Lemma \ref{decompositionlemmaI}.
By symmetry, it is therefore enough to show that   
$$\|(1+|k|^{-1})w\|_{(L^1 \cap L^\infty)(l_1)} \leq Cx^{-N},$$
where $l_1 = \{ue^{\frac{\pi i}{4}} \, | \, 0 < u < 1\}$ denotes the part of $\Sigma_1$ that lies in the right half-plane. 

Since $r(k) = r_1(k) + h(k)$ vanishes to all orders at $k = 0$, the estimate (\ref{rjaestimatesI}) gives
\begin{align} \nonumber
|r_{1,a}(x,t,k) + h(k)| & = \bigg|r_{1,a}(x,t,k) - \sum_{j=0}^{N} \frac{r_1^{(j)}(0)k^j}{j!} + h(k) - \sum_{j=0}^{N} \frac{h^{(j)}(0)k^j}{j!}\bigg|
	\\ \nonumber
& \leq C |k|^{N+1} e^{\frac{x}{4}|\re \tilde{\Phi}(\zeta,k)|} + C|k|^{N+1}
	\\ \label{r1ahboundI}
& \leq C |k|^{N+1} e^{\frac{x}{4}|\re \tilde{\Phi}(\zeta,k)|}, \qquad \zeta \in \mathcal{I}, \ k \in l_1.
\end{align}
For $k \in l_1$, only the $(21)$ entry of $w$ is nonzero and, using (\ref{rjaestimatesI}) and (\ref{r1ahboundI}),
\begin{align*}
|(w(x,t,k))_{21}| & = |(r_{1,a}(x,t,k) + h(k)) e^{x\tilde{\Phi}(\zeta, k)}|
 \leq C |r_{1,a}(x,t,k) + h(k)| e^{-x|\re \tilde{\Phi}|}
	\\
& \leq C|k|^{N+1} e^{-\frac{3x}{4}|\re \tilde{\Phi}|}, \qquad \zeta \in \mathcal{I}, \ k \in l_1.
\end{align*}
Now
$$\re \tilde{\Phi}(\zeta, ue^{\frac{\pi i}{4}}) = -\frac{\zeta - 1 +(\zeta +1) u^2}{2 \sqrt{2} \zeta  u}, \qquad \zeta \in \mathcal{I}, \ u \in \R.$$
Hence
$$|\re \tilde{\Phi}(\zeta, ue^{\frac{\pi i}{4}})| = \frac{\zeta - 1}{2 \sqrt{2} \zeta u}
+ \frac{(\zeta +1) u}{2 \sqrt{2} \zeta }, \qquad \zeta \in \mathcal{I}, \ u \in (0,1).$$
We can now estimate the $L^\infty$  norm on $l_1$:
\begin{align}\nonumber
& \|(1+|k|^{-1})(w(x,t,k))_{21}\|_{L^\infty(l_1)} 
 \leq 2\|k^{-1}(w(x,t,k))_{21}\|_{L^\infty(l_1)} 
	\\
&\leq C \sup_{u \in (0,1)} u^N e^{-cx\frac{\zeta - 1}{\zeta}  - cx\frac{(\zeta +1)u}{\zeta}}
 \leq C e^{-cx\frac{\zeta - 1}{\zeta}} \Big(\frac{\zeta}{\zeta +1}\Big)^N x^{-N}
\leq C x^{-N}, \qquad \zeta \in \mathcal{I}.
\end{align}
Since $l_1$ has finite length, the estimate of the $L^1$ norm follows immediately. This completes the proof of the lemma. 
\end{proof}

By Lemma \ref{w2lemmaI}, we have
\begin{align}\label{CwnormI}
\|\mathcal{C}_{w}\|_{\mathcal{B}(L^2(\Sigma))} \leq C \|w\|_{L^\infty(\Sigma)}  
\leq Cx^{-N}, \qquad x > 2, \ \zeta \in \mathcal{I},
\end{align}
where the operator $\mathcal{C}_{w}: L^2(\Sigma) + L^\infty(\Sigma) \to L^2(\Sigma)$ is defined by
$\mathcal{C}_{w}f = \mathcal{C}_-(f w)$.
Thus there exists an $X > 1$ such that whenever $\zeta \in \mathcal{I}$ and $x > X$, the operator $I - \mathcal{C}_{w(\zeta, t, \cdot)} \in \mathcal{B}(L^2(\Sigma))$ is invertible and the RH problem (\ref{RHmSigma}) has a unique solution $m \in I + \dot{E}^2(\hat{\C} \setminus \Sigma)$ given by
\begin{align}\label{mrepresentationI}
m(x, t, k) = I + \mathcal{C}(\mu w) = I + \frac{1}{2\pi i}\int_{\Sigma} \mu(x, t, s) w(x, t, s) \frac{ds}{s - k},
\end{align}
where $\mu(x, t, k) \in I + L^2(\Sigma)$ is defined by (\ref{mudef}).
Equation (\ref{mrepresentationI}) implies that the function $\hat{M}(x,t)$ defined in (\ref{m0def}) admits the representation
\begin{align}\label{hatmrepI}
\hat{M}(x,t) - I
= \ntlim_{k\to 0} m(x,t,k)  - I
= \frac{1}{2\pi i}\int_{\Sigma} (\mu w)(x, t, k) \frac{dk}{k}.
\end{align}
By Lemma \ref{w2lemmaI} and (\ref{CwnormI}),
\begin{align}\label{muestimateI}
\|\mu(x,t,\cdot) - I\|_{L^2(\Sigma)} \leq  
\frac{C\|w\|_{L^2(\Sigma)}}{1 - \|\mathcal{C}_{w}\|_{\mathcal{B}(L^2(\Sigma))}}
\leq Cx^{-N}, \qquad x > X, \ \zeta \in \mathcal{I}.
\end{align}
Using Lemma \ref{w2lemmaI} and (\ref{muestimateI}), we can estimate (\ref{hatmrepI}) as follows as $x \to \infty$:
\begin{align}\nonumber
\hat{M}(x,t) - I
& = \int_{\Sigma} w(x,t,k) \frac{dk}{k}
+
\int_{\Sigma} (\mu(x,t,k) - I) w(x,t,k) \frac{dk}{k}
	\\ \label{hatMsectorI}
& = O(\|k^{-1}w\|_{L^1(\Sigma)})
+ O(\|\mu - I\|_{L^2(\Sigma)}\|k^{-1}w\|_{L^2(\Sigma)})
 = O(x^{-N}),
\end{align}
uniformly with respect to $\zeta \in \mathcal{I}$.
Substituting these asymptotics into the definition (\ref{ulim}) of $u(x,t)$, we obtain
\begin{align*}
  u(x,t) & = 2 \arg\big(1 + O(x^{-N})\big) = O(x^{-N}), \qquad x \to \infty,
\end{align*}
again uniformly with respect to $\zeta \in \mathcal{I}$. This shows that the estimate (\ref{uasymptoticsI}) holds uniformly for $\zeta > 1$; since $u$ is continuous it must hold also for $\zeta = 1$, completing the proof of the asymptotics in Sector I.

\begin{remark}\label{MlambdajIremark}\upshape
For the purposes of Section \ref{solitonsec}, we note that if $\lambda_j$ is a point in $\C \setminus \Gamma$, then by modifying the contour deformations slightly if necessary, we may assume that $\dist(\lambda_j, \Sigma) > 0$. Indeed, the contour $\Sigma$  can for example be replaced by a contour such as the one shown in Figure \ref{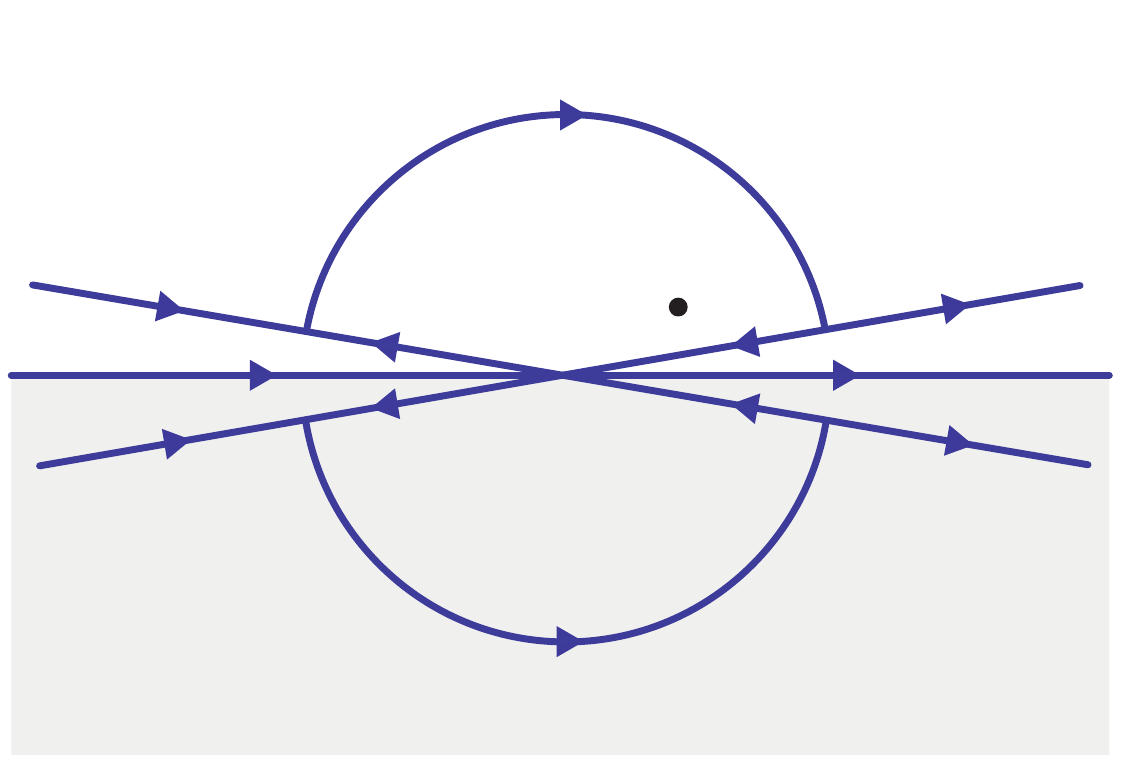}.
The same arguments that gave (\ref{hatMsectorI}) then imply
\begin{align}\label{MlambdajI}
  M(x,t,\lambda_j) 
= I + O(x^{-N})
\end{align}
uniformly for $\zeta \in (1, \infty)$ as $x \to \infty$. Equation (\ref{MlambdajI}) will be important for the proof of Theorem \ref{solitonasymptoticsth}.
\end{remark}

\begin{figure}
\begin{center}
\begin{overpic}[width=.65\textwidth]{SigmaIdeformed.pdf}
      \put(101.5,33.7){\small $\Sigma$}
             \put(76,60){\small $\re \tilde{\Phi} < 0$}
      \put(76,10){\small $\re \tilde{\Phi} > 0$}
      \put(59.4,43.5){\small $\lambda_j$}
    \end{overpic}
     \begin{figuretext}\label{SigmaIdeformed.pdf}
       Example of a contour $\Sigma$ which avoids the point $\lambda_j \in \C \setminus \Gamma$.   
          \end{figuretext}
     \end{center}
\end{figure}

\section{Proof of Theorem \ref{asymptoticsth}: Asymptotics in Sector II}\label{sectorIIsec}
Let $\mathcal{I} = [1/2,1)$ and suppose $\zeta \in \mathcal{I}$. 
Let $N \geq 1$ be an integer. 

\subsection{Transformations of the RH problem}

\subsubsection{First transformation}\label{firsttransformationsubsecII}
The first transformation is the same as in Sector I, except that the sets $\{U_j, V_j\}_1^2$ now depend on $\zeta$ and are defined as in Figure \ref{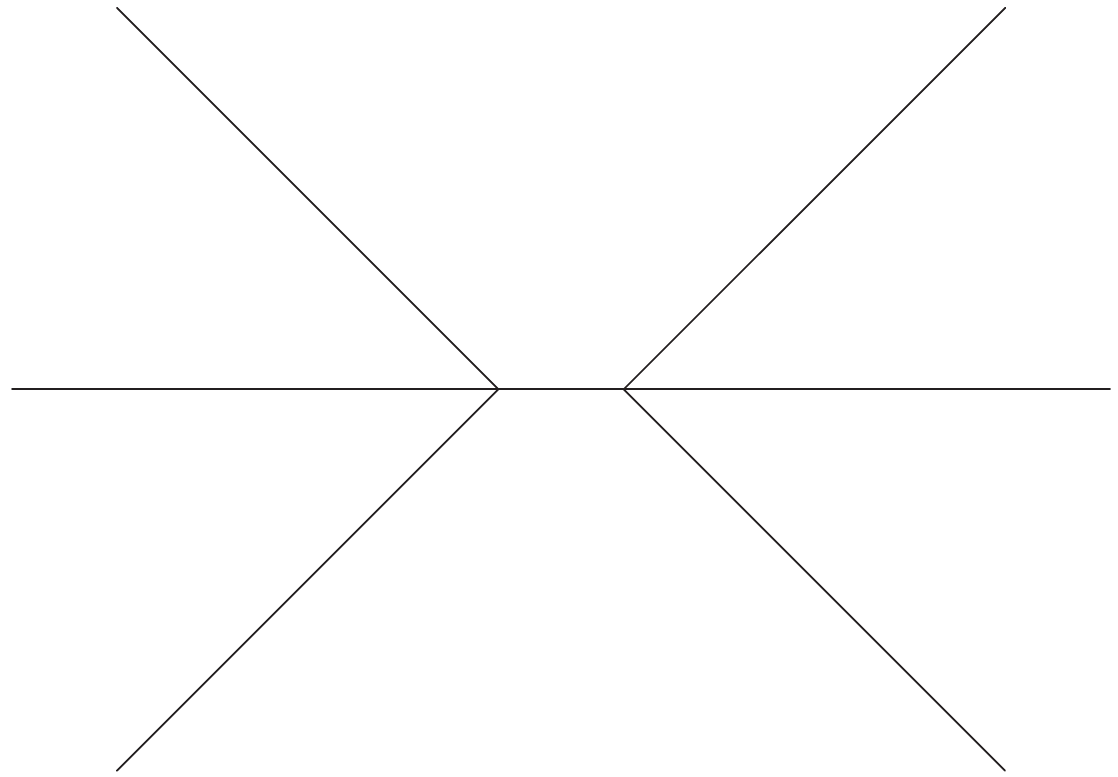}. Defining $M^{(1)}(x,t,k)$ by (\ref{M1defI}), we find that $M$ satisfies the RH problem (\ref{RHM}) iff $M^{(1)}$ 
satisfies the RH problem (\ref{RHMj}) with $j = 1$, where the contour $\Gamma^{(1)}$ is displayed in Figure \ref{Gamma1II.pdf} and the jump matrix $J^{(1)}$ is given by 
\begin{align}\nonumber
&J_1^{(1)} = \begin{pmatrix} 1 & 0 \\  -h e^{t\Phi} & 1 \end{pmatrix}, 
\qquad
J_2^{(1)} = \begin{pmatrix} 1 & - r^* e^{-t\Phi} \\
- re^{t\Phi}& 1 + r r^* \end{pmatrix}, 
	\\ \label{J1def}
& J_3^{(1)} = \begin{pmatrix} 1 & - h^* e^{-t\Phi} \\ 0 & 1 \end{pmatrix}, 
\qquad
J_4^{(1)} = \begin{pmatrix} 1 + r_1 r_1^* & r_1^* e^{-t\Phi} \\
 r_1e^{t\Phi} & 1 \end{pmatrix}.
\end{align}

\begin{figure}
\begin{center}
\begin{overpic}[width=.45\textwidth]{VjsII.pdf}
      \put(80,45){\small $V_1$}
      \put(17,45){\small $V_1$}
        \put(80,23){\small $V_2$}
      \put(17,23){\small $V_2$}
      \put(41,29){\small $-k_0$}
      \put(54,29){\small $k_0$}
    \end{overpic}
    \qquad
    \begin{overpic}[width=.45\textwidth]{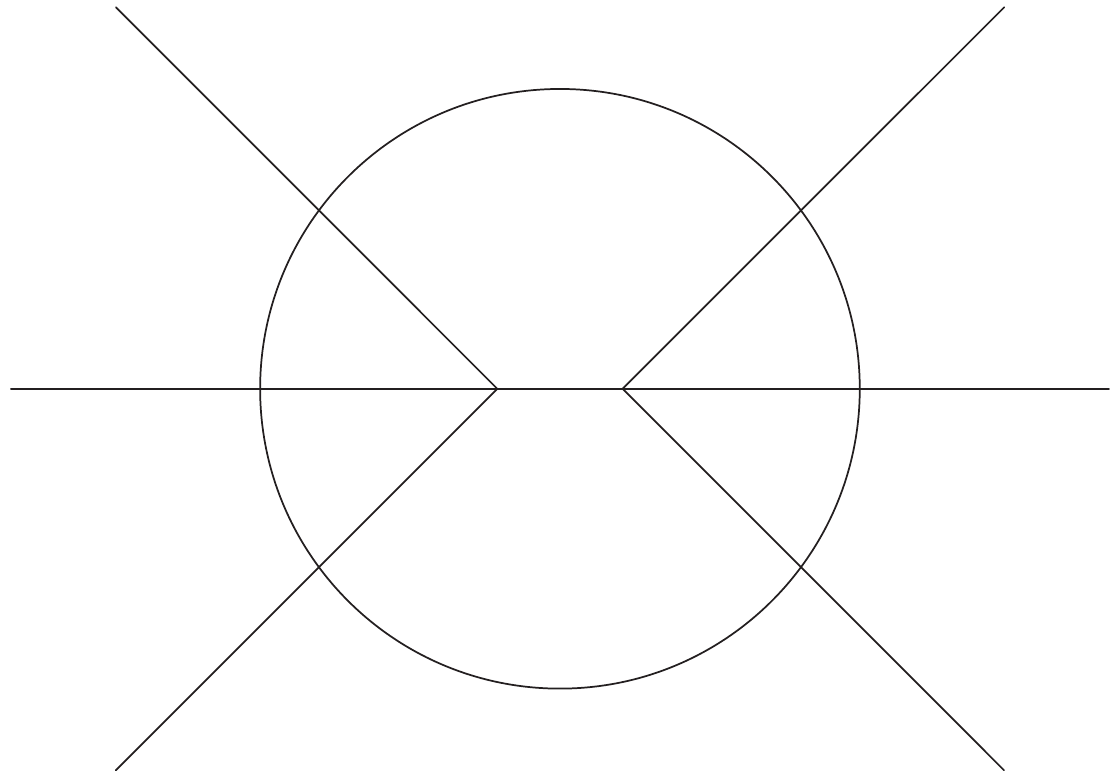}
      \put(66,39){\small $U_1$}
      \put(28,39){\small $U_1$}
        \put(66,28){\small $U_2$}
      \put(28,28){\small $U_2$}
      \put(41,29){\small $-k_0$}
      \put(54,29){\small $k_0$}
      \put(78,30){\small $1$}
      \put(15,30){\small $-1$}
    \end{overpic}\bigskip
     \begin{figuretext}\label{VjsII.pdf}
       The open subsets $\{V_j, U_j\}_1^2$ of the complex $k$-plane relevant for Sector II. 
     \end{figuretext}
     \end{center}
\end{figure}

\begin{figure}
\begin{center}
\begin{overpic}[width=.65\textwidth]{Gamma1II.pdf}
      \put(102,34){\small $\Gamma^{(1)}$}
      \put(76,60){\small $\re \Phi < 0$}
      \put(76,10){\small $\re \Phi > 0$}
      \put(49,61){\small $1$}
      \put(31,41){\small $1$}
      \put(67,41){\small $1$}
      \put(49,37){\small $2$}
      \put(49,13.5){\small $3$}
      \put(31,26.5){\small $3$}
      \put(67,26){\small $3$}
      \put(86,37){\small $4$}
      \put(13,37){\small $4$}
      \put(43,31.5){\small $-k_0$}
      \put(53.5,31.5){\small $k_0$}
    \end{overpic}
     \begin{figuretext}\label{Gamma1II.pdf}
       The contour $\Gamma^{(1)}$ in the complex $k$-plane relevant for Sector II. 
     \end{figuretext}
     \end{center}
\end{figure}

\subsubsection{Second transformation} 
The second transformation consists of deforming the jump along the real axis. To this end, we need an analytic approximation of $r_1$.

\begin{lemma}[Analytic approximation of $r_1$ for $0 \leq \zeta < 1$]\label{decompositionlemmaII}
There exists a decomposition 
\begin{align*}
 r_1(k) = r_{1,a}(x, t, k) + r_{1,r}(x, t, k), \qquad k \in (-\infty, k_0] \cup [k_0, \infty), 
\end{align*}
where the functions $r_{1,a}$ and $r_{1,r}$ have the following properties:
\begin{enumerate}[$(a)$]
\item For each $\zeta \in \mathcal{I}$ and each $t > 0$, $r_{1,a}(x, t, k)$ is defined and continuous for $k \in \bar{V}_1$ and analytic for $k \in V_1$.

\item The function $r_{1,a}$ satisfies
\begin{align}\label{rjaestimatesII}
\begin{cases} 
\big|r_{1, a}(x, t, k) - \sum_{j=0}^{N} \frac{r_1^{(j)}(k_0)(k-k_0)^j}{j!}\big| \leq C |k - k_0|^{N+1} e^{\frac{t}{4}|\re \Phi(\zeta,k)|}, 
	\\  
|r_{1, a}(x, t, k)| \leq \frac{C}{1 + |k|} e^{\frac{t}{4}|\re \Phi(\zeta,k)|},
\end{cases}   k \in \bar{V}_1, \ \zeta \in \mathcal{I}, \ t > 0.
\end{align}

\item The $L^1$ and $L^\infty$ norms on $(-\infty,-k_0) \cup (k_0, \infty)$ of the function $k \mapsto (1+|k|^{-1}) r_{1,r}(x, t, k)$ are $O(t^{-N})$ as $t \to \infty$ uniformly with respect to $\zeta \in \mathcal{I}$.

\item $r_{1,a}$ and $r_{1,r}$ obey the symmetries (\ref{rsymmetriesI}).

\end{enumerate}
\end{lemma}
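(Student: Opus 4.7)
The plan is to mimic the structure of Lemma \ref{decompositionlemmaI}, but expand around the stationary point $k = k_0$ of $\Phi$ rather than around $k = 0$. First I would construct a rational approximation $f_0(\zeta,k) = \sum_{j=1}^{L} a_j(\zeta)/(k+i)^j$, with $\zeta$-dependent coefficients, chosen so that $f_0$ agrees with $r_1$ to a high order $L$ at $k_0$ and to second order at infinity; the degree $L$ is fixed later in terms of $N$ to absorb the degeneracy of the change of variables at $k_0$. By the symmetry $r_1(k) = \overline{r_1(-\bar k)}$, the same $f_0$ can be arranged to match $r_1$ to order $L$ at $-k_0$ as well. Setting $f = r_1 - f_0$, the remainder is smooth on $\R$, satisfies $f(k) = O((k - k_0)^{L+1})$ near $k_0$, $f(k) = O((k+k_0)^{L+1})$ near $-k_0$, and $f(k) = O(k^{-3})$ at infinity. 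The rational piece $f_0$ extends analytically to $V_1$ and contributes to the bulk of $r_{1,a}$.

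On $[k_0,\infty)$ the map $\phi = -i\Phi(\zeta,k) = (k^2 + k_0^2)/(k(1+k_0^2))$ is a smooth bijection onto $[\phi_0,\infty)$, with $\phi_0 = 2k_0/(1+k_0^2)$ and $\partial_k \phi = (k^2 - k_0^2)/(k^2(1+k_0^2))$ vanishing at $k_0$. The identity $\phi - \phi_0 = (k-k_0)^2/(k(1+k_0^2))$ shows that the inverse $k(\phi)$ has a square-root branch at $\phi_0$, so any composition $f(k(\phi))$ inherits only half the vanishing order in $\phi$. To compensate, I would define
\[
F(\zeta,\phi) = \frac{(k+i)^{N+3}}{(k^2 - k_0^2)^{N+1}}\, f(k), \qquad k = k(\phi) \in [k_0,\infty),
\]
extended by zero to $(-\infty, \phi_0)$. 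Choosing $L$ sufficiently large (e.g.\ $L = 2N+3$) ensures that the pole of order $N+1$ in the denominator is overkilled by the zero of $f$ at $k_0$, leaving a factor whose behavior near $\phi_0$ is a high power of $(\phi-\phi_0)^{1/2}$, enough for $F(\zeta, \cdot) \in H^{N+1}(\R)$ with norm bounded uniformly in $\zeta \in [1/2,1)$. Fourier inversion then gives $F(\zeta,\phi) = \int_\R \hat F(\zeta,s) e^{i\phi s} ds$ with $\sup_\zeta \|s^{N+1}\hat F\|_{L^2(\R)} < \infty$, and substituting back yields
\[
f(k) = \frac{(k^2-k_0^2)^{N+1}}{(k+i)^{N+3}} \int_\R \hat F(\zeta,s)\, e^{s\Phi(\zeta,k)}\, ds, \qquad k \in [k_0,\infty).
\]

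Splitting the $s$-integral at $-t/4$ and letting $f_a$ be the $s \geq -t/4$ portion and $f_r$ the $s < -t/4$ portion, the analyticity of $f_a$ in $V_1^+ := V_1 \cap \{\re k > 0\}$ follows since, on $V_1^+$, $\re\Phi < 0$ and $|e^{s\Phi}| \leq e^{(t/4)|\re\Phi|}$ for all $s \geq -t/4$; combined with the bound $|(k^2 - k_0^2)^{N+1}/(k+i)^{N+3}| \leq C(1+|k|)^{-1}$ on $V_1^+$, this matches (\ref{rjaestimatesII}). For $f_r$, applying Cauchy--Schwarz to $\int_{-\infty}^{-t/4} \hat F\, ds$ with the weight $s^{N+1}$ yields $|f_r|(1+|k|^{-1})$ of order $t^{-N}$ in $L^1 \cap L^\infty$ on $(k_0,\infty)$, exactly as in the Sector I lemma. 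Setting $r_{1,a} = f_0 + f_a$ on $V_1^+$ and $r_{1,r} = f_r$ on $(k_0,\infty)$, then extending to $V_1^-$ and $(-\infty,-k_0)$ by the symmetry (\ref{rsymmetriesI}), completes the construction.

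The main technical obstacle will be verifying all bounds \emph{uniformly} as $\zeta \to 1$ (equivalently $k_0 \to 0$), for two reasons. First, the change of variables degenerates in two places simultaneously ($k = k_0$ merges with $k = 0$, where $\Phi$ has a pole), so factors of $k_0^{-1}$ enter from the chain rule $\partial_\phi = (\partial_k\phi)^{-1}\partial_k$ as well as from the weight $(k^2-k_0^2)^{-(N+1)}$ near $k = -k_0$. Second, the matching conditions defining the coefficients $a_j(\zeta)$ in $f_0$ must remain non-degenerate in the limit $k_0 \to 0$; the remedy is to choose $L$ large enough so that every negative power of $k_0$ introduced by these mechanisms is outweighed by a positive power from the vanishing order $L+1$ of $f$ at $\pm k_0$, and to check that the linear system for $\{a_j(\zeta)\}$ can be set up so that its coefficients (involving $(k_0+i)^{-j}$ and derivative evaluations) stay bounded and bounded-away-from-degenerate on the full range $k_0 \in (0, 1/\sqrt{3}]$.
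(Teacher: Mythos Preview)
Your overall strategy---subtract a rational function $f_0$ matching $r_1$ at $k_0$ and at infinity, pass to the phase variable $\phi = -i\Phi(\zeta,k)$ on $[k_0,\infty)$, split the Fourier integral at $s=-t/4$, then extend by the symmetry $k\mapsto -\bar k$---is exactly the adaptation of Lemma~\ref{decompositionlemmaI} that the paper has in mind (the paper's own proof reads in its entirety ``similar to that of Lemma~\ref{decompositionlemmaI}''). You also correctly identify the new feature, namely the square-root degeneracy of $k\mapsto\phi$ at $k_0$, and that this forces a larger matching order $L$ than the $2N+3$ used in Sector~I.

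There is, however, a concrete error in your choice of weight. You set
\[
F(\zeta,\phi)=\frac{(k+i)^{N+3}}{(k^2-k_0^2)^{N+1}}\,f(k),
\qquad
f_a(x,t,k)=\frac{(k^2-k_0^2)^{N+1}}{(k+i)^{N+3}}\int_{-t/4}^\infty\hat F(\zeta,s)\,e^{s\Phi(\zeta,k)}\,ds,
\]
and then claim $\bigl|(k^2-k_0^2)^{N+1}/(k+i)^{N+3}\bigr|\le C(1+|k|)^{-1}$ on $\bar V_1^+$. This is false: the prefactor is $O(|k|^{N-1})$ as $|k|\to\infty$, so for $N\ge 2$ it grows, and the second estimate in~\eqref{rjaestimatesII} fails. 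The extra factor $(k+k_0)^{N+1}$ also costs you a factor $(2k_0)^{-(N+1)}$ in $|F|$ near $k=k_0$, which is not compensated by the vanishing of $f$ at $k_0$ (vanishing at $-k_0$ is irrelevant on $[k_0,\infty)$) and destroys the uniformity in $\zeta$ that you flag as the main obstacle. Both problems disappear if you use the direct analogue of the Sector~I weight,
\[
F(\zeta,\phi)=\frac{(k+i)^{N+3}}{(k-k_0)^{N+1}}\,f(k),
\]
which gives a prefactor $(k-k_0)^{N+1}/(k+i)^{N+3}=O(|k|^{-2})$ at infinity and $O(|k-k_0|^{N+1})$ near $k_0$, matching~\eqref{rjaestimatesII}. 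With this weight, the square-root relation $\phi-\phi_0\sim (k-k_0)^2$ means $F\sim(\phi-\phi_0)^{(L-N)/2}$ near $\phi_0$, and $F\in H^{N+1}(\R)$ (after extension by zero) requires roughly $L\ge 3N+2$ rather than your tentative $L=2N+3$; but as you note, one simply takes $L$ large enough.
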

\begin{proof}
The proof is similar to that of Lemma \ref{decompositionlemmaI}.
\end{proof}

We introduce $m(x,t,k)$ by
$$m(x,t,k) = M^{(1)}(x,t,k)H(x,t,k),$$
where the sectionally analytic function $H$ is defined by
\begin{align*}
H = \begin{cases} 
\begin{pmatrix} 1  & 0 \\ -r_{1,a} e^{t\Phi} & 1 \end{pmatrix}, & k \in V_1,
	\\
\begin{pmatrix} 1  & r_{1,a}^* e^{-t\Phi} \\ 0& 1 \end{pmatrix}, & k \in V_2,
	\\
I, & \text{elsewhere}.	
\end{cases}
\end{align*}
\begin{figure}
\begin{center}
\begin{overpic}[width=.65\textwidth]{SigmaII.pdf}
      \put(102,34){\small $\Sigma$}
      \put(44,66){\small $\re \Phi < 0$}
      \put(44,5){\small $\re \Phi > 0$}
       \put(33.5,40.5){\small $1$}
      \put(65,40.5){\small $1$}
      \put(49,37){\small $2$}
      \put(33.5,27.5){\small $3$}
      \put(66,27){\small $3$}
      \put(19,37){\small $4$}
      \put(79,37){\small $4$}
     \put(82.5,10){\small $5$}
      \put(15,10){\small $5$}
     \put(81.5,57){\small $6$}
      \put(16,57.5){\small $6$}
       \put(50,54.5){\small $7$}
      \put(50,13.5){\small $8$}
    \end{overpic}
     \begin{figuretext}\label{SigmaII.pdf}
       The contour $\Sigma$ in the complex $k$-plane relevant for Sector II with the region $\re \Phi > 0$ shaded. 
     \end{figuretext}
     \end{center}
\end{figure}
By Lemma \ref{decompositionlemmaII}, we have  
$$H(x,t,\cdot)^{\pm1} \in I + (\dot{E}^2 \cap E^\infty)(\C \setminus \Sigma),$$
where $\Sigma \subset \C$ denotes the contour displayed in Figure \ref{SigmaII.pdf}.
It follows that $M$ satisfies the RH problem (\ref{RHM}) iff $m$ 
satisfies the RH problem (\ref{RHmSigma}), where the jump matrix $v$ is given by 
\begin{align*}\nonumber
&v_1 = \begin{pmatrix} 1  & 0 \\ -(r_{1,a}+h) e^{t\Phi} & 1 \end{pmatrix},
&&
v_2 = \begin{pmatrix} 1 & - r^* e^{-t\Phi} \\
- re^{t\Phi}& 1 + r r^* \end{pmatrix}, 
	\\\nonumber
&v_3 =  \begin{pmatrix} 1  & -(r_{1,a}^* + h^*) e^{-t\Phi} \\ 0& 1 \end{pmatrix},
&&
v_4 = \begin{pmatrix} 1+ |r_{1,r}|^2  & r_{1,r}^* e^{-t\Phi} \\ r_{1,r}e^{t\Phi} & 1 \end{pmatrix},
	\\\nonumber
&v_5 = \begin{pmatrix} 1  & r_{1,a}^* e^{-t\Phi} \\ 0& 1 \end{pmatrix},
&&
v_6 = \begin{pmatrix} 1  & 0 \\ r_{1,a} e^{t\Phi} & 1 \end{pmatrix},
	\\
&v_7 = \begin{pmatrix} 1 & 0 \\  - h e^{t\Phi} & 1 \end{pmatrix},
&&
v_8 = \begin{pmatrix} 1 & - h^* e^{-t\Phi} \\ 0 & 1 \end{pmatrix}.
\end{align*}

\begin{lemma}\label{w2lemmaII}
The function $w = v - I$ satisfies
\begin{align}\label{kinvwL1Linfty}
\|(1+|k|^{-1})w\|_{(L^1 \cap L^\infty)(\Sigma)} \leq C(t^{-N} + k_0^N),
\end{align}
uniformly for $\zeta \in \mathcal{I}$ and $t > 1$.
\end{lemma}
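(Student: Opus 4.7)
\textbf{Proof proposal for Lemma \ref{w2lemmaII}.}

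The plan is to estimate $(1+|k|^{-1})w$ on each of the eight subcontours $\Sigma_j$ of Figure \ref{SigmaII.pdf} separately. The symmetries (\ref{rsymmetriesI}) together with the symmetry $\Phi(\zeta,k) = -\overline{\Phi(\zeta,-\bar k)}$ will allow us to halve the work: it suffices to treat the parts in (say) the right half-plane. I would first dispose of the easy subcontours: on $\Sigma_5, \Sigma_6, \Sigma_7, \Sigma_8$ the function $|\re\Phi(\zeta,k)|$ is uniformly bounded below by some $c>0$ (independent of $\zeta\in\mathcal I$), so the bounds on $r_{1,a}$ and $h$ give an $O(e^{-ct})$ estimate, which is dominated by $t^{-N}$. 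On $\Sigma_4$, the estimate is exactly property $(c)$ of Lemma \ref{decompositionlemmaII}, which yields $O(t^{-N})$ directly. On $\Sigma_2 = (-k_0,k_0)$ we have $|e^{t\Phi}|=1$, and since $r$ vanishes to all orders at $k=0$, one has $|r(k)| \leq C_M|k|^M$ for every $M\geq 1$; taking $M = N+1$ gives $(1+|k|^{-1})|r(k)| \leq C k_0^N$ pointwise, and integration over the interval of length $2k_0$ also yields $O(k_0^N)$ in $L^1$.

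The main obstacle is the estimate on the diagonal subcontours $\Sigma_1$ and $\Sigma_3$ emanating from $\pm k_0$. I parameterise the right branch of $\Sigma_1$ by $k = k_0 + ue^{i\pi/4}$, $u>0$. On $\Sigma_1$ only the $(21)$-entry of $w$ is nonzero, equal to $-(r_{1,a}+h)e^{t\Phi}$. The key idea is to exploit both pieces of information simultaneously: the quantitative Taylor approximation of $r_{1,a}$ at $k_0$ from Lemma \ref{decompositionlemmaII}(b), and the fact that $r = r_1+h$, being $C^\infty$ and vanishing to all orders at $k=0$, also has all Taylor coefficients at $k_0$ of size $O(k_0^M)$ for any $M\geq 1$, namely $|r^{(j)}(k_0)|\leq C_{j,M}\,k_0^{M-j}$.

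Concretely, I write
\begin{align*}
r_{1,a}(x,t,k)+h(k)
&= \Bigl(r_{1,a}(x,t,k) - \sum_{j=0}^{N_1} \tfrac{r_1^{(j)}(k_0)}{j!}(k-k_0)^j\Bigr)
	\\
&\quad + \sum_{j=0}^{N_1} \tfrac{r^{(j)}(k_0)}{j!}(k-k_0)^j
 + \Bigl(h(k) - \sum_{j=0}^{N_1} \tfrac{h^{(j)}(k_0)}{j!}(k-k_0)^j\Bigr).
\end{align*}
The first bracket is bounded by $Cu^{N_1+1}e^{\frac{t}{4}|\re\Phi|}$ by Lemma \ref{decompositionlemmaII}(b); the last bracket is a smooth Taylor remainder of size $O(u^{N_1+1})$; and the middle sum is bounded by $C\sum_{j}k_0^{M-j}u^j \leq C(k_0+u)^M$ by the derivative bounds above, with $M$ as large as we please.

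The estimate is then closed using the local expansion of $\Phi$ at the critical point $k_0$: since $\Phi'(\zeta,k_0)=0$ and $\Phi''(\zeta,k_0) = 2i/(k_0(1+k_0^2))$, one finds $|\re\Phi(\zeta,k_0+ue^{i\pi/4})| = u^2/(k_0(1+k_0^2)) + O(u^3)$, so $|e^{t\Phi}|\leq e^{-ctu^2/k_0}$ for $u$ in a bounded range. Combining the three contributions yields the pointwise bound
\begin{align*}
|(r_{1,a}+h)(k)e^{t\Phi(\zeta,k)}| \leq C\bigl((k_0+u)^M + u^{N_1+1}\bigr)e^{-ctu^2/k_0}.
\end{align*}
For $u\leq k_0$ the right side is $O(k_0^{\min(M,N_1+1)})$, while for $u>k_0$ the standard Laplace-type maximisation $\sup_{u>0} u^p e^{-ctu^2/k_0} \lesssim (k_0/t)^{p/2}$ shows it is $O((k_0/t)^{M/2}+(k_0/t)^{(N_1+1)/2})$, which is $O(t^{-N})$ once $M$ and $N_1$ are chosen $\geq 2N$ (using $k_0\leq 1$ on $\mathcal I$). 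The factor $1+|k|^{-1}$ is bounded on $\Sigma_1$, so this gives the required $L^\infty$ bound $O(t^{-N}+k_0^N)$, and the $L^1$ bound follows since $\Sigma_1$ has finite length. The subcontour $\Sigma_3$ is handled identically with $e^{t\Phi}$ replaced by $e^{-t\Phi}$, and the left halves of $\Sigma_1,\Sigma_3$ by the symmetries (\ref{rsymmetriesI}). Collecting all subcontours produces (\ref{kinvwL1Linfty}).
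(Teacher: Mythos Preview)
Your approach is essentially the same as the paper's: same treatment of the easy subcontours $\Sigma_4,\dots,\Sigma_8$ and of $\Sigma_2$, and the same three-term decomposition of $r_{1,a}+h$ on $\Sigma_1$. There is, however, one step that fails as written.

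Your lower bound $|\re\Phi(\zeta,k_0+ue^{i\pi/4})| \geq cu^2/k_0$ ``for $u$ in a bounded range'' is not uniform in $k_0$. The Taylor expansion you wrote, $|\re\Phi| = u^2/(k_0(1+k_0^2)) + O(u^3)$, hides a $k_0$-dependent error: since $\Phi'''(\zeta,k_0) = -6i/((1+k_0^2)k_0^2)$, the next term is of size $\sim u^3/k_0^2$, so the expansion is only good for $u \lesssim k_0$. In fact the exact expression
\[
\re\Phi(\zeta,k_0+ue^{i\pi/4}) = \frac{-u^2(2k_0+\sqrt{2}u)}{2(k_0^2+1)(k_0^2+\sqrt{2}k_0u+u^2)}
\]
shows $|\re\Phi| \sim u$ (not $u^2/k_0$) when $u \gg k_0$, and the ratio $|\re\Phi|/(u^2/k_0)$ tends to $0$ as $u/k_0\to\infty$. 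Since in Sector II one has $k_0\to 0$ as $\zeta\to 1$ while $u$ runs over a fixed bounded segment of $l_1$, this is precisely the regime you must control, and your Laplace-type bound $\sup_u u^p e^{-ctu^2/k_0}\lesssim (k_0/t)^{p/2}$ is therefore unjustified.

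The fix is immediate: use the exact formula above. From it one gets the \emph{linear} lower bound $|\re\Phi|\geq (k_0+u)/16$ for $u\geq k_0$, whence $\sup_{u\geq k_0}(k_0+u)^p e^{-ct(k_0+u)} = O(t^{-p})$, which still yields $O(t^{-N})$ once $M,N_1+1\geq N$. This is exactly what the paper does; it in fact streamlines matters further by absorbing all three pieces into the single estimate $|r_{1,a}+h|\leq C|k|^{N+1}e^{\frac{t}{4}|\re\Phi|}$ and then using $|k|^{-1}|k|^{N+1}\leq C(k_0+u)^N$.
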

\begin{proof}
The $L^1$  and $L^\infty$ norms of $(1+|k|^{-1})w$ on $\cup_{j=5}^8 \Sigma_j$ are $O(e^{-ct})$ because $|\re \Phi(\zeta, k)| > c > 0$ is uniformly bounded away from zero on these subcontours. By Lemma \ref{decompositionlemmaII}, $\|(1+|k|^{-1})w\|_{(L^1 \cap L^\infty)(\Sigma_4)}  \leq Ct^{-N}$. 
Moreover, for $k \in \Sigma_2$, we have $\re \Phi = 0$ and hence
$$|w(x,t,k)| \leq C |r(k)|, \qquad k \in \Sigma_2.$$
Since $r(k)$ vanishes to all orders at $k = 0$, this gives $\|(1+|k|^{-1})w\|_{(L^1 \cap L^\infty)(\Sigma_2)} \leq C k_0^N$.

It remains to show that the contributions from $\Sigma_1$ and $\Sigma_3$ also satisfy (\ref{kinvwL1Linfty}). We give the proof for the part of $\Sigma_1$ that lies in the right half-plane; similar arguments apply to the other parts of $\Sigma_1 \cup \Sigma_3$.

Let $l_1 = \Sigma_1 \cap \{\re k > 0\}$. 
Recalling that $r = r_1 + h$ on $[-1,1]$ and that $r(k)$ vanishes to all orders at $k = 0$, the first inequality in (\ref{rjaestimatesII}) gives
\begin{align*} \nonumber
 |r_{1,a}(x,t,k) + h(k)| = &\; \bigg|r_{1,a}(x,t,k) - \sum_{j=0}^{N} \frac{r_1^{(j)}(k_0)(k-k_0)^j}{j!} 
	\\\nonumber
 &+ h(k) - \sum_{j=0}^{N} \frac{h^{(j)}(k_0)(k-k_0)^j}{j!}
 + \sum_{j=0}^{N} \frac{r^{(j)}(k_0) (k-k_0)^j}{j!} \bigg|
	\\ \nonumber
\leq &\; C |k - k_0|^{N+1} e^{\frac{t}{4}|\re \Phi(\zeta,k)|} 
+ C|k-k_0|^{N+1} + C|k_0|^{N+1}
	\\ 
\leq &\; C |k|^{N+1} e^{\frac{t}{4}|\re \Phi(\zeta,k)|}, \qquad \zeta \in \mathcal{I}, \ k \in l_1.
\end{align*}
Hence
\begin{align*}
|w(x,t,k) | = &\; |r_{1,a}(x,t,k) + h(k)|e^{t\re \Phi(\zeta, k)}
\leq  C |k|^{N+1} e^{-\frac{3t}{4}|\re \Phi(\zeta,k)|}, \qquad k \in l_1.
\end{align*}
But
\begin{align}\label{rePhionL1}
\re \Phi(\zeta, k_0 + ue^{\frac{\pi i}{4}}) = \frac{-u^2(2 k_0 + \sqrt{2} u)}{2 (k_0^2+1)(k_0^2+\sqrt{2} k_0 u+u^2)}, \qquad u \in \R,
\end{align}
which means that
\begin{align*}
|\re \Phi(\zeta, k_0 + ue^{\frac{\pi i}{4}})| \geq \frac{u^2(k_0 + u)}{4(k_0^2+\sqrt{2} k_0 u+u^2)}
\geq \frac{k_0 + u}{16}, \qquad \zeta \in \mathcal{I}, \ k_0 \leq u < \infty.
\end{align*}
Hence
$$|w(x,t,k)| \leq \begin{cases} C|k|^{N+1} e^{-\frac{3t}{4}\frac{k_0 + |k-k_0|}{16}}, & |k-k_0| \geq k_0, \\
C|k|^{N+1}, & |k - k_0| < k_0,
\end{cases} \quad k \in l_1,$$
so
\begin{align*}
& \|(1+|k|^{-1})w(x,t,\cdot)\|_{L^\infty(l_1)}
 \leq 2\|k^{-1}w(x,t,\cdot)\|_{L^\infty(l_1)}
	\\
& \leq C \sup_{0 \leq u \leq k_0}(k_0+u)^N
+ C \sup_{k_0 \leq u < \infty} (k_0+u)^N e^{-\frac{3t}{4}\frac{k_0 + u}{16}} 
 \leq C(k_0^N + t^{-N}),
\end{align*}
which finishes the proof.
\end{proof}

The asymptotics in Sector II follows from Lemma \ref{w2lemmaII} in the same way that the asymptotics in Sector I followed from Lemma \ref{w2lemmaI}. This completes the proof of the asymptotic formula (\ref{uasymptoticsII}) for $u(x,t)$ for $0 \leq \zeta < 1$; the case $\zeta = 1$ follows from the continuity of $u(x,t)$.

\begin{remark}\label{MlambdajIIremark}\upshape
For the purposes of Section \ref{solitonsec}, we note that if $\lambda_j$ is a point in $\C \setminus \Gamma$, then we can ensure that $\inf_{\zeta \in \mathcal{I}} \dist(\lambda_j, \Sigma(\zeta)) > 0$ by modifying the contour deformations slightly if necessary (cf. Remark \ref{MlambdajIremark} and Figure \ref{SigmaIdeformed.pdf}).
The above estimates then imply, for each fixed $\lambda_j \in \C \setminus \Gamma$
\begin{align}\label{MlambdajII}
  M(x,t,\lambda_j) 
= I + O(k_0^N + t^{-N})
\end{align}
uniformly for $\zeta \in [1/2, 1)$ as $t \to \infty$. 
\end{remark}

\section{Proof of Theorem \ref{asymptoticsth}: Asymptotics in Sector III}\label{sectorIIIsec}
Let $\mathcal{I} = [0,1)$ and suppose $\zeta \in \mathcal{I}$. Recall that the jump matrix $J$ admits two real critical points located at $\pm k_0$, where $k_0 := k_0(\zeta) \in (0,1]$ is defined by (\ref{k0def}).

\subsection{Transformations of the RH problem}
Let $V_j := V_j(\zeta)$, $j = 1, \dots, 6$, denote the open subsets of $\C$ displayed in Figure \ref{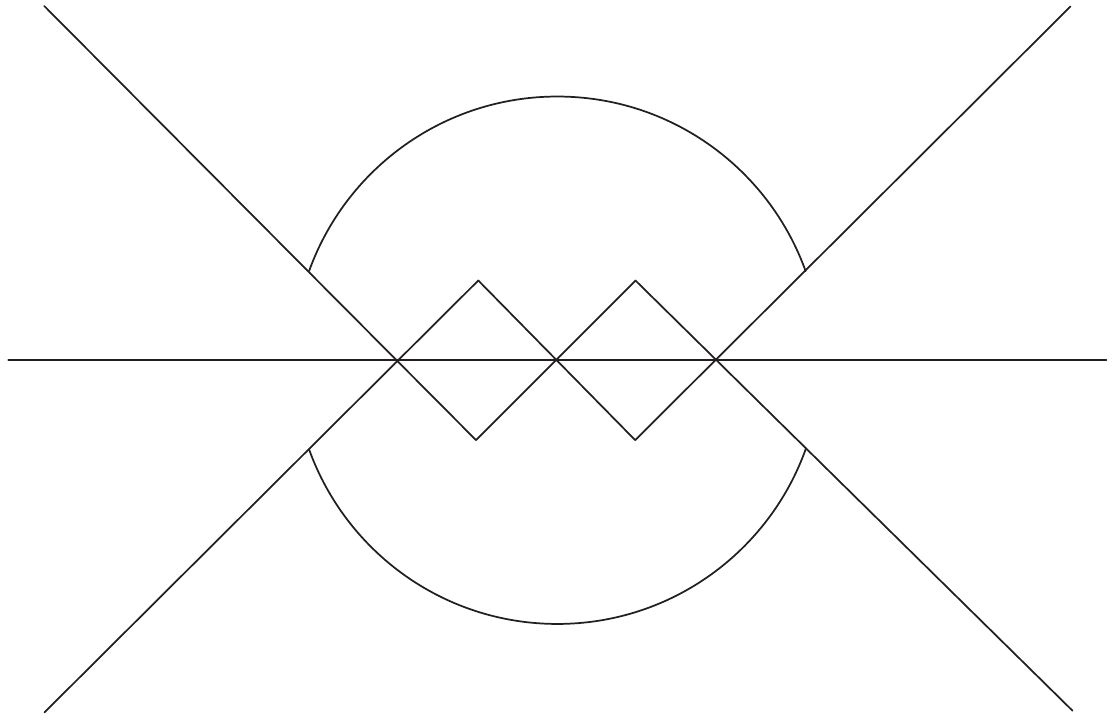}. We define $U_j := U_j(\zeta)$ for $j = 1,2$ by $U_1 = V_1 \cap \{|k| < 1\}$ and $U_2 = V_4 \cap \{|k| < 1\}$.

\subsubsection{First transformation}\label{firsttransformationsubsecIII}
The first transformation is the same as in sectors I and II, which means that we define $M^{(1)}(x,t,k)$ by (\ref{M1defI}).
Since $h e^{t\Phi}$ and $h^* e^{-t\Phi}$ are bounded and analytic functions of $k \in U_1$ and $k \in U_2$, respectively, we infer that $M$ satisfies the RH problem (\ref{RHM}) iff $M^{(1)}$ 
satisfies the RH problem (\ref{RHMj}) with $j = 1$, where $\Gamma^{(1)}$  is shown in Figure \ref{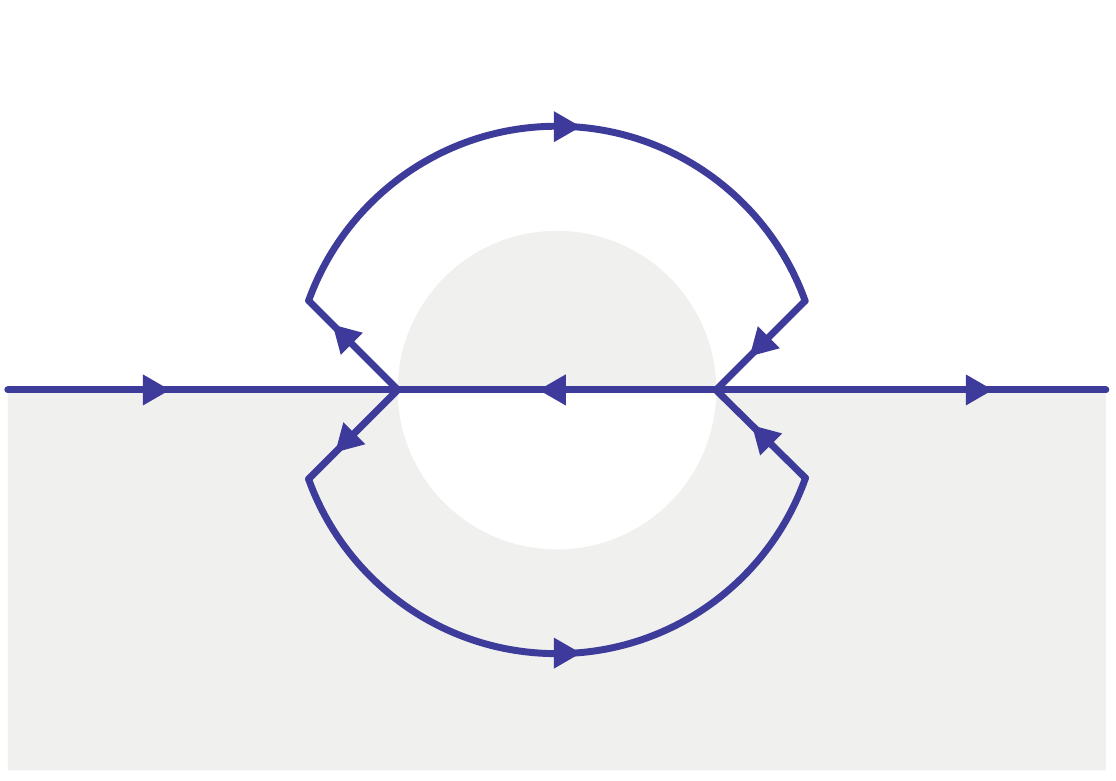} and the jump matrix $J^{(1)}$ is given by (\ref{J1def}).

\begin{figure}
\begin{center}
\begin{overpic}[width=.65\textwidth]{VjsIII.pdf}
      \put(88,41){\small $V_1$}
      \put(12,41){\small $V_1$}
      \put(41,34){\small $V_2$}
      \put(55,34){\small $V_2$}
      \put(41,28.2){\small $V_3$}
      \put(55,28.2){\small $V_3$}
      \put(88,21){\small $V_4$}
      \put(12,21){\small $V_4$}
      \put(48,61){\small $V_5$}
      \put(48,1){\small $V_6$}
      \put(32.2,27){\small $-k_0$}
      \put(63.2,27.4){\small $k_0$}
    \end{overpic}\bigskip
     \begin{figuretext}\label{VjsIII.pdf}
       The open subsets $V_j := V_j(\zeta)$, $j = 1,\dots, 6$, of the complex $k$-plane relevant for Sector III. 
     \end{figuretext}
     \end{center}
\end{figure}

\begin{figure}
\begin{center}
\begin{overpic}[width=.65\textwidth]{Gamma1III.pdf}
      \put(102,34){\small $\Gamma^{(1)}$}
      \put(76,60){\small $\re \Phi < 0$}
      \put(76,10){\small $\re \Phi > 0$}
      \put(49,61){\small $1$}
      \put(31,41){\small $1$}
      \put(67,41){\small $1$}
      \put(49,37){\small $2$}
      \put(49,13.5){\small $3$}
      \put(31,26.5){\small $3$}
      \put(67,26){\small $3$}
      \put(86,37){\small $4$}
      \put(13,37){\small $4$}
      \put(34,31){\small $-k_0$}
      \put(61,31){\small $k_0$}
    \end{overpic}
     \begin{figuretext}\label{Gamma1III.pdf}
       The contour $\Gamma^{(1)}$ in the complex $k$-plane. The region where $\re \Phi > 0$ is shaded. 
     \end{figuretext}
     \end{center}
\end{figure}

\subsubsection{Second transformation} 
The jump matrix $v$ does not admit an appropriate triangular factorization for $k \in (-k_0, k_0)$. Hence we introduce $M^{(2)}$ by
$$M^{(2)}(x,t,k) = M^{(1)}(x,t,k) \delta(\zeta, k)^{-\sigma_3},$$
where the complex-valued function $\delta(\zeta, k)$ is defined by
\begin{align}\label{deltadef}
\delta(\zeta, k) = e^{\frac{1}{2\pi i}\int_{-k_0}^{k_0} \frac{\ln(1 + |r(s)|^2)}{s-k} ds}, \qquad k \in \C \setminus [-k_0, k_0].
\end{align}

\begin{lemma}\label{deltalemmaIII}
For each $\zeta \in \mathcal{I}$, the function $\delta(\zeta, k)$ has the following properties:
\begin{enumerate}[$(a)$]

\item $\delta(\zeta, k)$ and $\delta(\zeta, k)^{-1}$ are bounded and analytic functions of $k \in \C \setminus [-k_0, k_0]$ with continuous boundary values on $(-k_0, k_0)$.

\item $\delta(\zeta, k) = \overline{\delta(\zeta, \bar{k})}^{-1} = \delta(\zeta, -k)^{-1}$ for $k \in \C \setminus [-k_0, k_0]$.

\item Across the subcontour $(-k_0, k_0)$ of $\Gamma^{(1)}$ oriented to the left as in Figure \ref{Gamma1III.pdf}, 
$\delta$ satisfies the jump condition 
$$\delta_+(\zeta, k) = \frac{\delta_-(\zeta, k)}{1 + |r(k)|^2}, \qquad k \in (-k_0, k_0).$$

\item $\delta$ satisfies the asymptotic formulas
$$\delta(\zeta, k) = \begin{cases} 1 + O(k^{-1}) & \text{uniformly as $k \to \infty$}, \\
1 + O(k) & \text{uniformly as $k \to 0$}.
\end{cases}$$
\end{enumerate}
\end{lemma}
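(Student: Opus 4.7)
Set $\nu(s) := \ln(1 + |r(s)|^2)$. By Assumption \ref{r1hassumption2}, $\nu \in C^\infty([-k_0,k_0])$ is real-valued; the symmetry $r(k) = \overline{r(-\bar k)}$ (a consequence of (\ref{r1hsymm})) makes $\nu$ even on $[-k_0,k_0]$; and since $r$ vanishes to all orders at $0$, so does $\nu$. The plan is to read off each of (a)--(d) by analyzing the Cauchy integral $F(\zeta,k) := \frac{1}{2\pi i}\int_{-k_0}^{k_0}\frac{\nu(s)}{s-k}\,ds$ of which $\delta$ is the exponential.

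For (a), analyticity of $\delta$ and $\delta^{-1}$ on $\C \setminus [-k_0,k_0]$ is immediate, and continuity of boundary values on the open interval $(-k_0,k_0)$ follows from the Plemelj--Privalov theorem applied to the smooth density $\nu$. Boundedness is the only subtle point and reduces to an analysis at the endpoints. Near $k_0$ I would split
\begin{align*}
F(\zeta,k) = \frac{\nu(k_0)}{2\pi i}\log\frac{k_0-k}{-k_0-k} + \frac{1}{2\pi i}\int_{-k_0}^{k_0}\frac{\nu(s)-\nu(k_0)}{s-k}\,ds,
\end{align*}
and analogously at $-k_0$; the second integral is bounded since its numerator is Lipschitz. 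Because $\nu(k_0)\in\R$, the coefficient $\nu(k_0)/(2\pi i)$ is purely imaginary, so $e^{F}$ picks up only a bounded phase (not a power-law blow-up) as $k \to k_0$. Replacing $\nu$ by $-\nu$ handles $\delta^{-1}$.

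Parts (b) and (c) are direct calculations. Taking complex conjugates inside the integral (using $\nu \in \R$) yields $\overline{F(\zeta,\bar k)} = -F(\zeta,k)$, hence $\overline{\delta(\zeta,\bar k)} = \delta(\zeta,k)^{-1}$; the substitution $s \mapsto -s$ (using that $\nu$ is even) gives $F(\zeta,-k) = -F(\zeta,k)$, hence $\delta(\zeta,-k) = \delta(\zeta,k)^{-1}$. For (c), the Plemelj formula gives the standard jump $F^+_{\C_+}(k) - F^-_{\C_-}(k) = \nu(k)$ for $k \in (-k_0,k_0)$. With the leftward orientation of the segment inherited from $\Gamma^{(1)}$, the RH convention identifies the limit from $\C_-$ as $\delta_+$ and from $\C_+$ as $\delta_-$, so $\delta_+/\delta_- = e^{-\nu(k)} = 1/(1+|r(k)|^2)$.

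For (d), I would expand $(s-k)^{-1}$. As $k \to \infty$, $(s-k)^{-1} = -k^{-1} + O(k^{-2})$ uniformly in $s \in [-k_0,k_0]$, so $F = O(k^{-1})$ and $\delta = 1 + O(k^{-1})$. As $k \to 0$, use the identity $(s-k)^{-1} = s^{-1} + k/(s(s-k))$; the function $\nu(s)/s$ is smooth on $[-k_0,k_0]$ (since $\nu$ vanishes to all orders at $0$) and \emph{odd} (since $\nu$ is even), so its integral vanishes, and the remainder term is $O(k)$. Hence $\delta = 1 + O(k)$. The main obstacle in the whole argument is the endpoint analysis in (a): one must resist concluding that the logarithmic singularity of $F$ at $\pm k_0$ produces a power-law singularity in $\delta$, and instead exploit the fact that a purely imaginary logarithmic coefficient contributes only a bounded oscillatory factor.
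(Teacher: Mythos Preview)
Your proof is correct and is precisely the standard argument that the paper omits (the paper's proof reads only ``The proof is standard''). The key ingredients you identify---Plemelj for (c), the purely imaginary logarithmic exponent at the endpoints for boundedness in (a), the symmetry and evenness of $\nu$ for (b), and the expansion of $(s-k)^{-1}$ together with the oddness of $\nu(s)/s$ for (d)---are exactly what is needed.
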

\begin{proof}
The proof is standard. 
\end{proof}

Lemma \ref{deltalemmaIII} implies that
$$\delta(\zeta,\cdot)^{\sigma_3} \in I + (\dot{E}^2 \cap E^\infty)(\C \setminus [-k_0, k_0]), \qquad \zeta \in \mathcal{I}.$$
Hence $M$ satisfies the RH problem (\ref{RHM}) iff $M^{(2)}$ 
satisfies the RH problem (\ref{RHMj}) with $j = 2$, where $\Gamma^{(2)} = \Gamma^{(1)}$ and the jump matrix $J^{(2)}$ is given by $J^{(2)} =  \delta_-^{\sigma_3} J^{(1)}  \delta_+^{-\sigma_3}$, that is,
\begin{align*}\nonumber
&J_1^{(2)} = \begin{pmatrix} 1 & 0 \\  - \delta^{-2}h e^{t\Phi} & 1 \end{pmatrix}, 
\qquad
J_2^{(2)} = \begin{pmatrix} 1 + r r^* & - \delta_-^2 \frac{r^*}{1 + rr^*} e^{-t\Phi} \\
- \delta_+^{-2} \frac{r}{1 + rr^*} e^{t\Phi}& 1 \end{pmatrix}, 
	\\ 
& J_3^{(2)} = \begin{pmatrix} 1 & - \delta^2 h^* e^{-t\Phi} \\ 0 & 1 \end{pmatrix}, 
\qquad
J_4^{(2)} = \begin{pmatrix} 1 + r_1 r_1^* & \delta^2 r_1^* e^{-t\Phi} \\
\delta^{-2} r_1e^{t\Phi} & 1 \end{pmatrix}.
\end{align*}
If we define $r_2(k)$ by 
\begin{align*}
  r_2(k) = \frac{r^*(k)}{1+r(k)r^*(k)}, 
\end{align*}
we can write the jumps across the real axis as 
\begin{align}\nonumber
&J_2^{(2)} = \begin{pmatrix} 1  & - \delta_-^2 r_2 e^{-t\Phi} \\ 0 & 1 \end{pmatrix}
\begin{pmatrix} 1 & 0 \\- \delta_+^{-2} r_2^* e^{t\Phi}& 1 \end{pmatrix}, 
	\\
& J_4^{(2)} = \begin{pmatrix} 1  & \delta^2 r_1^* e^{-t\Phi} \\ 0& 1 \end{pmatrix}
\begin{pmatrix} 1  & 0 \\ \delta^{-2} r_1e^{t\Phi} & 1 \end{pmatrix}.
\end{align}

\subsubsection{Third transformation}
Before we can deform the contour, we need to decompose each of the functions $h, r_1, r_2$ into an analytic part and a small remainder. 

\begin{lemma}[Analytic approximation of $h$]\label{hdecompositionlemmaIII}
For any integer $N \geq 1$, there exists a decomposition
\begin{align*}
h(k) = h_{a}(t, k) + h_{r}(t, k), \qquad t > 0, \ k \in \bar{D}_1 \cap \bar{D}_2,  
\end{align*}
such that the functions $h_{a}$ and $h_{r}$ have the following properties:
\begin{enumerate}[$(a)$]
\item For each $t > 0$, $h_{a}(t, k)$ is defined and continuous for $k \in \bar{D}_1$ and analytic for $k \in D_1$.

\item The function $h_a$ satisfies
\begin{align}\label{haestimateIII}
\begin{cases} 
|h_{a}(t, k) - \sum_{j=0}^{N} \frac{h^{(j)}(1)(k-1)^j}{j!}| \leq C|k-1|^{N+1} e^{\frac{t}{4} |\re \Phi(\zeta,k)|},
	\\
|h_{a}(t, k)| \leq 
\frac{C}{1 + |k|^2} e^{\frac{t}{4} |\re \Phi(\zeta,k )|},	
\end{cases}  \; k \in \bar{D}_1, \ \zeta \in \mathcal{I}, \ t > 0.
\end{align}

\item The $L^1$ and $L^\infty$ norms of the function $h_{r}(t, \cdot)$ on the semicircle $\bar{D}_1 \cap \bar{D}_2$ are $O(t^{-N-\frac{1}{2}})$ as $t \to \infty$.

\item $h_a(t, k) = \overline{h_a(t, -\bar{k})}$ and $h_r(t, k) = \overline{h_r(t, -\bar{k})}$.

\end{enumerate}
\end{lemma}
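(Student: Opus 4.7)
The proof will follow the template of Lemmas \ref{decompositionlemmaI} and \ref{decompositionlemmaII}, with the real axis replaced by the upper semicircle $\bar D_1 \cap \bar D_2 = \{|k| = 1, \im k \geq 0\}$ and with the analytic continuation taken into $D_1$ instead of $V_1$. The plan is to subtract from $h$ a rational approximation matching Taylor data at the critical point $k = 1$, and then to split the smooth residual into an analytic piece via a Fourier-type decomposition in a real parameter adapted to $\Phi$ along the semicircle.

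First, I would use $h \in C^\infty(\bar D_2)$ to construct a rational function $h_0(k) = \sum_{j=2}^M a_j(k+i)^{-j}$, with all poles at $k = -i \notin \bar D_1$, whose Taylor expansion at $k = 1$ agrees with that of $h$ to order $2N + 2$ and which satisfies $h_0(k) = O(k^{-2})$ as $k \to \infty$. These are finitely many independent linear conditions on the coefficients $\{a_j\}$ and are solvable for $M$ large enough; the symmetry $h_0(k) = \overline{h_0(-\bar k)}$ is imposed by symmetrization. The residual $f(k) := h(k) - h_0(k)$ then lies in $C^\infty(\bar D_2)$ and vanishes to order $2N+3$ at $k = 1$, so that
\begin{equation*}
F(\zeta, k) := \frac{(k + i)^{N+3}}{(k - 1)^{N+1}}\, f(k)
\end{equation*}
is smooth and uniformly bounded on the semicircle.

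Next I would parameterize the semicircle by $\theta \in (0,\pi)$ via $k = e^{i\theta}$ and build a Fourier representation of the form $F(\zeta, k) = \int_\R \hat F(\zeta, s)\, e^{s\Phi(\zeta, k)}\, ds$, with $\|s^{N+1}\hat F(\zeta, s)\|_{L^2(\R)}$ bounded uniformly in $\zeta \in \mathcal{I}$. Splitting the integral at $s = -t/4$ into $F_a$ (over $s \geq -t/4$) and $F_r$ (over $s < -t/4$), I would set
\begin{equation*}
h_a(t, k) := h_0(k) + \frac{(k-1)^{N+1}}{(k+i)^{N+3}}\, F_a(\zeta, k), \qquad h_r := h - h_a.
\end{equation*}
Analyticity of $F_a$ in $D_1$ and the pointwise bound $|F_a(\zeta,k)| \leq C e^{(t/4)|\re \Phi(\zeta,k)|}$ will follow from $\re \Phi \leq 0$ throughout $\bar D_1$ together with $\|\hat F\|_{L^1(\R)} \leq C$, while the bound $\|h_r\|_{L^1 \cap L^\infty} = O(t^{-N-1/2})$ on the semicircle will follow from Cauchy--Schwarz applied to the tail Fourier integral. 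The prefactor $(k-1)^{N+1}/(k+i)^{N+3}$ simultaneously produces the Taylor matching at $k = 1$ and the required $O(|k|^{-2})$ decay at infinity, yielding properties (a)--(c); property (d) is then enforced by averaging $h_a$ and $h_r$ against their images under $k \mapsto -\bar k$.

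The principal technical obstacle is that, in contrast to Lemmas \ref{decompositionlemmaI} and \ref{decompositionlemmaII}, the semicircle is not a level set of $\re \Phi$: on $k = e^{i\theta}$ one has $\re \Phi = -\zeta\sin\theta$ and $\im \Phi = \cos\theta$, so the standard change of variable $\phi = -i\Phi$ is complex-valued on the curve and the kernel $e^{s\Phi}$ is not purely oscillatory along the contour of integration. Two workable remedies are to parameterize by the real quantity $\im \Phi = \cos\theta$ while absorbing the real part of $\Phi$ into a $\zeta$-dependent weight on $\hat F$, or to perform a conformal change of variables sending the semicircle to a real interval and $D_1$ to a half-plane, thereby reducing the problem verbatim to the setup of Lemma \ref{decompositionlemmaII}. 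Once such a representation is in place, the remaining bookkeeping is routine.
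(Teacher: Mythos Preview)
Your plan has the right architecture (subtract a rational approximant, then Fourier-split the residual), but there is a concrete gap in the Fourier step, and the paper's execution differs from yours in two ways that matter.

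\medskip
\textbf{The gap: you must match at $k=-1$ as well.} The upper semicircle has two endpoints, $k=1$ and $k=-1$. Whatever real phase variable you use for the Fourier representation (and your own proposed remedy $\phi = \im\Phi = \cos\theta$ is the natural choice), its range is the compact interval $[-1,1]$, so to get $\hat F \in L^2$ with $s^{N+1}\hat F \in L^2$ you must extend $F$ by zero outside $[-1,1]$ and land in $H^{N+1}(\R)$. That requires $F$ and its first $N$ derivatives to vanish at \emph{both} endpoints. Your construction only forces vanishing at $k=1$: the prefactor $(k-1)^{N+1}/(k+i)^{N+3}$ and the Taylor matching of $h_0$ are both concentrated there, and nothing controls $f(-1)$. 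With a jump of $F$ at $\phi=-1$, $\hat F(s)$ decays only like $s^{-1}$, and your tail estimate $\|h_r\| = O(t^{-N-1/2})$ fails. Symmetrizing $h_a$ afterwards under $k\mapsto -\bar k$ does not repair this: the damage is already in the decay of $\hat F$.

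The paper therefore takes $f_0(k)=\sum_{j=2}^{6N+9} a_j k^{-j}$ (poles at $0\notin\bar D_1$) matching $h$ to order $3N+3$ at \emph{both} $k=\pm1$, and uses the symmetric prefactor $(k^2-1)^{N+1}/k^{2N+4}$. The symmetry $h(k)=\overline{h(-\bar k)}$ then forces $a_j\in\R$ for $j$ even and $a_j\in i\R$ for $j$ odd, so $f_0$, $f$, and the whole decomposition inherit property~(d) without any averaging.

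\medskip
\textbf{A cleaner resolution of your ``technical obstacle''.} You correctly note that $\Phi(\zeta,k)$ is not purely imaginary on the semicircle. The paper sidesteps this not with a $\zeta$-dependent weight but by dropping $\zeta$ altogether: it uses the fixed phase $\phi=\tfrac12(k+k^{-1})$ and the kernel $e^{\frac{i}{2}(k+k^{-1})s}$, which is analytic and bounded on $\bar D_1$ for $s\ge -t/4$ since $\re\tfrac{i}{2}(k+k^{-1})\le 0$ there. The $\zeta$-dependence enters only at the very end through the elementary inequality
\[
\bigl|\re\tfrac{i}{2}(k+k^{-1})\bigr|\le \bigl|\re\Phi(\zeta,k)\bigr|,\qquad k\in\bar D_1,\ \zeta\in[0,1),
\]
which follows because $\re\tfrac{i}{2}(k-k^{-1})\le 0$ on $\bar D_1$ as well. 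This gives a single, $\zeta$-independent decomposition $h=h_a(t,\cdot)+h_r(t,\cdot)$ satisfying the bounds~\eqref{haestimateIII} uniformly in $\zeta$, which is exactly what the later arguments need.
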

\begin{proof}
Recall that $h(k) \in C^{\infty}(\bar{D}_2)$. Let
$$f_0(k) = \sum_{j=2}^{6N+9} \frac{a_j}{k^j},$$
where the complex constants $\{a_j\}_{j=2}^{6N+9}$ are chosen so that $f_0(k)$ coincides with $h(k)$ to order $3N+3$ as $k \to \pm 1$, i.e.,
\begin{align}\label{hlinearconditionsIII}
f_0(k) = \begin{cases} 
\sum_{j=0}^{3N+3} \frac{h^{(j)}(1)}{j!} (k-1)^j + O((k-1)^{3N+4}), & k \to 1,
	\\
\sum_{j=0}^{3N+3} \frac{h^{(j)}(-1)}{j!} (k+1)^j + O((k+1)^{3N+4}), & k \to -1.
\end{cases}
\end{align}
Equation (\ref{hlinearconditionsIII}) imposes $6N+8$ independent linear conditions on the $a_j$; hence the coefficients $a_j$ exist and are unique.
The symmetry $h(k) = \overline{h(-\bar{k})}$ implies that $h^{(j)}(1) = (-1)^j \overline{h^{(j)}(-1)}$ for any $j$, which in turn implies that $a_j \in \R$ for $j$ even and $a_j \in i\R$ for $j$ odd; thus $f_0(k) = \overline{f_0(-\bar{k})}$.
Letting $f = h - f_0$, we have
\begin{align}\label{hcoincide}
 f^{(n)}(k) =
\begin{cases}
 O((k-1)^{3N+4 - n}), & k \to 1, 
	\\
 O((k+1)^{3N+4 - n}), & k \to -1, 
 \end{cases}
 \qquad  k \in \bar{D}_1 \cap \bar{D}_2, \ n = 0,1,2.
\end{align}

The decomposition of $h(k)$ can now be derived as follows.
The map $k \mapsto \phi = \phi(k)$ defined by
$$\phi(k) = \frac{1}{2}\Big(k+\frac{1}{k}\Big)$$
is a bijection $\bar{D}_1 \cap \bar{D}_2 \to [-1,1]$, because $\phi(e^{i\theta}) = \cos \theta$. Hence we can define $F:\R \to \C$ by
\begin{align}\label{Fphi}
F(\phi) = \begin{cases} \frac{k^{2N+4}}{(k^2 - 1)^{N+1}} f(k), & \phi \in [-1,1], \\
0, & \phi \in (-\infty,-1) \cup (1, \infty), 
\end{cases}
\quad \zeta \in \mathcal{I},
\end{align}
The function $F(\phi)$ is smooth for $\phi \in \R \setminus \{-1,1\}$ and 
$$F^{(n)}(\phi) = \bigg(\frac{2k^2}{k^2-1} \frac{\partial }{\partial k}\bigg)^n \bigg(\frac{k^{2N+4}}{(k^2-1)^{N+1}}f(k)\bigg), \qquad \phi \in (-1,1), \ n = 0,1, \dots, N+1.$$ 
Using (\ref{hcoincide}) it follows that $F \in C^{N+1}(\R)$. In particular, $F$ belongs to the Sobolev space $H^{N+1}(\R)$.
Hence $\|s^{N+1} \hat{F}(s)\|_{L^2(\R)} < \infty$ where $\hat{F}$ denotes the Fourier transform of $F$:
\begin{align}\label{FphihatF}
\hat{F}(s) = \frac{1}{2\pi} \int_{\R} F(\phi) e^{-i\phi s} d\phi, \qquad F(\phi) =  \int_{\R} \hat{F}(s) e^{i\phi s} ds.
\end{align}
Equations (\ref{Fphi}) and (\ref{FphihatF}) imply
$$\frac{(k^2-1)^{N+1}}{k^{2N+4}}\int_{\R} \hat{F}(s) e^{\frac{i}{2}(k+\frac{1}{k})s} ds 
= f(k), \qquad  k \in \bar{D}_1 \cap \bar{D}_2.$$
Writing
$$f(k) = f_a(t, k) + f_r(t, k), \qquad t > 0, \  k \in \bar{D}_1 \cap \bar{D}_2,$$
where the functions $f_a$ and $f_r$ are defined by
\begin{align*}
& f_a(t,k) = \frac{(k^2-1)^{N+1}}{k^{2N+4}}\int_{-\frac{t}{4}}^\infty \hat{F}(s) e^{\frac{i}{2}(k+\frac{1}{k})s} ds, \qquad t > 0, \ k \in \bar{D}_1,  
	\\
& f_r(t,k) = \frac{(k^2-1)^{N+1}}{k^{2N+4}}\int_{-\infty}^{-\frac{t}{4}} \hat{F}(s) e^{\frac{i}{2}(k+\frac{1}{k})s} ds,\qquad t > 0, \   k \in \bar{D}_1 \cap \bar{D}_2,
\end{align*}
we infer that $f_a(t, \cdot)$ is continuous in $\bar{D}_1$ and analytic in $D_1$. 
Furthermore, since $|\re \frac{i}{2}(k+k^{-1})| \leq |\re \Phi(\zeta, k)|$ for all $k \in \bar{D}_1$ and $\zeta \in \mathcal{I}$, we find
\begin{align*}\nonumber
 |f_a(t, k)| 
&\leq \frac{|k^2-1|^{N+1}}{|k|^{2N+4}}\|\hat{F}\|_{L^1(\R)}  \sup_{s \geq -\frac{t}{4}} e^{s \re \frac{i}{2}(k+k^{-1})}
\leq C\frac{|k^2-1|^{N+1}}{|k|^{2N+4}} e^{\frac{t}{4} |\re \frac{i}{2}(k+k^{-1})|} 
	\\ 
 &\leq C\frac{|k^2-1|^{N+1}}{|k|^{2N+4}} e^{\frac{t}{4} |\re \Phi(\zeta, k)|}, \qquad t > 0, \ k \in \bar{D}_1,
\end{align*}
and
\begin{align*}\nonumber
|f_r(t, k)| & \leq \frac{|k^2-1|^{N+1}}{|k|^{2N+4}} \int_{-\infty}^{-\frac{t}{4}} s^{N+1} |\hat{F}(s)| s^{-N-1} ds
 \leq C \| s^{N+1} \hat{F}(s)\|_{L^2(\R)} \sqrt{\int_{-\infty}^{-\frac{t}{4}} s^{-2N-2} ds}  
 	\\ 
&  \leq C t^{-N-\frac{1}{2}}, \qquad t > 0, \ k \in \bar{D}_1 \cap \bar{D}_2.
\end{align*}
Hence the $L^1$ and $L^\infty$ norms of $f_r$ on $\bar{D}_1 \cap \bar{D}_2$ are $O(t^{-N-\frac{1}{2}})$. 
The symmetry $f(k) = \overline{f(-\bar{k})}$ implies $F(\phi) = \overline{F(-\phi)}$, so $\hat{F}(s)$ is real valued, which leads to the symmetries $f_a(t, k) = \overline{f_a(t, -\bar{k})}$ and $f_r(t, k) = \overline{f_r(t, -\bar{k})}$.
Letting
\begin{align*}
& h_{a}(t, k) = f_0(k) + f_a(t, k), \qquad t > 0, \ k \in \bar{D}_1,
	\\
& h_{r}(t, k) = f_r(t, k), \qquad t > 0, \ k \in \bar{D}_1 \cap \bar{D}_2,
\end{align*}
we find a decomposition of $h$ with the desired properties.
\end{proof}

\begin{remark}\upshape
The analytic approximation of $h(k)$ is only needed for small $\zeta$ to ensure that the asymptotic formula is valid uniformly as $k_0$ approaches $1$. 
\end{remark}

\begin{lemma}[Analytic approximation of $r_1$ and $r_2$ for $0 \leq \zeta < 1$]\label{decompositionlemmaIII}
For any integer $N \geq 1$, there exist decompositions
\begin{align*}
& r_1(k) = r_{1,a}(x, t, k) + r_{1,r}(x, t, k), \qquad k \in (-\infty, k_0] \cup [k_0, \infty), 
	\\
& r_2(k) = r_{2,a}(x, t, k) + r_{2,r}(x, t, k), \qquad k \in [-k_0, k_0], 
\end{align*}
such that the functions $\{r_{j,a}, r_{j,r}\}_{j=1}^2$ have the following properties:
\begin{enumerate}[$(a)$]
\item For each $\zeta \in \mathcal{I}$ and each $t > 0$, $r_{j,a}(x, t, k)$ is defined and continuous for $k \in \bar{V}_j$ and analytic for $k \in V_j$, $j = 1,2$.

\item The functions $r_{1,a}$ and $r_{2,a}$ satisfy, uniformly for $\zeta \in \mathcal{I}$ and $t > 0$,
\begin{subequations}\label{rjaestimatesIII}
\begin{align}\label{rjaestimatesIIIa}
\bigg|r_{j, a}(x, t, k) - \sum_{n=0}^{N} \frac{r_j^{(n)}(k_0)(k-k_0)^n}{n!} \bigg| \leq C |k - k_0|^{N+1} e^{\frac{t}{4}|\re \Phi(\zeta,k)|}, \qquad k \in \bar{V}_j, \ j = 1, 2, 
\end{align}
and
\begin{align}\label{rjaestimatesIIIb}
\begin{cases}  
|r_{1, a}(x, t, k)| \leq \frac{C}{1 + |k|} e^{\frac{t}{4}|\re \Phi(\zeta,k)|}, & k \in \bar{V}_1,
	\\  
|r_{2, a}(x, t, k)| \leq C|k|^2 e^{\frac{t}{4}|\re \Phi(\zeta,k)|}, \quad & k \in \bar{V}_2.
\end{cases}     
\end{align}
\end{subequations}

\item The $L^1$ and $L^\infty$ norms on $(-\infty,-k_0) \cup (k_0, \infty)$ of the function $k \mapsto (1+|k|^{-1}) r_{1,r}(x, t, k)$ are $O(t^{-N-\frac{1}{2}})$ as $t \to \infty$ uniformly with respect to $\zeta \in \mathcal{I}$.

\item The $L^1$ and $L^\infty$ norms on $(-k_0, k_0)$ of the function $k \mapsto (1+|k|^{-1})r_{2,r}(x, t, k)$ are $O(t^{-N-\frac{1}{2}})$ as $t \to \infty$ uniformly with respect to $\zeta \in \mathcal{I}$.

\item The following symmetries are valid:
\begin{align}\label{rsymmetriesIII}
r_{j,a}(\zeta, t, k) = \overline{r_{j,a}(\zeta, t, -\bar{k})}, \quad
r_{j,r}(\zeta, t, k) = \overline{r_{j,r}(\zeta, t, -\bar{k})}, \qquad j = 1, 2.
\end{align}

\end{enumerate}
\end{lemma}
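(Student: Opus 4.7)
I would prove the two decompositions separately by a common template adapted from Lemmas \ref{decompositionlemmaI}, \ref{decompositionlemmaII}, and \ref{hdecompositionlemmaIII}, using the symmetries (\ref{rsymmetriesIII}) to restrict the construction to $\re k > 0$ in each case. The natural phase variable is
\begin{equation*}
\phi(\zeta,k) := -i\bigl(\Phi(\zeta,k)-\Phi(\zeta,k_0)\bigr) = \frac{(k-k_0)^2}{k(1+k_0^2)},
\end{equation*}
which has a double zero at $k=k_0$ and is a diffeomorphism from each of $(0,k_0)$ and $(k_0,\infty)$ onto $(0,\infty)$. Uniformity in $\zeta\in[0,1)$, including as $k_0\to 1$, will be tracked through the resulting Sobolev bounds.

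For $r_1$ on $(k_0,\infty)$, the plan is: (i) choose a rational function $f_0(k)=\sum_{j=1}^{2N+5}a_j(k+i)^{-j}$, whose $2N+5$ coefficients are uniquely fixed by the linear conditions that $f_0$ match the Taylor expansion of $r_1$ at $k_0$ to order $2N+2$ and the asymptotic expansion (\ref{r1expansion2}) at infinity to order $2$; then $f:=r_1-f_0$ satisfies $f^{(n)}(k)=O((k-k_0)^{2N+3-n})$ at $k_0$ and $f^{(n)}(k)=O(k^{-3})$ at infinity. (ii) Define $F(\zeta,\phi):=(k+i)^{N+3}(k-k_0)^{-(N+1)}f(k)$ for $k>k_0$ and extend by zero off the image; the chain rule $\partial_\phi=(\partial_\phi k)\partial_k$ together with the vanishing orders of $f$ ensures that $F(\zeta,\cdot)\in H^{N+1}(\R)$ with norm bounded uniformly in $\zeta$. (iii) Fourier-invert $F$, split the $s$-integral at $s=-t/4$, and let $f_a$ and $f_r$ be the pieces over $[-t/4,\infty)$ and $(-\infty,-t/4)$. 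On $\bar V_1$ one has $|e^{is\phi(k)}|\leq e^{|s|\,|\re\Phi(\zeta,k)|}$ (up to a unimodular factor involving $\Phi(\zeta,k_0)$); combined with $|s|\leq t/4$ on the $f_a$-piece this yields the second line of (\ref{rjaestimatesIIIb}) for $r_{1,a}:=f_0+f_a$, and matching against the Taylor polynomial of $f_0$ at $k_0$ gives (\ref{rjaestimatesIIIa}). A Cauchy--Schwarz bound on the tail, using $\|s^{N+1}\hat F(\zeta,\cdot)\|_{L^2(\R)}<\infty$, yields the $O(t^{-N-1/2})$ estimate for $r_{1,r}:=f_r$. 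Extension to $k<-k_0$ is then by reflection via (\ref{rsymmetriesIII}).

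For $r_2$ on $(0,k_0)$, the same template applies with two modifications. Since $r=r_1+h$ vanishes to all orders at $k=0$, so does $r_2=r^*/(1+|r|^2)$; I would choose $f_0$ to match the Taylor series of $r_2$ at $k_0$ to order $2N+2$ and vanish to order at least $N+4$ at $k=0$, and include an extra factor of $k^{-2}$ in the prefactor, for instance $F(\zeta,\phi):=P_2(k)k^{-2}(k-k_0)^{-(N+1)}f(k)$ with $P_2$ a polynomial whose zeros lie outside $\bar V_2$, so that the analytic piece $f_a$ automatically satisfies $|f_a(k)|\leq C|k|^2 e^{\frac{t}{4}|\re\Phi|}$ on $\bar V_2$, giving the first line of (\ref{rjaestimatesIIIb}). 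The split of the Fourier integral and the tail bound producing the $O(t^{-N-1/2})$ estimate for $r_{2,r}$ are otherwise identical, and the extension to $(-k_0,0)$ is again by the symmetry (\ref{rsymmetriesIII}).

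The main obstacle is the combinatorial bookkeeping in the prefactor of $F$: it must contain enough negative powers of $(k-k_0)$ to absorb the zero of $f$ at the critical point, enough positive powers of $(k\pm i)$ to cancel the decay of $f$ at infinity and produce the $(1+|k|)^{-1}$ bound in the first line of (\ref{rjaestimatesIIIb}), and, for $r_2$, enough negative powers of $k$ to extract the factor $|k|^2$---yet not so aggressive that $F$ fails to lie in $H^{N+1}(\R)$. Since $r_2$ vanishes to all orders at $k=0$, the required balance always exists once the orders of matching of $f_0$ at $k=0$, at $k_0$, and at $\infty$ are taken proportional to $N$, so the construction goes through uniformly in $\zeta\in[0,1)$ exactly as in Lemma \ref{hdecompositionlemmaIII}.
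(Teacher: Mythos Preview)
Your strategy---subtract a rational $f_0$ that matches $r_j$ at $k_0$ and at the far endpoint, pass to the phase variable $\phi$, Fourier-transform, and split the $s$-integral at $-t/4$---is exactly the template the paper intends (its own proof says only ``follows immediately from Lemma~\ref{decompositionlemmaII}'' and ``can be derived in a similar way''). There is, however, a real gap in the bookkeeping you propose at the critical point. In Sector~I the map $k\mapsto -i\tilde\Phi$ is a diffeomorphism on $(0,\infty)$, so one application of $\partial_\phi=(\phi')^{-1}\partial_k$ costs at most one power of the local variable. Here, by contrast, your phase $\phi(\zeta,k)=(k-k_0)^2/(k(1+k_0^2))$ has a \emph{double} zero at $k_0$, so $\phi'(k)\sim (k-k_0)$ and each $\partial_\phi$ costs \emph{two} powers of $(k-k_0)$. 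With your choices ($f_0$ matching $r_1$ to order $2N+2$ at $k_0$ and prefactor $(k-k_0)^{-(N+1)}$) one gets $F\sim(k-k_0)^{N+2}\sim\phi^{(N+2)/2}$ near $\phi=0$, whence $\partial_\phi^{N+1}F\sim\phi^{-N/2}$, which is not locally $L^2$ for any $N\ge1$. Thus $F\notin H^{N+1}(\R)$ and the Cauchy--Schwarz tail bound producing $r_{j,r}=O(t^{-N-1/2})$ breaks down.

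The fix is already visible in the paper's detailed proof of Lemma~\ref{hdecompositionlemmaIII}, where the same quadratic degeneracy of the phase occurs at $k=\pm1$: raise the matching order of $f_0$ at $k_0$ from $2N+2$ to about $3N+3$. Then $f\sim(k-k_0)^{3N+4}$, so $F\sim(k-k_0)^{2N+3}\sim\phi^{N+3/2}$ and $\partial_\phi^{N+1}F\sim\phi^{1/2}\to0$, giving $F\in C^{N+1}(\R)\subset H^{N+1}(\R)$ uniformly in $\zeta\in[0,1)$. With this single adjustment (and the corresponding enlargement of the rational ansatz to $\sum_{j=1}^{3N+6}a_j(k+i)^{-j}$), the rest of your argument---the $(k+i)$-factors controlling decay at $\infty$, the extra $k^{-2}$ for $r_2$ near the origin, and the extension to $\re k<0$ by the symmetry~\eqref{rsymmetriesIII}---goes through without change.
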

\begin{proof}
The decomposition of $r_1$ follows immediately from Lemma \ref{decompositionlemmaII} and the decomposition of $r_2$ can be derived in a similar way. 
\end{proof}

Applying Lemma \ref{hdecompositionlemmaIII} and Lemma \ref{decompositionlemmaIII} with $N = 1$, we obtain decompositions of $h,r_1,r_2$ (the lemmas are stated for an arbitrary $N \geq 1$, because this will be needed in Sector IV).
We introduce $M^{(3)}(x,t,k)$ by
\begin{align}\label{M3defIII}
M^{(3)}(x,t,k) = M^{(2)}(x,t,k)H(x,t,k),
\end{align}
where the sectionally analytic function $H$ is defined by
\begin{align*}
H = \begin{cases} 
\begin{pmatrix} 1  & 0 \\ -\delta^{-2} r_{1,a} e^{t\Phi} & 1 \end{pmatrix}, & k \in V_1,
	\\
\begin{pmatrix} 1  & - \delta^2 r_{2,a} e^{-t\Phi} \\ 0 & 1 \end{pmatrix}, & k \in V_2,
	\\
\begin{pmatrix} 1 & 0 \\ \delta^{-2} r_{2,a}^* e^{t\Phi}& 1 \end{pmatrix}, & k \in V_3,
	\\
\begin{pmatrix} 1  & \delta^2 r_{1,a}^* e^{-t\Phi} \\ 0& 1 \end{pmatrix}, & k \in V_4,
	\\
\begin{pmatrix} 1 & 0 \\  \delta^{-2}h_a e^{t\Phi} & 1 \end{pmatrix}, & k \in V_5,
	\\
\begin{pmatrix} 1 & -\delta^2 h_a^* e^{-t\Phi} \\ 0 & 1 \end{pmatrix}, & k \in V_6,
	\\
I, & \text{elsewhere}.	
\end{cases}
\end{align*}
\begin{figure}
\begin{center}
\begin{overpic}[width=.65\textwidth]{Gamma3III.pdf}
      \put(102,34){\small $\Gamma^{(3)}$}
      \put(66,62){\small $\re \Phi < 0$}
      \put(66,8){\small $\re \Phi > 0$}
      \put(66.5,40.3){\small $1$}
      \put(60,40){\small $2$}
      \put(60,27){\small $3$}
      \put(66.5,27){\small $4$}
       \put(32.6,39){\small $1$}
      \put(36.8,39){\small $2$}
      \put(36.8,28){\small $3$}
      \put(32.5,28){\small $4$}
     \put(85,15.5){\small $5$}
      \put(12.5,15.5){\small $5$}
     \put(85,52){\small $6$}
      \put(12.5,52){\small $6$}
       \put(50,60.5){\small $7$}
      \put(50,13){\small $8$}
      \put(42,36.5){\small $9$}
      \put(56.4,36.4){\small $9$}
      \put(12,37){\small $10$}
      \put(86,37){\small $10$}
    \end{overpic}
     \begin{figuretext}\label{Gamma3III.pdf}
       The contour $\Gamma^{(3)}$ in the complex $k$-plane with the region $\re \Phi > 0$ shaded. 
     \end{figuretext}
     \end{center}
\end{figure}
By Lemma \ref{deltalemmaIII}, Lemma \ref{hdecompositionlemmaIII}, and Lemma \ref{decompositionlemmaIII}, we have  
$$H(x,t,\cdot)^{\pm1} \in I + (\dot{E}^2 \cap E^\infty)(\C \setminus \Gamma^{(3)}),$$
where $\Gamma^{(3)} \subset \C$ denotes the contour displayed in Figure \ref{Gamma3III.pdf}.
It follows that $M$ satisfies the RH problem (\ref{RHM}) iff $M^{(3)}$ 
satisfies the RH problem (\ref{RHMj}) with $j = 3$, where the jump matrix $J^{(3)}$ is given by 
\begin{align}\nonumber
&J_1^{(3)} = \begin{pmatrix} 1  & 0 \\ \delta^{-2} (r_{1,a}+h) e^{t\Phi} & 1 \end{pmatrix},
&&
J_2^{(3)} = \begin{pmatrix} 1  & - \delta^2 r_{2,a} e^{-t\Phi} \\ 0 & 1 \end{pmatrix}, 
	\\\nonumber
&J_3^{(3)} = \begin{pmatrix} 1 & 0 \\ -\delta^{-2} r_{2,a}^* e^{t\Phi}& 1 \end{pmatrix},
&&
J_4^{(3)} = \begin{pmatrix} 1  & \delta^2 (r_{1,a}^* + h^*) e^{-t\Phi} \\ 0& 1 \end{pmatrix},
	\\\nonumber
&J_5^{(3)} = \begin{pmatrix} 1  & \delta^2 (r_{1,a}^* + h_a^*) e^{-t\Phi} \\ 0& 1 \end{pmatrix},
&&
J_6^{(3)} = \begin{pmatrix} 1  & 0 \\ \delta^{-2} (r_{1,a}+h_a) e^{t\Phi} & 1 \end{pmatrix},
	\\\nonumber
&J_7^{(3)} = \begin{pmatrix} 1 & 0 \\  - \delta^{-2}h_r e^{t\Phi} & 1 \end{pmatrix},
&&
J_8^{(3)} = \begin{pmatrix} 1 & - \delta^2 h_r^* e^{-t\Phi} \\ 0 & 1 \end{pmatrix},
	\\ \nonumber
& J_9^{(3)} = \begin{pmatrix} 1  & - \delta_-^2 r_{2,r} e^{-t\Phi} \\ 0 & 1 \end{pmatrix}
\begin{pmatrix} 1 & 0 \\- \delta_+^{-2} r_{2,r}^* e^{t\Phi}& 1 \end{pmatrix}, 
&&
J_{10}^{(3)} = \begin{pmatrix} 1+ |r_{1,r}|^2  & \delta^2 r_{1,r}^* e^{-t\Phi} \\ \delta^{-2} r_{1,r}e^{t\Phi} & 1 \end{pmatrix}.
\end{align}

\subsection{Local model}
The RH problem for $M^{(3)}$ has the property that the matrix $J^{(3)} - I$ decays to zero as $t \to \infty$ everywhere except near $\pm k_0$. This means that we only have to consider small neighborhoods of $\pm k_0$ when computing the long-time asymptotics of $M^{(3)}$. In this section, we find a local solution $m^{k_0}$ which approximates $M^{(3)}$ near $k_0$. 

\subsection{Exact solution on the cross}
Let $X = X_1 \cup \cdots \cup X_4 \subset \C$ be the cross defined by
\begin{align} \nonumber
&X_1 = \bigl\{se^{\frac{i\pi}{4}}\, \big| \, 0 \leq s < \infty\bigr\}, && 
X_2 = \bigl\{se^{\frac{3i\pi}{4}}\, \big| \, 0 \leq s < \infty\bigr\},  
	\\ \label{XdefIII}
&X_3 = \bigl\{se^{-\frac{3i\pi}{4}}\, \big| \, 0 \leq s < \infty\bigr\}, && 
X_4 = \bigl\{se^{-\frac{i\pi}{4}}\, \big| \, 0 \leq s < \infty\bigr\},
\end{align}
and oriented away from the origin, see Figure \ref{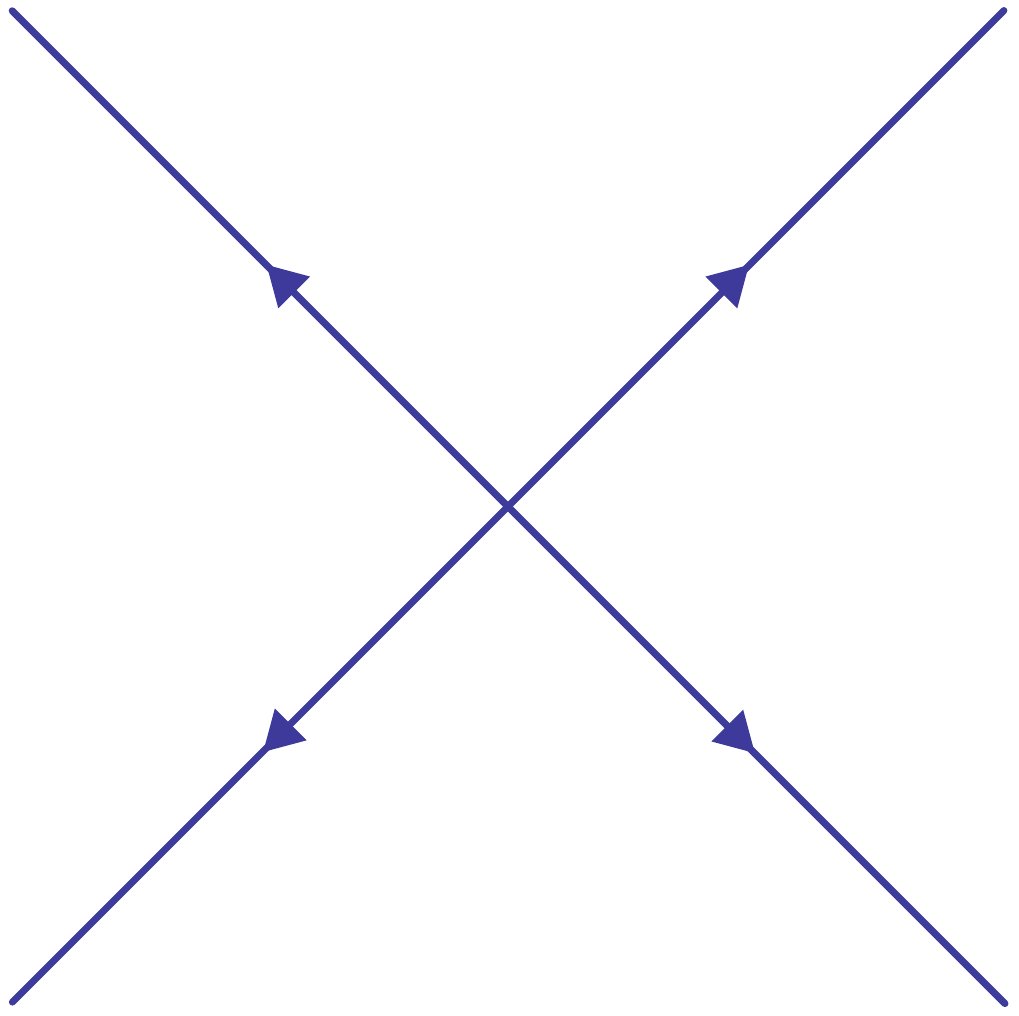}.

\begin{figure}
\begin{center}
 \begin{overpic}[width=.4\textwidth]{X.pdf}
      \put(73,68){\small $X_1$}
      \put(19,68){\small $X_2$}
      \put(17,27){\small $X_3$}
      \put(75,27){\small $X_4$}
      \put(48,43){$0$}
    \end{overpic}
     \begin{figuretext}\label{X.pdf}
        The contour $X = X_1 \cup X_2 \cup X_3 \cup X_4$.
     \end{figuretext}
     \end{center}
\end{figure}

\begin{lemma}[Exact solution on the cross]\label{XlemmaIII}
Define the function $\nu:\C \to (0,\infty)$ by 
$\nu(q) = \frac{1}{2\pi} \ln(1 + |q|^2)$ and define the jump matrix $v^X(q, z)$ for $z \in X$ by
\begin{align}\label{vXdefIII} 
v^X(q, z) = \begin{cases}
\begin{pmatrix} 1 & 0	\\
  q z^{2i\nu(q)} e^{\frac{iz^2}{2}}	& 1 \end{pmatrix}, &   z \in X_1, 
  	\\
\begin{pmatrix} 1 & -\frac{\bar{q}}{1 + |q|^2} z^{-2i\nu(q)}e^{-\frac{iz^2}{2}}	\\
0 & 1  \end{pmatrix}, &  z \in X_2, 
	\\
\begin{pmatrix} 1 &0 \\
- \frac{q}{1 + |q|^2}z^{2i\nu(q)} e^{\frac{iz^2}{2}}	& 1 \end{pmatrix}, &  z \in X_3,
	\\
 \begin{pmatrix} 1	& \bar{q} z^{-2i\nu(q)}e^{-\frac{iz^2}{2}}	\\
0	& 1 \end{pmatrix}, &  z \in X_4.
\end{cases}
\end{align}
Then, for each $q \in \C$, the RH problem 
\begin{align*}
\begin{cases} m^X(q, \cdot) \in I + \dot{E}^2(\C \setminus X), 
	\\
m_+^X(q, z) =  m_-^X(q, z) v^X(q, z) \quad \text{for a.e.} \ z \in X, 
\end{cases} 
\end{align*}
has a unique solution $m^X(q, z)$. This solution satisfies
\begin{align}\label{mXasymptoticsIII}
  m^X(q, z) = I - \frac{i}{z}\begin{pmatrix} 0 & \beta^X(q) \\ \overline{\beta^X(q)} & 0 \end{pmatrix} + O\biggl(\frac{q}{z^2}\biggr), \qquad z \to \infty,  \ q \in \C, 
\end{align}  
where the error term is uniform with respect to $\arg z \in [0, 2\pi]$ and $q$ in compact subsets of $\C$, and the function $\beta^X(q)$ is defined by
\begin{align*}
\beta^X(q) = \sqrt{\nu(q)} e^{i\left(\frac{\pi}{4} - \arg q - \arg \Gamma(i\nu(q)\right)}, \qquad q \in \C.
\end{align*}
Moreover, for each compact subset $K$ of $\C$, 
\begin{align}\label{mXqboundIII}
\sup_{q \in K} \sup_{z \in \C \setminus X} |m^X(q, z)| < \infty, \qquad
\sup_{q \in K} \sup_{z \in \C \setminus X} \frac{|m^X(q, z)- I|}{|q|} < \infty.
\end{align}
\end{lemma}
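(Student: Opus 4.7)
The plan is to construct $m^X$ explicitly in terms of parabolic cylinder (Weber) functions and then read off $\beta^X(q)$ from the resulting large-$z$ asymptotics. The first step is the classical \emph{constant-jump trick}: define $\Psi(q,z) := m^X(q,z)\, z^{-i\nu\sigma_3}\, e^{-iz^2\sigma_3/4}$, choosing the branch of $\log z$ with cut along a ray disjoint from $X$. A direct computation from \eqref{vXdefIII} shows that conjugation by $z^{-i\nu\sigma_3} e^{-iz^2\sigma_3/4}$ cancels the $z$-dependence in every entry of $v^X$, so that $\Psi$ has piecewise constant jumps on $X$. Consequently $\Psi'\Psi^{-1}$ extends to an entire function of $z$.

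Using the ansatz \eqref{mXasymptoticsIII} and the identity
$$\Psi'\Psi^{-1} \;=\; (m^X)'(m^X)^{-1} \;-\; \Big(\tfrac{i\nu}{z} + \tfrac{iz}{2}\Big)\, m^X\sigma_3(m^X)^{-1},$$
the large-$z$ expansion gives $\Psi'\Psi^{-1} = -\tfrac{iz}{2}\sigma_3 + \begin{pmatrix}0 & \beta \\ -\bar\beta & 0\end{pmatrix} + O(1/z)$ with $\beta = \beta^X(q)$; since the left-hand side is entire, the $O(1/z)$ remainder must vanish and the identity is exact. Thus $\Psi$ satisfies the first-order linear system
$$\Psi'(q,z) \;=\; \Big(-\tfrac{iz}{2}\sigma_3 + B(q)\Big)\Psi(q,z), \qquad B(q) = \begin{pmatrix}0 & \beta \\ -\bar\beta & 0\end{pmatrix},$$
and elimination of one component shows that the entries of $\Psi$ solve Weber's equation of index $i\nu(q)$.

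One then writes sector-by-sector candidates for $\Psi$ as linear combinations of $D_{i\nu}(\sqrt{2}\,e^{i\pi/4}z)$, $D_{-i\nu-1}(\sqrt{2}\,e^{-i\pi/4}z)$ and their rotates, with coefficients chosen so that (i) the prescribed jumps \eqref{vXdefIII} hold on each $X_j$, and (ii) $\Psi\, z^{i\nu\sigma_3} e^{iz^2\sigma_3/4} \to I$ in each of the four sectors cut out by $X$. The classical Weber connection formulas, which introduce factors of $\sqrt{2\pi}/\Gamma(-i\nu)$ and $e^{-\pi\nu/2}$, reduce (i) to a single scalar equation determining the remaining free parameter $\beta$ in terms of $q$; combining this with $\Gamma(i\nu)\Gamma(-i\nu) = \pi/(\nu\sinh\pi\nu)$ and $1+|q|^2 = e^{2\pi\nu}$ yields $|\beta^X(q)|^2 = \nu(q)$, and carrying the phase through the construction produces the stated formula $\beta^X(q) = \sqrt{\nu}\, e^{i(\pi/4 - \arg q - \arg\Gamma(i\nu))}$. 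The full expansion \eqref{mXasymptoticsIII} then follows from the known large-argument asymptotics of $D_{i\nu}$; uniqueness is automatic because any difference of two solutions in $I + \dot{E}^2(\C\setminus X)$ would produce an entire function of polynomial growth that is $O(1/z)$ at infinity, hence zero; and the bounds \eqref{mXqboundIII} follow by continuity of the explicit formula in $q$ together with the observation that $m^X(0,z) \equiv I$, which makes $m^X(q,z) - I$ Lipschitz in $q$ near $q=0$ uniformly in $z$. The main obstacle is this final bookkeeping: extracting the precise phase $e^{i(\pi/4 - \arg q - \arg\Gamma(i\nu))}$ demands careful tracking of the branches of $z^{i\nu}$ across the four sectors and correct identification of the Stokes multipliers for $D_{i\nu}$.
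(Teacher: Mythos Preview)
Your proposal is correct and follows exactly the approach the paper has in mind. The paper's own proof is essentially a one-line citation: it states that the result is standard, relies on the explicit parabolic-cylinder construction going back to Its \cite{I1981}, and notes that the only non-routine points are the $q$-dependence of the error in \eqref{mXasymptoticsIII} and the second bound in \eqref{mXqboundIII}, both of which follow by examining the explicit solution as $q\to 0$. Your outline (constant-jump reduction, derivation of the Weber system, sector-by-sector matching via connection formulas, extraction of $\beta^X$) is precisely this standard construction, and your handling of the $q\to 0$ limit for \eqref{mXqboundIII} matches the paper's remark. One minor point: your uniqueness argument should be phrased via $m_1 m_2^{-1}$ rather than $m_1 - m_2$, since the latter does not satisfy a clean jump condition; but this is routine and does not affect the substance.
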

\begin{proof}
The proof relies on deriving an explicit formula for the solution $m^X$ in terms of parabolic cylinder functions \cite{I1981}. The lemma is standard except possibly for the presence of $q$ in the error term in (\ref{mXasymptoticsIII}) and for the second estimate in (\ref{mXqboundIII}), which can be derived by considering the explicit formula for $m^X$ in the limit $q \to 0$.
\end{proof}

\subsection{Local model near $k_0$}
Define $\epsilon := \epsilon(\zeta)$ by $\epsilon = k_0/2$. Let $D_\epsilon(k_0)$ denote the open disk of radius $\epsilon$ centered at $k_0$.
In order to relate $M^{(3)}$ to the solution $m^X$ of Lemma \ref{XlemmaIII}, we make a local change of variables for $k$ near $k_0$ and introduce the new variable $z := z(\zeta, k)$ by 
$$z = \sqrt{t}(k -k_0)\psi(\zeta, k), \qquad \text{where} \quad \psi(\zeta, k) = \sqrt{\frac{2}{k(1+k_0^2)}}.$$
The function $\psi$ is analytic for $k \in D_\epsilon(k_0)$; we fix the branch of $\psi(\zeta, k)$ by requiring that $\re \psi(\zeta, k) > 0$ for $k \in D_\epsilon(k_0)$.
For each $\zeta \in \mathcal{I}$, the map $k \mapsto z$ is a biholomorphism from $D_\epsilon(k_0)$ to a neighborhood of the origin. 
We note that $z$ satisfies 
$$\frac{iz^2}{2} = \frac{it(k-k_0)^2}{k(1+k_0^2)} = t(\Phi(\zeta, k) - \Phi(\zeta, k_0)).$$ 

Let $\mathcal{X}^\epsilon := \mathcal{X}^\epsilon(\zeta)$ denote the following small cross centered at $k_0$ (see Figure \ref{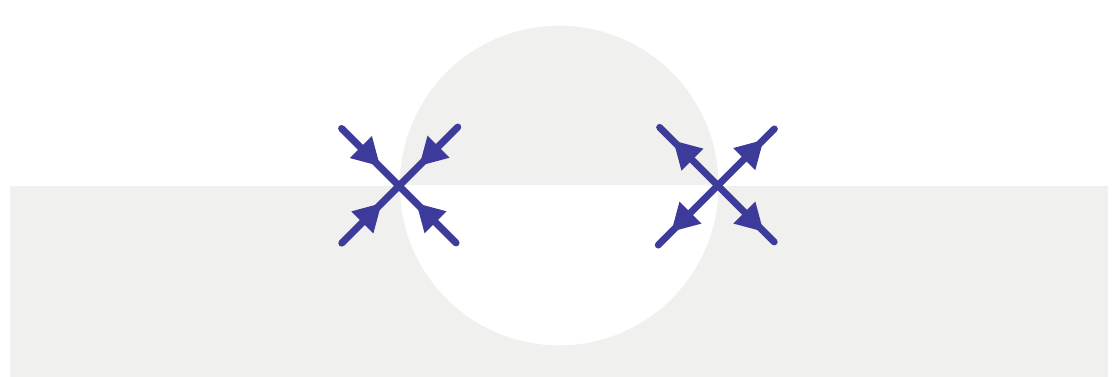}):
$$\mathcal{X}^\epsilon = \bigcup_{j=1}^4 \mathcal{X}_j^\epsilon \quad \text{where} \quad \mathcal{X}_j^\epsilon = \Gamma_j^{(3)} \cap D_\epsilon(k_0),$$
where $\Gamma_j^{(3)}$ is the subcontour of $\Gamma^{(3)}$ labeled by $3$ in Figure \ref{Gamma3III.pdf}. Let $-\mathcal{X}^\epsilon := \bigcup_{j=1}^4 \Gamma_j^{(3)} \cap D_\epsilon(-k_0)$ denote the analogous cross centered at $-k_0$.
Deforming the contours $\Gamma_j^{(3)}$, $j = 1, \dots, 4$, slightly, we may assume that $\mathcal{X}_j^\epsilon$ is mapped into $X_j$ under the map $k \mapsto z(\zeta, k)$, and that $\Gamma^{(3)}$  is invariant (up to orientation) under the involution $k \mapsto -\bar{k}$. Thanks to the symmetry $z(\zeta, k) = \overline{z(\zeta, \bar{k})}$, we may also assume that $\Gamma^{(3)}$ remains invariant under $k \mapsto \bar{k}$.

\begin{figure}
\begin{center}
 \begin{overpic}[width=.7\textwidth]{calX.pdf}
      \put(63,23){\small $\mathcal{X}^\epsilon$}
      \put(32.4,23){\small $-\mathcal{X}^\epsilon$}
    \end{overpic}
     \begin{figuretext}\label{calX.pdf}
        The small crosses $\mathcal{X}^\epsilon$ and $-\mathcal{X}^\epsilon$ centered at $k_0$ and $-k_0$, respectively.
     \end{figuretext}
     \end{center}
\end{figure}

We next consider the behavior of $\delta(\zeta, k)$ as $k$ approaches $k_0$.
Integration by parts gives
\begin{align*}
 \int_{-k_0}^{k_0}  & \frac{\ln(1 + |r(s)|^2)}{s-k} ds
= \ln (k-s)\ln(1 + |r(s)|^2)\big|_{s=-k_0}^{k_0} 
	\\
& - \int_{-k_0}^{k_0} \ln (k-s) d\ln(1 + |r(s)|^2), \qquad \zeta \in \mathcal{I}, \ k \in \C \setminus (-\infty, k_0],
\end{align*}
where the branch cut for the logarithm runs along $(-\infty, 0]$. It follows that
\begin{align*}
  \delta(\zeta, k) = e^{-i \nu \ln (\frac{k - k_0}{k+k_0}) + \chi(\zeta, k)}, \qquad \zeta \in \mathcal{I}, \ k \in \C \setminus (-\infty, k_0],
\end{align*}
where  $\nu := \nu(\zeta) \geq 0$ is defined by
\begin{align}\label{nudefIII}
\nu(\zeta) = \frac{1}{2\pi} \ln(1 + |r(k_0)|^2), \qquad \zeta \in \mathcal{I},
\end{align}
and the function $\chi(\zeta, k)$ is given by
\begin{align}\label{chidefIII}
\chi(\zeta, k) = -\frac{1}{2\pi i} \int_{-k_0}^{k_0} \ln (k-s) d\ln(1 + |r(s)|^2), \qquad \zeta \in \mathcal{I}, \ k \in \C \setminus (-\infty, k_0].
\end{align}
Hence, for each $\zeta \in \mathcal{I}$, we can write $\delta$ as
$$\delta(\zeta, k)  = z^{-i\nu} \delta_0(\zeta, t) \delta_1(\zeta, k), \qquad k \in D_\epsilon(k_0)\setminus (-\infty, k_0],$$
where the functions $\delta_0(\zeta, t)$ and $\delta_1(\zeta, k)$ are defined by
\begin{align*}
& \delta_0(\zeta, t) = t^{\frac{i\nu}{2}} e^{i\nu\ln(2k_0\psi(\zeta, k_0))} e^{\chi(\zeta, k_0)}, \qquad t > 0,
	\\ 
&\delta_1(\zeta, k) = e^{i\nu\ln(\frac{(k+k_0)\psi(\zeta, k)}{2k_0\psi(\zeta, k_0)})}e^{\chi(\zeta, k) - \chi(\zeta, k_0)}, \qquad k \in D_\epsilon(k_0)\setminus (-\infty, k_0].
\end{align*}

Define $\tilde{m}(x,t,z)$ by
$$\tilde{m}(x,t,z(\zeta, k)) = M^{(3)}(x,t,k)e^{-\frac{t\Phi(\zeta, k_0)\sigma_3}{2}}\delta_0(\zeta,t)^{\sigma_3}, \qquad k \in D_\epsilon(k_0) \setminus \Gamma^{(3)}.$$
Then $\tilde{m}$ is a sectionally analytic function which satisfies $\tilde{m}_+ = \tilde{m}_- \tilde{v}$ for $k \in \mathcal{X}^\epsilon$, where the jump matrix 
$$\tilde{v} = \delta_0(\zeta, t)^{-\hat{\sigma}_3}e^{\frac{t\Phi(\zeta, k_0)}{2}\hat{\sigma}_3} J^{(3)}$$ 
is given by
\begin{align}\nonumber
&\tilde{v}(x,t,z) = \begin{cases}
\begin{pmatrix} 1 & 0 \\ z^{2i\nu} \delta_1^{-2} (r_{1,a} + h_a) e^{\frac{iz^2}{2}}& 1 \end{pmatrix}, & k \in \mathcal{X}_1^\epsilon \cap D_1,
	\\
\begin{pmatrix} 1 & 0 \\ z^{2i\nu} \delta_1^{-2} (r_{1,a} + h) e^{\frac{iz^2}{2}}& 1 \end{pmatrix}, & k \in \mathcal{X}_1^\epsilon \cap D_2,
	\\
\begin{pmatrix} 1 & -z^{-2i\nu} \delta_1^2 r_{2,a} e^{-\frac{iz^2}{2}} \\ 0 & 1 \end{pmatrix}, & k \in \mathcal{X}_2^\epsilon,
	\\
\begin{pmatrix} 1 & 0 \\ -z^{2i\nu} \delta_1^{-2} r_{2,a}^* e^{\frac{iz^2}{2}}& 1 \end{pmatrix}, & k \in \mathcal{X}_3^\epsilon,
	\\
\begin{pmatrix} 1 & z^{-2i\nu} \delta_1^2 (r_{1,a}^* + h_a^*) e^{-\frac{iz^2}{2}} \\ 0 & 1 \end{pmatrix}, & k \in \mathcal{X}_4^\epsilon \cap D_4,
	\\
\begin{pmatrix} 1 & z^{-2i\nu} \delta_1^2 (r_{1,a}^* + h^*) e^{-\frac{iz^2}{2}} \\ 0 & 1 \end{pmatrix}, & k \in \mathcal{X}_4^\epsilon \cap D_3.
\end{cases}
\end{align}
The sets $\mathcal{X}_1^\epsilon \cap D_1$ and $\mathcal{X}_4^\epsilon \cap D_4$ are empty for small $k_0$.

Define the complex-valued function $q := q(\zeta)$ by
\begin{align}\label{qdefIII}
  q = r(k_0), \qquad \zeta \in \mathcal{I}.
\end{align}
For any fixed $z \in X$, we have $k(\zeta, z) \to k_0$ as $t \to \infty$; hence we expect the following in this limit:
$$\delta_1(k) \to 1, \quad r_{1,a}(k) + h(k) \to q, \quad  r_{2,a}(k) \to \frac{\bar{q}}{1 + |q|^2}.$$
This suggests that $\tilde{v}$ tends to the jump matrix $v^{X}$ defined in (\ref{vXdefIII}) for large $t$, i.e., that the jumps of $M^{(3)}$ for $k$ near $k_0$ approach those of the function $m^X \delta_0^{-\sigma_3} e^{\frac{t\Phi(\zeta,k_0)\sigma_3}{2}}$  as $t \to \infty$. 
This suggests that we approximate $M^{(3)}$ in the neighborhood $D_\epsilon(k_0)$ of $k_0$ by a $2 \times 2$-matrix valued function $m^{k_0}$ of the form
\begin{align}\label{mk0defIII}
m^{k_0}(x,t,k) = Y(\zeta,t,k) m^X(q(\zeta),z(\zeta, k)) \delta_0(\zeta, t)^{-\sigma_3}e^{\frac{t\Phi(\zeta, k_0)\sigma_3}{2}},
\end{align}
where $Y(\zeta,t,k)$ is a function which is analytic for $k \in D_\epsilon(k_0)$.
To ensure that $m^{k_0}$ is a good approximation of $M^{(3)}$ for large $t$, we choose $Y$ so that $m^{k_0} \to I$ on $\partial D_\epsilon(k_0)$ as $t \to \infty$. 
Hence we choose
$$Y(\zeta,t,k) \equiv
Y(\zeta,t) = e^{-\frac{t\Phi(\zeta, k_0)\sigma_3}{2}}\delta_0(\zeta, t)^{\sigma_3}.$$

Let $\tau := \tau(\zeta, t) = k_0 t$. 

\begin{lemma}
For each  $\zeta \in \mathcal{I}$ and $t > 0$, the function $m^{k_0}(x,t,k)$ defined in (\ref{mk0defIII}) is an analytic function of $k \in D_\epsilon(k_0) \setminus \mathcal{X}^\epsilon$. Moreover,
\begin{align}\label{mk0boundIII}
|m^{k_0}(x,t,k) - I| \leq C|q| \leq C, \qquad \zeta \in\mathcal{I}, \ t > 2, \ k \in D_\epsilon(k_0) \setminus \mathcal{X}^\epsilon.
\end{align}
Across $\mathcal{X}^\epsilon$, $m^{k_0}$ obeys the jump condition $m_+^{k_0} =  m_-^{k_0} v^{k_0}$, where the jump matrix $v^{k_0}$ satisfies, for each $1 \leq p \leq \infty$,
\begin{align}\label{J3vk0estimateIII}
 \|J^{(3)} - v^{k_0}\|_{L^p(\mathcal{X}^\epsilon)} \leq  C\tau^{-\frac{1}{2} + \frac{1}{2p}} t^{-\frac{1}{p}} (1 + k_0 \ln{t}),
\qquad \zeta \in \mathcal{I}, \ t >2.
\end{align}	
Furthermore, as $t \to \infty$,
\begin{align}\label{mk0LinftyestimateIII}
\|m^{k_0}(x,t,\cdot)^{-1} - I\|_{L^\infty(\partial D_\epsilon(k_0))} = O(q \tau^{-1/2}), 
\end{align}
and
\begin{align}\label{mk0circleIII}
\frac{1}{2\pi i}\int_{\partial D_\epsilon(k_0)}(m^{k_0}(x,t,k)^{-1} - I) \frac{dk}{k}
= -\frac{Y(\zeta,t) m_1^X(\zeta) Y(\zeta,t)^{-1}}{\sqrt{t} \psi(\zeta, k_0)k_0} + O(q \tau^{-1}),
\end{align}	
uniformly with respect to $\zeta \in \mathcal{I}$, where $m_1^X(\zeta)$ is defined by
\begin{align}\label{m1XdefIII}
m_1^X(\zeta) = -i\begin{pmatrix} 0 & \beta^X(q(\zeta)) \\ \overline{\beta^X(q(\zeta))} & 0 \end{pmatrix}.
\end{align}
\end{lemma}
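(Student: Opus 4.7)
The plan is to verify the four claims in order, with claims (1) and (2) being immediate from the definitions and claims (3)-(5) requiring estimates based on the decomposition lemmas and the large-$z$ asymptotics of $m^X$.

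\textbf{Analyticity and the bound (\ref{mk0boundIII}).} First I observe that $Y(\zeta,t)$ is independent of $k$ and that $k \mapsto z(\zeta,k)$ is biholomorphic on $D_\epsilon(k_0)$, so the analyticity of $m^{k_0}$ in $D_\epsilon(k_0)\setminus \mathcal{X}^\epsilon$ reduces to that of $m^X$ in $\C \setminus X$ (Lemma \ref{XlemmaIII}). Next I rewrite $m^{k_0} = Y m^X Y^{-1}$, so that $m^{k_0} - I = Y(m^X - I)Y^{-1}$. The key observation is that $Y$ is a \emph{diagonal} matrix whose entries have unit modulus: $\Phi(\zeta,k_0) = 2ik_0/(1+k_0^2) \in i\R$; $|t^{i\nu/2}| = 1$ and the factor $e^{i\nu\ln(2k_0\psi(\zeta,k_0))}$ in $\delta_0$ has unit modulus since its exponent is pure imaginary; and $\chi(\zeta,k_0) = -\frac{1}{2\pi i}\int_{-k_0}^{k_0}\ln(k_0-s)d\ln(1+|r(s)|^2)$ is pure imaginary because both $\ln(k_0-s)$ and $\ln(1+|r|^2)$ are real on $(-k_0,k_0)$. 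Combining this with the uniform estimate $|m^X(q,z) - I| \leq C|q|$ from (\ref{mXqboundIII}) gives (\ref{mk0boundIII}).

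\textbf{The jump estimate (\ref{J3vk0estimateIII}).} Under the change $k \mapsto z$ and the conjugation by $Y$, the jump $J^{(3)}$ is transformed into $\tilde v$, while the corresponding jump of $m^{k_0}$ becomes $v^X$. Hence $\|J^{(3)} - v^{k_0}\|_{L^p(\mathcal{X}^\epsilon)}$ is controlled (up to the uniformly bounded factors coming from $Y$) by the $L^p$ norm of $\tilde v - v^X$ on the crosses. Off-diagonal entry by off-diagonal entry, one writes for example on $\mathcal{X}^\epsilon_1$
\begin{equation*}
\delta_1^{-2}(r_{1,a}+h_a) - q = \delta_1^{-2}\bigl((r_{1,a}-r_1(k_0)) + (h_a - h(k_0))\bigr) + q(\delta_1^{-2} - 1),
\end{equation*}
and applies the $N = 1$ versions of Lemma \ref{hdecompositionlemmaIII} and Lemma \ref{decompositionlemmaIII}, together with the Taylor expansion $\delta_1(k) - 1 = O\bigl(\nu|k-k_0|/k_0\bigr)$ read off from the explicit formula for $\delta_1$. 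Each contribution is of the form $|k-k_0|\cdot e^{-\frac{3t}{4}|\re\Phi|}$ times a factor independent of $k$, and after the substitution $k - k_0 = ue^{\pm i\pi/4}$ and the further rescaling $u = s/\sqrt{t\psi^2(k_0)}$ (with $\psi^2(k_0) \asymp 1/k_0$), the Gaussian decay from $e^{-cu^2 t/k_0}$ converts the $L^p$ norm into a product of $t^{-1/2 -1/(2p)}$ and an appropriate power of $k_0$; reorganizing gives the stated $\tau^{-1/2+1/(2p)}t^{-1/p}$. The logarithmic factor $1 + k_0\ln t$ is the main technical annoyance: it arises from the Cauchy-type integral defining $\chi(\zeta,k) - \chi(\zeta,k_0)$, whose derivative at $k_0$ is of size $\ln\epsilon \sim \ln k_0$, so that on the portion of the cross where $|k-k_0|$ is comparable to $\epsilon$ the $\delta_1$-contribution acquires an extra factor bounded by $k_0\ln t$ after the rescaling.

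\textbf{The boundary estimate (\ref{mk0LinftyestimateIII}) and the integral formula (\ref{mk0circleIII}).} On $\partial D_\epsilon(k_0)$, one has $|z| = \sqrt{t}\,\epsilon\,|\psi(k)| \asymp \sqrt{\tau}$, and the large-$z$ asymptotics (\ref{mXasymptoticsIII}) give $(m^X)^{-1} - I = -m_1^X/z + O(q/z^2)$; conjugating by $Y$ (which preserves moduli as above) yields (\ref{mk0LinftyestimateIII}). For (\ref{mk0circleIII}) I substitute this expansion into the contour integral to get
\begin{equation*}
\frac{1}{2\pi i}\int_{\partial D_\epsilon(k_0)}\!\!(m^{k_0\,-1}\!-\!I)\frac{dk}{k} = -Ym_1^X Y^{-1}\!\cdot\!\frac{1}{2\pi i}\!\int_{\partial D_\epsilon(k_0)}\!\!\frac{dk}{z(\zeta,k)\,k} + O\!\left(q\!\int_{\partial D_\epsilon(k_0)}\!\!\frac{|dk|}{|z|^2|k|}\right).
\end{equation*}
The first integral is computed by residues: $1/z = 1/(\sqrt{t}(k-k_0)\psi(k))$ is meromorphic inside $D_\epsilon(k_0)$ with a simple pole at $k_0$ of residue $1/(\sqrt{t}\psi(\zeta,k_0))$, producing the leading term $-Ym_1^X Y^{-1}/(\sqrt{t}\psi(\zeta,k_0)k_0)$. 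The error term is bounded using $|z|^2 \asymp \tau$, $|k| \asymp k_0$, and $|dk| \asymp k_0$, giving $O(q/\tau)$, which completes (\ref{mk0circleIII}). The main obstacle throughout is the careful tracking of the $k_0$-dependence of every constant so that the estimates remain uniform as $k_0 \to 0$; the logarithm in (\ref{J3vk0estimateIII}) is the delicate point.
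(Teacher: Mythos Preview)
Your approach is essentially the same as the paper's: conjugate by $Y$ to reduce to $m^X$, estimate $\tilde v - v^X$ entrywise using the analytic-approximation lemmas together with a bound on $\delta_1 - 1$, and read off the circle integral from the large-$z$ asymptotics of $m^X$ via Cauchy's formula. The structure is correct.

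There is one imprecision worth fixing. Your initial claim that $\delta_1(k)-1 = O(\nu|k-k_0|/k_0)$ ``from Taylor expansion'' is not right: $\chi(\zeta,\cdot)$ is not differentiable at $k_0$ (the integrand $\ln(k-s)$ has an endpoint singularity), so there is no Taylor expansion to read off. The paper instead proves directly that
\[
|\chi(\zeta,k)-\chi(\zeta,k_0)| \leq C|k-k_0|\bigl(1+|\ln|k-k_0||\bigr),
\]
which combined with the elementary bound on the factor $e^{i\nu\ln((k+k_0)\psi(k)/(2k_0\psi(k_0)))}-1$ gives
\[
|\delta_1(\zeta,k)-1| \leq C|k-k_0|\bigl(k_0^{-1}+|\ln|k-k_0||\bigr), \qquad k\in\mathcal X^\epsilon.
\]
This is the estimate you actually need. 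The $k_0\ln t$ factor in \eqref{J3vk0estimateIII} then arises not from ``the portion of the cross where $|k-k_0|\sim\epsilon$'' (that region is in the Gaussian tail and contributes nothing), but from the $|\ln|k-k_0||$ term evaluated at the peak $|k-k_0|\sim\sqrt{k_0/t}$ of $u\,e^{-ctu^2/k_0}$; there $|\ln|k-k_0||\sim\tfrac12\ln(t/k_0)\sim\ln t$, and multiplying by $\sqrt{k_0/t}$ produces $k_0\tau^{-1/2}\ln t$. Once this is corrected your computation of the $L^1$ and $L^\infty$ norms (and hence $L^p$ by interpolation, as the paper does) goes through.

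A smaller point: on $\mathcal X_1^\epsilon\cap D_1$ the approximation lemma for $h$ is centred at $1$, not at $k_0$, so your term $h_a-h(k_0)$ must be split as $(h_a-h(1))+(h(1)-h(k_0))$; the second piece is $O(1-k_0)\leq O(|k-k_0|)$ because $|k|>1>k_0$ there.
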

\begin{proof}
The analyticity of $m^{k_0}$ follows directly from the definition. Since
$$m^{k_0}(x,t,k) = Y(\zeta,t)m^X(q, z)Y(\zeta,t)^{-1} 
= e^{-\frac{t\Phi(\zeta, k_0)\hat{\sigma}_3}{2}} \delta_0(\zeta, t)^{\hat{\sigma}_3} m^X(q, z),$$
where $|e^{-\frac{t\Phi(\zeta, k_0)}{2}}| = |\delta_0(\zeta, t)| = 1$, the estimate (\ref{mk0boundIII}) is a consequence of (\ref{mXqboundIII}).

We next establish (\ref{J3vk0estimateIII}). Standard estimates show that
$$|\chi(\zeta, k) - \chi(\zeta, k_0)| \leq  C |k - k_0| ( 1+ |\ln|k-k_0||), \qquad  \zeta \in \mathcal{I}, \ k \in \mathcal{X}^\epsilon,$$
and
\begin{align*}
|e^{i\nu\ln(\frac{(k+k_0)\psi(\zeta, k)}{2k_0\psi(\zeta, k_0)})} - 1|
 \leq C\bigg|\ln\bigg(\frac{(k+k_0)\psi(\zeta, k)}{2k_0\psi(\zeta, k_0)}\bigg)\bigg|
\leq Ck_0^{-1}|k-k_0|, \qquad \zeta \in \mathcal{I}, \ k \in \mathcal{X}^\epsilon.
\end{align*}
This yields
\begin{align}\nonumber
|\delta_1(\zeta, k) - 1| 
& \leq  |e^{i\nu\ln(\frac{(k+k_0)\psi(\zeta, k)}{2k_0\psi(\zeta, k_0)})} - 1| |e^{\chi(\zeta, k) - \chi(\zeta, k_0)}| + |e^{\chi(\zeta, k) - \chi(\zeta, k_0)} -1|
	\\ \label{delta1estimateIII}
& \leq  C |k - k_0| (k_0^{-1} + |\ln|k-k_0||), \qquad \zeta \in \mathcal{I}, 
\ k \in \mathcal{X}^\epsilon.
\end{align}
On the other hand, equations (\ref{haestimateIII}) and (\ref{rjaestimatesIII}) imply that the following estimates hold for all $\zeta \in \mathcal{I}$ and $t > 2$:
\begin{align}\nonumber
& \big|r_{1,a}(x,t,k) + h_a(t,k) - q\big| \leq C|k - k_0|e^{\frac{t}{4} |\re \Phi(\zeta,k )|}, && k \in \mathcal{X}_1^\epsilon \cap D_1,
 	\\\nonumber
& \big|r_{1,a}(x,t,k) + h(k) - q\big| \leq C|k - k_0|e^{\frac{t}{4} |\re \Phi(\zeta,k )|}, && k \in \mathcal{X}_1^\epsilon \cap D_2,
 	\\\nonumber
& \bigg|r_{2,a}(x,t,k) - \frac{\bar{q}}{1 + |q|^2}\bigg| \leq C|k - k_0|e^{\frac{t}{4} |\re \Phi(\zeta,k )|},&& k \in \mathcal{X}_2^\epsilon,
  	\\\nonumber
& \bigg| \overline{r_{2,a}(x,t,\bar{k})} - \frac{q}{1 + |q|^2}\bigg| \leq C|k - k_0|e^{\frac{t}{4} |\re \Phi(\zeta,k )|},&& k \in \mathcal{X}_3^\epsilon,
 	\\ \nonumber
&  \big|\overline{r_{1,a}(x,t,\bar{k})} + \overline{h_a(t,\bar{k})} - \bar{q}\big| \leq C|k - k_0|e^{\frac{t}{4} |\re \Phi(\zeta,k )|}, 
&& k \in \mathcal{X}_4^\epsilon \cap D_4,
 	\\ \label{rqestimateIII}
&  \big| \overline{r_{1,a}(x,t,\bar{k})} + \overline{h(\bar{k})} - \bar{q}\big| \leq C|k - k_0|e^{\frac{t}{4} |\re \Phi(\zeta,k )|}, 
&& k \in \mathcal{X}_4^\epsilon \cap D_3.
\end{align}
Indeed, for $k \in \mathcal{X}_1^\epsilon \cap D_1$, the estimates (\ref{haestimateIII}) and (\ref{rjaestimatesIII}) give
\begin{align*}
 \big|&r_{1,a}(x,t,k) + h_a(t,k) - q\big| 
= \big|r_{1,a}(x,t,k) + h_a(t,k) - r_1(k_0) - h(k_0)\big| 
	\\
 \leq & \; |r_{1,a}(x,t,k)- r_1(k_0)| + |h_a(t,k) - h(1)| + |h(1) - h(k_0)|
	\\
\leq &\; C |k - k_0|e^{\frac{t}{4} |\re \Phi(\zeta,k )|}
 + C|k-1|e^{\frac{t}{4} |\re \Phi(\zeta,k )|} + C|k-k_0|
	\\	 
\leq &\; C|k - k_0|e^{\frac{t}{4} |\re \Phi(\zeta,k )|},
\end{align*}
which proves the first estimate in (\ref{rqestimateIII}); the proofs of the other estimates are similar.

Since
\begin{align*}
\tilde{v} - v^X
= 
 \begin{cases}
\begin{pmatrix} 0 & 0 \\ (\delta_1^{-2} (r_{1,a} + h_a) - q )z^{2i\nu}e^{\frac{iz^2}{2}} & 0 \end{pmatrix}, & k \in \mathcal{X}^\epsilon_1 \cap D_1,
	\\
\begin{pmatrix} 0 & 0 \\ (\delta_1^{-2} (r_{1,a} + h)  - q)z^{2i\nu}e^{\frac{iz^2}{2}} & 0 \end{pmatrix}, & k \in \mathcal{X}^\epsilon_1 \cap D_2,
	\\
\begin{pmatrix} 0 & (-\delta_1^2 r_{2,a} + \frac{\bar{q}}{1 + |q|^2})z^{-2i\nu} e^{-\frac{iz^2}{2}} \\ 0 & 0 \end{pmatrix}, & k \in \mathcal{X}^\epsilon_2,
	\\
\begin{pmatrix} 0 & 0 \\ -(\delta_1^{-2} r_{2,a}^* - \frac{q}{1 + |q|^2})z^{2i\nu} e^{\frac{iz^2}{2}} & 0 \end{pmatrix}, & k \in \mathcal{X}^\epsilon_3,
	\\
\begin{pmatrix} 0 & (\delta_1^2 (r_{1,a}^* + h_a^*) - \bar{q}) z^{-2i\nu}e^{-\frac{iz^2}{2}}\\ 0 & 0 \end{pmatrix}, & k \in \mathcal{X}^\epsilon_4 \cap D_4,
	\\
\begin{pmatrix} 0 & (\delta_1^2 (r_{1,a}^* + h^*) - \bar{q}) z^{-2i\nu}e^{-\frac{iz^2}{2}}\\ 0 & 0 \end{pmatrix}, & k \in \mathcal{X}^\epsilon_4 \cap D_3,
\end{cases}
\end{align*}
equations (\ref{delta1estimateIII}) and (\ref{rqestimateIII}) imply 
\begin{align}\label{vtildevXIII}
|\tilde{v} - v^X| \leq C|k - k_0| (k_0^{-1} + |\ln|k-k_0||)e^{-\frac{3t|k - k_0|^2}{16k_0}}, \qquad k \in \mathcal{X}^\epsilon.
\end{align}
Indeed, for $k \in \mathcal{X}^\epsilon$, we have $|\psi(\zeta, k)| \geq |k|^{-1/2} \geq (2k_0)^{-1/2}$ and so
$$t \re \Phi(\zeta, k) = \re\Big(\frac{iz^2}{2}\Big) = - \frac{|z|^2}{2} = -\frac{t}{2}|k - k_0|^2|\psi(\zeta, k)|^2
\leq -\frac{t|k - k_0|^2}{4k_0}, \qquad k \in \mathcal{X}^\epsilon_1,$$
and hence
\begin{align*}
 |\tilde{v} - v^X|
\leq &\; |\delta_1^{-2} - 1| \big|(r_{1,a} + h_a)z^{2i\nu}e^{\frac{iz^2}{2}}\big|
+ \big|(r_{1,a} + h_a - q)z^{2i\nu}e^{\frac{iz^2}{2}}\big|
	\\
\leq&\; C |\delta_1^{-2} - 1| e^{\frac{t}{4} |\re \Phi(\zeta,k )|} e^{\re \frac{iz^2}{2}}
+ C|k - k_0|e^{\frac{t}{4} |\re \Phi(\zeta,k )|} e^{\re \frac{iz^2}{2}}
	\\
\leq &\; C|k - k_0| (k_0^{-1} + |\ln|k-k_0||)e^{-\frac{3}{4}|\re \frac{iz^2}{2}|}
	\\
\leq &\; C|k - k_0| (k_0^{-1} + |\ln|k-k_0||)e^{-\frac{3t|k - k_0|^2}{16k_0}}, \qquad k \in  \mathcal{X}^\epsilon_1 \cap D_1,
\end{align*}
which gives (\ref{vtildevXIII}) for $k \in \mathcal{X}_1^\epsilon \cap D_1$; the proof is similar for the other parts of $\mathcal{X}^\epsilon$.

Since
$$J^{(3)} - v^{k_0} =  e^{-\frac{t\Phi(\zeta, k_0)}{2}\hat{\sigma}_3} \delta_0^{\hat{\sigma}_3} (\tilde{v} - v^X), \qquad k \in \mathcal{X}^\epsilon,$$
equation (\ref{vtildevXIII}) implies
$$|J^{(3)} - v^{k_0}| \leq C|k - k_0| (k_0^{-1} + |\ln|k-k_0||)e^{-\frac{3t|k - k_0|^2}{16k_0}}, \qquad k \in \mathcal{X}^\epsilon.$$
Thus
$$ \|J^{(3)} - v^{k_0}\|_{L^1(\mathcal{X}^\epsilon)}
\leq C\int_0^\epsilon u (k_0^{-1}+ |\ln u|)e^{-\frac{3tu^2}{16k_0}} du
\leq C t^{-1}(1 + k_0 \ln{t}), \quad \zeta \in \mathcal{I}, \ t > 2,$$
and
$$ \|J^{(3)} - v^{k_0}\|_{L^\infty(\mathcal{X}^\epsilon)}
\leq C \sup_{0 \leq u \leq \epsilon} u (k_0^{-1}+ |\ln u|)e^{-\frac{3tu^2}{16k_0}} 
\leq  C \tau^{-1/2}(1 + k_0 \ln{t}), \quad \zeta \in \mathcal{I}, \  t > 2,$$
which thanks to the general inequality $\|f\|_{L^p} \leq \|f\|_{L^\infty}^{1 - 1/p} \|f\|_{L^1}^{1/p}$ gives (\ref{J3vk0estimateIII}).

The variable $z = \sqrt{t}(k - k_0)\psi(\zeta, k)$ goes to infinity as $t\to \infty$ if $k \in \partial D_\epsilon(k_0)$. In fact, for $k \in \partial D_\epsilon(k_0)$, we have $|z| \geq C\tau^{1/2}$, because $|\psi(\zeta, k)| \geq (2k_0)^{-1/2}$.
Thus equation (\ref{mXasymptoticsIII}) yields
\begin{align*}
 & m^X(q, z(\zeta, k)) = I + \frac{m_1^X(\zeta)}{\sqrt{t}(k - k_0)\psi(\zeta, k)} + O(q\tau^{-1}), 
\qquad  t \to \infty,  \ k \in \partial D_\epsilon(k_0),
 \end{align*}  
uniformly with respect to $k \in \partial D_\epsilon(k_0)$, where $m_1^X$ is given by (\ref{m1XdefIII}).
Since $m^{k_0} = Y m^X Y^{-1}$ and $|Y| \leq C$, this shows that
\begin{align}\label{mk0m1XIII}
 & (m^{k_0})^{-1} - I = - \frac{Ym_1^X(\zeta) Y^{-1}}{\sqrt{t}(k - k_0)\psi(\zeta, k)} + O(q \tau^{-1}), \qquad  t \to \infty,
 \end{align}  
uniformly with respect to $\zeta \in \mathcal{I}$ and $k \in \partial D_\epsilon(k_0)$. Using again that $|\psi(\zeta, k)| \geq (2k_0)^{-1/2}$ on $\partial D_\epsilon(k_0)$ and the bound $|m_1^X| \leq C |q|$, this proves (\ref{mk0LinftyestimateIII}). 
Finally, equation (\ref{mk0circleIII})  follows from (\ref{mk0m1XIII}) and Cauchy's formula.
\end{proof}

\subsection{Final steps}
Define the approximate solution $M^{app}$ by
$$M^{app}(x,t,k) = \begin{cases} m^{k_0}(x,t,k), & k \in D_\epsilon(k_0), \vspace{.1cm} \\
\overline{m^{k_0}(x,t,-\bar{k})}, & k \in D_\epsilon(-k_0), \\
I, & \text{elsewhere}.
\end{cases}$$
The function $m(x,t,k)$ defined by
$$m = M^{(3)} (M^{app} )^{-1}$$
satisfies the RH problem (\ref{RHmSigma}), where the contour $\Sigma = \Sigma(\zeta) = \Gamma^{(3)} \cup \partial D_\epsilon(k_0) \cup \partial D_\epsilon(-k_0)$ is displayed in Figure \ref{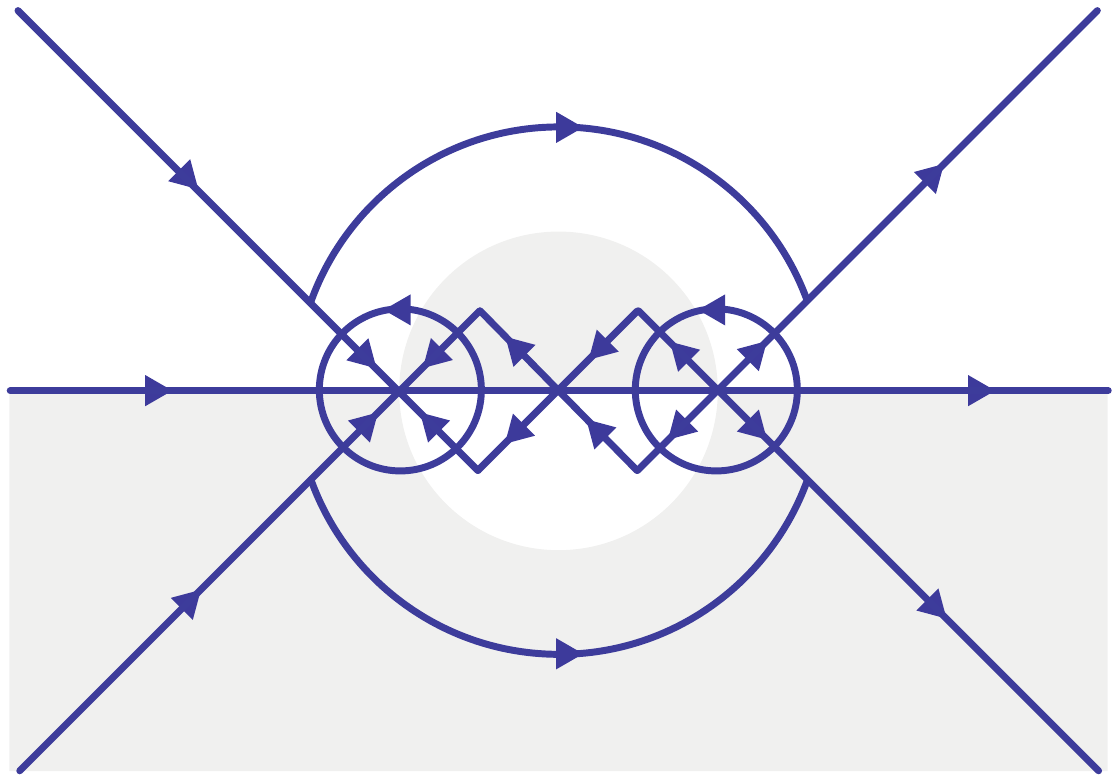} and the jump matrix $v$ is given by 
$$v = \begin{cases}
J^{(3)}, & k \in \Sigma \setminus (\overline{D_\epsilon(k_0)} \cup \overline{D_\epsilon(-k_0)}), 
	\\
(m^{k_0})^{-1}, & k \in \partial D_\epsilon(k_0), 
	\\
m_-^{k_0} J^{(3)}(m_+^{k_0})^{-1}, & k \in \Sigma \cap D_\epsilon(k_0),
\end{cases}
$$
and is extended to the remainder of $\Sigma$ via the symmetry
$$v(x,t,k) = \overline{v(x, t, -\bar{k})}, \qquad k \in \Sigma.$$

\begin{figure}
\begin{center}
 \begin{overpic}[width=.6\textwidth]{SigmaIII.pdf}
      \put(102,33.5){$\Sigma$}
      \put(43,64){\small $\re \Phi < 0$}
      \put(43,5){\small $\re \Phi > 0$}
    \end{overpic}
    \vspace{.5cm}
     \begin{figuretext}\label{SigmaIII.pdf}
        The contour  $\Sigma$ in the case of Sector III.
     \end{figuretext}
     \end{center}
\end{figure}

We divide $\Sigma$ into six pieces as follows:
$$\Sigma = \mathcal{X}^\epsilon \cup (-\mathcal{X}^\epsilon) \cup \partial D_\epsilon(k_0) \cup \partial D_\epsilon(-k_0) \cup \Sigma' \cup \Sigma'',$$
where (see Figures \ref{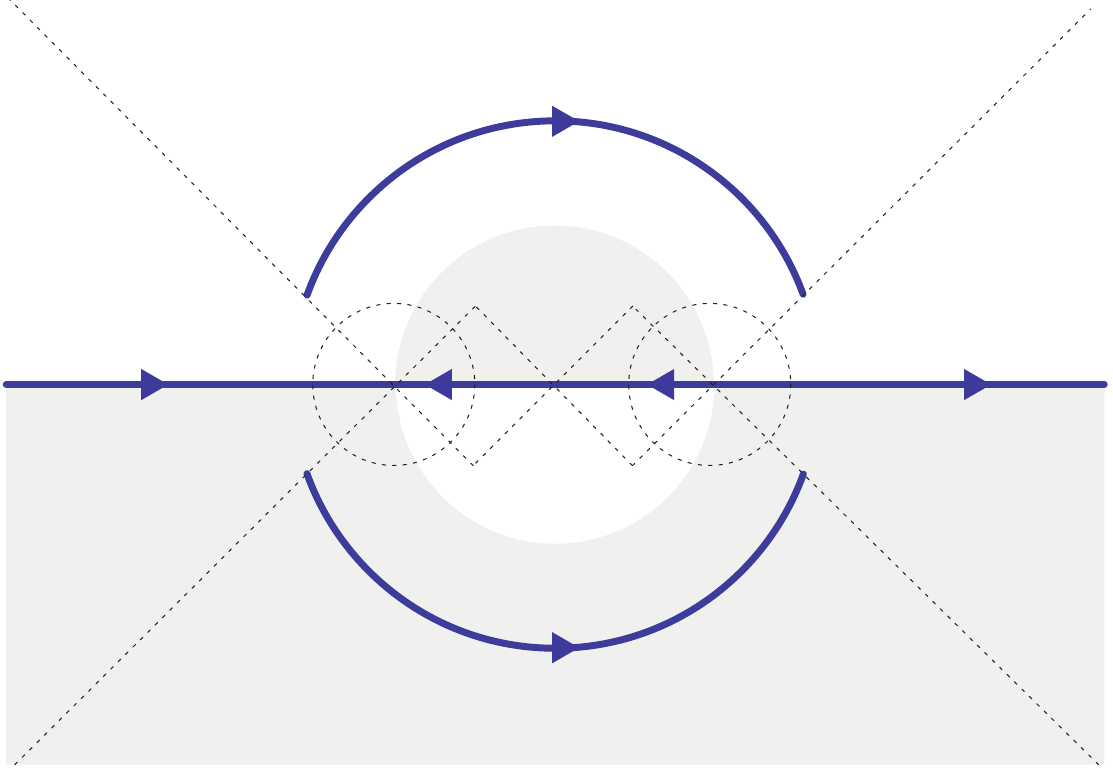} and \ref{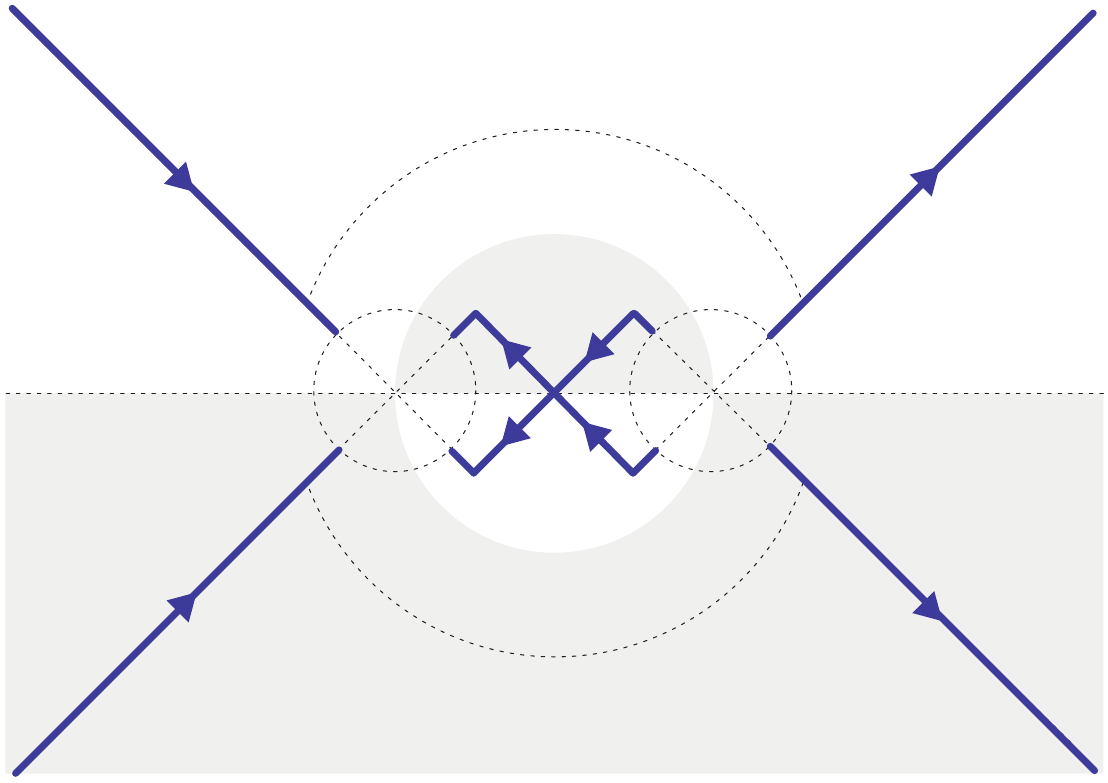})
\begin{align}\label{Sigmaprimedef}
\Sigma' := \Gamma^{(3)} \cap (\R \cup \{|k| = 1\})
\quad \text{and} \quad
\Sigma'' := \bigcup_{j=1}^6 \Gamma_j^{(3)} \setminus (D_\epsilon(k_0) \cup D_\epsilon(-k_0)).
\end{align}

\begin{lemma}\label{wlemmaIII}
Let $w = v - I$. For each $1 \leq p \leq \infty$, the following estimates hold uniformly for $t > 2$ and $\zeta \in \mathcal{I}$:
\begin{subequations}\label{westimateIII}
\begin{align}\label{westimateIIIa}
& \|w\|_{L^p(\partial D_\epsilon(k_0))} \leq C|q| k_0^{1/p}\tau^{-\frac{1}{2}},
	\\\label{westimateIIIb}
& \|w\|_{L^p(\mathcal{X}^\epsilon)} \leq C\tau^{-\frac{1}{2} + \frac{1}{2p}} t^{-\frac{1}{p}} (1 + k_0 \ln{t}).
	\\ \label{westimateIIIc}
& \|(1+|k|^{-1})w\|_{L^p(\Sigma')} \leq C t^{-3/2},  
	\\\label{westimateIIId}
& \|(1+|k|^{-1})w\|_{L^p(\Sigma'')} \leq C(t^{-1/p} + k_0)e^{-c\tau},	
\end{align}
\end{subequations}
\end{lemma}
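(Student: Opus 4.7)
The plan is to handle the four pieces of $\Sigma$ separately, each by a different mechanism: for $\partial D_\epsilon(k_0)$ use the local model asymptotics directly, for $\mathcal{X}^\epsilon$ factor out the local model and apply (\ref{J3vk0estimateIII}), for $\Sigma'$ use the decomposition lemmas with $N=1$ to extract $O(t^{-3/2})$ decay, and for $\Sigma''$ exploit exponential decay of $|e^{\pm t\Phi}|$ away from $\pm k_0$.

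For (a), on $\partial D_\epsilon(k_0)$ we have $w = (m^{k_0})^{-1} - I$. Using the interpolation $\|w\|_{L^p} \leq \|w\|_{L^\infty}\,|\partial D_\epsilon(k_0)|^{1/p}$, combined with $|\partial D_\epsilon(k_0)| = \pi k_0$ and the $L^\infty$ bound (\ref{mk0LinftyestimateIII}), gives (\ref{westimateIIIa}) immediately. For (b), since $m_+^{k_0} = m_-^{k_0} v^{k_0}$, we can write $w = m_-^{k_0}(J^{(3)} - v^{k_0})(m_+^{k_0})^{-1}$; the uniform bound (\ref{mk0boundIII}) on $m^{k_0}$ and its inverse (the latter via boundedness of $\det m^X$ away from zero for $|q|$ bounded) gives $|w| \leq C|J^{(3)} - v^{k_0}|$, and (\ref{westimateIIIb}) follows from (\ref{J3vk0estimateIII}).

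For (c), split $\Sigma'$ according to the labels $7,8,9,10$ in Figure \ref{Gamma3III.pdf}, corresponding respectively to the unit-circle arcs (where the jumps involve $h_r$) and the real axis segments (where they involve $r_{2,r}$ on the middle interval and $r_{1,r}$ on the outer part). Outside the disks $w = J^{(3)} - I$; inside the disks $m^{k_0}$ has no jump across these parts (since they are not part of $\mathcal{X}^\epsilon$), so $w = m^{k_0}(J^{(3)} - I)(m^{k_0})^{-1}$ and by (\ref{mk0boundIII}) one still gets $|w| \leq C|J^{(3)}-I|$. Since $|e^{t\Phi}|$ is bounded by $1$ on the upper unit circle (where $\re\Phi = -\zeta\sin\theta \leq 0$), on the real axis (where $\re\Phi = 0$), and similarly for $e^{-t\Phi}$ on the lower arcs, $|w|$ is controlled pointwise by $|h_r|$, $|r_{1,r}|$, or $|r_{2,r}|$. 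Applying Lemmas \ref{hdecompositionlemmaIII}(c) and \ref{decompositionlemmaIII}(c)(d) with $N=1$ yields $O(t^{-3/2})$ in both $L^1$ and $L^\infty$ for the weighted versions, and the general $L^p$ estimate (\ref{westimateIIIc}) follows by interpolation $\|f\|_{L^p} \leq \|f\|_{L^\infty}^{1-1/p}\|f\|_{L^1}^{1/p}$.

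For (d), on each diagonal ray parameterized as $k = k_0 + ue^{i\pi/4}$ (and its reflections), formula (\ref{rePhionL1}) and the removal of $D_\epsilon(k_0)$ with $\epsilon = k_0/2$ give the lower bound $t|\re \Phi(\zeta,k)| \geq c(\tau + t|k-k_0|)$. The jumps $v$ on $\Sigma''$ involve $r_{1,a}, r_{2,a}, h_a$ or their conjugates, bounded (via (\ref{haestimateIII})--(\ref{rjaestimatesIII})) by $Ce^{\frac{t}{4}|\re \Phi|}$, multiplied by $e^{\pm t\Phi}$; hence
\begin{equation*}
|w(x,t,k)| \leq C e^{-\tfrac{3}{4}t|\re \Phi(\zeta,k)|} \leq C e^{-c\tau - ct|k-k_0|}.
\end{equation*}
A direct $u$-integral then yields $\|w\|_{L^p(\Sigma'')} \leq C t^{-1/p}e^{-c\tau}$, and the factor $(1+|k|^{-1})$ adds at most a contribution $\leq C k_0 e^{-c\tau}$ from the portion of the rays near $k=0$. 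The main technical burden lies in (c), where one must carefully identify the jump on each subpiece of $\Sigma'$ (inside vs.\ outside the disks) while tracking the weight $(1+|k|^{-1})$, and in verifying that the lower bound on $|\re\Phi|$ used in (d) is uniform in $\zeta \in [0,1)$ despite $k_0$ potentially approaching $0$ or $1$.
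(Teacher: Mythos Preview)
Your treatment of parts (a), (b), and (c) matches the paper's proof essentially line by line: the $L^\infty$ bound together with the circumference $\pi k_0$ for (a), the factorization $w = m_-^{k_0}(J^{(3)}-v^{k_0})(m_+^{k_0})^{-1}$ with (\ref{mk0boundIII}) and (\ref{J3vk0estimateIII}) for (b), and the remainder estimates of Lemmas \ref{hdecompositionlemmaIII} and \ref{decompositionlemmaIII} (with $N=1$) combined with the boundedness of $m^{k_0}$ inside the disks for (c).

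For part (d), however, there is a genuine gap. Your pointwise bound $|w|\leq Ce^{-\frac34 t|\re\Phi|}$ is correct, but it is too crude to absorb the weight $(1+|k|^{-1})$ uniformly in $\zeta\in[0,1)$. On both the outward rays $l_1$ and the inward rays $l_2$, the smallest value of $|k|$ is of order $k_0$, so $(1+|k|^{-1})\sim k_0^{-1}$, and your bound only yields $(1+|k|^{-1})|w|\leq Ck_0^{-1}e^{-c\tau}$ near these points. This is \emph{not} dominated by $(t^{-1/p}+k_0)e^{-c\tau}$ when $k_0$ is small; your closing sentence ``the factor $(1+|k|^{-1})$ adds at most a contribution $\leq Ck_0e^{-c\tau}$'' is an assertion, not a consequence of the estimates you have written down.

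What the paper does, and what is missing from your outline, is to extract an extra power of $|k|$ from the jump entries themselves. On $l_1$ one uses $r(0)=r_1(0)+h(0)=0$ together with (\ref{rjaestimatesIIIa}) (applied at $k_0$) to obtain $|r_{1,a}+h|\leq C|k|\,e^{\frac t4|\re\Phi|}$ as in (\ref{r1ahboundIII}); this gives $(1+|k|^{-1})|w_{21}|\leq C(1+|k|)e^{-ct|k|}$, which integrates to $Ct^{-1/p}e^{-c\tau}$. On $l_2$ one uses the second line of (\ref{rjaestimatesIIIb}), namely $|r_{2,a}|\leq C|k|^2e^{\frac t4|\re\Phi|}$, so that $(1+|k|^{-1})|w_{12}|\leq C|k|\,e^{-c\tau}\leq Ck_0e^{-c\tau}$. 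You cite (\ref{haestimateIII})--(\ref{rjaestimatesIII}) but then write the bound as $Ce^{\frac t4|\re\Phi|}$, discarding precisely the $|k|$-prefactors that make the weighted estimate go through.
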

\begin{proof}
The estimate (\ref{westimateIIIa}) follows from (\ref{mk0LinftyestimateIII}).
For $k \in \mathcal{X}^\epsilon$, we have
$$w = m_-^{k_0} (J^{(3)} - v^{k_0}) (m_+^{k_0})^{-1},$$
so equations (\ref{mk0boundIII}) and (\ref{J3vk0estimateIII}) yield (\ref{westimateIIIb}).

On $\Sigma'$, the jump matrix $J^{(3)}$ involves the small remainders $h_r$, $r_{1,r}$, $r_{2,r}$, so the estimate (\ref{westimateIIIc}) holds as a consequence of Lemma \ref{hdecompositionlemmaIII}, Lemma \ref{decompositionlemmaIII}, and (for the part of $\Sigma'$ that lies in the disks $D_\epsilon(\pm k_0)$) the boundedness (\ref{mk0boundIII}) of $m^{k_0}$.

\begin{figure}
\begin{center}
\begin{overpic}[width=.6\textwidth]{Sigma1primeIII.pdf}
      \put(102,33.5){\small $\Sigma'$}
    \end{overpic}
     \begin{figuretext}\label{Sigma1primeIII.pdf}
       The subcontour $\Sigma'$ of $\Sigma$.
      \end{figuretext}
     \end{center}
\end{figure}

It only remains to prove (\ref{westimateIIId}). 
We define two subsets $l_1$ and $l_2$ of $\Sigma''$ as follows (see Figure \ref{Sigma2primeIII.pdf}): Let $l_1$ denote the part of $(\Gamma_1^{(3)} \cup \Gamma_6^{(3)}) \setminus D_\epsilon(k_0)$ that lies in the right half-plane, and let $l_2$ denote the part of $\Gamma_2^{(3)} \setminus D_\epsilon(k_0)$ that lies in the right half-plane. The contour $\Sigma''$ consists (up to orientation) of the curves $l_1$ and $l_2$ together with their images under the involutions $k \mapsto \bar{k}$ and $k\mapsto -\bar{k}$. Hence, by symmetry, (\ref{westimateIIId}) will follow once we prove the following two estimates for $1 \leq p \leq \infty$:
\begin{subequations}
\begin{align} \label{l1boundIII}
\|(1+|k|^{-1})w(x,t,k)\|_{L^p(l_1)} & \leq Ct^{-1/p}e^{-c\tau},
	\\ \label{l2boundIII}
\|(1+|k|^{-1})w(x,t,k)\|_{L^p(l_2)} & \leq Ck_0e^{-c\tau}.
\end{align} 
\end{subequations}

\begin{figure}
\begin{center}
    \begin{overpic}[width=.6\textwidth]{Sigma2primeIII.pdf}
      \put(85.5,52){\small $l_1$}
      \put(52,42){\small $l_2$}
    \end{overpic}
     \begin{figuretext}\label{Sigma2primeIII.pdf}
       The subcontour $\Sigma''$ of $\Sigma$ consists of the curves $l_1$ and $l_2$ and their images under reflection in the real and imaginary axes. 
     \end{figuretext}
     \end{center}
\end{figure}

In view of (\ref{rePhionL1}), we have
\begin{align}\label{rePhik0uIII}
|\re \Phi(\zeta, k_0 + ue^{\frac{\pi i}{4}})| \geq \frac{u^2(k_0 + u)}{4(k_0^2+\sqrt{2} k_0 u+u^2)}
\geq \frac{k_0 + u}{32}, \qquad \zeta \in \mathcal{I}, \ \epsilon \leq u.
\end{align}
The curve $l_1$  is a small deformation of the ray $\{k_0 + ue^{\frac{\pi i}{4}} \, | \, \epsilon \leq u < \infty\}$. Hence (\ref{rePhik0uIII}) implies that there exists a $c> 0$ such that
\begin{align}\label{rePhionl1III}
|\re \Phi(\zeta, k)| \geq c|k|, \qquad \zeta \in \mathcal{I}, \ k \in l_1.
\end{align}
On the other hand, since $r(0) = r_1(0) + h(0) = 0$, the estimates (\ref{haestimateIII}) and (\ref{rjaestimatesIII}) give
\begin{align} \nonumber
|r_{1,a}(x,t,k) + h(k)| & = |r_{1,a}(x,t,k) - r_1(k_0) + r_1(k_0) - r_1(0) + h(k) - h(0)|
	\\ \nonumber
& \leq C |k - k_0| e^{\frac{t}{4}|\re \Phi(\zeta,k)|} + C |k_0| + C|k|
	\\ \label{r1ahboundIII}
& \leq C|k|e^{\frac{t}{4}|\re \Phi(\zeta,k)|}, \qquad \zeta \in \mathcal{I}, \ k \in l_1 \cap D_2.
\end{align}
For $k \in l_1$, only the $(21)$ entry of $w$ is nonzero and, using (\ref{haestimateIII}), (\ref{rjaestimatesIII}), (\ref{rePhionl1III}), and (\ref{r1ahboundIII}),
\begin{align*}
|(w(x,t,k))_{21}| & = |\delta(\zeta, k)^{-2}(r_{1,a}(x,t,k) + \tilde{h}(t, k)) e^{t\Phi(\zeta, k)}|
	\\
& \leq C |r_{1,a}(x,t,k) + \tilde{h}(t, k)| e^{-t|\re \Phi|}
	\\
& \leq C|k| e^{-\frac{3t}{4}|\re \Phi|}
 \leq C|k| e^{-ct|k|}, \qquad \zeta \in \mathcal{I}, \ k \in l_1,
\end{align*}
where $\tilde{h}(t, k) = h(k)$ for $k \in l_1 \cap D_2$ and $\tilde{h}(t, k) = h_a(t,k)$ for $k \in l_1 \cap D_1$. The estimate (\ref{l1boundIII}) follows. 

We finally prove (\ref{l2boundIII}). The same kind of argument that led to (\ref{rePhionl1III}) shows that
$$|\re \Phi(\zeta, k)| \geq ck_0, \qquad \zeta \in \mathcal{I}, \ k \in l_2.$$
For $k \in l_2$, only the $(12)$ entry of $w$ is nonzero and, using (\ref{rjaestimatesIII}),
\begin{align*}
|(w(x,t,k))_{12}| & = |-\delta(\zeta, k)^2 r_{2,a}(x,t,k) e^{-t\Phi(\zeta, k)}|
\leq C |r_{2,a}| e^{-t|\re \Phi|}
	\\
& \leq C|k|^2e^{-\frac{3t}{4}|\re \Phi|}
\leq C|k|^2 e^{-ck_0 t}, \qquad k \in l_2.
\end{align*}
This gives (\ref{l2boundIII}).
\end{proof}

The estimates in Lemma \ref{wlemmaIII} show that
\begin{align}\label{hatwLinftyIII}
\begin{cases}
\|w\|_{L^2(\Sigma)} \leq C F_2(\zeta, t),
	\\
\|w\|_{L^\infty(\Sigma)} \leq  C F_\infty(\zeta,t),
\end{cases}	 \qquad t > 2, \ \zeta \in \mathcal{I},
\end{align}
where
\begin{align*}
& F_2(\zeta,t) := |q| t^{-\frac{1}{2}} + \tau^{-\frac{1}{4}} t^{-\frac{1}{2}} (1 + k_0 \ln{t}) + t^{-\frac{3}{2}} + (t^{-\frac{1}{2}} + k_0)e^{-c\tau},
	\\
& F_\infty(\zeta,t) := |q|\tau^{-\frac{1}{2}} + C\tau^{-\frac{1}{2}} (1 + k_0 \ln{t}) + t^{-\frac{3}{2}} + e^{-c\tau}.
\end{align*}

By (\ref{hatwLinftyIII}), we have
\begin{align}\label{CwnormIII}
\|\mathcal{C}_{w}\|_{\mathcal{B}(L^2(\Sigma))} \leq C \|w\|_{L^\infty(\Sigma)}  
\leq F_\infty(\zeta, t), \qquad t > 2, \ \zeta \in \mathcal{I},
\end{align}
where the operator $\mathcal{C}_{w}: L^2(\Sigma) + L^\infty(\Sigma) \to L^2(\Sigma)$ is defined by
$\mathcal{C}_{w}f = \mathcal{C}_-(f w)$.
In particular, since $F_\infty \to 0$ as $\tau \to \infty$, there exists a $T > 2$ such that $\|\mathcal{C}_{w}\|_{\mathcal{B}(L^2(\Sigma))} < 1/2$ for all $\tau > T$. For $\tau > T$, the operator $I - \mathcal{C}_{w(\zeta, t, \cdot)} \in \mathcal{B}(L^2(\Sigma))$ is invertible and the RH problem (\ref{RHmSigma}) has a unique solution $m \in I + \dot{E}^2(\hat{\C} \setminus \Sigma)$ given by (\ref{mrepresentationI}) where $\mu(x, t, k) \in I + L^2(\Sigma)$ is defined by (\ref{mudef}).
By (\ref{hatwLinftyIII}) and (\ref{CwnormIII}),
\begin{align}\label{muestimateIII}
\|\mu(x,t,\cdot) - I\|_{L^2(\Sigma)} 
\leq  \frac{C\|w\|_{L^2(\Sigma)}}{1 - \|\mathcal{C}_{w}\|_{\mathcal{B}(L^2(\Sigma))}}\leq C F_2(\zeta, t), \qquad \tau > T, \ \zeta \in \mathcal{I}.
\end{align}

\subsection{Asymptotics of $m$}\label{asymptoticsofmsubsecIII}
We see from (\ref{mrepresentationI}) and Lemma \ref{deltalemmaIII} that the function $\hat{M}(x,t)$ defined in (\ref{m0def}) satisfies
\begin{align}\label{m0limIII}
\hat{M}(x,t) 
 =\ntlim_{k\to 0} M(x,t,k)  = \ntlim_{k\to 0} m(x,t,k)\delta(\zeta, k)^{\sigma_3}  
 = I + \frac{1}{2\pi i}\int_{\Sigma} \mu(x, t, k) w(x, t, k) \frac{dk}{k}.
\end{align}
From now on until the end of Section \ref{sectorIIIsec}, all equations involving error terms of the form $O(\cdot)$ will be valid uniformly for all $(x,t) \in [0,\infty) \times [0,\infty)$ such that $\zeta \in \mathcal{I}$ and $\tau > T$.

By (\ref{mk0circleIII}), (\ref{westimateIIIa}), and (\ref{muestimateIII}), the contribution from $\partial D_\epsilon(k_0)$ to the right-hand side of (\ref{m0limIII}) is
\begin{align*}
&\; \frac{1}{2\pi i} \int_{\partial D_\epsilon(k_0)} w(x,t,k) \frac{dk}{k}	
+  \frac{1}{2\pi i} \int_{\partial D_\epsilon(k_0)} (\mu(x,t,k) - I) w(x,t,k) \frac{dk}{k}
	\\
= &\; \frac{1}{2\pi i}\int_{\partial D_\epsilon(k_0)}((m^{k_0})^{-1} - I) \frac{dk}{k}
+ O(k_0^{-1}\|\mu - I\|_{L^2(\partial D_\epsilon(k_0))}\|w\|_{L^2(\partial D_\epsilon(k_0))})
	\\
= & \; -\frac{Y(\zeta,t) m_1^X(\zeta) Y(\zeta,t)^{-1}}{\sqrt{t} \psi(\zeta, k_0)k_0} 
+ O(q\tau^{-1})
+ O(k_0^{-1} F_2(\zeta, t) q t^{-1/2}).
\end{align*}
The symmetry $w(x,t,k) = \overline{w(x,t,-\bar{k})}$ implies that the contribution from $\partial D_\epsilon(-k_0)$ to the right-hand side of (\ref{m0limIII}) is the complex conjugate of the contribution from $\partial D_\epsilon(k_0)$.

By (\ref{westimateIIIb}) and (\ref{muestimateIII}), the contribution from $\mathcal{X}^\epsilon$ to the right-hand side of (\ref{m0limIII}) is
\begin{align*}
O\big(k_0^{-1}\|w\|_{L^1(\mathcal{X}^\epsilon)}
& + k_0^{-1}\|\mu - I\|_{L^2(\mathcal{X}^\epsilon)}\|w\|_{L^2(\mathcal{X}^\epsilon)}\big)
	\\
&= O\big((\tau^{-1} + F_2(\zeta,t) k_0^{-1/2} \tau^{-3/4})(1 + k_0\ln{t})\big).
\end{align*}

By (\ref{westimateIIIc}) and (\ref{muestimateIII}),
\begin{align*}
\int_{\Sigma'} \mu(x,t,k) w(x,t,k) \frac{dk}{k}
& = \int_{\Sigma'} w(x,t,k) \frac{dk}{k}
+
\int_{\Sigma'} (\mu(x,t,k) - I) w(x,t,k) \frac{dk}{k}
	\\
& = O(\|k^{-1}w\|_{L^1(\Sigma')})
+ O(\|\mu - I\|_{L^2(\Sigma')}\|k^{-1}w\|_{L^2(\Sigma')})
	\\
& = O(t^{-3/2}),
\end{align*}
showing that the contribution from $\Sigma'$ to the integral in (\ref{m0limIII}) is $O(t^{-3/2})$.
Similarly, by (\ref{westimateIIId}) and (\ref{muestimateIII}), the contribution from $\Sigma''$ is
\begin{align*}
\int_{\Sigma''} \mu(x,t,k) w(x,t,k) \frac{dk}{k}
& = O(\|k^{-1}w\|_{L^1(\Sigma'')})
+ O(\|\mu - I\|_{L^2(\Sigma'')}\|k^{-1}w\|_{L^2(\Sigma'')})
	\\
& = O((t^{-1/2} + k_0)e^{-c\tau}).
\end{align*}

Collecting the above contributions, we find from (\ref{m0limIII}) that
\begin{align*}\nonumber
\hat{M}(x,t) - I
=& -2\re\bigg(\frac{Y(\zeta,t) m_1^X(\zeta) Y(\zeta,t)^{-1}}{\sqrt{t} \psi(\zeta, k_0)k_0} \bigg)
+ O(q\tau^{-1}) + O(k_0^{-1} F_2(\zeta, t) q t^{-1/2}) 
	\\  \nonumber
&+ O\big((\tau^{-1} + F_2(\zeta,t) k_0^{-1/2} \tau^{-3/4})(1 + k_0\ln{t})\big)
 	\\ 
&  +O(t^{-3/2})
+O((t^{-1/2} + k_0)e^{-c\tau}).
\end{align*}
Using that $q = r(k_0) = O(k_0^N)$ for any  $N \geq 1$, we can simplify the error terms as follows:
\begin{align}\label{m0lim2III}
\hat{M}(x,t) - I
= -2\re\bigg(\frac{Y(\zeta,t) m_1^X(\zeta) Y(\zeta,t)^{-1}}{\sqrt{t} \psi(\zeta, k_0)k_0} \bigg)
+O\bigg(\frac{1}{\tau} + \frac{\ln{t}}{t}\bigg).
\end{align}

\subsection{Asymptotics of $u$}
Substituting the asymptotics (\ref{m0lim2III}) into the definition (\ref{ulim}) of $u(x,t)$, we obtain
\begin{align*}
  u(x,t) & = 2 \arg\bigg(1 - \frac{2i \re(Y(\zeta,t) m_1^X(\zeta) Y(\zeta,t)^{-1})_{21}}{\sqrt{t} \psi(\zeta, k_0)k_0} + O\bigg(\frac{1}{\tau} + \frac{\ln{t}}{t}\bigg)\bigg).
\end{align*}
Using that
$$(Y(\zeta,t) m_1^X(\zeta) Y(\zeta,t)^{-1})_{21}
= -i \sqrt{\nu} e^{i\alpha(\zeta,t)}.$$
where $\alpha(\zeta,t)$ is defined by (\ref{alphadef}),  we find
\begin{align*}
  u(x,t) & = 2 \arg\bigg(1 - i\sqrt{\frac{2(1 + k_0^2)\nu}{\tau}}   \sin \alpha+ O\bigg(\frac{1}{\tau} + \frac{\ln{t}}{t}\bigg)\bigg)	
	\\
 & = -2 \arctan\bigg(\sqrt{\frac{2(1 + k_0^2)\nu}{\tau}}   \sin \alpha+ O\bigg(\frac{1}{\tau} + \frac{\ln{t}}{t}\bigg)\bigg)	
  	\\
& = -2 \sqrt{\frac{2(1 + k_0^2)\nu}{\tau}}   \sin \alpha+ O\bigg(\frac{1}{\tau} + \frac{\ln{t}}{t}\bigg) \quad \text{(mod $4\pi$)}.	
\end{align*}
This completes the proof of the asymptotic formula (\ref{uasymptoticsIII}).

\begin{remark}\label{MlambdajIIIremark}\upshape
For the purposes of Section \ref{solitonsec}, we note that if $\lambda_j$ is a point in $\C \setminus \Gamma$ such that 
\begin{align}\label{distlambdajSigma}
\inf_{0 \leq \zeta < 1} \dist(\lambda_j, \Sigma(\zeta)) > 0,
\end{align}
then the following analog of (\ref{m0lim2III}) is valid uniformly for $\zeta \in [0,1)$ as $t \to \infty$:
\begin{align}\label{mlambdajIII}
m(x,t,\lambda_j) 
= I 
+ \frac{i}{\sqrt{t}}\bigg\{
\frac{1}{(k_0 - \lambda_j)}\begin{pmatrix} 0 & \bar{A} \\ A & 0 \end{pmatrix}
- \frac{1}{(k_0 + \lambda_j)}\begin{pmatrix} 0 & A \\ \bar{A} & 0 \end{pmatrix}\bigg\}
+O\bigg(\frac{1}{\tau} + \frac{\ln{t}}{t}\bigg),
\end{align}
where $A := A(x,t)$ is given by
\begin{align}\label{Adef}
A(x,t) = \frac{i (Y(\zeta,t) m_1^X(\zeta) Y(\zeta,t)^{-1})_{21}}{\psi(\zeta, k_0)}
= \sqrt{\frac{k_0(1 + k_0^2)\nu}{2}} e^{i\alpha(\zeta,t)}.
\end{align}
Indeed, by (\ref{mrepresentationI}),
\begin{align*}
m(x, t, \lambda_j) = I + \frac{1}{2\pi i}\int_{\Sigma} (\mu w)(x, t, s) \frac{ds}{s - \lambda_j}.
\end{align*}
Hence, using the expansion 
\begin{align}
\frac{1}{2\pi i}\int_{\partial D_\epsilon(k_0)}(m^{k_0}(x,t,k)^{-1} - I) \frac{dk}{k-\lambda_j}
= -\frac{Y(\zeta,t) m_1^X(\zeta) Y(\zeta,t)^{-1}}{\sqrt{t} \psi(\zeta, k_0)(k_0 - \lambda_j)} + O(q t^{-1}),
\end{align}	
which is an easy analog of (\ref{mk0circleIII}), the same type of arguments that gave (\ref{m0lim2III}) yield 
\begin{align*}
m(x,t,\lambda_j) 
= I - \frac{Y(\zeta,t) m_1^X(\zeta) Y(\zeta,t)^{-1}}{\sqrt{t} \psi(\zeta, k_0)(k_0 - \lambda_j)} 
- \overline{\bigg(\frac{Y(\zeta,t) m_1^X(\zeta) Y(\zeta,t)^{-1}}{\sqrt{t} \psi(\zeta, k_0)(k_0 + \bar{\lambda}_j)} \bigg) }
+O\bigg(\frac{1}{\tau} + \frac{\ln{t}}{t}\bigg)
\end{align*}
uniformly for $\zeta \in [0,1)$ as $t \to \infty$.
Simplification gives (\ref{mlambdajIII}).

Given $\lambda_j \in \C \setminus \Gamma$, we may ensure that (\ref{distlambdajSigma}) holds by defining $\epsilon = ck_0$ with $c > 0$ sufficiently small and by modifying the contour deformations if necessary (cf. Remarks \ref{MlambdajIremark} and \ref{MlambdajIIremark}). Thus, recalling the relationship between $M$ and $m$, (\ref{mlambdajIII}) implies, for each fixed $\lambda_j \in \C \setminus \Gamma$, 
\begin{align}\label{MlambdajIII}
  M(x,t,\lambda_j) 
= \bigg\{I + \frac{i}{\sqrt{t}}\bigg\{
\frac{1}{(k_0 - \lambda_j)}\begin{pmatrix} 0 & \bar{A} \\ A & 0 \end{pmatrix}
- \frac{1}{(k_0 + \lambda_j)}\begin{pmatrix} 0 & A \\ \bar{A} & 0 \end{pmatrix}
\bigg\}
+ O\bigg(\frac{1}{\tau} + \frac{\ln{t}}{t}\bigg) \bigg\}\delta(\zeta, \lambda_j)^{\sigma_3}
\end{align}
uniformly for $\zeta \in [0,1)$ as $t \to \infty$.
Equation (\ref{MlambdajIII}) will be important for the proof of Theorem \ref{solitonasymptoticsth}.
\end{remark}

\section{Proof of Theorem \ref{asymptoticsth}: Asymptotics in Sector IV}\label{sectorIVsec}
Let $\mathcal{I} = [0,1/2]$ and suppose $\zeta \in \mathcal{I}$.
Assume $r(k)$ vanishes to order $Z \geq 0$ at $k = 1$, i.e., $r^{(j)}(1) = 0$ for $j =0, \dots, Z$.

\subsection{Transformations of the RH problem}
We transform the RH problem in the same way as in Sector III, but this time there is no need for a local approximation near $\pm k_0$. Thus we set $m := M^{(3)}$, where $M^{(3)}$ is the function defined in (\ref{M3defIII}), and the analytic approximations of $h,r_1,r_2$ have been defined by applying Lemma \ref{hdecompositionlemmaIII} and Lemma \ref{decompositionlemmaIII} with some large integer $N \gg Z$. Then $m$ satisfies the RH problem (\ref{RHmSigma}) with $v := J^{(3)}$ and $\Sigma := \Gamma^{(3)}$, see Figure \ref{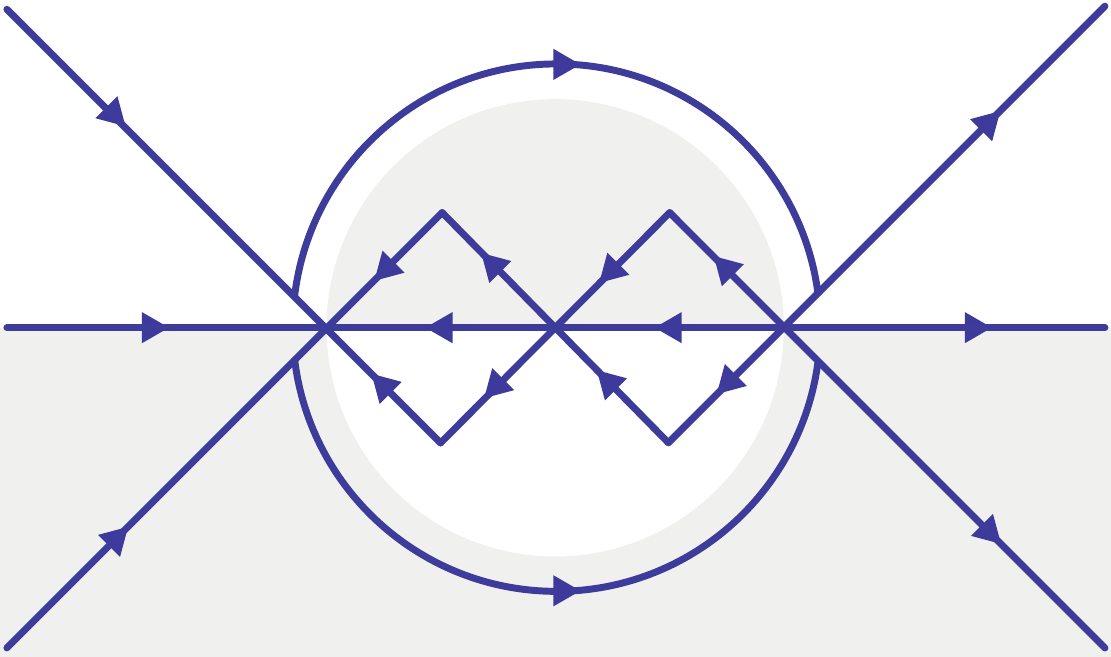}.

\begin{lemma}\label{w2lemmaIV}
The function $w = v - I$ satisfies
\begin{align*}
& \|(1+|k|^{-1})w\|_{L^1(\Sigma)} \leq C(t^{-\frac{Z+2}{2}} + \zeta^{Z+2}),
	\\
& \|(1+|k|^{-1})w\|_{L^\infty(\Sigma)} \leq C(t^{-\frac{Z+1}{2}} + \zeta^{Z+1}),	
\end{align*}
uniformly for $\zeta \in \mathcal{I}$ and $t > 1$.
\end{lemma}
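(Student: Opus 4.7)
\emph{Proof plan.} The argument parallels the proof of Lemma \ref{w2lemmaII} in Sector II, but now we must exploit the vanishing of $r$ at $k=1$ to order $Z$ in order to obtain the improved decay in powers of $\zeta$. As in Sectors II and III, the plan is to decompose $\Sigma = \Gamma^{(3)}$ into its ten labeled subcontours (Figure \ref{Gamma3III.pdf}) and estimate $(1+|k|^{-1})w$ on each. In applying the analytic decompositions of Lemmas \ref{hdecompositionlemmaIII} and \ref{decompositionlemmaIII} I will take the integer parameter $M$ much larger than $Z$.

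The four ``remainder'' subcontours $\Sigma_7, \Sigma_8$ (carrying $h_r$) and $\Sigma_9, \Sigma_{10}$ (carrying $r_{2,r}$ and $r_{1,r}$) are the easiest: part $(c)$ of Lemma \ref{hdecompositionlemmaIII} and parts $(c), (d)$ of Lemma \ref{decompositionlemmaIII} immediately yield $L^1$ and $L^\infty$ bounds of order $O(t^{-M-1/2})$ for $(1+|k|^{-1})w$, which for $M$ large are negligible compared to the stated right-hand sides. The substantive work lies on the cross pieces $\Sigma_1,\Sigma_2,\Sigma_3,\Sigma_4$ through $\pm k_0$ and on the deformed arc pieces $\Sigma_5, \Sigma_6 \subset \bar D_1$. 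By the symmetries $k \mapsto \bar k$ and $k \mapsto -\bar k$ of the contour, it suffices to treat one ray of the right-hand cross and a representative portion of $\Sigma_6$.

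On $\Sigma_1 \cap D_2$, where the jump involves $(r_{1,a}+h)e^{t\Phi}$, I apply Lemma \ref{decompositionlemmaIII}$(b)$ to expand $r_{1,a}$ to order $Z$ about $k_0$, and Taylor-expand the smooth function $h$ about $k_0$ to the same order. Adding, and using that $r_1+h = r$ on the real interval containing $k_0$, the coefficients of the combined expansion are precisely $r^{(j)}(k_0)$ for $j = 0, \ldots, Z$. Because $r$ vanishes to order $Z$ at $k=1$ and $1-k_0 \leq C\zeta$ throughout Sector IV, a further Taylor expansion at $k=1$ gives $|r^{(j)}(k_0)| \leq C\zeta^{Z+1-j}$. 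Assembling,
\begin{align*}
|r_{1,a}(x,t,k)+h(k)| \leq C\sum_{j=0}^Z \zeta^{Z+1-j}|k-k_0|^j + C|k-k_0|^{Z+1}e^{\frac{t}{4}|\re\Phi(\zeta,k)|}.
\end{align*}
An analogous computation, Taylor-expanding $r_{1,a}$ (Lemma \ref{decompositionlemmaIII}) about $k_0$ and $h_a$ (Lemma \ref{hdecompositionlemmaIII}) about $1$, then re-centering the former at $k=1$, handles the combination $r_{1,a}+h_a$ on $\Sigma_5, \Sigma_6$. The cross pieces $\Sigma_2, \Sigma_3$ are treated identically using the factor $r_{2,a}$ and the observation that $r_2 = \bar r/(1+|r|^2)$ also vanishes to order $Z$ at $k=1$.

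These polynomial bounds must then be combined with the exponential factor $|e^{\pm t\Phi}| = e^{-t|\re\Phi|}$ present on each analytic piece. In Sector IV one has $k_0 \in [1/\sqrt{3},1]$ bounded below, so parametrizing a cross ray by $u = |k-k_0|$, the estimate $|\re \Phi| \geq cu^2/(1+u)$ (cf.\ (\ref{rePhionL1})) gives Gaussian-type decay $e^{-ctu^2}$ on the relevant bounded range; moreover $(1+|k|^{-1})$ is uniformly bounded on all the analytic pieces since $|k|$ is bounded below. Using $(\zeta+u)^{Z+1} \leq 2^{Z+1}(\zeta^{Z+1}+u^{Z+1})$, standard Gaussian integrals yield
\begin{align*}
\sup_{u \geq 0} (\zeta+u)^{Z+1}e^{-ctu^2} &\leq C\bigl(\zeta^{Z+1}+t^{-(Z+1)/2}\bigr), \\
\int_0^\infty (\zeta+u)^{Z+1}e^{-ctu^2}\,du &\leq C\bigl(\zeta^{Z+1}t^{-1/2}+t^{-(Z+2)/2}\bigr),
\end{align*}
and the elementary inequality $\zeta^{Z+1}t^{-1/2} \leq \zeta^{Z+2}+t^{-(Z+2)/2}$ (verified by separately considering $\zeta \leq t^{-1/2}$ and $\zeta \geq t^{-1/2}$) converts the $L^1$ bound into the stated form. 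The chief technical annoyance is the re-centering step on $\Sigma_5, \Sigma_6$, where $r_{1,a}$ and $h_a$ are naturally expanded at different points; expressing $r_1^{(j)}(k_0)$ via a further Taylor expansion about $k=1$ places both approximations on a common footing, after which the identity $(r_1+h)^{(j)}(1) = r^{(j)}(1) = 0$ for $j = 0, \ldots, Z$ produces the desired gain $|k-1|^{Z+1} \leq (|k-k_0|+\zeta)^{Z+1}$. Everything else is routine.
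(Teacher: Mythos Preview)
Your proposal is correct and follows essentially the same strategy as the paper: Taylor-expand the analytic approximations, exploit $r^{(j)}(1)=0$ via the re-centering identity (the paper's equation (\ref{sumdifference})), and close with Gaussian integrals against $e^{-ct|k-k_0|^2}$.

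There are two organizational differences worth noting. First, the paper splits the diagonal legs into a near part $\mathcal{X}^\epsilon$ and a far part $\Sigma''$, dispatching $\Sigma''$ by citing the exponential bound (\ref{westimateIIId}) from Sector~III; you instead carry the bound $(\zeta+|k-k_0|)^{Z+1}e^{-\frac{3t}{4}|\re\Phi|}$ along the full legs, which works because $|\re\Phi|\gtrsim |k-k_0|^2/(1+|k-k_0|)$ gives linear exponential decay at infinity. Second, for the $L^1$ bound the paper observes that the additive term $|1-k_0|^{Z+1}$ is only present on a segment of length $O(1-k_0)$, yielding $\zeta^{Z+2}$ directly; you instead integrate $\zeta^{Z+1}e^{-ctu^2}$ over all $u$ to get $\zeta^{Z+1}t^{-1/2}$ and then invoke the dichotomy $\zeta\lessgtr t^{-1/2}$. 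Both routes are valid; the paper's case split is slightly sharper intermediate bookkeeping, while yours is more uniform and avoids the near/far decomposition altogether.
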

\begin{proof}
We have $\Sigma = \mathcal{X}^\epsilon \cup (-\mathcal{X}^\epsilon) \cup \Sigma' \cup \Sigma''$, where $\Sigma'$ and $\Sigma''$ are defined in (\ref{Sigmaprimedef}).
The $L^1$  and $L^\infty$ norms of $(1+|k|^{-1})w$ on $\Sigma'$ are $O(t^{-N-\frac{1}{2}})$ by Lemma \ref{decompositionlemmaIII} and are $O(e^{-ct})$ on $\Sigma''$ by (\ref{westimateIIId}).

\begin{figure}
\begin{center}
\begin{overpic}[width=.65\textwidth]{SigmaIV.pdf}
      \put(102,28.5){\small $\Sigma$}
    \end{overpic}
     \begin{figuretext}\label{SigmaIV.pdf}
       The contour $\Sigma$ in the case of Sector IV.
     \end{figuretext}
     \end{center}
\end{figure}

\begin{figure}
\begin{center}
\begin{overpic}[width=.65\textwidth]{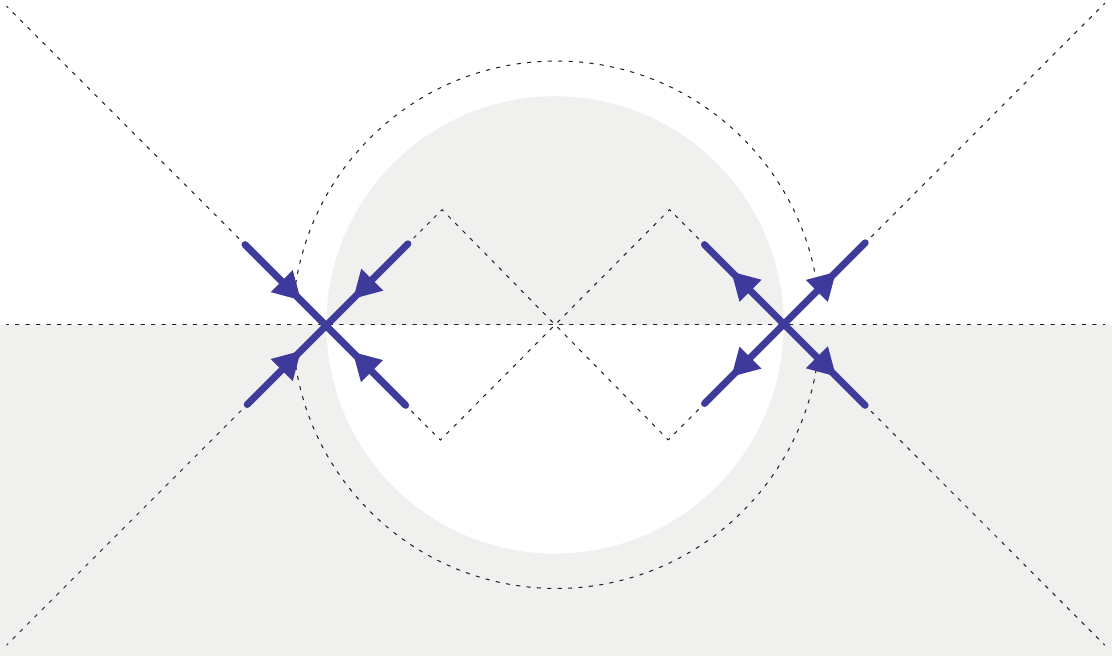}
      \put(75.5,32){\small $\mathcal{X}_1^\epsilon$}
      \put(61.2,32){\small $\mathcal{X}_2^\epsilon$}
     \put(61.2,25.7){\small $\mathcal{X}_3^\epsilon$}
      \put(75.5,25.7){\small $\mathcal{X}_4^\epsilon$}
          \end{overpic}
     \begin{figuretext}\label{SigmaprimeIV.pdf}
             The small crosses $\mathcal{X}^\epsilon = \cup_{j=1}^4 \mathcal{X}_j^{\epsilon}$ and $-\mathcal{X}^\epsilon$ centered at $k_0$ and $-k_0$, respectively.
     \end{figuretext}
     \end{center}
\end{figure}

It remains to consider the contributions from $\pm \mathcal{X}^\epsilon$, see Figure \ref{SigmaprimeIV.pdf}. By symmetry, we only have to consider $\mathcal{X}^\epsilon$. 
Since $r_1(k)$ is smooth on $\R$, we have, uniformly for $k \in \mathcal{X}^\epsilon$ and $\zeta \in \mathcal{I}$,
\begin{align}\nonumber
\sum_{j=0}^{Z} & \frac{r_1^{(j)}(1) (k-1)^j}{j!}
 =
\sum_{j=0}^{Z}\sum_{l=0}^{2Z}\bigg( \frac{r_1^{(j+l)}(k_0)(1-k_0)^l}{l!} + O\big((1-k_0)^{2Z+1}\big)\bigg)\frac{(k-1)^j}{j!}
	\\\nonumber
& = 
\sum_{j=0}^{Z}\sum_{s=j}^{2Z+j} \frac{r_1^{(s)}(k_0)(1-k_0)^{s-j}}{(s-j)!}\frac{(k-1)^j}{j!} 
+ O\big((1-k_0)^{2Z+1}\big)
	\\\nonumber
& = 
\bigg(\sum_{s=0}^{Z}\sum_{j=0}^{s} +\sum_{s=Z+1}^{2Z}\sum_{j=0}^{Z} \bigg) \begin{pmatrix} s \\ j \end{pmatrix}\frac{r_1^{(s)}(k_0)(1-k_0)^{s-j}(k-1)^j }{s!}  
+ O\big((1-k_0)^{Z+1}\big)
	\\\label{sumdifference}
& = \sum_{s=0}^{Z} \frac{r_1^{(s)}(k_0)(k-k_0)^{s}}{s!}  
+ \sum_{j=0}^Z O\big((1-k_0)^{Z+1-j}(k-1)^j\big).
\end{align}
For $k \in \mathcal{X}_1^\epsilon$ with $|k| \geq 1$, we have $|1-k_0| \leq C|k-1| \leq C|k-k_0|$.
Hence, recalling that $r = r_1 + h$ on $[-1,1]$ and $r(k)$ vanishes to order $Z$ at $k = 1$, the estimates (\ref{haestimateIII}) and (\ref{rjaestimatesIIIa}) together with (\ref{sumdifference}) imply
\begin{align} \nonumber
 |r_{1,a}(x,t,k) + h_a(k)| = &\; \bigg|r_{1,a}(x,t,k) - \sum_{j=0}^{Z} \frac{r_1^{(j)}(k_0)(k-k_0)^j}{j!} 
 + h_a(k) - \sum_{j=0}^{Z} \frac{h^{(j)}(1)(k-1)^j}{j!}
	\\\nonumber
 & + \sum_{j=0}^{Z} \frac{r_1^{(j)}(k_0) (k-k_0)^j}{j!} - \sum_{j=0}^{Z} \frac{r_1^{(j)}(1)(k-1)^j}{j!} \bigg|
	\\ \nonumber
\leq &\; C |k - k_0|^{Z+1} e^{\frac{t}{4}|\re \Phi(\zeta,k)|} 
+ C |k - 1|^{Z+1} e^{\frac{t}{4}|\re \Phi(\zeta,k)|} 
+ C|k-k_0|^{Z+1}
	\\ \label{X1farestimate}
\leq &\; C |k-k_0|^{Z+1} e^{\frac{t}{4}|\re \Phi(\zeta,k)|}, \qquad \zeta \in \mathcal{I}, \ k \in \mathcal{X}_1^\epsilon \cap \{|k| \geq 1\}.
\end{align}
For $k \in \mathcal{X}_1^\epsilon$ with $|k| \leq 1$, we have $|k-k_0|  \leq C|k-1| \leq C|1-k_0|$. Hence, using (\ref{sumdifference}) with $r_1$ replaced by $r$, we obtain in a similar way that
\begin{align} \nonumber
 |r_{1,a}(x,t,k) + h(k)| = &\; \bigg|r_{1,a}(x,t,k) - \sum_{j=0}^{Z} \frac{r_1^{(j)}(k_0)(k-k_0)^j}{j!} 
 + h(k) - \sum_{j=0}^{Z} \frac{h^{(j)}(k_0)(k-k_0)^j}{j!}
	\\\nonumber
 & + \sum_{j=0}^{Z} \frac{r^{(j)}(k_0) (k-k_0)^j}{j!} - \sum_{j=0}^{Z}  \frac{r^{(j)}(1) (k-1)^j}{j!}\bigg|
	\\ \nonumber
\leq &\; C |k - k_0|^{Z+1} e^{\frac{t}{4}|\re \Phi(\zeta,k)|} 
+ C|k-k_0|^{Z+1} + C |1-k_0|^{Z+1}
	\\ \label{X1nearestimate}
\leq &\; C |k-k_0|^{Z+1} e^{\frac{t}{4}|\re \Phi(\zeta,k)|}
+ C |1-k_0|^{Z+1}, \qquad \zeta \in \mathcal{I}, \ k \in \mathcal{X}_1^\epsilon \cap \{|k| \leq 1\}.
\end{align}
Similar estimates apply to $\mathcal{X}_2^\epsilon$. For $k \in \mathcal{X}_2^\epsilon$ with $|k-k_0| \geq |1-k_0|$, we have $|1-k_0|  \leq C|k-1| \leq C|k-k_0|$, whereas for 
$k \in \mathcal{X}_2^\epsilon$ with $ |k-k_0| \leq |1-k_0|$, we have $|k-k_0|  \leq C|k-1| \leq C|1 - k_0|$. 
Noting that $r_2 = r^*/(1+|r|^2)$  also vanishes to order $Z$ at $k = 1$, this yields
\begin{align} \nonumber
 |r_{2,a}(x,t,k)| = &\; \bigg|r_{2,a}(x,t,k) - \sum_{j=0}^{Z} \frac{r_2^{(j)}(k_0)(k-k_0)^j}{j!} 
+ \sum_{j=0}^{Z} \frac{r_2^{(j)}(k_0) (k-k_0)^j}{j!} - \sum_{j=0}^{Z}  \frac{r_2^{(j)}(1) (k-1)^j}{j!}\bigg|
	\\ \label{X2farestimate}
\leq &\; C |k-k_0|^{Z+1} e^{\frac{t}{4}|\re \Phi(\zeta,k)|}, \qquad \zeta \in \mathcal{I}, \ k \in \mathcal{X}_2^\epsilon \cap \{|k-k_0| \geq |1-k_0|\},
\end{align}
and, similarly,
\begin{align} \nonumber
& |r_{2,a}(x,t,k)| 
\leq C |k-k_0|^{Z+1} e^{\frac{t}{4}|\re \Phi(\zeta,k)|} + C |1-k_0|^{Z+1}, 
	\\\label{X2nearestimate}
&\hspace{5cm} \zeta \in \mathcal{I}, \ k \in \mathcal{X}_2^\epsilon \cap \{|k-k_0| \leq |1-k_0|\}.
\end{align}

Combining the inequalities (\ref{X1farestimate})-(\ref{X2nearestimate}) (and extending them to $\mathcal{X}_3^\epsilon \cup \mathcal{X}_4^\epsilon$ by symmetry), we arrive at the following estimate for the jump matrix for $\zeta \in \mathcal{I}$, $t > 1$, and $k \in \mathcal{X}^\epsilon$:
\begin{align*}
|w(x,t,k) | \leq  \begin{cases}
C |k-k_0|^{Z+1} e^{-\frac{3t}{4}|\re \Phi(\zeta,k)|}, & |k-k_0| \geq 2(1-k_0), \\
C(|k-k_0|^{Z+1} + |1-k_0|^{Z+1})e^{-\frac{3t}{4}|\re \Phi(\zeta,k)|}, & |k-k_0| \leq 2(1-k_0).
\end{cases} 
\end{align*}
Easy estimates show that (cf. (\ref{rePhionL1}))
$$|\re \Phi(\zeta, k)| \geq \frac{|k-k_0|^2}{8}, \qquad \zeta \in \mathcal{I}, \ k \in \mathcal{X}^\epsilon.$$
Using that $|k|^{-1}$ is uniformly bounded on  $\mathcal{X}^\epsilon$, we arrive at
\begin{align*}
\|(1+|k|^{-1})w\|_{L^1(\mathcal{X}^\epsilon)} 
& \leq \|w\|_{L^1(\mathcal{X}^\epsilon)} 
 \leq C\int_0^{\infty} u^{Z+1} e^{-\frac{t}{8}u^2} du
+ C(1-k_0) |1-k_0|^{Z+1} 
	\\
& \leq C(t^{-\frac{Z+2}{2}} + \zeta^{Z+2})
\end{align*}
and
\begin{align*}
\|(1+|k|^{-1})w\|_{L^\infty(\mathcal{X}^\epsilon)} 
\leq \|w\|_{L^\infty(\mathcal{X}^\epsilon)} 
 \leq C\sup_{0 \leq u \leq \infty} u^{Z+1} e^{-\frac{t}{8}u^2}
+ |1-k_0|^{Z+1} 
\leq C(t^{-\frac{Z+1}{2}} + \zeta^{Z+1})
\end{align*}
for $\zeta \in \mathcal{I}$ and $t > 1$. The lemma follows.
\end{proof}

The asymptotics of $u(x,t)$ in Sector IV follows from Lemma \ref{w2lemmaIV}. Indeed, by (\ref{muestimateI}), we have $\|\mu - I\|_{L^2(\Sigma)} \leq C\|w\|_{L^2(\Sigma)}$, and then Lemma \ref{w2lemmaIV} gives (cf. (\ref{hatMsectorI}))
\begin{align}\nonumber
\hat{M}(x,t) - I \leq C\|k^{-1}w\|_{L^1(\Sigma)})
+ C\|\mu - I\|_{L^2(\Sigma)}\|k^{-1}w\|_{L^2(\Sigma)}
\leq C(t^{-\frac{Z+2}{2}} + \zeta^{Z+2})
\end{align}
uniformly for $\zeta \in \mathcal{I}$ as $t \to \infty$. Recalling (\ref{ulim}), this implies
$u(x,t) = O(t^{-\frac{Z+2}{2}} + \zeta^{Z+2})$  
uniformly in Sector IV. This completes the proof of the asymptotic formulas (\ref{uasymptoticsIV}) and (\ref{uasymptoticsIV2}), and hence also of Theorem \ref{asymptoticsth}.

\begin{remark}\label{MlambdajIVremark}\upshape
If $\lambda_j$ is a point in $\C \setminus \Gamma$, then a slight modification of the above argument shows that (cf. Remark \ref{MlambdajIremark})
\begin{align}\label{MlambdajIV}
  M(x,t,\lambda_j) = I + O(t^{-\frac{Z+2}{2}} + \zeta^{Z+2})
\end{align}
uniformly for $\zeta \in [0,1/2]$ as $t \to \infty$. 
\end{remark}

\section{Solitons}\label{solitonsec}
Until now we have made the assumption that the matrix-valued function $M(x,t,k)$ entering the RH problem (\ref{RHM}) is pole-free, corresponding to the fact that the spectral functions $a(k)$ and $d(k)$ have no zeros (see Assumption \ref{assumption1}). In this section, we extend all the theorems of Section \ref{mainsec} to allow for a finite number of simple poles. This will broaden the scope of these theorems to include solitons and solutions with nonzero topological charge. 
We calculate in detail the asymptotics of the solution and its winding number in the presence of both radiation and solitons. As in the case of the nonlinear Schr\"odinger equation \cite{FIS2005}, it turns out that the zeros of $d(k)$ generate solitons, whereas the zeros of $a(k)$ affect the solution in a more subtle way; see \cite{FI1992} for some early results in this direction. By consistently keeping track of lower-order terms, we are able to also compute the subleading term in the asymptotic expansion. Results on asymptotic solitons for the sine-Gordon equation on the line can be found in \cite{K1989, KK1999}.

\subsection{RH problems with poles}
We begin by introducing a notion of a RH problem with poles. 
As before, let $\Gamma = \R \cup \{|k|= 1\}$. Let $\mathcal{N} \geq 0$ be an integer and let $\{C_j\}_1^\mathcal{N} \subset \C$ be complex constants. 
\begin{assumption}\label{poleassumption}
Suppose $\{\lambda_j\}_1^\mathcal{N} \subset \C_+ \setminus \Gamma$ is a set of distinct points which is invariant under the map $\lambda \mapsto -\bar{\lambda}$. 
\end{assumption}
Assumption \ref{poleassumption} implies that the $\lambda_j$ are either pure imaginary or come in pairs $(\lambda_j, -\bar{\lambda}_j)$ with $\re \lambda_j \neq 0$. 
Let $J:\Gamma \to GL(2, \C)$ be a (to begin with arbitrary) jump matrix. For our present purposes, the following definition will prove convenient.

\begin{definition}\upshape
We say that a sectionally meromorphic $2 \times 2$-matrix valued function $M(k)$ {\it satisfies the RH problem determined by} $\{\Gamma, J(k), \{\lambda_j\}_1^\mathcal{N}, \{C_j\}_1^\mathcal{N}\}$ if 
\begin{enumerate}[$(i)$]
\item $M(k)$ can be written as
\begin{align}\nonumber
M(k) =&\; (kI+B_\mathcal{N})(kI+B_{\mathcal{N}-1})\cdots (kI+B_1)\tilde{M}(k)
	\\ \label{eq-mm}
& \times \begin{pmatrix}
\frac{1}{\prod_{j=1}^{\mathcal{N}}(k-\lambda_j)} & 0 \\
0 & \frac{1}{\prod_{j=1}^{\mathcal{N}}(k-\bar{\lambda}_j)} \end{pmatrix}
\end{align}
for some constant complex $2 \times 2$ matrices $\{B_j\}_1^\mathcal{N}$ and some $2 \times 2$-matrix valued function $\tilde{M} \in I + \dot{E}^2(\C \setminus \Gamma)$.

\item $M_+(k) = M_-(k) J(k)$ for a.e. $k \in \Gamma$.

\item $\det M = 1$.

\item $M(k)$ satisfies the residue conditions
\begin{align}\label{RHmres}
\begin{cases}
  \underset{\lambda_j}{\res} [M(k)]_1= C_j [M(\lambda_j)]_2,
  	\\
 \underset{\bar{\lambda}_j}{\res} [M(k)]_2= -\bar{C}_j [M(\bar{\lambda}_j)]_1,  
 \end{cases} \quad j=1,2,\dots,\mathcal{N}.
 \end{align}
\end{enumerate}
\end{definition}
Condition $(i)$ ensures that $[M]_1$ and $[M]_2$ are analytic functions of $k \in \C \setminus \Gamma$ except for (at most) a finite number of simple poles located at the points $\lambda_j$ and $\bar{\lambda}_j$, respectively. The associated residues must satisfy the relations in \eqref{RHmres}.

\begin{lemma}\label{residueuniquelemma}
The solution  of the RH problem determined by $\{\Gamma, J(k), \{\lambda_j\}_1^\mathcal{N}, \{C_j\}_1^\mathcal{N}\}$ is unique if it exists.
\end{lemma}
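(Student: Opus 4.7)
The strategy is to show that if $M$ and $\tilde{M}$ are two solutions of the RH problem, then $N := M \tilde{M}^{-1}$ is globally analytic on $\C$, lies in $I + \dot{E}^2(\C)$, and therefore equals the identity by the same Cauchy-integral vanishing argument that handled the pole-free case (cf.\ the proofs of Lemma \ref{vanishinglemma} and Lemma \ref{existencemlemma}). Since $\det \tilde{M} \equiv 1$, the adjugate furnishes $\tilde{M}^{-1}$ as a meromorphic $2 \times 2$-matrix on $\C$. Combining $M_\pm = M_\mp J$, $\tilde{M}_\pm = \tilde{M}_\mp J$ and $\det J = 1$ immediately yields $N_+ = N_-$ a.e.\ on $\Gamma$, so $N$ has no jump and extends analytically across $\Gamma \setminus \{\lambda_j, \bar{\lambda}_j\}_1^{\mathcal{N}}$.

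The heart of the argument will be the cancellation of the potential simple poles of $N$ at each $\lambda_j$ and $\bar{\lambda}_j$. Writing $e_1 = (1,0)^T$ and $e_2 = (0,1)^T$, the residue condition (\ref{RHmres}) at $\lambda_j$ forces a rank-one Laurent expansion
\begin{align*}
M(k) = \frac{C_j\, [M(\lambda_j)]_2\, e_1^T}{k - \lambda_j} + M^{(0)} + O(k-\lambda_j),
\end{align*}
and an analogous expansion for $\tilde{M}$. Because $\det \tilde{M} \equiv 1$, the cofactor formula produces a principal part of the form $\frac{C_j\, e_2\, w_j^T}{k - \lambda_j}$ for $\tilde{M}^{-1}$ at $\lambda_j$, where $w_j$ is a row vector determined by $[\tilde{M}(\lambda_j)]_2$ via the adjugate. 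Forming the product $M \tilde{M}^{-1}$ near $\lambda_j$, the $(k-\lambda_j)^{-2}$ coefficient vanishes automatically because $e_1^T e_2 = 0$, and the $(k-\lambda_j)^{-1}$ coefficient cancels by a short computation using $M^{(0)} e_2 = [M(\lambda_j)]_2$, the explicit first row of $\mathrm{adj}(\tilde{M})(\lambda_j)$, and the shared constant $C_j$. An entirely parallel computation at $\bar{\lambda}_j$, now using the residue condition for $[M]_2$ in (\ref{RHmres}), shows that $N$ is analytic on all of $\C$.

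To conclude I would unpack the factorisation (\ref{eq-mm}): both the polynomial prefactor $(kI + B_{\mathcal{N}})\cdots(kI+B_1)$ and the diagonal $\mathrm{diag}(1/B, 1/\bar{B})$ are $I + O(k^{-1})$ at infinity, so $M$ and $\tilde{M}$ inherit $\dot{E}^2(\C \setminus \Gamma)$ decay from their regular factors, and the same holds for $N - I$. Since $N - I \in \dot{E}^2(\C)$ is globally analytic with vanishing jump, the Cauchy representation (Theorem~4.1 of \cite{LCarleson}) gives $N - I = \mathcal{C}(N_+ - N_-) = 0$, i.e.\ $M \equiv \tilde{M}$.

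The main technical obstacle will be the pole-cancellation bookkeeping at $\lambda_j$: it rests essentially on the rank-one structure of the residues encoded in (\ref{RHmres}) combined with the normalisation $\det M = 1$, and is where the precise form $C_j\,[M(\lambda_j)]_2$ (rather than an arbitrary rank-one direction) enters. Everything else reduces to routine matrix manipulation and the pole-free uniqueness machinery already in place.
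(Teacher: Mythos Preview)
Your proposal is correct and follows essentially the same route as the paper: form the ratio $M\tilde{M}^{-1}$, use the residue conditions (\ref{RHmres}) together with $\det \tilde{M}=1$ (via the cofactor formula for $\tilde{M}^{-1}$) to see that the apparent simple poles at $\lambda_j,\bar{\lambda}_j$ cancel, and then invoke the Cauchy representation for a jump-free function to conclude $M\tilde{M}^{-1}=I$. The only minor discrepancy is that the paper places $M\tilde{M}^{-1}-I$ in $\dot{E}^1(\C\setminus\Gamma)$ rather than $\dot{E}^2$, which is the natural space for a product of two $\dot{E}^2$ factors, but this does not affect the argument.
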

\begin{proof}
Let $M$ and $N$ be two solutions. Then the first columns $[M]_1$ and $[N]_1$ have possible simple poles at the $\lambda_j$, while the second columns $[M]_2$ and $[N]_2$ have possible simple poles at the $\bar{\lambda}_j$. Using that $\det N = 1$, we compute
\begin{align*}
 \underset{\lambda_j}{\res}[M N^{-1}]_1 & =  \underset{\lambda_j}{\res}\big( [M]_1 N_{22} -
  [M]_2  N_{21}  \big)
  	\\
&  = C_j [M(\lambda_j)]_2 N_{22}(\lambda_j)
  - [M(\lambda_j)]_2 C_j N_{22}(\lambda_j) = 0,
\end{align*}
and similar computations apply to the second column of $M N^{-1}$ and at $\bar{\lambda}_j$.
It follows that the function $m := M N^{-1}$ is analytic at the $\lambda_j$ and the $\bar{\lambda}_j$.
In particular, $m \in I + \dot{E}^1(\C \setminus \Gamma)$. Since $m$ has no jump across $\Gamma$, we conclude that $m$ vanishes identically.  
\end{proof}

\subsection{Dressing}
The next lemma shows how the solution $M$ of the nonregular RH problem with poles can be obtained from the solution $\tilde{M}$ of a regular RH problem without poles. See \cite{FI1996} for an analogous lemma in the case of the nonlinear Schr\"odinger equation.

Let $r_1:\R \to \C$ and $h:\partial D_2 \to \C$ be two functions satisfying Assumption \ref{r1hassumption}. 
Then the functions $\tilde{r}_1:\R \to \C$ and $\tilde{h}:\bar{D}_2 \to \C$ defined by
\begin{align}\label{tilder1hdef}
\tilde{r}_1(k)= r_1(k) \prod_{j=1}^{\mathcal{N}}\frac{k-\lambda_j}{k-\bar{\lambda}_j},\qquad 
\tilde{h}(k) = h(k) \prod_{j=1}^{\mathcal{N}}\frac{k-\lambda_j}{k-\bar{\lambda}_j}
\end{align}
also fulfill Assumption \ref{r1hassumption}.
From now on $J(x,t,k)$ will denote the jump matrix defined in terms of $r_1$ and $h$ via (\ref{Jdef}). Let $\tilde{J}(x,t,k)$ be the matrix defined by replacing $r_1(k)$ and $h(k)$ with $\tilde{r}_1(k)$ and $\tilde{h}(k)$ in (\ref{Jdef}).
By Theorem \ref{existenceth}, there exists a unique solution $\tilde{M}(x,t,k)$ of the RH problem 
\begin{align}\label{RHMtilde}
\begin{cases}
\tilde{M}(x, t, \cdot) \in I + \dot{E}^2(\C \setminus \Gamma),\\
\tilde{M}_+(x,t,k) = \tilde{M}_-(x, t, k) \tilde{J}(x, t, k) \quad \text{for a.e.} \ k \in \Gamma.
\end{cases}
\end{align}

\begin{lemma}\label{dressinglemma}
Let $\{\lambda_j\}_1^\mathcal{N}$ satisfy Assumption \ref{poleassumption} and suppose $\{C_j(x,t)\}_1^\mathcal{N}$ are complex-valued functions such that
\begin{align}\label{Cjassump}
\begin{cases}
C_j(x,t) \in i\R & \text{if $\lambda_j$ is pure imaginary,}
	\\
C_{j+1}(x,t) = -\overline{C_j(x,t)} & \text{if $(\lambda_j, \lambda_{j+1} = -\bar{\lambda}_j)$ is a pair with $\re \lambda_j \neq 0$}.
\end{cases}
\end{align}

For each $(x,t) \in [0,\infty) \times [0,\infty)$, the RH problem determined by $\{\Gamma$, $J(x,t,\cdot)$, $\{\lambda_j\}_1^\mathcal{N}$, $\{C_j(x,t)\}_1^\mathcal{N}\}$ has a unique solution $M(x,t,k)$. This solution is given by (\ref{eq-mm}), where the $2 \times 2$ matrices $B_j := B_j(x,t)$, $j = 1, \dots, \mathcal{N}$, are determined recursively by the equations
\begin{align}\label{Bjrecursive}
  \begin{cases}
  (\lambda_jI+B_j(x,t))\tilde{M}_{j-1}(x,t,\lambda_j)
  \begin{psmallmatrix} 1 \\ -d_j(x,t) \end{psmallmatrix} = 0,
  	\vspace{.1cm}\\ 
  (\bar{\lambda}_jI +B_j(x,t))\tilde{M}_{j-1}(x,t,\bar{\lambda}_j)
  \begin{psmallmatrix} \overline{d_j(x,t)} \\ 1 \end{psmallmatrix} = 0,
  \end{cases} \quad j = 1, 2, \dots, \mathcal{N},
\end{align}
with the assignments
\begin{align}\nonumber
& \tilde{M}_0(x,t,k) := \tilde{M}(x,t,k),
	\\\nonumber
  &\tilde{M}_{j}(x,t,k) := (kI+B_j)\tilde{M}_{j-1}(x,t,k), \quad j=1,2,\dots, \mathcal{N}, 
  	\\\label{djdef}
  &d_j(x,t) := C_j(x,t) \frac{\prod_{l=1,l\neq j}^{\mathcal{N}}(\lambda_j-\lambda_l)}{\prod_{l=1}^{\mathcal{N}}(\lambda_j-\bar{\lambda}_l)}, \qquad j = 1,2, \dots, \mathcal{N}.
\end{align}
Furthermore, $M$ obeys the symmetries
\begin{align}\label{Msymm1}
& M(x,t,k) = \sigma_2 \overline{M(x,t,\bar{k})} \sigma_2, \qquad k \in \C \setminus \Gamma,
	\\ \label{Msymm2}
& M(x,t,k) = \sigma_2 M(x,t,-k) \sigma_2, \qquad k \in \C \setminus \Gamma.
\end{align}
\end{lemma}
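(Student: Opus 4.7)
Uniqueness of $M(x,t,k)$ is provided by Lemma \ref{residueuniquelemma}, so the task is to construct a solution via the explicit dressing formula (\ref{eq-mm})--(\ref{djdef}) and verify conditions $(i)$--$(iv)$ together with the symmetries (\ref{Msymm1})--(\ref{Msymm2}). The plan is to show inductively on $j = 0, 1, \dots, \mathcal{N}$ that the matrix $B_j$ is well-defined by (\ref{Bjrecursive}), that $\tilde{M}_j$ satisfies the conjugation symmetry $\tilde{M}_j(k) = \sigma_2 \overline{\tilde{M}_j(\bar{k})} \sigma_2$, and that $\tilde{M}_j$ is invertible at each $\lambda_l, \bar\lambda_l$ with $l > j$. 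The base case $j = 0$ holds because, by Theorem \ref{existenceth}, $\tilde{M}$ has unit determinant and the stated symmetry (cf.\ (\ref{msymm})). For the inductive step, the equations (\ref{Bjrecursive}) assert that $-B_j$ has the vectors
\begin{equation*}
v_1 := \tilde{M}_{j-1}(\lambda_j)\begin{pmatrix}1\\-d_j\end{pmatrix}, \qquad v_2 := \tilde{M}_{j-1}(\bar\lambda_j)\begin{pmatrix}\bar{d}_j\\1\end{pmatrix},
\end{equation*}
as eigenvectors with eigenvalues $\lambda_j$ and $\bar\lambda_j$. The symmetry of $\tilde{M}_{j-1}$ combined with the elementary identity $\sigma_2(\bar{d}_j, 1)^T = -i\,\overline{(1, -d_j)^T}$ yields $v_2 = -i\sigma_2\bar{v}_1$. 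Since $\tilde{M}_{j-1}(\lambda_j)$ is invertible as a product of invertible matrices (using $\det(\lambda_jI + B_l) = (\lambda_j - \lambda_l)(\lambda_j - \bar\lambda_l) \neq 0$ for $l < j$), one has $v_1 \neq 0$, and the identity $\det(v_1 \mid \sigma_2\bar{v}_1) = i|v_1|^2$ forces linear independence of $v_1$ and $v_2$. Hence $B_j$ is uniquely determined with $\det(kI + B_j) = (k-\lambda_j)(k-\bar\lambda_j)$, and a short direct computation verifies $B_j = \sigma_2\bar{B}_j\sigma_2$, propagating the symmetry to $\tilde{M}_j = (kI+B_j)\tilde{M}_{j-1}$.

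With the $B_j$ in hand, I would verify $(i)$--$(iv)$ for the $M$ defined by (\ref{eq-mm}). Condition $(iii)$ is immediate: $\det M = \prod_j (k-\lambda_j)(k-\bar\lambda_j) \cdot \prod_j (k-\lambda_j)^{-1}(k-\bar\lambda_j)^{-1} \cdot \det\tilde{M} = 1$. For condition $(i)$, the product $(kI+B_{\mathcal{N}})\cdots(kI+B_1) = k^{\mathcal{N}}I + O(k^{\mathcal{N}-1})$ as $k \to \infty$, and the diagonal factor $D := \diag(\prod(k-\lambda_j)^{-1}, \prod(k-\bar\lambda_j)^{-1})$ cancels this polynomial growth, leaving $M$ in the factored form (\ref{eq-mm}) with $\tilde{M} \in I + \dot{E}^2(\C \setminus \Gamma)$. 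For condition $(ii)$, a direct computation from (\ref{tilder1hdef}) gives $\tilde{J} = DJD^{-1}$; since the $(kI+B_j)$ factors are entire and have no jump, $\tilde{M}_+ = \tilde{M}_-\tilde{J}$ yields $M_+ = M_-J$. For condition $(iv)$ at $\lambda_m$, the construction gives $(\lambda_mI+B_m)\tilde{M}_{m-1}(\lambda_m)(1, -d_m)^T = 0$, and left-multiplication by the remaining invertible factors $(\lambda_mI+B_l)$, $l > m$, yields $\tilde{M}_{\mathcal{N}}(\lambda_m)(1, -d_m)^T = 0$, i.e.\ $[\tilde{M}_{\mathcal{N}}(\lambda_m)]_1 = d_m[\tilde{M}_{\mathcal{N}}(\lambda_m)]_2$. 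Substituting (\ref{djdef}) for $d_m$ and using $[M]_1 = [\tilde{M}_{\mathcal{N}}]_1/\prod(k-\lambda_j)$ gives exactly $\underset{\lambda_m}{\res}[M]_1 = C_m[M(\lambda_m)]_2$; the second residue condition at $\bar\lambda_m$ follows by the analogous calculation.

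The symmetries (\ref{Msymm1})--(\ref{Msymm2}) finally follow from Lemma \ref{residueuniquelemma} applied to the candidates $\sigma_2\overline{M(\bar{k})}\sigma_2$ and $\sigma_2 M(-k)\sigma_2$. For (\ref{Msymm1}), the jump identity $J(k) = \sigma_2\overline{J(\bar{k})}\sigma_2$ (read off from (\ref{Jdef})) together with the unimodular relation $J^{-1} = \sigma_2 J^T\sigma_2$ and the built-in pairing of the residues at $\lambda_j, \bar\lambda_j$ with coefficients $C_j, -\bar{C}_j$ make $\sigma_2\overline{M(\bar{k})}\sigma_2$ satisfy the same RH problem as $M$. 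For (\ref{Msymm2}), the key ingredients are $J(k) = J(-k)^T$ (from $r_1(-k) = \overline{r_1(k)}$ and $\theta(x,t,-k) = -\theta(x,t,k)$), the invariance of $\{\lambda_j\}$ under $\lambda \mapsto -\bar\lambda$, and crucially the assumption (\ref{Cjassump}) on the $C_j$: matching the residues at $\lambda_j$ of $\sigma_2 M(-k)\sigma_2$ and $M(k)$ forces $\bar{C}_j = -C_j$ when $\lambda_j \in i\R$ and $C_{j+1} = -\bar{C}_j$ when $(\lambda_j, \lambda_{j+1} = -\bar\lambda_j)$ is a pair. The main technical obstacle is the bookkeeping in the inductive step --- simultaneously tracking the conjugation symmetry of $\tilde{M}_j$, invertibility along the chain of factors, and the identity $v_2 = -i\sigma_2\bar{v}_1$ that drives the entire construction; once these are set up, the remaining verifications are essentially algebraic.
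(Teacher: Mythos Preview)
Your argument is correct and largely follows the paper's route for the inductive construction of the $B_j$ and the verification of conditions $(i)$--$(iv)$: the key observation that the two target eigenvectors are related by $v_2 = -i\sigma_2\bar v_1$, hence linearly independent, is exactly what the paper encodes via the identity $Q_j=\sigma_2\overline{Q_j}\sigma_2$ and $\det Q_j=|(Q_j)_{11}|^2+|(Q_j)_{12}|^2$.

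The genuine difference lies in the symmetries. For (\ref{Msymm1}) the paper reads it off directly from the already-established relation $B_j=\sigma_2\overline{B_j}\sigma_2$, whereas you invoke uniqueness; both are fine. For (\ref{Msymm2}) the approaches diverge more substantially. You show that $\sigma_2M(x,t,-k)\sigma_2$ solves the same RH problem (checking the jump via $J(k)=J(-k)^T$, the residue conditions via (\ref{Cjassump}), and condition $(i)$ via the invariance of $\{\lambda_j\}$ under $\lambda\mapsto-\bar\lambda$ together with $\tilde M(x,t,k)=\sigma_2\tilde M(x,t,-k)\sigma_2$), and then appeal to Lemma~\ref{residueuniquelemma}. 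The paper instead proves (\ref{Msymm2}) constructively by induction on $\tilde M_j$: it shows $B_j=-\sigma_2 B_j\sigma_2$ when $\lambda_j\in i\R$, and for a pair $(\lambda_j,\lambda_{j+1}=-\bar\lambda_j)$ it exploits ordering-independence of the dressing (itself a consequence of uniqueness) to obtain the product identity $(kI+B_{j+1})(kI+B_j)=(kI-\sigma_2B_{j+1}\sigma_2)(kI-\sigma_2B_j\sigma_2)$. Your route is cleaner for the lemma as stated, but the paper's constructive proof yields these intermediate identities on the individual $B_j$, which are reused verbatim in the later asymptotic analysis (the derivation of (\ref{uargsum})); with your approach those facts would need to be recovered separately.
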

\begin{proof}
We first use induction to show that the $B_j$ are well-defined and that the following hold:
\begin{align}\label{tildeMjsymm}
& \begin{cases} \det\tilde{M}_{i-1}(x,t,k) = \prod_{l = 1}^{i-1} (k - \lambda_l)(k - \bar{\lambda}_l), \vspace{.1cm} \\ 
\tilde{M}_{i-1}(x,t,k) = \sigma_2\overline{\tilde{M}_{i-1}(x,t,\bar{k})}\sigma_2,
\end{cases} \quad i = 1, \dots, \mathcal{N}+1,
	\\ \label{Bjsymm}
& B_i(x,t) = \sigma_2\overline{B_i(x,t)}\sigma_2, \qquad i = 1, \dots, \mathcal{N}.
\end{align}

We know from Section \ref{existenceproofsec} (see Lemma \ref{existencemlemma} and (\ref{msymm})) that (\ref{tildeMjsymm}) holds for $i = 1$.
Fix $1 \leq j \leq \mathcal{N}$ and suppose (\ref{tildeMjsymm}) holds for $i = j$. The linear algebraic system (\ref{Bjrecursive}) can be solved uniquely for $B_j$ iff the matrix 
$$Q_j(x,t) := \bigg(\tilde{M}_{j-1}(x,t,\lambda_j)
  \begin{psmallmatrix} 1 \\ -d_j(x,t) \end{psmallmatrix}, \tilde{M}_{j-1}(x,t,\bar{\lambda}_j)
  \begin{psmallmatrix} \overline{d_j(x,t)} \\ 1 \end{psmallmatrix} \bigg)$$
has nonvanishing determinant. 
The relation $\tilde{M}_{j-1}(x,t,\lambda_j) = \sigma_2 \overline{\tilde{M}_{j-1}(x,t,\bar{\lambda}_j)} \sigma_2$ implies $Q_j(x,t) = \sigma_2\overline{Q_j(x,t)}\sigma_2$, and hence $\det Q_j = |(Q_j)_{11}|^2 + |(Q_j)_{12}|^2$ vanishes only if $Q_j = 0$. But in fact $Q_j \neq 0$, because (\ref{tildeMjsymm}) with $i = j$ shows that $\det \tilde{M}_{j-1}(x,t,\lambda_j)$ and $\det \tilde{M}_{j-1}(x,t,\bar{\lambda}_j)$ are nonzero.
This shows that $B_j$ is well-defined. Taking the complex conjugate of (\ref{Bjrecursive}) and multiplying by $\sigma_2$ from the left, we see that $\sigma_2\overline{B_j(x,t)}\sigma_2$ satisfies the same equations as $B_j(x,t)$. By uniqueness, we conclude that (\ref{Bjsymm}) holds for $i = j$. 
On the other hand, by (\ref{Bjrecursive}), the second order polynomial $\det(kI + B_j)$ has zeros at $\lambda_j$ and $\bar{\lambda}_j$, which gives $\det(kI + B_j) = (k - \lambda_j)(k - \bar{\lambda}_j)$. It follows that (\ref{tildeMjsymm}) holds for $i = j +1$. By induction, this proves (\ref{tildeMjsymm}) and (\ref{Bjsymm}) for all values of $i$.

In view of (\ref{Jdef}) and (\ref{tilder1hdef}), the computation
$$M_-^{-1}M_+ = \begin{pmatrix}
\prod_{j=1}^{\mathcal{N}}(k-\lambda_j) & 0 \\
0 & \prod_{j=1}^{\mathcal{N}}(k-\bar{\lambda}_j) \end{pmatrix} \tilde{J} \begin{pmatrix}
\frac{1}{\prod_{j=1}^{\mathcal{N}}(k-\lambda_j)} & 0 \\
0 & \frac{1}{\prod_{j=1}^{\mathcal{N}}(k-\bar{\lambda}_j)} \end{pmatrix} = J
$$
shows that $M$ satisfies the correct jump condition across $\Gamma$.  The condition $\det M = 1$ follows from (\ref{tildeMjsymm}) with $i = \mathcal{N}+1$. 
Finally, for each $j = 1, \dots, \mathcal{N}$, we have
\begin{align*}
\underset{\lambda_j}{\res} [M(x,t,k)]_1
& = \frac{(\lambda_jI + B_\mathcal{N}) \cdots (\lambda_jI + B_1)}{\prod_{l=1,l\neq j}^{\mathcal{N}}(\lambda_j-\lambda_l)}[\tilde{M}(x,t,\lambda_j)]_1
	\\
& = \frac{(\lambda_jI + B_\mathcal{N}) \cdots (\lambda_jI + B_{j})}{\prod_{l=1,l\neq j}^{\mathcal{N}}(\lambda_j-\lambda_l)}[\tilde{M}_{j-1}(x,t,\lambda_j)]_1.
\end{align*}
By (\ref{Bjrecursive}) and the definition (\ref{djdef}) of $d_j$, this yields
\begin{align*}
\underset{\lambda_j}{\res} [M(x,t,k)]_1
& = \frac{d_j (\lambda_jI + B_\mathcal{N}) \cdots (\lambda_jI + B_{j})}{\prod_{l=1,l\neq j}^{\mathcal{N}}(\lambda_j-\lambda_l)}[\tilde{M}_{j-1}(x,t,\lambda_j)]_2
	\\
&= C_j  \frac{(\lambda_jI + B_\mathcal{N}) \cdots (\lambda_jI + B_j)}{\prod_{l=1}^{\mathcal{N}}(\lambda_j-\bar{\lambda}_l)} [\tilde{M}_{j-1}(x,t,\lambda_j)]_2
= C_j  [M(x,t,\lambda_j)]_2,
\end{align*}
showing that $M$ satisfies the first of the residue conditions in (\ref{RHmres}); the proof of the second condition is analogous. 
The symmetry in (\ref{Msymm1}) follows from (\ref{eq-mm}) and (\ref{Bjsymm}). 

It remains to prove (\ref{Msymm2}). 
Note that (\ref{Cjassump}) implies
\begin{enumerate}[$(i)$]
\item $d_j(x,t) \in \R$ if $\lambda_j$ is pure imaginary. 
\item $d_{j+1}(x,t) = \overline{d_j(x,t)}$ if $(\lambda_j, \lambda_{j+1} = -\bar{\lambda}_j)$ is a pair with $\re \lambda_j \neq 0$.
\end{enumerate}
We know from Section \ref{existenceproofsec} (see (\ref{msymm})) that $\tilde{M} = \tilde{M}_0$ obeys (\ref{Msymm2}). Seeking a proof by induction, we suppose $\tilde{M}_{j-1}(x,t,k) = \pm \sigma_2 \tilde{M}_{j-1}(x,t,-k) \sigma_2$ for some $1 \leq j \leq \mathcal{N}$.

If $\lambda_j \in i\R$, then multiplying the equations in (\ref{Bjrecursive}) from the left by $\sigma_2$ and using $(i)$, we infer that $-\sigma_2B_j\sigma_2$ satisfies the same equations as $B_j$. Thus $B_j = -\sigma_2B_j\sigma_2$, which gives $\tilde{M}_{j}(x,t,k) = \mp \sigma_2 \tilde{M}_{j}(x,t,-k) \sigma_2$.

If $(\lambda_j, \lambda_{j+1} = -\bar{\lambda}_j)$ is a pair with $\re \lambda_j \neq 0$, we instead argue as follows. The uniqueness of 
Lemma \ref{residueuniquelemma} implies that the recursive construction of $M$ via (\ref{Bjrecursive})-(\ref{djdef}) leads to the same result regardless of the ordering of the $\lambda_j$. Let $\{B_j'\}_1^\mathcal{N}$ be the matrices defined by the recursive formula (\ref{Bjrecursive}) when the points $\lambda_j$ and $\lambda_{j+1}$ are  interchanged. Multiplying (\ref{Bjrecursive}) from the left by $\sigma_2$ and using $(ii)$, we deduce that $B_j' = -\sigma_2 B_j \sigma_2$ and $B_{j+1}' = -\sigma_2 B_{j+1} \sigma_2$. Since the result is independent of the ordering of the $\lambda_j$, we obtain
\begin{align}\label{BBsymm}
(kI + B_{j+1})(kI + B_j) = (kI + B_{j+1}')(kI + B_j') = (kI - \sigma_2B_{j+1}\sigma_2)(kI - \sigma_2B_j\sigma_2),
\end{align}
which shows that $\tilde{M}_{j+1}(x,t,k) = \pm \sigma_2 \tilde{M}_{j+1}(x,t,-k) \sigma_2$.

Combining the above two cases, induction yields $\tilde{M}_\mathcal{N}(x,t,k) = (-1)^\mathcal{N} \sigma_2 \tilde{M}_\mathcal{N}(x,t,-k) \sigma_2$. Taking the rightmost factor in (\ref{eq-mm}) into account, the symmetry (\ref{Msymm2}) follows.
\end{proof}

\subsection{Construction of solutions}
We can now prove generalizations of Theorems \ref{existenceth} and \ref{existenceth2} which allow for a finite number of simple poles.

\begin{theorem}[Construction of quarter-plane solutions]\label{solitonexistenceth}
Let $r_1:\R \to \C$ and $h:\partial D_2 \to \C$ be functions satisfying Assumption \ref{r1hassumption} and define $J(x,t,k)$ by (\ref{Jdef}). Let $\{\lambda_j\}_1^\mathcal{N}$ satisfy Assumption \ref{poleassumption} and let $\{c_j\}_1^\mathcal{N}$ be a set of complex numbers such that
\begin{align}\label{cjassump}
\begin{cases} 
c_j \in i\R & \text{if $\lambda_j$ is pure imaginary,}
	\\
c_{j+1} = -\overline{c_j} & \text{if $(\lambda_j, \lambda_{j+1} = -\bar{\lambda}_j)$ is a pair with $\re \lambda_j \neq 0$.}
\end{cases}
\end{align}

Then the RH problem determined by $\{\Gamma, J(x,t,\cdot), \{\lambda_j\}_1^\mathcal{N}, \{c_j e^{2i\theta(x,t,\lambda_j)}\}_1^\mathcal{N}\}$ has a unique solution $M(x,t,k)$ for each $(x,t) \in [0,\infty) \times [0, \infty)$. Moreover, the nontangential limit (\ref{m0def}) exists for each $(x,t) \in [0,\infty) \times [0, \infty)$ and the function $u(x,t)$ defined by (\ref{ulim}) belongs to $C^2([0,\infty) \times [0, \infty), \R)$ and satisfies the sine-Gordon equation \eqref{sg} for $x \geq 0$ and $t \geq 0$. 
\end{theorem}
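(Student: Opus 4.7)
The strategy is to reduce to Theorem \ref{existenceth} via the dressing construction of Lemma \ref{dressinglemma}, and then to derive the Lax pair for $M$ by adapting the argument of Lemma \ref{laxlemma} to the meromorphic setting.

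First, one verifies that $C_j(x,t) := c_j e^{2i\theta(x,t,\lambda_j)}$ satisfies the compatibility conditions (\ref{Cjassump}). For $\lambda_j \in i\R$, the definitions (\ref{thetadef}) give $\theta_m(\lambda_j) \in i\R$ for $m=1,2$, so $e^{2i\theta(x,t,\lambda_j)} \in \R$ and $C_j \in i\R$. For a pair $(\lambda_j, \lambda_{j+1} = -\bar{\lambda}_j)$, the identities $\theta_m(-\bar\lambda_j) = -\overline{\theta_m(\lambda_j)}$ yield $e^{2i\theta(x,t,-\bar\lambda_j)} = \overline{e^{2i\theta(x,t,\lambda_j)}}$, whence $C_{j+1} = -\overline{C_j}$. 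Since the modified functions $\tilde r_1, \tilde h$ of (\ref{tilder1hdef}) also satisfy Assumption \ref{r1hassumption}, Theorem \ref{existenceth} supplies a unique $\tilde M(x,t,k)$ solving (\ref{RHMtilde}), and Lemma \ref{dressinglemma} then produces the unique meromorphic $M(x,t,k)$ satisfying the RH problem with poles via the recursion (\ref{eq-mm})-(\ref{djdef}).

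Next, I would show that the nontangential limit $\hat M(x,t) = \ntlim_{k \to 0} M(x,t,k)$ exists and depends $C^2$ on $(x,t)$. The diagonal rational factor in (\ref{eq-mm}) and each linear prefactor $(kI + B_j)$ are smooth at $k = 0$ because every $\lambda_j$ is bounded away from the origin, so the limit is inherited from the corresponding limit of $\tilde M$ guaranteed by Theorem \ref{existenceth}. A straightforward induction on $j$ using (\ref{Bjrecursive}) then shows $B_j(x,t) \in C^2$: the inputs $\tilde M(x,t,\lambda_j)$ and $\tilde M(x,t,\bar\lambda_j)$ are $C^2$ in $(x,t)$ by Lemma \ref{laxlemma} applied to $\tilde M$ (equivalently, by the representation (\ref{m1representation}) combined with Lemma \ref{C2lemma} and dominated convergence for $k$ fixed off $\Gamma$), the functions $C_j(x,t)$ are manifestly smooth, and the coefficient matrix of the linear system (\ref{Bjrecursive}) is invertible by the argument already given in the proof of Lemma \ref{dressinglemma}.

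The central step is to establish the Lax pair (\ref{mlax}) for $M$. Despite the meromorphy of $M$, the combination $M_x M^{-1}$ is \emph{analytic} at each pole $\lambda_j, \bar\lambda_j$: differentiating the residue identity $\res_{\lambda_j}[M]_1 = C_j(x,t) [M(\lambda_j)]_2$ in $x$ and using $\partial_x C_j = 2i\theta_1(\lambda_j) C_j$, one checks that the principal parts of $M_x$ and of $M$ at $\lambda_j$ are proportional in precisely the way needed to cancel in $M_x M^{-1}$; the analogous calculation handles $\bar\lambda_j$ as well as $M_t M^{-1}$. Because the jump matrix $J$ depends on $(x,t)$ only through $e^{\pm 2i\theta}$, exactly as in the pole-free case, the Cauchy-transform/dotted-$E^1$ argument of Lemma \ref{laxlemma} applied to $\tfrac{k}{(k-p)^3}(M_x - \tfrac{i}{4}(k - 1/k) M \sigma_3) M^{-1}$ (with $p \in \C \setminus \Gamma$ chosen away from all $\lambda_j, \bar\lambda_j$) reproduces the rational-in-$k$ form (\ref{mxmtFFF}). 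The asymptotic analyses as $k \to \infty$ and $k \to 0$, together with the symmetries (\ref{Msymm1})-(\ref{Msymm2}), yield (\ref{FGQidentities}) with $Q(x,t,k)$ built from the function $u$ defined by (\ref{ulim}) precisely as in Section \ref{existenceproofsec}, and the compatibility $M_{xt} = M_{tx}$ then forces $u$ to satisfy (\ref{sg}). The main obstacle is carrying out this last step rigorously: one must verify that the pole structure of $M$ at the $\lambda_j, \bar\lambda_j$ is genuinely cancelled in each Lax combination and that no new poles are introduced. This is the RH incarnation of the classical B\"acklund/Darboux mechanism, and it is the exponential evolution $C_j = c_j e^{2i\theta(x,t,\lambda_j)}$ that makes the bookkeeping work out.
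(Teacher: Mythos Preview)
Your approach coincides with the paper's: reduce existence and uniqueness to Theorem \ref{existenceth} via Lemma \ref{dressinglemma}, check that the residue conditions make the relevant Lax combinations regular at the $\lambda_j,\bar\lambda_j$, invoke the symmetries (\ref{Msymm1})--(\ref{Msymm2}), and then rerun the argument of Lemma \ref{laxlemma}.

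One correction is needed. The combination $M_x M^{-1}$ is \emph{not} analytic at $\lambda_j$: writing $\res_{\lambda_j}[M]_1 = C_j [M(\lambda_j)]_2$ and expanding, the principal part of $(M_x M^{-1})_{11}$ at $\lambda_j$ is proportional to $(C_j)_x/(k-\lambda_j)$, which does not vanish. What \emph{is} regular is $(M_x - i\theta_1 M\sigma_3)M^{-1}$ (equivalently $\psi_x\psi^{-1}$ with $\psi = Me^{-i\theta\sigma_3}$), and it is exactly your relation $\partial_x C_j = 2i\theta_1(\lambda_j)C_j$ that cancels the pole coming from $M_xM^{-1}$ against the one from $-i\theta_1 M\sigma_3 M^{-1}$. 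You do apply the $\dot E^1$ argument to the correct combination $\tfrac{k}{(k-p)^3}(M_x - \tfrac{i}{4}(k-1/k)M\sigma_3)M^{-1}$ in the next sentence, so this is a slip in the lead-in rather than a gap in the argument; the paper states regularity for $(m_x - i\theta_1 m\sigma_3)m^{-1}$ from the outset.
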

\begin{proof}
By Lemma \ref{dressinglemma}, the solution $M(x,t,k)$ exists and is uniquely defined by (\ref{eq-mm}) where $\tilde{M}(x,t,k)$ is the solution of (\ref{RHMtilde}) and the $B_j$ are determined by (\ref{Bjrecursive}). 
Defining $m(x,t,k)$  by (\ref{mxtkdef}), we see that there are functions $\{F_j(x,t), G_j(x,t)\}_{-1}^1$ such that equations (\ref{mxmtFFF}) hold, because the residue conditions (\ref{RHmres}) imply that $(m_x - i\theta_1m\sigma_3)m^{-1}$ and $(m_t - i\theta_2m\sigma_3)m^{-1}$ are regular at the points $\{\lambda_j, \bar{\lambda}_j\}_1^\mathcal{N}$.
By (\ref{Msymm1}) and (\ref{Msymm2}), $m$ obeys the symmetries in (\ref{msymm}). 
Thus we can proceed as in the proof of Lemma \ref{laxlemma} to deduce that $m$ satisfies the Lax pair equations (\ref{mlax}) with $u(x,t)$  defined in terms of $M$ via (\ref{m0def})-(\ref{ulim}). The rest of the proof follows as in Section \ref{existenceproofsec}.
\end{proof}

\begin{assumption}\label{solitonassumption1}
Assume the following conditions are satisfied:
\begin{itemize}
\item The global relation (\ref{GR}) holds.

\item $d(k)$ is nonzero in $\bar{D}_2$ except for $\Lambda$ simple zeros $\{\lambda_j \}_{1}^{\Lambda}$ in $D_2$.
\item $a(k)$ is nonzero in $\bar{D}_1 \cup \bar{D}_2$ except for $n_1+n_2$ simple zeros $\{k_j \}_{j=1}^{n_1+n_2}$, where $\{k_j\}_{1}^{n_1} \subset D_1$ and $\{k_j\}_{n_1+1}^{n_2} \subset D_2$.
\item None of the zeros of $a(k)$ in $D_2$ coincides with a zero of $d(k)$.

\end{itemize}
\end{assumption}

For convenience of notation, we write $\mathcal{N} := \Lambda + n_1$ and set $\lambda_{\Lambda + j} := k_j$ for $j = 1, \dots, n_1$.

\begin{theorem}[Construction of quarter-plane solutions with given initial and boundary values]\label{solitonexistenceth2}

Let $u_0, u_1, g_0, g_1$ be functions satisfying (\ref{ujgjassump}) for $n = 1$, $m = 2$, and some integers $N_x, N_t \in \Z$. Suppose $u_0, u_1, g_0, g_1$ are compatible with equation (\ref{sg}) to third order at $x=t=0$. Define the spectral functions $a(k)$, $b(k)$, $A(k)$, $B(k)$ by (\ref{abABdef}) and suppose Assumption \ref{solitonassumption1} holds. 

Then the spectral functions $r_1(k)$  and $h(k)$ defined by (\ref{r1hdef}) satisfy Assumption \ref{r1hassumption}. 
Moreover, if $M(x,t,k)$ is the unique solution of the RH problem determined by $\{\Gamma$, $J(x,t,\cdot)$, $\{\lambda_j\}_1^\mathcal{N}$, $\{c_j e^{2i\theta(x,t,\lambda_j)}\}_1^\mathcal{N}\}$, where
\begin{align}\label{cjdef}
c_j=\begin{cases}
  -\frac{\overline{B(\bar{\lambda}_{j})}}{a(\lambda_{j})\dot d(\lambda_{j})}, \quad & j=1,\dots, \Lambda, \\
 \frac{1}{\dot a(\lambda_{j})b(\lambda_{j})}, \quad & j=\Lambda+1,\dots,\mathcal{N},
\end{cases}
\end{align}
then the associated solution $u(x,t)$ of sine-Gordon defined by (\ref{ulim2}) has, for an appropriate choice of the integer $j \in \Z$, initial and boundary values given by $u_0, u_1, g_0, g_1$.
\end{theorem}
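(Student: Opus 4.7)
The plan is to follow the template of Section \ref{existence2proofsec}, now carefully tracking the residue conditions that arise from the poles of $M$ at $\{\lambda_j\}_1^\mathcal{N}$. First, I would verify that $r_1$ and $h$ defined by \eqref{r1hdef} satisfy Assumption \ref{r1hassumption}: since Assumption \ref{solitonassumption1} places all zeros of $a$ and $d$ in the open sets $D_1\cup D_2$ and $D_2$ respectively (disjoint from the contour $\Gamma$), both functions are continuous on the contour, and the required expansions, symmetries, and vanishing of $r = c^*/d$ at $k = 0, \pm 1$ follow verbatim from Lemma \ref{r1hlemma} using the global relation together with Lemmas \ref{ablemma}, \ref{ABlemma}, \ref{cdlemma}. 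A short calculation using the $k\mapsto -\bar k$ symmetries of $a,b,B,d$ shows that the constants $c_j$ in \eqref{cjdef} obey \eqref{cjassump}. Theorem \ref{solitonexistenceth} then produces a unique $M(x,t,k)$ and a $C^2$-solution $u(x,t)$ of \eqref{sg} defined by \eqref{ulim2}.

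To verify the initial trace I would mirror Lemma \ref{initiallemma}: build $X(x,k), Y(x,k)$ by their Volterra equations, assemble $m^{(x)}(x,k)$ from $[Y]_1/a$ and $[X]_2$ (and Schwartz conjugates) in the two half-planes, and set $M^{(x)}(x,k) := M(x,0,k)T(x,k)$ with the triangular factor $T$ given by \eqref{Mxdef}. Using the algebraic relation $Y = X\,e^{-i\theta_1 x\hat\sigma_3}(s^{-1})$, the function $m^{(x)}$ satisfies the jump \eqref{mxRH} together with residue conditions $\res_{k_j}[m^{(x)}]_1 = (b^*(k_j)/\dot a(k_j))\,e^{2i\theta_1(k_j)x}[m^{(x)}(x,k_j)]_2$ at every zero $k_j$ of $a$ in $\bar{\C}_+$; the symmetry $aa^*+bb^* = 1$ reduces to $bb^*(k_j) = 1$ at such a zero, so the coefficient agrees with $1/(\dot a(k_j)b(k_j))$, matching \eqref{cjdef}. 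The crux is to check that $M^{(x)}$ has the same pole set and the same residues. Three configurations arise:
\begin{enumerate}[$(i)$]
\item At $\{k_j\}_1^{n_1}\subset D_1$, the triangular factor is the identity and the residues on $M^{(x)}$ are inherited directly from those prescribed on $M$, matching $m^{(x)}$ by the computation above.
\item At $\{\lambda_j\}_1^\Lambda\subset D_2$ (zeros of $d$, poles of $M$), the identity $\res_{\lambda_j}h = c_j$, which follows directly from $h = -B^*/(ad)$ and \eqref{cjdef}, forces the pole of $[M]_1$ to cancel the pole of $-h\,e^{2i\theta_1 x}[M]_2$ inside $[M^{(x)}]_1 = [M]_1 - h\,e^{2i\theta_1 x}[M]_2$, so $M^{(x)}$ is regular at $\lambda_j$, as it should be because $a(\lambda_j)\neq 0$.
\item At $\{k_j\}_{n_1+1}^{n_1+n_2}\subset D_2$ (zeros of $a$, not poles of $M$), the pole of $h$ contributes a residue $B^*(k_j)/(\dot a(k_j)d(k_j))$ times the exponential; the spectral identity $d = aA^* + bB^*$ in $\bar D_2$ evaluated at $a(k_j) = 0$ gives $d(k_j) = b(k_j)B^*(k_j)$, which brings the residue into the required form $1/(\dot a(k_j)b(k_j))$.
\end{enumerate}
Uniqueness of the pole-augmented RH problem (Lemma \ref{residueuniquelemma}) then forces $M^{(x)} = m^{(x)}$, and the $k\to 0$ and $k\to\infty$ expansions of $X,Y$ recalled in \cite{HLNonlinearFourier} reproduce $u(x,0) = u_0(x)$ and $u_t(x,0) = u_1(x)$ once $j\in\Z$ in \eqref{ulim2} is chosen appropriately.

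The boundary identification $u(0,t) = g_0(t)$, $u_x(0,t) = g_1(t)$ proceeds by exactly the same mechanism applied to the $t$-part problem, following the contour deformation of Lemma \ref{boundarylemma}: choose $\epsilon>0$ so small that $A(k)$ is nonvanishing on $\bar{\mathcal{D}}_3$, introduce $\Gamma_\epsilon$, construct $m^{(t)}$ with residue conditions at the zeros of $A$ lying in $\mathcal{D}_1 \cup \mathcal{D}_3$, and set $M^{(t)}(t,k) := \mathcal{M}(0,t,k)G_j(t,k)$ via \eqref{Gjdef} and \eqref{Mtdef}. I expect the main difficulty to be the residue bookkeeping inside $\mathcal{D}_2$, where three interacting meromorphic structures collide: the poles of $\mathcal{M}$ at those $\lambda_j$ with $|\lambda_j|<\epsilon$, the poles that the factor $A^*/d$ in the $(2,2)$ entry of $G_2$ contributes at the $\lambda_j$, and any zeros of $a$ in $\mathcal{D}_2$. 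A short calculation analogous to case~$(ii)$ above shows that the residue of $[M^{(t)}]_2$ at $\lambda_j$ simplifies to a multiple of $-b(\lambda_j)c_j + A^*(\lambda_j)/\dot d(\lambda_j)$, which vanishes identically because $d(\lambda_j) = a(\lambda_j)A^*(\lambda_j) + b(\lambda_j)B^*(\lambda_j) = 0$ and $c_j = -B^*(\lambda_j)/(a(\lambda_j)\dot d(\lambda_j))$ by \eqref{cjdef}; the remaining residues at zeros of $a$ and of $A$ are handled via the same combination of the global relation $Ab = Ba$ on $\bar D_1$, the identity $d = a/A$ on $\partial D_1$, and the spectral identity $d = aA^* + bB^*$ in $\bar D_2$. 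Uniqueness then yields $M^{(t)} = m^{(t)}$, and the limits \eqref{gjlim} together with the universal recovery formula \eqref{uxutrecover} identify $u(0,t) = g_0(t)$ and $u_x(0,t) = g_1(t)$, completing the proof.
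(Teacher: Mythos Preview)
Your plan is correct and follows the paper's approach: mirror Section~\ref{existence2proofsec}, augment the RH problems for $m^{(x)}$ and $m^{(t)}$ with residue conditions at the zeros of $a$, verify that $M^{(x)}$ and $M^{(t)}$ obey the same residues (your cases (i)--(iii) are exactly the ``algebraic manipulations'' the paper leaves implicit), and conclude by the uniqueness of Lemma~\ref{residueuniquelemma}. One simplification you can borrow from the paper for the $t$-part: since $d(0)=(-1)^{N_x-N_t}\neq 0$, shrink $\epsilon$ further so that $d$ (and $a$) are zero-free in $\bar{\mathcal{D}}_2$, which eliminates the $\mathcal{D}_2$ residue bookkeeping entirely and leaves only residues at the $k_j\in\mathcal{D}_1$; note also that via the continuation $[T]_2/A = (b/a)e^{-2i\theta_2 t}[U]_1 + [U]_2$ these residues sit at zeros of $a$, not of $A$.
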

\begin{proof}
The proof proceeds along the same lines as in Section \ref{existence2proofsec}. However, because $a(k)$ and $d(k)$ may now have zeros, the RH problem (\ref{mxRH}) for $m^{(x)}$ must be supplemented with the residue conditions
\begin{align}\label{RHmxres}
\begin{cases}
 \underset{k_j}{\res} [m^{(x)}(x,k)]_1= \frac{e^{2i\theta_1(k_j)x}}{\dot{a}(k_j)b(k_j)} [m^{(x)}(x,k_j)]_2,  & j=1,2,\dots,n_1 + n_2,
  	\\
 \underset{\bar{k}_j}{\res} [m^{(x)}(x,k)]_2= -\frac{e^{-2i\theta_1(\bar{k}_j)x}}{\overline{\dot{a}(k_j)b(k_j)}} [m^{(x)}(x,\bar{k}_j)]_1,  \quad & j=1,2,\dots,n_1 + n_2.
 \end{cases}
\end{align}
Now the definition (\ref{originalMdef}) of $M$ shows that $M$ obeys the residue conditions 
\begin{align}\label{RHmres2}
\begin{cases}
  \underset{\lambda_j}{\res} [M(x,t,k)]_1= c_j e^{2i\theta(x,t,\lambda_j)} [M(x,t,\lambda_j)]_2,
  	\\
 \underset{\bar{\lambda}_j}{\res} [M(x,t,k)]_2= -\bar{c}_j e^{-2i\theta(x,t,\bar{\lambda}_j)} [M(x,t,\bar{\lambda}_j)]_1,  
 \end{cases} \quad j=1,2,\dots,\mathcal{N}.
 \end{align}
Thus, noting that the zeros of $a$ and $d$ give rise to simple poles for $h$ (see (\ref{r1hdef})), algebraic manipulations show that the function $M^{(x)}$ defined in (\ref{Mxdef}) also obeys the residue conditions (\ref{RHmxres}). By uniqueness (see Lemma \ref{residueuniquelemma}), we conclude that $M^{(x)} = m^{(x)}$.

In a similar way, the  RH problem (\ref{mtRH}) for $m^{(t)}$ must be supplemented with the residue conditions
\begin{align}\label{RHmtres}
 \begin{cases}
  \underset{k_j}{\res} [m^{(t)}(t,k)]_2 = \frac{b(k_j)}{\dot{a}(k_j)} e^{-2i\theta_2(k_j)t} [m^{(t)}(t,k_j)]_1,  & k_j \in \mathcal{D}_1,
  	\\
 \underset{\bar{k}_j}{\res} [m^{(t)}(t,k)]_1 = - \frac{\overline{b(k_j)}}{\overline{\dot{a}(k_j)}} e^{2i\theta_2(\bar{k}_j)t} [m^{(t)}(t,\bar{k}_j)]_2,  \quad& \bar{k}_j \in \mathcal{D}_4.
 \end{cases}
\end{align}
By (\ref{cdexpansionsd}), we have $d(0) = (-1)^{N_x - N_t}$. Thus, shrinking $\mathcal{D}_2$ if necessary, we may assume that $d(k)$  is nonzero in $\mathcal{D}_2$. Tedious but straightforward algebraic manipulations now show that the function $M^{(t)}$ defined in (\ref{Mtdef}) obeys the same residue conditions as $m^{(t)}$. Thus, by uniqueness, $M^{(t)} = m^{(t)}$.
Given that $M^{(x)} = m^{(x)}$ and $M^{(t)} = m^{(t)}$, the rest of the proof follows as in Section \ref{existence2proofsec}.
\end{proof}

\subsection{Example: Kink/antikink}
The sine-Gordon one-soliton is obtained by applying Theorem \ref{solitonexistenceth} with $r_1 = h = 0$ and a single pole $\lambda_1$. For $\lambda_1 \in i\R_+$ and $c_1 \in i\R$, the function $d_1(x,t) =\frac{c_1}{2\lambda_1}e^{2i\theta(x,t,\lambda_1)}$ is real-valued and a computation gives
\begin{align}\label{onesolitonB1mhat}
B_1 = \frac{\lambda_1}{1+ d_1^2}\begin{pmatrix}
  d_1^2-1 & 2d_1 \\ 2d_1 & 1-d_1^2 \end{pmatrix},
 \qquad
 \hat{M}(x,t) = \frac{1}{1+ d_1^2}\begin{pmatrix}
  1-d_1^2 & 2d_1 \\ -2d_1 & 1-d_1^2 \end{pmatrix}.
\end{align}
This yields the one-soliton
\begin{align}\label{onesoliton}
u_{1\text{-sol}}(x,t) = 2 \arg(1-d_1^2 - 2id_1)
= 4\arg(1 - id_1) 
= 4 \arctan\bigg(-\frac{\im c_1}{2|\lambda_1|} e^{-\gamma(x - vt)}\bigg),
\end{align}
where the real parameters $\gamma$ and $v$ are given by
\begin{align}\label{gammavdef}
\gamma = \frac{\im \lambda_1}{2}(1 + |\lambda_1|^{-2}) > 0, \qquad v = \frac{1-|\lambda_1|^2}{1+|\lambda_1|^2},
\end{align}
and we have chosen the branch of arg so that $u_{1\text{-sol}}(x,t) \to 0$ as $x\to \infty$.
For $\im c_1 > 0$, $u_{1\text{-sol}}$ is a kink changing value from  $-2\pi$ to $0$ as $x$ goes from $-\infty$ to $\infty$; for $\im c_1 < 0$ it is an antikink changing value from $2\pi$ to $0$.

\subsection{Example: Breather}
For $r_1 = h = 0$ and a pair of poles $(\lambda_1, \lambda_2 = -\bar{\lambda}_1)$ with $\re \lambda_1 > 0$, the two-soliton solution generated by Theorem \ref{solitonexistenceth} is referred to as a breather.
In this case, using that $\tilde{M} = I$, $\lambda_2 = -\bar{\lambda}_1$, and $c_2 = -\bar{c}_1$, we find 
$$d_1(x,t) = \overline{d_2(x,t)} = \frac{c_1\re\lambda_1}{2i\lambda_1\im\lambda_1}e^{2i\theta(x,t,\lambda_1)}$$
and a computation yields
\begin{align*}
u_{\text{breather}}(x,t) = 2\arg\bigg(\frac{\Theta}{\bar{\Theta}}\bigg) = 4 \arg \Theta,
\end{align*}
where $\Theta := \Theta(x,t)$ is given by
\begin{align*}
\Theta(x,t) & = (1-id_1)(1 + i\bar{d}_1)\lambda_1 + (1 + id_1)(1 - i\bar{d}_1)\bar{\lambda}_1
	\\
& = 2\big(|d_1|^2 + 1\big) \re\lambda_1  + 4 i \im d_1 \im \lambda_1.
\end{align*}
That is, choosing the branch of arg so that $u_{\text{breather}}(x,t) \to 0$ as $x\to \pm \infty$,
\begin{align}\label{solitonbreather}
u_{\text{breather}}(x,t) = 4\arctan\bigg\{\frac{2 \im d_1 \im \lambda_1}{(1 + |d_1|^2) \re\lambda_1}\bigg\}.
\end{align}
The breather travels at the speed $v$ given in (\ref{gammavdef}) because
\begin{align}\label{thetajexp}
e^{2i\theta(x,t,\lambda_1)} = e^{-\gamma(x -vt)} e^{i\tilde{\gamma}(x - t/v)},
\end{align}
and so
\begin{align*}
& |d_1| =\frac{|c_1| \re\lambda_1}{2|\lambda_1| \im\lambda_1} e^{-\gamma(x -vt)},
	\\
& \im d_1 = \frac{\big[\im(c_1\bar{\lambda}_1)\sin(\tilde{\gamma}(x - \frac{t}{v}))
- \re(c_1\bar{\lambda}_1) \cos(\tilde{\gamma}(x - \frac{t}{v})) \big]\re\lambda_1}{2|\lambda_1|^2 \im \lambda_1}  e^{-\gamma(x -vt)},
\end{align*}	
where $\gamma, v \in \R$ are given by (\ref{gammavdef}) and $\tilde{\gamma} := \frac{\re \lambda_1}{2}(1 - |\lambda_1|^{-2}) \in \R$.

\subsection{Asymptotics}
In this subsection, we prove Theorems \ref{solitonasymptoticsth} and \ref{solitonasymptoticsth2} which generalize Theorems \ref{asymptoticsth} and \ref{asymptoticsth2} of Section \ref{mainsec} to the case when solitons are present.

\begin{assumption}\label{poleassumption2}
Suppose that $\{\lambda_j\}_1^\mathcal{N} \subset \C_+ \setminus \Gamma$ is a set of distinct points such that:
\begin{itemize}
\item $\{\lambda_j\}_1^\Lambda \subset D_2$ and $\{\lambda_j\}_{\Lambda+1}^\mathcal{N} \subset D_1$. 

\item $\{\lambda_j\}_1^\mathcal{N}$ is invariant under the map $\lambda \mapsto -\bar{\lambda}$. 

\item The ordering is such that if $(\lambda, -\bar{\lambda})$ is a pair in $\{\lambda_j\}_1^\mathcal{N}$ with $\re \lambda > 0$, then $(\lambda, -\bar{\lambda}) = (\lambda_j, \lambda_{j+1})$ for some $j$.

\item $|\lambda_1| \leq |\lambda_2| \leq \cdots \leq |\lambda_\mathcal{N}|$ where strict inequality $|\lambda_j| < |\lambda_{j+1}|$ holds except when $\lambda_{j+1} = -\bar{\lambda}_j$. 

\end{itemize}
\end{assumption}

Let $v_j := \frac{1-|\lambda_j|^2}{1+|\lambda_j|^2}$ be the speed corresponding to $\lambda_j$ according to (\ref{gammavdef}).  
For $|\lambda_j| < 1$, the speed $v_j$ is positive, meaning that the soliton travels to the right, whereas for $|\lambda_j| > 1$ it is negative, meaning that the soliton travels to the left. This suggests that, for the half-line problem, only those $\lambda_j$ that satisfy $|\lambda_j| < 1$ (i.e., $\{\lambda_j\}_1^\Lambda$) give rise to solitons asymptotically, see Figure \ref{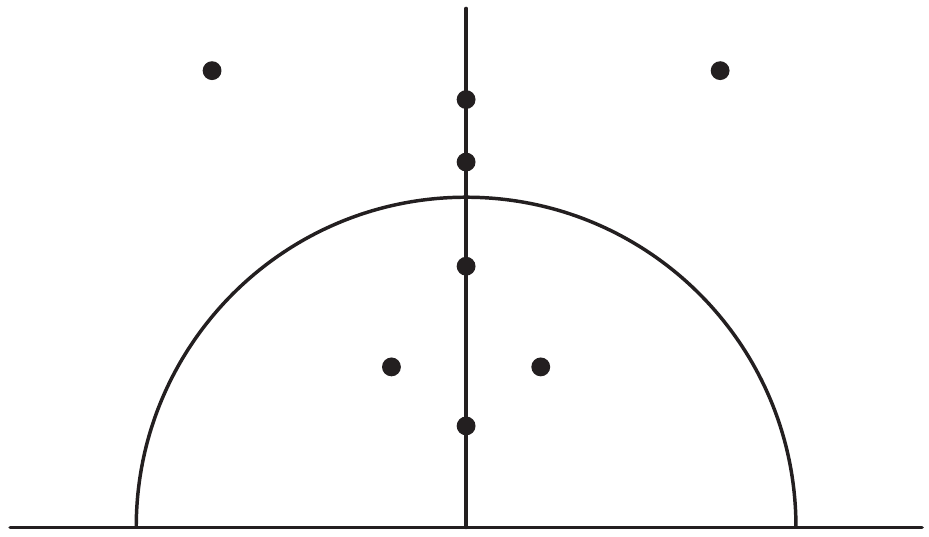}. 

Assumption \ref{poleassumption2} implies that the poles $\lambda_j$ are either pure imaginary or come in pairs $(\lambda_j, \lambda_{j+1} = -\bar{\lambda}_j)$ where $\re \lambda_j > 0$.
According to the next theorem, each pure imaginary pole $\lambda_j$ with $|\lambda_j| < 1$ gives rise to a kink/antikink, whereas each pair $(\lambda_j, -\bar{\lambda}_j)$ with $|\lambda_j| < 1$ gives rise to a breather. The speeds of these solitons satisfy
$$0 < v_\Lambda \leq \cdots \leq v_2 \leq v_1 < 1$$
with strict inequality $v_{j+1} < v_j$ except when $\lambda_{j+1} = -\bar{\lambda}_j$. 

\begin{figure}
\begin{center}
\bigskip\bigskip
\hspace{-.6cm}
 \begin{overpic}[width=.525\textwidth]{poles.pdf}
      \put(51.8,10){\small $\lambda_1$}
      \put(60,17){\small $\lambda_2$}
      \put(35.5,17){\small $\lambda_3$}
      \put(51.8,27.5){\small $\lambda_4$}
      \put(51.8,39){\small $\lambda_5$}
      \put(51.8,46){\small $\lambda_6$}
      \put(79,48.5){\small $\lambda_7$}
      \put(16.5,48.5){\small $\lambda_8$}
      \put(101.5,-.5){\small $\re k$}
      \put(84.7,-3.6){\small $1$}
      \put(11,-3.6){\small $-1$}
   \end{overpic} \qquad\qquad
 \begin{overpic}[width=.325\textwidth]{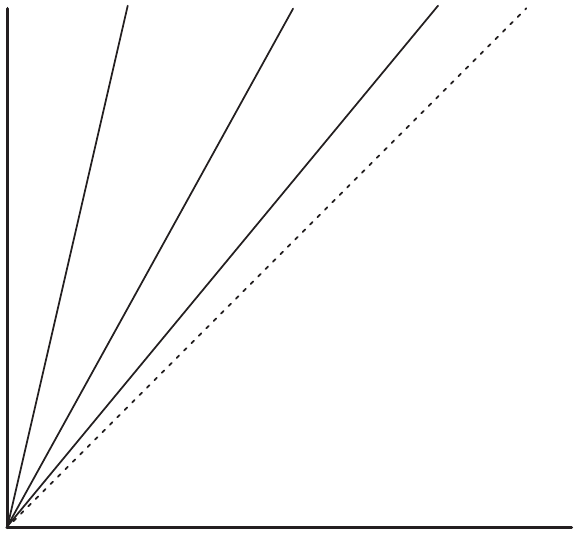}
      \put(100,0){ $x$}
      \put(-3,96){ $t$}
       \put(14,80){\makebox(0,0){\rotatebox{77}{{\small $\zeta = v_4$}}}}
       \put(35,74){\makebox(0,0){\rotatebox{61}{{\small $\zeta = v_2 = v_3$}}}}
       \put(62,83){\makebox(0,0){\rotatebox{51}{{\small $\zeta = v_1$}}}}
       \put(88,82){\makebox(0,0){\rotatebox{45}{{\small $\zeta = 1$}}}}
   \end{overpic}
     \begin{figuretext}\label{poles.pdf}
        The left figure displays a possible distribution of the $\lambda_j$. The associated sine-Gordon solution is dominated asymptotically by solitons traveling in the three directions shown on the right. 
        The line $\zeta = 1$ is also shown (dashed). 
The pure imaginary poles $\lambda_1$ and  $\lambda_4$ generate kinks/antikinks traveling with speeds $v_1$ and  $v_4$, respectively. The pair $(\lambda_2, \lambda_3 = -\bar{\lambda}_2)$ generates a breather traveling with speed $v_2 = v_3$. Only those $\lambda_j$ with $|\lambda_j| \leq 1$ generate solitons. 
     \end{figuretext}
     \end{center}
\end{figure}

\begin{theorem}[Asymptotics of quarter-plane solutions]\label{solitonasymptoticsth}
Let $\{\lambda_j\}_1^\mathcal{N} \subset \C_+ \setminus \Gamma$ satisfy Assumption \ref{poleassumption2} and let $\{c_j\}_1^\mathcal{N}$ be nonzero complex numbers satisfying (\ref{cjassump}). Suppose $r_1:\R \to \C$ and $h:\bar{D}_2 \to \C$ satisfy Assumption \ref{r1hassumption2} except that $h(k)$ may have a simple pole at each point in the set $\{\lambda_j\}_1^\Lambda \subset D_2$.
Let $u(x,t)$ be the sine-Gordon quarter-plane solution corresponding to $r_1, h, \lambda_j, c_j$ via Theorem \ref{solitonexistenceth}. 
  
Then $u \in C^\infty([0,\infty) \times [0,\infty), \R)$ and there exists a choice of the branch of $\arg$ in (\ref{ulim}) such that $u(x,0) \to 0$ as $x \to \infty$. 
For this choice of branch, $u(x,t)$ satisfies the asymptotic formulas (\ref{uasymptoticsI}) and (\ref{uasymptoticsII}) Sectors I and II. In Sector IV, $u(x,t)$ satisfies
\begin{align}\label{uasymptoticsIV3}
\text{\upshape Sector IV:} \quad  u(x,t) = -2\pi \sum_{i=1}^{\Lambda} \sgn(\im c_i) + O\big(\zeta^{Z+2} + t^{-\frac{Z}{2}-1}\big), \qquad 0 \leq \zeta \leq 1/2, \ t > 1,
\end{align}
whenever $r(k)$ vanishes to order $Z \geq 0$ at $k = 1$.
In Sector III the asymptotics of $u(x,t)$ is given as follows: For each $1 \leq j \leq \Lambda$ and each small enough $\epsilon > 0$, the asymptotics in the narrow sector $\zeta \in (v_{j} - \epsilon, v_{j} + \epsilon)$ centered on the line $\zeta = v_j$ is given uniformly by
\begin{subequations}\label{solitonuasymptotics}
\begin{align}\nonumber
u(x,t) = &\;  u_{const}(j) + u_{sol}(x,t; j)  + \frac{u_{rad}^{(1)}(x,t) + u_{rad}^{(2)}(x,t)}{\sqrt{t}}
	\\ \label{solitonuasymptoticsa}
& + O\bigg(\frac{\ln{t}}{t}\bigg), \qquad \zeta \in (v_{j} - \epsilon, v_{j} + \epsilon), \ t > 2,
\end{align}
and the asymptotics away from these narrow sectors is given uniformly by
\begin{align}\nonumber
& u(x,t) = u_{const}(j) + \frac{u_{rad}^{(1)}(x,t) + u_{rad}^{(2)}(x,t)}{\sqrt{t}} + O\bigg(\frac{1}{k_0 t} + \frac{\ln{t}}{t}\bigg), 
	\\ \label{solitonuasymptoticsb}
&   \zeta \in [v_j + \epsilon, v_{j-1} - \epsilon], \ t > 2, \ j = 1, \dots, \Lambda+1, \ (v_{\Lambda +1} + \epsilon \equiv 0, v_0 - \epsilon \equiv 1),
\end{align}
\end{subequations}
where
\begin{itemize}

\item $u_{const}(j) \in 2\pi \Z$ is the constant background in the sector\footnote{The sector $v_j < \zeta < v_{j-1}$ is empty if $\lambda_{j} = -\bar{\lambda}_{j-1}$.} $v_j < \zeta < v_{j-1}$ generated by the solitons with speeds $\{v_i\}_1^{j-1}$:
\begin{align}\label{uconstdef}
u_{const}(j) = -2\pi \sum_{i=1}^{j-1} \sgn(\im c_i), \qquad  j = 1, \dots, \Lambda+1.
\end{align}

\item If $\lambda_j \in i\R$, then $u_{sol}(x,t;j)$ is the kink/antikink of speed $v_j$ given by
\begin{align}\label{usoljdef}
u_{sol}(x,t;j) = -4\arctan d_j'(x,t),
\end{align}
where $\gamma_j = \frac{1}{2}(|\lambda_j| + |\lambda_j|^{-1})$ and
\begin{align}\label{djprimedef}
d_j'(x,t) := \frac{e^{-\gamma_j(x - v_jt)}e^{-\frac{|\lambda_j|}{\pi}\int_{-k_0}^{k_0}\frac{\ln(1+|r(s)|^2)}{s^2+|\lambda_j|^2} ds}\im c_j}{2|\lambda_j|} 
\prod_{l=1}^{j-1} \bigg|\frac{\lambda_j-\lambda_l}{\lambda_j-\bar{\lambda}_l}\bigg|^2.
\end{align}

\item If $\lambda_{j+1} = -\bar{\lambda}_j$, then $u_{sol}(x,t; j) \equiv u_{sol}(x,t; j+1)$ is the breather of speed $v_j = v_{j+1}$  given by
\begin{align}\label{usoljdef2}
u_{sol}(x,t;j) = 4\arctan\bigg\{\frac{2\im d_j'' \, \im \lambda_j}{(1 + |d_j''|^2)\re \lambda_j}  \bigg\},\end{align}
where
\begin{align}\label{djdoubleprimedef}
d_j''(x,t) := \frac{c_j e^{2i\theta(x,t,\lambda_j)} e^{-\frac{1}{\pi i }\int_{-k_0}^{k_0}\frac{\ln (1+|r(s)|^2)}{s - \lambda_j}ds} \re\lambda_j}{2i\lambda_j \im \lambda_j} 
\prod_{l=1}^{j-1} \bigg(\frac{\lambda_j-\lambda_l}{\lambda_j-\bar{\lambda}_l}\bigg)^2.
\end{align}

\item $u_{rad}^{(1)}(x,t)$ is (up to a factor of $(-1)^\mathcal{N}$) the asymptotic radiation contribution of Theorem \ref{asymptoticsth} determined by $\tilde{r}(k) := \tilde{r}_1(k) + \tilde{h}(k)$, i.e.,
\begin{align}\label{uradraddef}
u_{rad}^{(1)}(x,t) = - 2 (-1)^\mathcal{N} \sqrt{\frac{2(1 + k_0^2)\tilde{\nu}}{k_0}}   \sin \tilde{\alpha},
\end{align}
where $\tilde{\nu}$ and $\tilde{\alpha}$ are defined by replacing $r(k)$ with $\tilde{r}(k)$ in the definition (\ref{alphadef}) of $\nu$ and $\alpha$.

\item $u_{rad}^{(2)}(x,t)$ is the asymptotic radiation contribution generated by $\{\lambda_j\}_1^{\mathcal{N}}$ as follows: 
If $\zeta \in [v_j + \epsilon, v_{j-1} - \epsilon]$ for some $j = 1, \dots, \Lambda$ (that is, if $(x,y)$ lies away from the narrow asymptotic soliton sectors), then
\begin{align}\nonumber
u_{rad}^{(2)}(x,t)
= & \; \sum_{\substack{\lambda_l \in i\R \\ 1 \leq l \leq j-1}} 4(-1)^{\mathcal{N} - l} r_{\mathcal{N} + l}^{(l-1)}  
 +\sum_{\substack{\lambda_{l} \in i\R \\ j \leq l \leq \mathcal{N}}} 4(-1)^{\mathcal{N} - l}
r_{l}^{(l-1)}
	\\\label{outsideusolraddef}
& - \sum_{\substack{\re \lambda_l > 0 \\ 1 \leq l \leq j-1}}
8(-1)^{\mathcal{N}-l} \frac{\im \lambda_l}{\re \lambda_l}\im r_{\mathcal{N} + l}^{(l-1)}
 - \sum_{\substack{\re \lambda_{l} > 0 \\ j \leq l \leq \mathcal{N}}} 8 (-1)^{\mathcal{N}-l}\frac{\im \lambda_{l}}{\re \lambda_{l}} \im r_{l}^{(l-1)}, 
\end{align}
where the functions $\{r_m^{(l)}(x,t), r_{\mathcal{N} + m}^{(l)}(x,t)\}$, $l = 0, \dots, \mathcal{N}-1$, $m = l+1, \dots, \mathcal{N}$, are defined for $l = 0$ by 
\begin{subequations}\label{rrecursive}
\begin{align}\label{rinitial}
& \begin{cases}
r_m^{(0)} = \frac{i\tilde{A}}{k_0 -\lambda_m} - \frac{i\bar{\tilde{A}}}{k_0 + \lambda_m},
	\\
r_{\mathcal{N} + m}^{(0)} = \frac{i\tilde{A}}{k_0 - \bar{\lambda}_m} - \frac{i\bar{\tilde{A}}}{k_0 + \bar{\lambda}_m},
\end{cases}  m = 1, \dots, \mathcal{N}; \quad
 \tilde{A}(x,t) := \sqrt{\frac{k_0(1 + k_0^2)\tilde{\nu}}{2}} e^{i\tilde{\alpha}(x,t)},
\end{align}
and for $l = 1, \dots, \mathcal{N}$ via the recursive formulas
\begin{align}\label{rrecursive1}
& \begin{cases}
r_m^{(l)} = \frac{\lambda_m - \lambda_l}{\lambda_m - \bar{\lambda}_l} r_m^{(l-1)} + \frac{\lambda_l - \bar{\lambda}_l}{\lambda_m - \bar{\lambda}_l}r_{\mathcal{N} + l}^{(l-1)},
	\\
r_{\mathcal{N} + m}^{(l)} = \frac{\bar{\lambda}_m - \lambda_l}{\bar{\lambda}_m - \bar{\lambda}_l} r_{\mathcal{N} + m}^{(l-1)}
+ \frac{\lambda_l - \bar{\lambda}_l}{\bar{\lambda}_m - \bar{\lambda}_l} r_{\mathcal{N} + l}^{(l-1)},
\end{cases} \ l = 1, \dots, j-1,
	\\ \label{rrecursive2}
& \begin{cases}
r_m^{(l)} = \frac{\lambda_m - \bar{\lambda}_{l}}{\lambda_m - \lambda_{l}}r_m^{(l-1)}
+ \frac{\bar{\lambda}_{l}-\lambda_{l}}{\lambda_m - \lambda_{l}}r_{l}^{(l-1)},
	\\
r_{\mathcal{N} + m}^{(l)} = 
\frac{\bar{\lambda}_m - \bar{\lambda}_{l}}{\bar{\lambda}_m - \lambda_{l}} r_{\mathcal{N} + m}^{(l-1)}
+ \frac{\bar{\lambda}_{l}-\lambda_{l}}{\bar{\lambda}_m - \lambda_{l}} r_{l}^{(l-1)},
\end{cases}  l = j, \dots, \mathcal{N}.
\end{align}
\end{subequations}

If $\zeta \in (v_{j} - \epsilon, v_{j} + \epsilon)$ for some $j = 1, \dots, \Lambda$ with $\lambda_j \in i\R$ (that is, if $(x,t)$ lies in the narrow asymptotic sector centered on an asymptotic kink/antikink), then
\begin{align}\nonumber
u_{rad}^{(2)}(x,t)
= & \; 4\frac{r^{(\mathcal{N}-1)}_j + r^{(\mathcal{N}-1)}_{\mathcal{N} + j} (d_j')^2}{1 + (d_j')^2}
+ \sum_{\substack{\lambda_l \in i\R \\ 1 \leq l \leq j-1}} 4(-1)^{\mathcal{N} - l} r_{\mathcal{N} + l}^{(l-1)}  
 +\sum_{\substack{\lambda_{l} \in i\R \\ j +1 \leq l \leq \mathcal{N}}} 4(-1)^{\mathcal{N} - l +1}
r_{l}^{(l-2)}
	\\\label{kinkusolraddef}
& - \sum_{\substack{\re \lambda_l > 0 \\ 1 \leq l \leq j-1}}
8(-1)^{\mathcal{N}-l} \frac{\im \lambda_l}{\re \lambda_l}\im r_{\mathcal{N} + l}^{(l-1)}
+ \sum_{\substack{\re \lambda_{l} > 0 \\ j+1 \leq l \leq \mathcal{N}}} 8 (-1)^{\mathcal{N}-l+1}\frac{\im \lambda_{l}}{\re \lambda_{l}} \im r_{l}^{(l-2)}, 
\end{align}
where the functions $\{r_m^{(l)}(x,t), r_{\mathcal{N} + m}^{(l)}(x,t)\}$, $l = 0, \dots, \mathcal{N}-1$, are defined recursively by the same formulas (\ref{rrecursive}) except that (\ref{rrecursive2}) is replaced by
\begin{align}\label{rrecursive2kink}
& \begin{cases}
r_m^{(l)} = \frac{\lambda_m - \bar{\lambda}_{l+1}}{\lambda_m - \lambda_{l+1}}r_m^{(l-1)}
+ \frac{\bar{\lambda}_{l+1}-\lambda_{l+1}}{\lambda_m - \lambda_{l+1}}r_{l+1}^{(l-1)},
	\\
r_{\mathcal{N} + m}^{(l)} = 
\frac{\bar{\lambda}_m - \bar{\lambda}_{l+1}}{\bar{\lambda}_m - \lambda_{l+1}} r_{\mathcal{N} + m}^{(l-1)}
+ \frac{\bar{\lambda}_{l+1}-\lambda_{l+1}}{\bar{\lambda}_m - \lambda_{l+1}} r_{l+1}^{(l-1)},
\end{cases}  l = j, \dots, \mathcal{N}-1,\ m \in \{ l+2, \dots, \mathcal{N}\} \cup \{j\}.
\end{align}

If $\zeta \in (v_{j} - \epsilon, v_{j} + \epsilon)$ for some $j = 1, \dots, \Lambda$ with $\re \lambda_j > 0$ (that is, if $(x,t)$ lies in the narrow asymptotic sector centered on an asymptotic breather), then
\begin{align}\nonumber
u_{rad}^{(2)}(x,t)
= & 
- \frac{8 (\im\lambda_j)(\re\lambda_j)  \im\big[(1 + (\overline{d_j''})^2) r_j^{(\mathcal{N}-2)} - (1 + (d_j'')^2)(\overline{d_j''})^2 r_{\mathcal{N}+j}^{(\mathcal{N}-2)}\big]}{4(\im d_j'')^2 (\im \lambda_j)^2 + (1 + |d_j''|^2)^2 (\re \lambda_j)^2}
	\\ \nonumber
& + \sum_{\substack{\lambda_l \in i\R \\ 1 \leq l \leq j-1}} 4(-1)^{\mathcal{N} - l} r_{\mathcal{N} + l}^{(l-1)}  
 +\sum_{\substack{\lambda_{l} \in i\R \\ j +2 \leq l \leq \mathcal{N}}} 4(-1)^{\mathcal{N} - l}
r_{l}^{(l-3)}
	\\ \label{breatherusolraddef}
&	- \sum_{\substack{\re \lambda_l > 0 \\ 1 \leq l \leq j-1}}
8(-1)^{\mathcal{N}-l} \frac{\im \lambda_l}{\re \lambda_l}\im r_{\mathcal{N} + l}^{(l-1)}
 - \sum_{\substack{\re \lambda_{l} > 0 \\ j+2 \leq l \leq \mathcal{N}}} 8 (-1)^{\mathcal{N}-l}\frac{\im \lambda_{l}}{\re \lambda_{l}} \im r_{l}^{(l-3)},
\end{align}
where the functions $\{r_m^{(l)}(x,t), r_{\mathcal{N} + m}^{(l)}(x,t)\}$, $l = 0, \dots, \mathcal{N}-2$,  are defined recursively by the same formulas (\ref{rrecursive}) except that (\ref{rrecursive2}) is replaced by
\begin{align}\label{rrecursive2breather}
\begin{cases}
r_m^{(l)} = \frac{\lambda_m - \bar{\lambda}_{l+2}}{\lambda_m - \lambda_{l+2}}r_m^{(l-1)}
+ \frac{\bar{\lambda}_{l+2}-\lambda_{l+2}}{\lambda_m - \lambda_{l+2}}r_{l+2}^{(l-1)},
	\\
r_{\mathcal{N} + m}^{(l)} = 
\frac{\bar{\lambda}_m - \bar{\lambda}_{l+2}}{\bar{\lambda}_m - \lambda_{l+2}} r_{\mathcal{N} + m}^{(l-1)}
+ \frac{\bar{\lambda}_{l+2}-\lambda_{l+2}}{\bar{\lambda}_m - \lambda_{l+2}} r_{l+2}^{(l-1)},
\end{cases} l = j, \dots, \mathcal{N}-2, \ m \in \{l+3, \dots, \mathcal{N}\} \cup \{j,j+1\}.
\end{align}
\end{itemize}
\end{theorem}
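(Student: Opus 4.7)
The plan is to leverage the dressing transformation from Lemma \ref{dressinglemma}, which expresses $M(x,t,k)$ as a product of rational factors $(kI+B_j(x,t))$ applied to the auxiliary function $\tilde{M}(x,t,k)$ solving the regular (pole-free) RH problem (\ref{RHMtilde}) with modified data $\tilde{r}_1,\tilde{h}$ from (\ref{tilder1hdef}). Since the Blaschke product in (\ref{tilder1hdef}) is a finite rational function that preserves Assumption \ref{r1hassumption2}, Theorem \ref{asymptoticsth} applies directly to $\tilde{M}$ and yields the pure-radiation piece $u_{rad}^{(1)}$ through its behavior at $k=0$ (the sign $(-1)^{\mathcal{N}}$ arising from the determinant of the product $\prod_j (kI+B_j)/(k-\lambda_j)(k-\bar\lambda_j)$ at $k=0$). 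Crucially, Remarks \ref{MlambdajIremark}--\ref{MlambdajIVremark} furnish asymptotic expansions for $\tilde{M}(x,t,\lambda_j)$ at each fixed interior point $\lambda_j$ in every sector, including the subleading $1/\sqrt{t}$ correction built from $\tilde A$ as in (\ref{MlambdajIII}). Combined with the algebraic recursion (\ref{Bjrecursive}) for $B_j$ (with $d_j(x,t)=c_j e^{2i\theta(x,t,\lambda_j)}\prod_{l\neq j}(\lambda_j-\lambda_l)/\prod_l(\lambda_j-\bar\lambda_l)$), this reduces the analysis to a finite but delicate algebraic computation.

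For Sectors I and II, the product $\prod_j(kI+B_j)$ evaluated at $k=0$ remains uniformly bounded, and Theorem \ref{asymptoticsth} for $\tilde M$, together with the rapid decay estimates (\ref{MlambdajI}), (\ref{MlambdajII}), immediately yields the formulas (\ref{uasymptoticsI}) and (\ref{uasymptoticsII}). For Sector IV, the estimate (\ref{MlambdajIV}) shows that for each $j=1,\ldots,\Lambda$ (since $v_j>0$) the scalar $d_j(x,t)$ grows exponentially as $\zeta\to 0$, so each dressing factor at $k=0$ converges to a rank-one limit; tracking these $\Lambda$ limits through the recursion and invoking the identity $\arg(1-id)\to -\frac{\pi}{2}\sgn(\im d)$ as $|d|\to\infty$ produces the topological charge contribution $-2\pi\sum_{1}^{\Lambda}\sgn(\im c_j)$ in (\ref{uasymptoticsIV3}), with the error inherited from (\ref{MlambdajIV}).

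The heart of the argument lies in Sector III. For $\zeta$ in one of the intervals $(v_j+\epsilon,v_{j-1}-\epsilon)$ away from soliton lines, $|d_l|\to\infty$ exponentially for $l<j$ (solitons that have passed) while $|d_l|\to 0$ exponentially for $l\geq j$ (solitons not yet reached). Feeding these limits into (\ref{Bjrecursive}) replaces each such factor $(kI+B_l)$, evaluated at $k=0$, by a definite rank-one idempotent; combined with the shift of $\arg r$ produced by evaluating the modified $\tilde r(k_0)=r(k_0)\prod(k_0-\lambda_j)/(k_0-\bar\lambda_j)$ in $\tilde\alpha$, this yields the background $u_{const}(j)$ and the radiation term (\ref{outsideusolraddef}). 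In the narrow sector $\zeta\in(v_j-\epsilon,v_j+\epsilon)$ centered on an asymptotic soliton, the $j$-th dressing (or both the $j$-th and $(j{+}1)$-st in the breather case) remain $O(1)$ while all others still freeze; the leading balance reproduces the explicit one-soliton/breather computation (\ref{onesoliton})/(\ref{solitonbreather}) with the corrected amplitudes (\ref{djprimedef}), (\ref{djdoubleprimedef}) coming from both the Blaschke correction factor $\prod_{l<j}|\lambda_j-\lambda_l|^2/|\lambda_j-\bar\lambda_l|^2$ and the extra phase $\exp[-\frac{1}{\pi i}\int_{-k_0}^{k_0}\ln(1+|r|^2)/(s-\lambda_j)\,ds]$ arising from $\delta(\zeta,\lambda_j)^{\sigma_3}$ in (\ref{MlambdajIII}). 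To extract $u_{rad}^{(2)}$, one substitutes the full expansion (\ref{MlambdajIII}) --- including the subleading $\tilde A/\sqrt{t}$ contribution --- into (\ref{Bjrecursive}) and propagates the first-order perturbation through each of the $\mathcal{N}$ steps of the recursion; the recursions (\ref{rrecursive1}) and (\ref{rrecursive2})/(\ref{rrecursive2kink})/(\ref{rrecursive2breather}) for $r_m^{(l)}$ are precisely the M\"obius-type transformations on the residues $\tilde A/(k_0\pm\lambda_m)$ induced, respectively, by a frozen dressing ($|d_l|\to\infty$) and by an unarrived dressing ($d_l\to 0$).

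The principal technical obstacle is the systematic bookkeeping of this subleading $1/\sqrt{t}$ radiation piece through the $\mathcal N$-step dressing: at each step the residues at $\lambda_m,\bar\lambda_m$ of the first and second columns get mixed and the poles shifted, so verifying (\ref{rrecursive}) requires a careful induction that respects the pair/singleton structure of Assumption \ref{poleassumption2}. A second obstacle is the breather case $\lambda_{j+1}=-\bar\lambda_j$, where the two dressings at the same speed must be solved simultaneously as a coupled $4\times 4$ system; here the exchange identity (\ref{BBsymm}) and the symmetry $c_{j+1}=-\overline{c_j}$ from (\ref{cjassump}) are used to derive the breather formula (\ref{usoljdef2}) and its perturbation (\ref{breatherusolraddef}). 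Finally, uniformity of the error $O(\ln t/t + 1/(k_0 t))$ across the narrow and wide sectors requires that $\epsilon$ be chosen small enough to separate the distinct speeds $v_j$, and that the asymptotic expansions from Sections \ref{sectorIsec}--\ref{sectorIVsec} hold uniformly on compact subsets of $\C\setminus\Gamma$ --- a property explicitly recorded in Remarks \ref{MlambdajIremark}--\ref{MlambdajIVremark} and obtained there by slightly deforming the contours so that $\inf_{\zeta}\dist(\lambda_j,\Sigma(\zeta))>0$.
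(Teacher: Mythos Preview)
Your proposal is correct and follows essentially the same route as the paper. The paper organizes the computation through two devices you leave implicit: first, an exact additive decomposition
\[
u=\sum_{\lambda_j\in i\R}2\arg\big((iB_j)_{11}+i(-1)^{\mathcal{N}-j}(iB_j)_{21}\big)+\sum_{\re\lambda_j>0}2\arg\big(-(B_{j+1}B_j)_{11}+i(-1)^{\mathcal{N}-j}(B_{j+1}B_j)_{21}\big)+(-1)^{\mathcal{N}}\tilde u
\]
obtained from the symmetry $\mathcal{B}=(-1)^{\mathcal{N}}\sigma_2\mathcal{B}\sigma_2$ of $\mathcal{B}=i^{\mathcal{N}}B_{\mathcal{N}}\cdots B_1$; second, for the narrow soliton sectors, an explicit \emph{reordering} of the $\lambda_l$ so that the active pole (or breather pair) is dressed last, which is what reduces the leading and subleading terms there to the clean one-soliton/breather systems (\ref{BcalN0system})--(\ref{BcalN1system}) and (\ref{BNm1Bnsystembreather}).
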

\begin{proof}
Let $\hat{M}(x,t) := \ntlim_{k \to 0} M(x,t,k)$ and $\hat{\tilde{M}}(x,t) := \ntlim_{k \to 0} \tilde{M}(x,t,k)$. 
The functions $\tilde{r}_1:\R \to \C$ and $\tilde{h}:\bar{D}_2 \to \C$ defined in terms of $r_1$ and $h$ by (\ref{tilder1hdef}) satisfy Assumption \ref{r1hassumption2} (and hence also Assumption \ref{r1hassumption}). Thus, by Theorem \ref{existenceth}, the pure radiation solution 
$$\tilde{u} := 2 \arg(\hat{\tilde{M}}_{11} + i\hat{\tilde{M}}_{21})$$
generated by $\tilde{r}_1$ and $\tilde{h}$ is well-defined, and, by Theorem \ref{asymptoticsth}, $\tilde{u}(x,t)$ satisfies the asymptotic formulas (\ref{uasymptotics}) with $\nu$ and $\alpha$ replaced by $\tilde{\nu}$ and $\tilde{\alpha}$, respectively. 

Let $B_j$ denote the matrices obtained by applying the recursive procedure of Lemma \ref{dressinglemma} to the sequence $\{\lambda_j\}_1^\mathcal{N}$ with $C_j(x,t) = c_j e^{2i\theta(x,t,\lambda_j)}$ and $\tilde{M}_0 = \tilde{M}$.
Evaluating the expression (\ref{eq-mm}) for $M$ as $k \to 0$, we find
\begin{align}\label{hatmsoliton}
\hat{M}(x,t) = i^\mathcal{N}\mathcal{B}(x,t)\hat{\tilde{M}}(x,t)\begin{pmatrix}
\prod_{j=1}^{\mathcal{N}}\lambda_j^{-1} & 0 \\
0 & \prod_{j=1}^{\mathcal{N}} \bar{\lambda}_j^{-1} \end{pmatrix},
\end{align}
where $\mathcal{B} := \mathcal{B}(x,t)$ is defined by $\mathcal{B} = i^\mathcal{N} B_\mathcal{N}B_{\mathcal{N}-1} \cdots B_1$. 
Since  $\prod_{j=1}^\mathcal{N} \lambda_j = i^\mathcal{N}\prod_{j=1}^\mathcal{N} |\lambda_j|$, substitution of (\ref{hatmsoliton}) into (\ref{ulim}) yields
\begin{align*}
u(x,t) & = 2 \arg\Big((\mathcal{B} \hat{\tilde{M}})_{11} + i (\mathcal{B} \hat{\tilde{M}})_{21}\Big)
= 2 \arg\Big((\mathcal{B}_{11} + i \mathcal{B}_{21})\hat{\tilde{M}}_{11} 
+ (\mathcal{B}_{12} + i \mathcal{B}_{22}) \hat{\tilde{M}}_{21}\Big).
\end{align*}
The proof of Lemma \ref{dressinglemma} shows that $B_j = -\sigma_2 B_j \sigma_2$ if $\lambda_j \in i\R$ and (evaluating (\ref{BBsymm}) at $k = 0$) that $B_{j+1}B_j = \sigma_2B_{j+1}B_j\sigma_2$ if $(\lambda_j, \lambda_{j+1} = -\bar{\lambda}_j)$ is a pair with $\re \lambda_j > 0$. Hence $\mathcal{B} = (-1)^\mathcal{N} \sigma_2 \mathcal{B} \sigma_2$, that is, $\mathcal{B}_{22} = (-1)^\mathcal{N} \mathcal{B}_{11}$ and $\mathcal{B}_{12} = -(-1)^\mathcal{N} \mathcal{B}_{21}$. Thus
\begin{align*}
u(x,t) & = 2 \arg\Big((\mathcal{B}_{11}+i \mathcal{B}_{21} )(\hat{\tilde{M}}_{11} + i(-1)^\mathcal{N}\hat{\tilde{M}}_{21})\Big)
	\\
& = 2 \arg(\mathcal{B}_{11}+i \mathcal{B}_{21}) + (-1)^\mathcal{N} \tilde{u}(x,t) \quad \text{(mod $4\pi$)}.
\end{align*}
In general, if  $U,V$ are $2 \times 2$  matrices such that $U_{22} = \alpha U_{11}$ and $U_{12} = -\alpha U_{21}$ for some constant $\alpha \in \C$, then simple algebra gives
$$(UV)_{11} \pm i (UV)_{21} = (U_{11} \pm i U_{21})(V_{11} \pm \alpha i V_{21}).$$
Repeated use of this identity gives
\begin{align}\nonumber
u(x,t) = &\; \sum_{\lambda_j \in i\R} 2 \arg\Big((iB_j)_{11}+i (-1)^{\mathcal{N} - j} (iB_j)_{21}\Big) 
	\\\label{uargsum}
& + \sum_{\re \lambda_j > 0} 2 \arg\Big(-(B_{j+1}B_j)_{11}+i (-1)^{\mathcal{N} - j}(B_{j+1}B_j)_{21}\Big) 
+ (-1)^\mathcal{N} \tilde{u}(x,t) \quad \text{(mod $4\pi$)}.
\end{align}
Since $B_j = \sigma_2\overline{B_j}\sigma_2$ for all $j$ by (\ref{Bjsymm}), the entries of the matrices $iB_j$ and $B_{j+1}B_j$ are real for $\lambda_j \in i\R$ and $\re \lambda_j > 0$, respectively.

For each $j$, equation (\ref{MlambdajI}) shows that $\tilde{M}(x,t,\lambda_j) = I + O(x^{-N})$ uniformly as $(x,t) \to \infty$ in Sector I, and (\ref{MlambdajII}) shows that 
$\tilde{M}(x,t,\lambda_j) = I + O(k_0^N + t^{-N})$ uniformly as $(x,t) \to \infty$ in Sector II.
Moreover, (\ref{thetajexp}) shows that the factor $e^{2i\theta(x,t,\lambda_j)}$, and hence also the factor $d_j(x,t)$, is exponentially large as $(x,t) \to\infty$ in Sector I or Sector II. 
It follows that $B_j(x,t)$ asymptotes to $-\diag(\lambda_j, \bar{\lambda}_j)$ with an error of order $O(x^{-N})$ ($O(k_0^N + t^{-N})$) as $(x,t) \to \infty$ in Sector I (Sector II) for each $j = 1, \dots, \mathcal{N}$. Using this in equation (\ref{uargsum}), we conclude that $u(x,t)$ satisfies the asymptotic formulas (\ref{uasymptoticsI}) and (\ref{uasymptoticsII}) in Sectors I and II. 

We next consider Sector III. We will determine the asymptotics in this sector by establishing a sequence of six lemmas. The first two lemmas deal with the asymptotics away from the narrow sectors centered on the solitons.  

\begin{lemma}\label{outsideBjlemma}
Let $\{B_l\}_1^\mathcal{N}$ be the matrices obtained by applying the procedure of Lemma \ref{dressinglemma} with $\tilde{M}_0 = \tilde{M}$ and the $\lambda_l$ in the standard order $\lambda_1, \dots, \lambda_{\mathcal{N}}$.
For each $j = 1, \dots, \Lambda$, the following asymptotic formulas are valid as $t \to \infty$, uniformly with respect 
$\zeta \in [v_j + \epsilon, v_{j-1} - \epsilon]$:
\begin{align}\label{outsideBl}
B_l = \begin{cases}
- \begin{psmallmatrix} \bar{\lambda}_l & 0 \\ 0 & \lambda_l \end{psmallmatrix}
 + \frac{\lambda_l - \bar{\lambda}_l}{\sqrt{t}} 
 \begin{psmallmatrix} 0 & 
 \overline{r_{\mathcal{N} + l}^{(l-1)}} \\
  r_{\mathcal{N} + l}^{(l-1)}  & 0
 \end{psmallmatrix} 
+ O\big(\frac{1}{k_0 t} + \frac{\ln{t}}{t}\big), & l = 1, \dots, j-1, 
	\\
- \begin{psmallmatrix} \lambda_{l} & 0 \\ 0 & \bar{\lambda}_{l} \end{psmallmatrix}
 + \frac{\bar{\lambda}_{l}- \lambda_{l}}{\sqrt{t}} \begin{psmallmatrix} 0 & \overline{r_{l}^{(l-1)}} \\
 r_{l}^{(l-1)}& 0
 \end{psmallmatrix} 
+ O\big(\frac{1}{k_0 t} + \frac{\ln{t}}{t}\big), & l = j, \dots, \mathcal{N},
\end{cases}
\end{align}
where the functions $\{r_m^{(l)}(x,t), r_{\mathcal{N} + m}^{(l)}(x,t)\}$ are given by (\ref{rrecursive}).
\end{lemma} 
\proofbegin
We will prove that\footnote{Here and elsewhere in the proof the expansion is valid as $t \to \infty$ uniformly with respect 
$\zeta \in [v_j + \epsilon, v_{j-1} - \epsilon]$.}
\begin{align}\label{tildeMiexpansion}
\tilde{M}_i(x,t,\lambda_m)
= \bigg\{\begin{psmallmatrix} a_m^{(i)} & 0 \\ 0 & \overline{a_{\mathcal{N} + m}^{(i)}} \end{psmallmatrix} +
  \frac{1}{\sqrt{t}}\begin{psmallmatrix} 0 & -\overline{b_{\mathcal{N} + m}^{(i)}} \\ b_m^{(i)} & 0 \end{psmallmatrix}
 + O\bigg(\frac{1}{k_0 t} + \frac{\ln t}{t}\bigg)\bigg\}\delta(\zeta, \lambda_m)^{\sigma_3}
 \end{align}
for $i = 0, \dots, \mathcal{N}$ and $m = 1, \dots, \mathcal{N}$, where $\{b_m^{(i)}(x,t)\}$ are complex-valued functions and the constants $\{a_m^{(i)}\} \subset \C$ are given by
\begin{align}\label{amlexpressions1}
&\begin{cases}
a_m^{(i)} = \prod_{s = 1}^i (\lambda_m - \bar{\lambda}_s), \\
a_{\mathcal{N} + m}^{(i)} = \prod_{s = 1}^i (\bar{\lambda}_m - \bar{\lambda}_s), 
\end{cases} \quad i = 1, \dots, j-1,
	\\ \label{amlexpressions2}
&\begin{cases}
a_m^{(i)} = \big(\prod_{s = 1}^{j-1} (\lambda_m - \bar{\lambda}_s)\big)
\big(\prod_{s = j}^{i} (\lambda_m - \lambda_{s})\big),
	\\
a_{\mathcal{N} + m}^{(i)} = \big(\prod_{s = 1}^{j-1} (\bar{\lambda}_m - \bar{\lambda}_s)\big)\big(\prod_{s = j}^{i} (\bar{\lambda}_m - \lambda_{s})\big), 
\end{cases} \quad i = j, \dots, \mathcal{N}.
\end{align}
The symmetries $\tilde{M}_{i}(x,t,k) = \sigma_2\overline{\tilde{M}_{i}(x,t,\bar{k})}\sigma_2$ and $\delta(\zeta, k) = \overline{\delta(\zeta, \bar{k})}^{-1}$ show that (\ref{tildeMiexpansion}) is equivalent to
\begin{align}\label{tildeMiexpansionbar}
\tilde{M}_i(x,t,\bar{\lambda}_m)
= \bigg\{\begin{psmallmatrix} a_{\mathcal{N} + m}^{(i)} & 0 \\ 0 & \overline{a_m^{(i)}} \end{psmallmatrix} +
  \frac{1}{\sqrt{t}}\begin{psmallmatrix} 0 & -\overline{b_m^{(i)}} \\ b_{\mathcal{N} + m}^{(i)} & 0 \end{psmallmatrix}
 + O\bigg(\frac{1}{k_0 t} + \frac{\ln t}{t}\bigg)\bigg\}\delta(\zeta, \bar{\lambda}_m)^{\sigma_3}.
 \end{align}
According to (\ref{MlambdajIII}), we have, for each $m = 1, \dots, \mathcal{N}$,
\begin{align}\label{tildeMlambdaj}
  \tilde{M}(x,t,\lambda_m) 
= \bigg\{I + \frac{F(x,t,\lambda_m)}{\sqrt{t}}
+ O\bigg(\frac{1}{k_0 t} + \frac{\ln{t}}{t}\bigg)\bigg\} \delta(\zeta, \lambda_m)^{\sigma_3},
\end{align}
where
$$F(x,t,\lambda_m)
:= \frac{i}{(k_0 - \lambda_m)}\begin{pmatrix} 0 & \bar{\tilde{A}} \\ \tilde{A} & 0 \end{pmatrix}
- \frac{i}{(k_0 + \lambda_m)}\begin{pmatrix} 0 & \tilde{A} \\ \bar{\tilde{A}} & 0 \end{pmatrix}$$
with $\tilde{A}(x,t)$ given by (\ref{rinitial}). 
Since $\tilde{M}_0 = \tilde{M}$, (\ref{tildeMlambdaj}) shows that (\ref{tildeMiexpansion}) holds for $i = 0$ with $a_m^{(0)} = a_{\mathcal{N} + m}^{(0)} = 1$ and
\begin{align*}
&b_m^{(0)}(x,t) = F_{21}(x,t,\lambda_m), \qquad 
b_{\mathcal{N} + m}^{(0)}(x,t) = -\overline{F_{12}(x,t,\lambda_m)}, \qquad m = 1, \dots, \mathcal{N}, 
\end{align*}
Defining 
$$r_m^{(l)}(x,t) := \frac{b_m^{(l)}(x,t)}{a_m^{(l)}}, \qquad 
r_{\mathcal{N} + m}^{(l)}(x,t) := \frac{b_{\mathcal{N} + m}^{(l)}(x,t)}{a_{\mathcal{N} + m}^{(l)}},$$ 
we see that the initial conditions (\ref{rinitial}) hold.

Seeking a proof by induction, we fix $1 \leq l \leq j-1$ and assume (\ref{tildeMiexpansion}) and (\ref{amlexpressions1}) (and hence also (\ref{tildeMiexpansionbar})) holds for $i = l - 1$. Then (\ref{Bjrecursive}) gives
\begin{align}\label{Blsystem1}
  \begin{cases}
  (\lambda_lI+B_l)
  \bigg\{\begin{psmallmatrix} a_l^{(l-1)} & 0 \\ 0 & \overline{a_{\mathcal{N} +l}^{(l-1)}} \end{psmallmatrix} +
  \frac{1}{\sqrt{t}}
  \begin{psmallmatrix} 0 & -\overline{b_{\mathcal{N} + l}^{(l-1)}} \\ b_l^{(l-1)} & 0 \end{psmallmatrix}
   + O\big(\frac{1}{k_0 t} + \frac{\ln t}{t}\big)\bigg\}\delta(\zeta, \lambda_l)^{\sigma_3}
  \begin{psmallmatrix} 1 \\ -d_l \end{psmallmatrix} = 0,
  	\vspace{.1cm}\\ 
  (\bar{\lambda}_lI +B_l)  
 \bigg\{\begin{psmallmatrix} a_{\mathcal{N} + l}^{(l-1)} & 0 \\ 0 & \overline{a_l^{(l-1)}} \end{psmallmatrix} +
  \frac{1}{\sqrt{t}}\begin{psmallmatrix} 0 & -\overline{b_l^{(l-1)}} \\ b_{\mathcal{N} + l}^{(l-1)} & 0 \end{psmallmatrix}
 + O\big(\frac{1}{k_0 t} + \frac{\ln t}{t}\big)\bigg\}\delta(\zeta, \bar{\lambda}_l)^{\sigma_3}
  \begin{psmallmatrix} \overline{d_l} \\ 1 \end{psmallmatrix} = 0,
  \end{cases}
\end{align}
where
$$d_{l}(x,t) = c_{l} e^{2i\theta(x,t,\lambda_{l})} \frac{\prod_{s=1,s\neq l}^{\mathcal{N}}(\lambda_{l} -\lambda_s)}{\prod_{s=1}^{\mathcal{N}}(\lambda_{l}-\bar{\lambda}_s)}.$$
For $m$ such that $v_m \geq v_{j}$ ($v_m \leq v_{j-1}$), the factor $e^{2i\theta(x,t,\lambda_m)}$, and hence also the factor $d_m(x,t)$, is exponentially large (small) as $t\to\infty$ uniformly for $\zeta \in [v_{j} + \epsilon, v_{j-1} - \epsilon]$ (see (\ref{thetajexp})).
Since $l \leq j - 1$, $d_l(x,t)$ is exponentially large as $t \to \infty$; hence (\ref{Blsystem1}) yields
\begin{align}\label{}
  \begin{cases}
  (\lambda_lI+B_l)
 \bigg\{\begin{psmallmatrix} a_l^{(l-1)} & 0 \\ 0 & \overline{a_{\mathcal{N} +l}^{(l-1)}} \end{psmallmatrix} +
  \frac{1}{\sqrt{t}}
  \begin{psmallmatrix} 0 & -\overline{b_{\mathcal{N} + l}^{(l-1)}} \\ b_l^{(l-1)} & 0 \end{psmallmatrix}
   + O\big(\frac{1}{k_0 t} + \frac{\ln t}{t}\big)\bigg\}
  \begin{psmallmatrix} 0 \\ 1 \end{psmallmatrix} = O(e^{-ct}),
  	\vspace{.1cm}\\ 
  (\bar{\lambda}_lI +B_l)  
   \bigg\{\begin{psmallmatrix} a_{\mathcal{N} + l}^{(l-1)} & 0 \\ 0 & \overline{a_l^{(l-1)}} \end{psmallmatrix} +
  \frac{1}{\sqrt{t}}\begin{psmallmatrix} 0 & -\overline{b_l^{(l-1)}} \\ b_{\mathcal{N} + l}^{(l-1)} & 0 \end{psmallmatrix}
 + O\big(\frac{1}{k_0 t} + \frac{\ln t}{t}\big)\bigg\}
  \begin{psmallmatrix} 1 \\ 0 \end{psmallmatrix} = O(e^{-ct}),
  \end{cases}
\end{align}
that is,
\begin{align*}
  \begin{cases}
  (\lambda_lI+B_l)
  \bigg\{\begin{psmallmatrix} 0 \\ \overline{a_{\mathcal{N} +l}^{(l-1)}}  \end{psmallmatrix} +
  \frac{1}{\sqrt{t}}\begin{psmallmatrix} -\overline{b_{\mathcal{N} + l}^{(l-1)}} \\ 0 \end{psmallmatrix}
 + O\big(\frac{1}{k_0 t} + \frac{\ln t}{t}\big)\bigg\} = O(e^{-ct}),
  	\vspace{.1cm}\\ 
  (\bar{\lambda}_lI +B_l)  
  \bigg\{\begin{psmallmatrix} a_{\mathcal{N} + l}^{(l-1)}  \\ 0 \end{psmallmatrix} +
  \frac{1}{\sqrt{t}}\begin{psmallmatrix} 0 \\ b_{\mathcal{N} + l}^{(l-1)} \end{psmallmatrix}
 + O\big(\frac{1}{k_0 t} + \frac{\ln t}{t}\big)\bigg\} = O(e^{-ct}).
  \end{cases}
\end{align*}
We deduce from this system that $B_l$ admits the expansion in the first line of (\ref{outsideBl}).
The recursive definition $\tilde{M}_{l} = (kI+B_l)\tilde{M}_{l-1}$ then gives
\begin{align*}
\tilde{M}_l(x,t,\lambda_m)
= & \; \bigg\{\begin{psmallmatrix} \lambda_m - \bar{\lambda}_l & 0 \\ 0 & \lambda_m - \lambda_l \end{psmallmatrix}
 + \frac{\lambda_l - \bar{\lambda}_l}{\sqrt{t}} \begin{psmallmatrix} 0 & 
 \overline{r_{\mathcal{N} + l}^{(l-1)}} \\
  r_{\mathcal{N} + l}^{(l-1)}  & 0
 \end{psmallmatrix} 
+ O\bigg(\frac{1}{k_0 t} + \frac{\ln t}{t}\bigg)\bigg\}
	\\
&\times \bigg\{\begin{psmallmatrix} a_m^{(l-1)} & 0 \\ 0 & \overline{a_{\mathcal{N} + m}^{(l-1)}} \end{psmallmatrix} +
  \frac{1}{\sqrt{t}}\begin{psmallmatrix} 0 & -\overline{b_{\mathcal{N} + m}^{(l-1)}} \\ b_m^{(l-1)} & 0 \end{psmallmatrix}
 + O\bigg(\frac{1}{k_0 t} + \frac{\ln t}{t}\bigg)\bigg\}\delta(\zeta, \lambda_m)^{\sigma_3},	
\end{align*}
which shows that (\ref{tildeMiexpansion}) holds also for $i = l$ with 
\begin{align*}
& a_m^{(l)} = (\lambda_m - \bar{\lambda}_l)a_m^{(l-1)}, &&
 b_m^{(l)} = (\lambda_m - \lambda_l)b_m^{(l-1)} +  (\lambda_l - \bar{\lambda}_l) r_{\mathcal{N} + l}^{(l-1)}a_m^{(l-1)},
	\\
&a_{\mathcal{N} + m}^{(l)} = (\bar{\lambda}_m - \bar{\lambda}_l)a_{\mathcal{N} + m}^{(l-1)},
&&
 b_{\mathcal{N} + m}^{(l)} = (\bar{\lambda}_m - \lambda_l)b_{\mathcal{N} + m}^{(l-1)} +  (\lambda_l - \bar{\lambda}_l) r_{\mathcal{N} + l}^{(l-1)}a_{\mathcal{N} + m}^{(l-1)}.
\end{align*}
By induction, we see that (\ref{tildeMiexpansion}) and (\ref{amlexpressions1}) hold for $i = 0, \dots, j-1$ and that (\ref{outsideBl}) holds for $l = 1, \dots, j-1$. 
Moreover, the recursive formulas (\ref{rrecursive1}) are satisfied.

Now fix $j \leq l \leq \mathcal{N}$ and assume (\ref{tildeMiexpansion})-(\ref{amlexpressions2}) (and hence also (\ref{tildeMiexpansionbar})) hold for $i = l - 1$. Equation (\ref{Bjrecursive}) gives
\begin{align}\label{Blsystem2}
  \begin{cases}
  (\lambda_{l}I+B_l)
  \bigg\{\begin{psmallmatrix} a_{l}^{(l-1)} & 0 \\ 0 & \overline{a_{\mathcal{N} +l}^{(l-1)}} \end{psmallmatrix} +
  \frac{1}{\sqrt{t}}
  \begin{psmallmatrix} 0 & -\overline{b_{\mathcal{N} + l}^{(l-1)}} \\ b_{l}^{(l-1)} & 0 \end{psmallmatrix}
   + O\big(\frac{1}{k_0 t} + \frac{\ln t}{t}\big)\bigg\}\delta(\zeta, \lambda_{l})^{\sigma_3}
  \begin{psmallmatrix} 1 \\ -d_l \end{psmallmatrix} = 0,
  	\vspace{.1cm}\\ 
  (\bar{\lambda}_{l}I +B_l)  
 \bigg\{\begin{psmallmatrix} a_{\mathcal{N} + l}^{(l-1)} & 0 \\ 0 & \overline{a_{l}^{(l-1)}} \end{psmallmatrix} +
  \frac{1}{\sqrt{t}}\begin{psmallmatrix} 0 & -\overline{b_{l}^{(l-1)}} \\ b_{\mathcal{N} + l}^{(l-1)} & 0 \end{psmallmatrix}
 + O\big(\frac{1}{k_0 t} + \frac{\ln t}{t}\big)\bigg\}\delta(\zeta, \bar{\lambda}_{l})^{\sigma_3}
  \begin{psmallmatrix} \overline{d_l} \\ 1 \end{psmallmatrix} = 0.
  \end{cases}
\end{align}
Since $l \geq j$, $d_l(x,t)$ is exponentially small as $t \to \infty$, so (\ref{Blsystem2}) yields
\begin{align}\label{}
  \begin{cases}
  (\lambda_{l}I+B_l)
  \bigg\{\begin{psmallmatrix} a_{l}^{(l-1)} \\ 0  \end{psmallmatrix} +
  \frac{1}{\sqrt{t}}
  \begin{psmallmatrix} 0 \\ b_{l}^{(l-1)}  \end{psmallmatrix}
 + O\big(\frac{1}{k_0 t} + \frac{\ln t}{t}\big)\bigg\} = O(e^{-ct}),
  	\vspace{.1cm}\\ 
  (\bar{\lambda}_{l}I +B_l)  
  \bigg\{\begin{psmallmatrix} 0 \\ \overline{a_{l}^{(l-1)}}  \end{psmallmatrix} +
  \frac{1}{\sqrt{t}}\begin{psmallmatrix} -\overline{b_{l}^{(l-1)}}  \\ 0 \end{psmallmatrix}
 + O\big(\frac{1}{k_0 t} + \frac{\ln t}{t}\big)\bigg\} = O(e^{-ct}).
  \end{cases}
\end{align}
We deduce from this system that $B_l$ admits the expansion in the second line of (\ref{outsideBl}).
The recursive definition $\tilde{M}_{l} = (kI+B_l)\tilde{M}_{l-1}$ then gives
\begin{align*}
\tilde{M}_l(x,t,\lambda_m)
= & \; \bigg\{\begin{psmallmatrix} \lambda_m - \lambda_{l} & 0 \\ 0 & \lambda_m - \bar{\lambda}_{l} \end{psmallmatrix}
 + \frac{\bar{\lambda}_{l}- \lambda_{l}}{\sqrt{t}} \begin{psmallmatrix} 0 &
 \overline{r_{l}^{(l-1)}} \\ r_{l}^{(l-1)} & 0
 \end{psmallmatrix} 
+ O\bigg(\frac{1}{k_0 t} + \frac{\ln t}{t}\bigg)\bigg\}
	\\
&\times \bigg\{\begin{psmallmatrix} a_m^{(l-1)} & 0 \\ 0 & \overline{a_{\mathcal{N} + m}^{(l-1)}} \end{psmallmatrix} +
  \frac{1}{\sqrt{t}}\begin{psmallmatrix} 0 & -\overline{b_{\mathcal{N} + m}^{(l-1)}} \\ b_m^{(l-1)} & 0 \end{psmallmatrix}
 + O\bigg(\frac{1}{k_0 t} + \frac{\ln t}{t}\bigg)\bigg\}\delta(\zeta, \lambda_m)^{\sigma_3},	
\end{align*}
which shows that (\ref{tildeMiexpansion}) holds also for $i = l$ with 
\begin{align*}
& a_m^{(l)} = (\lambda_m - \lambda_{l})a_m^{(l-1)}, &&
b_m^{(l)} = (\lambda_m - \bar{\lambda}_{l})b_m^{(l-1)}
+ (\bar{\lambda}_{l}-\lambda_{l}) r_{l}^{(l-1)}a_m^{(l-1)}, 
	\\
&a_{\mathcal{N} + m}^{(l)} = (\bar{\lambda}_m - \lambda_{l})a_{\mathcal{N} + m}^{(l-1)},
&&  b_{\mathcal{N} + m}^{(l)} = 
(\bar{\lambda}_m - \bar{\lambda}_{l})b_{\mathcal{N} + m}^{(l-1)}
+ (\bar{\lambda}_{l}-\lambda_{l}) r_{l}^{(l-1)} a_{\mathcal{N} + m}^{(l-1)}.
\end{align*}
By induction, we see that (\ref{tildeMiexpansion})-(\ref{amlexpressions2}) hold for $i = 1, \dots,\mathcal{N}$ and that (\ref{outsideBl}) holds for $l = 1, \dots,\mathcal{N}$. 
Moreover, the recursive formulas (\ref{rrecursive2}) are satisfied.
\proofendcontinue

We will determine the constant background $u_{const}$ by simply adding up the accumulated topological charge, so it is enough in the following to derive the asymptotics modulo $2\pi$. 

\begin{lemma}[Asymptotics away from the narrow soliton sectors]\label{outsidesectorlemma} 
For $j = 1, \dots, \Lambda$, we have
\begin{align}\nonumber
u(x,t) = \frac{u_{rad}^{(1)}(x,t) + u_{rad}^{(2)}(x,t)}{\sqrt{t}}
+ O\bigg(\frac{1}{k_0t} + \frac{\ln t}{t}\bigg)  \quad \text{\upshape (mod $2\pi$)}
\end{align}
as $t \to \infty$ uniformly for $\zeta \in [v_{j} + \epsilon, v_{j-1} - \epsilon]$.
\end{lemma}
\proofbegin
By Theorem \ref{asymptoticsth}, $\tilde{u}(x,t)$ asymptotes to $-2\sqrt{\frac{2(1 + k_0^2)\tilde{\nu}}{tk_0}}   \sin \tilde{\alpha}$ (mod $4\pi$), with an error of $O(\frac{1}{k_0 t} + \frac{\ln{t}}{t})$ in Sector III. Thus the term $(-1)^\mathcal{N} \tilde{u}(x,t)$ on the right-hand side of (\ref{uargsum}) gives rise to the term $t^{-1/2}u_{rad}^{(1)}$ asymptotically. Let us consider the other terms in (\ref{uargsum}).

For $1 \leq l \leq j-1$ with $\lambda_l \in i\R$, (\ref{outsideBl}) gives
\begin{align*}
2\arg\big((iB_l)_{11}+i (-1)^{\mathcal{N} - l} (iB_l)_{21}\big) 
& = 
2\arg\bigg(1 + 2 i (-1)^{\mathcal{N} - l} \frac{r_{\mathcal{N} + l}^{(l-1)}}{\sqrt{t}} 
+ O\bigg(\frac{1}{k_0t} + \frac{\ln t}{t}\bigg) \bigg) 
	\\
& = 
\frac{4(-1)^{\mathcal{N} - l} r_{\mathcal{N} + l}^{(l-1)}}{\sqrt{t}} + O\bigg(\frac{1}{k_0t} + \frac{\ln t}{t}\bigg) \quad \text{(mod $2\pi$)}.
\end{align*}
Similarly, for $j \leq l \leq \mathcal{N}$ with $\lambda_{l} \in i\R$, (\ref{outsideBl}) gives
\begin{align*}
2\arg((iB_l)_{11}+i (-1)^{\mathcal{N} - l} (iB_l)_{21}) 
= 
\frac{4(-1)^{\mathcal{N} - l}r_{l}^{(l-1)}}{\sqrt{t}}  + O\bigg(\frac{1}{k_0t} + \frac{\ln t}{t}\bigg) \quad \text{(mod $2\pi$)}.
\end{align*}

Assume now that $1 \leq l \leq j-1$ with $\re \lambda_l > 0$. 
The proof of Lemma \ref{dressinglemma} gives
$$\tilde{M}_{l-1}(x,t,k) = (-1)^{l-1}\sigma_2 \tilde{M}_{l-1}(x,t,-k) \sigma_2
= (-1)^{l-1} \overline{\tilde{M}_{l-1}(x,t,-\bar{k})},$$
which for $k = \lambda_l = -\bar{\lambda}_{l+1}$ leads to
\begin{align}\label{roverliner}
r_{l}^{(l-1)} = \overline{r_{l+1}^{(l-1)}}, \qquad r_{\mathcal{N}+l}^{(l-1)} = \overline{r_{\mathcal{N}+l+1}^{(l-1)}}.
\end{align}
Hence, using (\ref{rrecursive1}),
\begin{align*}
r_{\mathcal{N} + l}^{(l-1)} - r_{\mathcal{N} + l + 1}^{(l)}
& = r_{\mathcal{N} + l}^{(l-1)} - \frac{\bar{\lambda}_{l+1} - \lambda_l}{\bar{\lambda}_{l+1} - \bar{\lambda}_l} r_{\mathcal{N} + l+1}^{(l-1)}
- \frac{\lambda_l - \bar{\lambda}_l}{\bar{\lambda}_{l+1} - \bar{\lambda}_l} r_{\mathcal{N} + l}^{(l-1)}
= \frac{2i\lambda_l \im r_{\mathcal{N} + l}^{(l-1)}}{\re \lambda_l}.
\end{align*}
Thus (\ref{outsideBl}) gives
\begin{align*}
& 2 \arg(-(B_{l+1}B_l)_{11}+i (-1)^{\mathcal{N} - l}(B_{l+1}B_l)_{21}) 
	\\
& =
2 \frac{(-1)^{\mathcal{N}-l}(\lambda_l - \bar{\lambda}_l)}{ \sqrt{t} \lambda_l} \big(r_{\mathcal{N} + l}^{(l-1)}
- r_{\mathcal{N} + l + 1}^{(l)}\big)
+ O\bigg(\frac{1}{k_0t} + \frac{\ln t}{t}\bigg)
	\\
& =
- 8 \frac{(-1)^{\mathcal{N}-l} \im \lambda_l}{ \sqrt{t}\re \lambda_l} \im r_{\mathcal{N} + l}^{(l-1)}
+ O\bigg(\frac{1}{k_0t} + \frac{\ln t}{t}\bigg) \quad \text{(mod $2\pi$)}.
\end{align*}
Similarly, for $j \leq l \leq \mathcal{N}-1$ with $\re \lambda_{l} > 0$, (\ref{outsideBl}) gives
\begin{align*}
& 2 \arg(-(B_{l+1}B_l)_{11}+i (-1)^{\mathcal{N} - l}(B_{l+1}B_l)_{21}) 
	\\
& =
2 \frac{(-1)^{\mathcal{N}-l}(\bar{\lambda}_{l} - \lambda_{l})}{ \sqrt{t} \bar{\lambda}_{l}} \big(r_{l}^{(l-1)}
- r_{l + 1}^{(l)}\big)
+ O\bigg(\frac{1}{k_0t} + \frac{\ln t}{t}\bigg)  \quad \text{(mod $2\pi$)}
\end{align*}
where, by (\ref{rrecursive2}) and (\ref{roverliner}),
$$r_{l}^{(l-1)}
- r_{l + 1}^{(l)}
= r_{l}^{(l-1)}
- \frac{\lambda_{l+1} - \bar{\lambda}_{l}}{\lambda_{l+1} - \lambda_{l}}r_{l+1}^{(l-1)}
- \frac{\bar{\lambda}_{l}-\lambda_{l}}{\lambda_{l+1} - \lambda_{l}}r_{l}^{(l-1)}
= \frac{2 i\bar{\lambda}_{l} \im r_{l}^{(l-1)}}{\re \lambda_{l}}.$$
The lemma follows by inserting the above contributions in (\ref{uargsum}).
\proofendcontinue

Until now we have assumed that the recursive procedure of Lemma \ref{dressinglemma} has been applied with the sequence $\{\lambda_j\}_1^{\mathcal{N}}$ arranged in the standard order $\lambda_1, \dots, \lambda_\mathcal{N}$, so that $B_j$ corresponds to the insertion of a pole at $\lambda_j$. If we instead apply the procedure of Lemma \ref{dressinglemma} with the $\lambda_j$ in the order $\lambda_{\sigma(1)}, \dots, \lambda_{\sigma(\mathcal{N})}$, where $\sigma$ is a permutation of $\{1, \dots, \mathcal{N}\}$ such that $\sigma(j+1) = \sigma(j) +1$ if $\re \lambda_j > 0$, then (\ref{uargsum}) takes the form
\begin{align}\nonumber
u(x,t) = &\; \sum_{\lambda_{\sigma(j)} \in i\R} 2 \arg\Big((iB_j)_{11}+i (-1)^{\mathcal{N} - j} (iB_j)_{21}\Big) 
	\\\label{uargsum2}
& + \sum_{\re \lambda_{\sigma(j)} > 0} 2 \arg\Big(-(B_{j+1}B_j)_{11}+i (-1)^{\mathcal{N} - j}(B_{j+1}B_j)_{21}\Big) 
+ (-1)^\mathcal{N} \tilde{u}(x,t) \quad \text{(mod $4\pi$)}.
\end{align}
In this case, the matrix $B_j$ defined in the $j$th recursive step corresponds to the insertion of a pole at $\lambda_{\sigma(j)}$. 

The next two lemmas deal with the asymptotics in a sector containing an asymptotic kink/antikink.

\begin{lemma}\label{kinkBjlemma}
Assume $\lambda_j \in i\R$ for some $j = 1, \dots, \Lambda$. 
Let $\{B_l\}_1^\mathcal{N}$ be the matrices obtained by applying the procedure of Lemma \ref{dressinglemma} with $\tilde{M}_0 = \tilde{M}$ and the $\lambda_l$ in the order 
\begin{align}\label{ordering1}
\lambda_1, \dots, \lambda_{j-1}, \lambda_{j+1}, \dots, \lambda_{\mathcal{N}}, \lambda_j.
\end{align}

Then the following asymptotic formulas are valid as $t \to \infty$, uniformly with respect 
$\zeta \in [v_{j+1} + \epsilon, v_{j-1} - \epsilon]$:
\begin{align}\label{kinkBl}
B_l = \begin{cases}
- \begin{psmallmatrix} \bar{\lambda}_l & 0 \\ 0 & \lambda_l \end{psmallmatrix}
 + \frac{\lambda_l - \bar{\lambda}_l}{\sqrt{t}} 
 \begin{psmallmatrix} 0 & 
 \overline{r_{\mathcal{N} + l}^{(l-1)}} \\
  r_{\mathcal{N} + l}^{(l-1)}  & 0
 \end{psmallmatrix} 
+ O\big(\frac{1}{k_0 t} + \frac{\ln{t}}{t}\big), & l = 1, \dots, j-1, 
	\\
- \begin{psmallmatrix} \lambda_{l+1} & 0 \\ 0 & \bar{\lambda}_{l+1} \end{psmallmatrix}
 + \frac{\bar{\lambda}_{l+1}- \lambda_{l+1}}{\sqrt{t}} \begin{psmallmatrix} 0 & \overline{r_{l+1}^{(l-1)}} \\
 r_{l+1}^{(l-1)}& 0
 \end{psmallmatrix} 
+ O\big(\frac{1}{k_0 t} + \frac{\ln{t}}{t}\big), & l = j, \dots, \mathcal{N} - 1,
	\\
B_{\mathcal{N},0}+ \frac{1}{\sqrt{t}} B_{\mathcal{N},1} + O\big(\frac{1}{k_0 t} + \frac{\ln{t}}{t}\big), & l = \mathcal{N},
\end{cases}
\end{align}
where the functions $\{r_m^{(l)}(x,t), r_{\mathcal{N} + m}^{(l)}(x,t)\}$ are given by (\ref{rrecursive}) and the matrix-valued functions $B_{\mathcal{N},0}(x,t)$ and $B_{\mathcal{N},1}(x,t)$ satisfy
\begin{align}\label{BcalN0}
B_{\mathcal{N},0} = \frac{\lambda_j}{1+ (d_j')^2}\begin{psmallmatrix}
  (d_j')^2-1 & 2d_j' \\ 2d_j' & 1-(d_j')^2 \end{psmallmatrix}
\end{align}  
and
\begin{align}\label{BN1BN0quotient}
\frac{i(B_{\mathcal{N},1})_{11} - (B_{\mathcal{N},1})_{21}}{i(B_{\mathcal{N},0})_{11} - (B_{\mathcal{N},0})_{21}}
= 2i\frac{r^{(\mathcal{N}-1)}_j
+ (d_j')^2r^{(\mathcal{N}-1)}_{\mathcal{N} + j}}{1 + (d_j')^2}.
\end{align}
\end{lemma} 
\proofbegin
For $l = 1, \dots, \mathcal{N}-1$, the proof is analogous to that of Lemma \ref{kinkBjlemma}, except that this time (\ref{amlexpressions2}) is replaced by
\begin{align}\label{amlexpressionskink}
\begin{cases}
a_m^{(i)} = \big(\prod_{s = 1}^{j-1} (\lambda_m - \bar{\lambda}_s)\big)
\big(\prod_{s = j+2}^{i+2} (\lambda_m - \lambda_{s})\big),
	\\
a_{\mathcal{N} + m}^{(i)} = \big(\prod_{s = 1}^{j-1} (\bar{\lambda}_m - \bar{\lambda}_s)\big)\big(\prod_{s = j+2}^{i+2} (\bar{\lambda}_m - \lambda_{s})\big), 
\end{cases} \quad i = j, \dots, \mathcal{N} - 1,
\end{align}
and the recursive equations (\ref{rrecursive2}) are replaced by (\ref{rrecursive2kink}).
Since $\sigma(\mathcal{N}) = j$, (\ref{Bjrecursive}) gives the following system for $B_{\mathcal{N}}(x,t)$:
\begin{align*}
  \begin{cases}
  (\lambda_j I+B_{\mathcal{N}})
  \bigg\{\begin{psmallmatrix} a_j^{(\mathcal{N}-1)} & 0 \\ 0 & \overline{a_{\mathcal{N} + j}^{(\mathcal{N}-1)}} \end{psmallmatrix} +
  \frac{1}{\sqrt{t}}
  \begin{psmallmatrix} 0 & -\overline{b_{\mathcal{N} + j}^{(\mathcal{N}-1)}} \\ b_j^{(\mathcal{N}-1)} & 0 \end{psmallmatrix}
   + O\big(\frac{1}{k_0 t} + \frac{\ln t}{t}\big)\bigg\}\delta(\zeta, \lambda_j)^{\sigma_3}
  \begin{psmallmatrix} 1 \\ -d_{\mathcal{N}} \end{psmallmatrix} = 0,
  	\vspace{.1cm}\\ 
  (\bar{\lambda}_jI +B_{\mathcal{N}})  
 \bigg\{\begin{psmallmatrix} a_{\mathcal{N} + j}^{(\mathcal{N}-1)} & 0 \\ 0 & \overline{a_j^{(\mathcal{N}-1)}} \end{psmallmatrix} +
  \frac{1}{\sqrt{t}}\begin{psmallmatrix} 0 & -\overline{b_j^{(\mathcal{N}-1)}} \\ b_{\mathcal{N} + j}^{(\mathcal{N}-1)} & 0 \end{psmallmatrix}
 + O\big(\frac{1}{k_0 t} + \frac{\ln t}{t}\big)\bigg\}\delta(\zeta, \bar{\lambda}_j)^{\sigma_3}
  \begin{psmallmatrix} \overline{d_{\mathcal{N}}} \\ 1 \end{psmallmatrix} = 0,
  \end{cases}
\end{align*}
where
$$d_{\mathcal{N}}(x,t) = c_j e^{2i\theta(x,t,\lambda_j)} \frac{\prod_{l=1,l\neq j}^{\mathcal{N}}(\lambda_j-\lambda_l)}{\prod_{l=1}^{\mathcal{N}}(\lambda_j-\bar{\lambda}_l)}.$$
Since $\lambda_j \in i\R$, the symmetry properties of $\delta$ (see Lemma \ref{deltalemmaIII}) imply $\delta(\zeta, \lambda_j) = \delta(\zeta, \bar{\lambda}_j)^{-1} \in \R$.
Letting
$$d_j'(x,t) := d_{\mathcal{N}}(x,t)\frac{\overline{a_{\mathcal{N} + j}^{(\mathcal{N}-1)}}}{ a_j^{(\mathcal{N}-1)}}\delta(\zeta, \lambda_j)^{-2},
$$
a computation using (\ref{amlexpressionskink}) shows that $d_j'$ can be written as in (\ref{djprimedef}); in particular, $d_j'$ is real-valued. 
We infer that $B_{\mathcal{N}}$ admits the expansion in (\ref{kinkBl}), where $B_{\mathcal{N},0}$ is determined by the same system as the one-soliton except that $d_j$ is replaced by $d_j'$, i.e.,
\begin{align}\label{BcalN0system}
&  \begin{cases}
  (\lambda_jI+B_{\mathcal{N},0})
   \begin{psmallmatrix} 1 \\ - d_j' \end{psmallmatrix} = 0,
  	\vspace{.1cm}\\ 
  (\bar{\lambda}_jI +B_{\mathcal{N},0})  
  \begin{psmallmatrix} d_j'  \\ 1 \end{psmallmatrix} = 0,
 \end{cases}
\end{align}
and $B_{\mathcal{N},1}$ is determined by
\begin{align}\label{BcalN1system}
 \begin{cases}
  \bigg\{ (\lambda_jI+B_{\mathcal{N},0})
   \begin{psmallmatrix} 0 & -\overline{b_{\mathcal{N} + j}^{(\mathcal{N}-1)}} \\ b_j^{(\mathcal{N}-1)} & 0 \end{psmallmatrix}
 + B_{\mathcal{N},1}\begin{psmallmatrix} a_j^{(\mathcal{N}-1)} & 0 \\ 0 & \overline{a_{\mathcal{N} + j}^{(\mathcal{N}-1)}} \end{psmallmatrix}
   \bigg\}
   \begin{psmallmatrix} 1 \\ -d_{\mathcal{N}}\delta(\zeta, \lambda_j)^{-2} \end{psmallmatrix}
   = 0,
  	\vspace{.1cm}\\ 
 \bigg\{ (\bar{\lambda}_jI+B_{\mathcal{N},0})
 \begin{psmallmatrix} 0 & -\overline{b_j^{(\mathcal{N}-1)}} \\ b_{\mathcal{N} + j}^{(\mathcal{N}-1)} & 0 \end{psmallmatrix}
 + B_{\mathcal{N},1}\begin{psmallmatrix} a_{\mathcal{N} + j}^{(\mathcal{N}-1)} & 0 \\ 0 & \overline{a_j^{(\mathcal{N}-1)}} \end{psmallmatrix}
   \bigg\}
  \begin{psmallmatrix} \overline{d_\mathcal{N}} \delta(\zeta, \bar{\lambda}_j)^2 \\ 1 \end{psmallmatrix} = 0.
  \end{cases}
\end{align}
The system (\ref{BcalN0system}) yields the expression (\ref{BcalN0}) for $B_{\mathcal{N},0}$ (cf. (\ref{onesolitonB1mhat})).
On the other hand, we know from the proof of Lemma \ref{dressinglemma} that
$$\tilde{M}_{\mathcal{N}-1}(x,t,k) = (-1)^{\mathcal{N}-1}\sigma_2 \tilde{M}_{\mathcal{N}-1}(x,t,-k) \sigma_2,$$
that is, the functions $a_i^{(\mathcal{N}-1)},a_{\mathcal{N}+i}^{(\mathcal{N}-1)}, b_i^{(\mathcal{N}-1)},b_{\mathcal{N}+i}^{(\mathcal{N}-1)}$ take pure imaginary (real) values if $\mathcal{N}$ is even (odd). Long but straightforward computations using (\ref{BcalN0}) and (\ref{BcalN1system}) now yield (\ref{BN1BN0quotient}).
\proofendcontinue

\begin{lemma}[Asymptotics near a kink/antikink]\label{kinksectorlemma} 
Assume $\lambda_j \in i\R$ for some $j = 1, \dots, \Lambda$. Then
\begin{align}\nonumber
u(x,t) = & -4 \arctan(d_j') + \frac{u_{rad}^{(1)}(x,t) + u_{rad}^{(2)}(x,t)}{\sqrt{t}}
+ O\bigg(\frac{1}{k_0t} + \frac{\ln t}{t}\bigg)  \quad \text{\upshape (mod $2\pi$)}
\end{align}
as $t \to \infty$ uniformly for $\zeta \in [v_{j+1} + \epsilon, v_{j-1} - \epsilon]$.
\end{lemma}
\proofbegin
Using the ordering in (\ref{ordering1}), equation (\ref{uargsum2}) gives
\begin{align}\nonumber
u(x,t) = &\; 
2 \arg((iB_{\mathcal{N}})_{11}+i (iB_\mathcal{N})_{21}) 
+ \bigg(\sum_{\substack{\lambda_l \in i\R \\ 1 \leq l \leq j-1}} +\sum_{\substack{\lambda_{l+1} \in i\R \\ j \leq l \leq \mathcal{N}-1}}\bigg)2 \arg\big((iB_l)_{11}+i (-1)^{\mathcal{N} - l} (iB_l)_{21}\big) 
	\\\nonumber
& + \bigg(\sum_{\substack{\re \lambda_l > 0 \\ 1 \leq l \leq j-1}} + \sum_{\substack{\re \lambda_{l+1} > 0 \\ j \leq l \leq \mathcal{N}-1}}\bigg)2 \arg\Big(-(B_{l+1}B_l)_{11}+i (-1)^{\mathcal{N} - l}(B_{l+1}B_l)_{21}\Big) 
	\\
& + (-1)^\mathcal{N} \tilde{u}(x,t) \quad \text{(mod $4\pi$)}.
\end{align}
The last term on the right-hand side gives rise to the contribution from $u_{rad}^{(1)}$.
By Lemma \ref{kinkBjlemma}, the first term on the right-hand side satisfies
\begin{align*}
2 \arg((iB_{\mathcal{N}})_{11}& +i (iB_\mathcal{N})_{21}) 
= 2 \arg((iB_{\mathcal{N},0})_{11} - (B_{\mathcal{N},0})_{21})
	\\
& \hspace{2.3cm} + 2\arg\bigg(1 + \frac{i(B_{\mathcal{N},1})_{11} - (B_{\mathcal{N},1})_{21}}{\sqrt{t}(i(B_{\mathcal{N},0})_{11} - (B_{\mathcal{N},0})_{21})} + O\bigg(\frac{1}{k_0t} + \frac{\ln t}{t}\bigg) \bigg) 
	\\
= &\; 4\arg(1 - i d_j') 
+ 2\arg\bigg(1 + \frac{2i}{\sqrt{t}}\frac{r^{(\mathcal{N}-1)}_j 
+ (d_j')^2 r^{(\mathcal{N}-1)}_{\mathcal{N} + j}}{1 + (d_j')^2} + O\bigg(\frac{1}{k_0t} + \frac{\ln t}{t}\bigg) \bigg) 
	\\
= & -4 \arctan(d_j')
+ \frac{4}{\sqrt{t}}\frac{r^{(\mathcal{N}-1)}_j 
+  (d_j')^2 r^{(\mathcal{N}-1)}_{\mathcal{N} + j}}{1 + (d_j')^2}
+ O\bigg(\frac{1}{k_0t} + \frac{\ln t}{t}\bigg) 
\quad \text{(mod $4\pi$)}.
\end{align*}
The remaining terms are handled as in the proof of Lemma \ref{outsidesectorlemma}.
\proofendcontinue

The last two lemmas deal with the asymptotics in a sector containing an asymptotic breather. 
 
 \begin{lemma}\label{breatherBjlemma}
Assume $\re \lambda_j > 0$ for some $j = 1, \dots, \Lambda$. 
Let $\{B_l\}_1^\mathcal{N}$ be the matrices obtained by applying the procedure of Lemma \ref{dressinglemma} with $\tilde{M}_0 = \tilde{M}$ and the $\lambda_l$ in the order 
\begin{align}\label{ordering2}
\lambda_1, \dots, \lambda_{j-1}, \lambda_{j+2}, \dots, \lambda_{\mathcal{N}}, \lambda_j, \lambda_{j+1}.
\end{align}

Then the following asymptotic formulas are valid as $t \to \infty$, uniformly with respect 
$\zeta \in [v_{j+2} + \epsilon, v_{j-1} - \epsilon]$:
\begin{align}\label{breatherBl}
B_l = \begin{cases}
- \begin{psmallmatrix} \bar{\lambda}_l & 0 \\ 0 & \lambda_l \end{psmallmatrix}
 + \frac{\lambda_l - \bar{\lambda}_l}{\sqrt{t}} 
 \begin{psmallmatrix} 0 & 
 \overline{r_{\mathcal{N} + l}^{(l-1)}} \\
  r_{\mathcal{N} + l}^{(l-1)}  & 0
 \end{psmallmatrix} 
+ O\big(\frac{1}{k_0 t} + \frac{\ln{t}}{t}\big), & l = 1, \dots, j-1, 
	\\
- \begin{psmallmatrix} \lambda_{l+2} & 0 \\ 0 & \bar{\lambda}_{l+2} \end{psmallmatrix}
 + \frac{\bar{\lambda}_{l+2}- \lambda_{l+2}}{\sqrt{t}} \begin{psmallmatrix} 0 & \overline{r_{l+2}^{(l-1)}} \\
 r_{l+2}^{(l-1)}& 0
 \end{psmallmatrix} 
+ O\big(\frac{1}{k_0 t} + \frac{\ln{t}}{t}\big), & l = j, \dots, \mathcal{N} - 2,
	\\
B_{l,0}+ \frac{1}{\sqrt{t}} B_{l,1} + O\big(\frac{1}{k_0 t} + \frac{\ln{t}}{t}\big), & l = \mathcal{N}-1, \mathcal{N},
\end{cases}
\end{align}
where the functions $\{r_m^{(l)}(x,t), r_{\mathcal{N} + m}^{(l)}(x,t)\}$ are given by (\ref{rinitial}), (\ref{rrecursive1}), and (\ref{rrecursive2breather}), and the entries of $B_{\mathcal{N},0}(x,t)$ and $B_{\mathcal{N},1}(x,t)$ are such that the functions
\begin{align}\nonumber
& \beta_0(x,t):= 
2 \arg\big(-(B_{\mathcal{N},0}B_{\mathcal{N}-1,0})_{11} - i (B_{\mathcal{N},0}B_{\mathcal{N}-1,0})_{21}\big),
	\\
& \beta_1(x,t) := \frac{(B_{\mathcal{N},1}B_{\mathcal{N}-1,0})_{11} + i (B_{\mathcal{N},1}B_{\mathcal{N}-1,0})_{21}
+ (B_{\mathcal{N},0}B_{\mathcal{N}-1,1})_{11} + i (B_{\mathcal{N},0}B_{\mathcal{N}-1,1})_{21}}{(B_{\mathcal{N},0}B_{\mathcal{N}-1,0})_{11} + i (B_{\mathcal{N},0}B_{\mathcal{N}-1,0})_{21}}
\end{align}
satisfy
\begin{align}\label{beta0expression}
& \beta_0 = 4\arctan\bigg\{\frac{2\im d_j'' \, \im \lambda_j}{(1 + |d_j''|^2)\re \lambda_j}  \bigg\}\quad \text{\upshape (mod $2\pi$)},
	\\  \label{beta1expression}
& \beta_1 = - \frac{4 i (\im\lambda_j)(\re\lambda_j)  \im\big[(1 + (\overline{d_j''})^2) r_j^{(\mathcal{N}-2)} - (1 + (d_j'')^2)(\overline{d_j''})^2 r_{\mathcal{N}+j}^{(\mathcal{N}-2)}\big]}{4(\im d_j'')^2 (\im\lambda_j)^2 + (1 + |d_j''|^2)^2 (\re \lambda_j)^2}.
\end{align}
\end{lemma} 
\proofbegin
For $l = 1, \dots, \mathcal{N}-2$, the proof is analogous to that of Lemma \ref{outsideBjlemma}, except that (\ref{amlexpressions2}) is replaced by
\begin{align}\label{amlexpressionsbreather}
\begin{cases}
a_m^{(i)} = \big(\prod_{s = 1}^{j-1} (\lambda_m - \bar{\lambda}_s)\big)
\big(\prod_{s = j+2}^{i+2} (\lambda_m - \lambda_{s})\big),
	\\
a_{\mathcal{N} + m}^{(i)} = \big(\prod_{s = 1}^{j-1} (\bar{\lambda}_m - \bar{\lambda}_s)\big)\big(\prod_{s = j+2}^{i+2} (\bar{\lambda}_m - \lambda_{s})\big), 
\end{cases} \quad i = j, \dots, \mathcal{N} - 2,
\end{align}
and the recursive equations (\ref{rrecursive2}) are replaced by (\ref{rrecursive2breather}).
Equation (\ref{Bjrecursive}) then gives the following system for $B_{\mathcal{N} -1}(x,t)$ and $B_{\mathcal{N}}(x,t)$:
\begin{align}\label{BNBNm1system}
&  \begin{cases}
  (\lambda_j I+B_{\mathcal{N}-1})
  \tilde{M}_{\mathcal{N}-2}(x,t,\lambda_j)
  \begin{psmallmatrix} 1 \\ -d_{\mathcal{N}-1} \end{psmallmatrix} = 0,
  	\vspace{.1cm}\\ 
  (\bar{\lambda}_jI +B_{\mathcal{N}-1})  \tilde{M}_{\mathcal{N}-2}(x,t,\bar{\lambda}_j)
  \begin{psmallmatrix} \overline{d_{\mathcal{N}-1}} \\ 1 \end{psmallmatrix} = 0,
	\\
  (\lambda_{j+1} I+B_{\mathcal{N}})(\lambda_{j+1} I+B_{\mathcal{N}-1})
 \tilde{M}_{\mathcal{N}-2}(x,t,\lambda_{j+1})
  \begin{psmallmatrix} 1 \\ -d_{\mathcal{N}} \end{psmallmatrix} = 0,
  	\vspace{.1cm} \\ 
  (\bar{\lambda}_{j+1}I +B_{\mathcal{N}})(\bar{\lambda}_{j+1}I +B_{\mathcal{N}-1})  
 \tilde{M}_{\mathcal{N}-2}(x,t, \bar{\lambda}_{j+1})
  \begin{psmallmatrix} \overline{d_{\mathcal{N}}} \\ 1 \end{psmallmatrix} = 0,
  \end{cases}
\end{align}
where
\begin{align*}
\begin{cases}
d_{\mathcal{N}-1}(x,t) = c_j e^{2i\theta(x,t,\lambda_j)} \frac{\prod_{l=1,l\neq j}^{\mathcal{N}}(\lambda_j-\lambda_l)}{\prod_{l=1}^{\mathcal{N}}(\lambda_j-\bar{\lambda}_l)},
	\vspace{.1cm}\\ 
d_{\mathcal{N}}(x,t) = c_{j+1} e^{2i\theta(x,t,\lambda_{j+1})} \frac{\prod_{l=1,l\neq j+1}^{\mathcal{N}}(\lambda_{j+1}-\lambda_l)}{\prod_{l=1}^{\mathcal{N}}(\lambda_{j+1}-\bar{\lambda}_l)}.
\end{cases}
\end{align*}
Since $c_{j+1} = -\bar{c}_j$ and $\theta(x,t,\lambda_{j+1}) = -\overline{\theta(x,t,\lambda_j)}$, we find that 
$\overline{d_{\mathcal{N}}(x,t)} = d_{\mathcal{N}-1}(x,t)$.
Also, by Lemma \ref{deltalemmaIII},
\begin{align}\label{deltasymmbreather}
\delta(\zeta, \lambda_j)
= \overline{\delta(\zeta, \lambda_{j+1})}
= \delta(\zeta, \bar{\lambda}_{j+1})^{-1}
= \overline{\delta(\zeta, \bar{\lambda}_{j})}^{-1}.
\end{align}
On the other hand, the proof of Lemma \ref{dressinglemma} gives
$$\tilde{M}_{\mathcal{N}-2}(x,t,k) = (-1)^{\mathcal{N}}\sigma_2 \tilde{M}_{\mathcal{N}-2}(x,t,-k) \sigma_2
= (-1)^{\mathcal{N}} \overline{\tilde{M}_{\mathcal{N}-2}(x,t,-\bar{k})}.$$
In particular, 
\begin{align}\nonumber
& a_{j}^{(\mathcal{N}-2)} = (-1)^{\mathcal{N}} \overline{a_{j+1}^{(\mathcal{N}-2)}}, \qquad
a_{\mathcal{N}+j}^{(\mathcal{N}-2)} = (-1)^{\mathcal{N}} \overline{a_{\mathcal{N}+j+1}^{(\mathcal{N}-2)}}, 
	\\ \label{aboverlinebreather}
& b_{j}^{(\mathcal{N}-2)} = (-1)^{\mathcal{N}} \overline{b_{j+1}^{(\mathcal{N}-2)}}, \qquad
b_{\mathcal{N}+j}^{(\mathcal{N}-2)} = (-1)^{\mathcal{N}} \overline{b_{\mathcal{N}+j+1}^{(\mathcal{N}-2)}}.
\end{align}
Letting
$$d_j''(x,t) := d_{\mathcal{N}-1}  \frac{\overline{a_{\mathcal{N} + j}^{(\mathcal{N}-2)}} }{a_j^{(\mathcal{N}-2)}} \delta(\zeta, \lambda_j)^{-2},$$
a computation using (\ref{amlexpressionsbreather}) shows that $d_j''$ can be written as in (\ref{djdoubleprimedef}). Furthermore, utilizing (\ref{tildeMiexpansion}), (\ref{deltasymmbreather}), and (\ref{aboverlinebreather}) in (\ref{BNBNm1system}), it follows that $B_{\mathcal{N}}$ admits the expansion in (\ref{breatherBl}), where $B_{\mathcal{N},0}$ is determined by the same system as the breather except that $d_j$ is replaced by $d_j''$, i.e.,
\begin{align}\label{BNm1Bnsystembreather}
 &  \begin{cases}
  (\lambda_j I+B_{\mathcal{N}-1,0})
  \begin{psmallmatrix} 1 \\ - d_j'' \end{psmallmatrix} = 0,
  	\vspace{.1cm}\\ 
  (\bar{\lambda}_jI +B_{\mathcal{N}-1,0})  
  \begin{psmallmatrix} \overline{d_j''} \\ 1 \end{psmallmatrix} = 0,
  \end{cases}
\quad  \begin{cases}
  (\lambda_{j+1} I+B_{\mathcal{N},0})(\lambda_{j+1} I+B_{\mathcal{N}-1,0})
  \begin{psmallmatrix} 1 \\ - \overline{d_j''} \end{psmallmatrix} = 0,
  	\vspace{.1cm} \\ 
  (\bar{\lambda}_{j+1}I +B_{\mathcal{N},0})(\bar{\lambda}_{j+1}I +B_{\mathcal{N}-1,0})  
  \begin{psmallmatrix} d_j'' \\ 1 \end{psmallmatrix} = 0.
  \end{cases}
\end{align}
The solution of (\ref{BNm1Bnsystembreather}) leads to the expression (\ref{beta0expression}) for $\beta_0$ (cf. (\ref{solitonbreather})).
Taking into account also the terms of $O(t^{-1/2})$ in (\ref{BNBNm1system}), long but straightforward computations yield (\ref{beta1expression}).
\proofendcontinue

\begin{lemma}[Asymptotics near breather]\label{breathersectorlemma}
Assume $\re \lambda_j > 0$ for some $j = 1, \dots, \Lambda$. Then
\begin{align*}\nonumber
u(x,t) = &\; 4\arctan\bigg\{\frac{2\im d_j'' \, \im \lambda_j}{(|d_j''|^2 + 1)\re \lambda_j}  \bigg\} 
+ \frac{u_{rad}^{(1)}(x,t) + u_{rad}^{(2)}(x,t)}{\sqrt{t}}
	\\\nonumber
& + O\bigg(\frac{1}{k_0t} + \frac{\ln t}{t}\bigg)  \quad \text{\upshape (mod $2\pi$)}
\end{align*}
as $t \to \infty$ uniformly for $\zeta \in [v_{j+2} + \epsilon, v_{j-1} - \epsilon]$.
\end{lemma}
\proofbegin
Using the ordering in (\ref{ordering2}), equation (\ref{uargsum2}) gives
\begin{align}\nonumber
u(x,t) = &\; 2 \arg\big(-(B_{\mathcal{N}}B_{\mathcal{N}-1})_{11} - i (B_{\mathcal{N}}B_{\mathcal{N}-1})_{21}\big)
	\\\nonumber
& + \bigg(\sum_{\substack{\lambda_l \in i\R \\ 1 \leq l \leq j-1}} +\sum_{\substack{\lambda_{l+2} \in i\R \\ j \leq l \leq \mathcal{N}-2}}\bigg)2 \arg\big((iB_l)_{11}+i (-1)^{\mathcal{N} - l} (iB_l)_{21}\big) 
	\\\nonumber
& + \bigg(\sum_{\substack{\re \lambda_l > 0 \\ 1 \leq l \leq j-1}} 
+ \sum_{\substack{\re \lambda_{l+2} > 0 \\ j \leq l \leq \mathcal{N}-2}}\bigg)2 \arg\big(-(B_{l+1}B_l)_{11}+i (-1)^{\mathcal{N} - l}(B_{l+1}B_l)_{21}\big) 
	\\
& + (-1)^\mathcal{N} \tilde{u}(x,t) \quad \text{(mod $4\pi$)}.
\end{align}
The last term on the right-hand side gives rise to the contribution from $u_{rad}^{(1)}$.
By Lemma \ref{breatherBjlemma}, the first term on the right-hand side satisfies
\begin{align*}
& 2 \arg\big(-(B_{\mathcal{N}}B_{\mathcal{N}-1})_{11} - i (B_{\mathcal{N}}B_{\mathcal{N}-1})_{21}\big)
=
\beta_0
+ 2 \arg\bigg( 1 + \frac{\beta_1}{\sqrt{t}} + O\bigg(\frac{1}{k_0t} + \frac{\ln t}{t}\bigg) \bigg) \quad \text{(mod $2\pi$)}.
\end{align*}
The remaining terms are handled as in the proof of Lemma \ref{outsidesectorlemma}.
Using the expressions of Lemma \ref{breatherBjlemma} for $\beta_0$ and $\beta_1$, the lemma follows. 
\proofendcontinue

If $\lambda_j \in i\R$, then (\ref{usoljdef}) shows that $u_{sol}(x,t;j)$ is a kink (antikink) for $\im c_j > 0$ ($\im c_j < 0$). The breather has zero net topological charge. Accordingly, the topological charge accumulated by the $j-1$ fastest solitons is  $\sum_{i=1}^{j-1} \sgn(\im c_i)$; this yields the expression (\ref{uconstdef}) for $u_{const}(j)$.
The asymptotics in Sector III now follows from Lemma \ref{outsidesectorlemma}, Lemma \ref{kinksectorlemma}, and Lemma \ref{breathersectorlemma}. 
Using (\ref{MlambdajIV}) and the fact that $u_{const}(\Lambda + 1) = -2\pi \sum_{i=1}^{\Lambda} \sgn(\im c_i)$, an argument similar to the one used in Sectors I and II yields the asymptotic formula (\ref{uasymptoticsIV3}) in Sector IV.
\end{proof}

\begin{remark}\upshape
In the case of a single pure imaginary eigenvalue (i.e. $\mathcal{N}=1$ and $\lambda_1 \in i\R$), the leading order term $u_{sol}$ in (\ref{solitonuasymptoticsa}) was derived already in \cite[p. 976]{FI1992}. The identity $\arctan{x} + \arctan x^{-1} = \pi/2$, $x > 0$, shows that formula (\ref{usoljdef}) for $u_{sol}$ reduces to the expression in \cite{FI1992} when $\mathcal{N}=1$.
\end{remark}

\begin{remark}\upshape
The recursive system for $\{r_m^{(l)}, r_{\mathcal{N} + m}^{(l)}\}$ in Theorem \ref{solitonasymptoticsth} can be solved explicitly. For example, introducing new variables $\{s_m^{(l)}, s_{\mathcal{N} + m}^{(l)}\}$ in (\ref{rrecursive1}) via
$$r_m^{(l)} = s_m^{(l)} \prod_{i=1}^l \frac{\lambda_m - \lambda_i}{\lambda_m - \bar{\lambda}_i}, \qquad
r_{\mathcal{N}+m}^{(l)} = s_{\mathcal{N}+m}^{(l)} \prod_{i=1}^l \frac{\bar{\lambda}_m - \lambda_i}{\bar{\lambda}_m - \bar{\lambda}_i},$$
a computation leads to the explicit (but perhaps not very illuminating) expressions
$$\begin{cases}
s_m^{(l)} = r_m^{(0)} + \sum_{s=1}^l \sum_{1 \leq i_1 \leq \cdots \leq i_s \leq l} C_{m,\mathcal{N}+i_s} \bigg(\prod_{u=1}^{s-1} C_{\mathcal{N}+i_{u+1},\mathcal{N} + i_u}\bigg) r_{\mathcal{N}+i_1}^{(0)},
	\\
s_{\mathcal{N}+m}^{(l)} = r_{\mathcal{N}+m}^{(0)} + \sum_{s=1}^l \sum_{1 \leq i_1 \leq \cdots \leq i_s \leq l} C_{\mathcal{N}+m,\mathcal{N}+i_s} \bigg(\prod_{u=1}^{s-1} C_{\mathcal{N}+i_{u+1},\mathcal{N} + i_u}\bigg) r_{\mathcal{N}+i_1}^{(0)}, 
\end{cases}$$
for $l = 1, \dots, j-1$ and $m = l + 1, \dots, \mathcal{N}$, where
$$\begin{cases}
C_{m,\mathcal{N}+l} = \frac{\lambda_l - \bar{\lambda}_l}{\lambda_m - \lambda_l} \prod_{i=1}^{l-1}\frac{(\bar{\lambda}_l - \lambda_i)(\lambda_m - \bar{\lambda}_i)}{(\bar{\lambda}_l - \bar{\lambda}_i)(\lambda_m - \lambda_i)},
	\\
C_{\mathcal{N}+m,\mathcal{N}+l} = \frac{\lambda_l - \bar{\lambda}_l}{\bar{\lambda}_m - \lambda_l} \prod_{i=1}^{l-1}\frac{(\bar{\lambda}_l - \lambda_i)(\bar{\lambda}_m - \bar{\lambda}_i)}{(\bar{\lambda}_l - \bar{\lambda}_i)(\bar{\lambda}_m - \lambda_i)}.
\end{cases}
$$
\end{remark}

\begin{theorem}[Asymptotics of quarter-plane solutions with given initial and boundary values]\label{solitonasymptoticsth2}
Let $u_0, u_1, g_0, g_1$ be functions which satisfy (\ref{ujgjschwartz}) for some integers $N_x, N_t \in \Z$ and which are compatible with equation (\ref{sg}) to all orders at $x=t=0$. 
Define $a(k)$, $b(k)$, $A(k)$, $B(k)$ by (\ref{abABdef}), suppose Assumption \ref{solitonassumption1} holds, and define $J$ by (\ref{Jdef}). Let $M(x,t,k)$ be the solution from Theorem \ref{solitonexistenceth2} of the RH problem determined by $\{\Gamma$, $J(x,t,\cdot)$, $\{\lambda_j\}_1^\mathcal{N}$, $\{c_j e^{2i\theta(x,t,\lambda_j)}\}_1^\mathcal{N}\}$.
 
Then, for an appropriate choice of the integer $j \in \Z$, the function $u(x,t)$ defined by (\ref{ulim2}) is a smooth solution of (\ref{sg}) in the quarter-plane $\{x \geq 0, t \geq 0\}$ which obeys the initial and boundary conditions (\ref{initialboundaryvalues}). In Sectors I, II, and IV, $u(x,t)$ satisfies the asymptotic formulas
\begin{align*}
\text{\upshape Sector I:} \quad & u(x,t) = 2\pi N_x + O\big(x^{-N}\big), \qquad 1 \leq \zeta < \infty, \ x > 1,	
	\\
\text{\upshape Sector II:} \quad & u(x,t) = 2\pi N_x + O\big((1-\zeta)^N + t^{-N}\big), \qquad 0 \leq \zeta \leq 1, \ t > 1,
	\\
\text{\upshape Sector IV:} \quad & u(x,t) = 2\pi N_t + O\big(\zeta^{N} + t^{-N}\big), \qquad 0 \leq \zeta \leq 1/2, \ t > 1,
\end{align*}
for each integer $N \geq 1$. 
In Sector III, $u(x,t)$ satisfies the asymptotic formulas (\ref{solitonuasymptotics}) with $u_{sol}$, $u_{rad}^{(1)}$, $u_{rad}^{(2)}$ given as in Theorem \ref{solitonasymptoticsth}, and $u_{const}(j)$ given by 
$$u_{const}(j) = 2\pi N_x -2\pi \sum_{i=1}^{j-1} \sgn(\im c_i), \qquad  j = 1, \dots, \Lambda+1.$$
The topological charge of the solution is
$$N_x - N_t = \sum_{i=1}^{\Lambda} \sgn(\im c_i).$$
\end{theorem}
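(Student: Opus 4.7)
The plan is to obtain the theorem by combining Theorem \ref{solitonexistenceth2} (which gives a smooth solution matching the prescribed initial and boundary values) with Theorem \ref{solitonasymptoticsth} (which gives the asymptotics for a solution constructed from spectral data). The task amounts to verifying that the spectral data $(r_1, h, \{\lambda_j\}_1^{\mathcal N}, \{c_j\}_1^{\mathcal N})$ arising from $u_0, u_1, g_0, g_1$ via (\ref{abABdef}), (\ref{r1hdef}), and (\ref{cjdef}) satisfies the hypotheses of Theorem \ref{solitonasymptoticsth}, and then translating the resulting asymptotic formulas through the shift in (\ref{ulim2}).

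First, the Schwartz assumption (\ref{ujgjschwartz}) implies (\ref{ujgjassump}) with arbitrary $n, m$, and compatibility to all orders subsumes compatibility to third order, so Theorem \ref{solitonexistenceth2} applies and produces, for an appropriate integer $j \in \Z$, a $C^2$ solution with the boundary data (\ref{initialboundaryvalues}); the improved regularity $u \in C^\infty$ follows from the smoothness of the jump matrix together with the arguments of Sections \ref{existenceproofsec}--\ref{existence2proofsec} applied at each order.

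Next I verify the hypotheses of Theorem \ref{solitonasymptoticsth}. By Lemmas \ref{ablemma} and \ref{ABlemma} applied with arbitrary $n, m$, the Schwartz class hypothesis on $(u_0, u_1, g_0, g_1)$ gives that $a, b, A, B$ are smooth with asymptotic expansions to all orders as $k \to \infty$ and $k \to 0$; since $a$ is nonvanishing in $\bar{\C}_+$ and $d$ vanishes only at the simple zeros $\{\lambda_j\}_1^{\Lambda}$, the formulas (\ref{r1hdef}) yield $r_1 \in C^\infty(\R)$ with the expansion (\ref{r1expansion2}), and $h \in C^\infty(\bar D_2)$ analytic on $D_2$ except for simple poles at the $\{\lambda_j\}_1^\Lambda$. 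The symmetries (\ref{r1hsymm}) follow from parts $(d)$ of Lemmas \ref{ablemma}--\ref{ABlemma} and $(e)$ of Lemma \ref{cdlemma}. The global relation (\ref{GR}) forces $c(k) = 0$ on $\R \setminus (-1,1)$ so that $r(\pm 1) = 0$ (and indeed $r$ vanishes to all orders there by smoothness of $c, d$), while the expansion (\ref{cdexpansionsc}) shows $c(k) = O(k^{m+1})$ for every $m$, which, together with $d(0) = (-1)^{N_x - N_t} \neq 0$, makes $r(k)$ vanish to all orders at $k=0$. The poles $\{\lambda_j\}_1^\mathcal{N}$ are distinct and $\{\lambda \mapsto -\bar\lambda\}$-invariant by the conjugate symmetries of $a$ and $d$, and can therefore be reordered to satisfy Assumption \ref{poleassumption2}. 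The conjugation relations (\ref{cjassump}) for the norming constants $c_j$ in (\ref{cjdef}) follow by applying $k \mapsto -\bar{k}$ to the symmetries of $a, b, A, B$ and noting that $\dot a, \dot d$ transform with a sign.

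Having verified the hypotheses, Theorem \ref{solitonasymptoticsth} produces the asymptotic formulas (\ref{solitonuasymptotics}) together with the decay estimates in Sectors I, II, IV, with $u(x,0)$ tending to $0$ along a canonical branch. The choice of the integer $j$ in (\ref{ulim2}) imposed by Theorem \ref{solitonexistenceth2} is precisely the one that shifts this canonical branch so that $u(x,0) = u_0(x)$ and hence $u(x,t) \to 2\pi N_x$ in Sector I; this accounts for the constant $2\pi N_x$ appearing in the Sector I, II, and III formulas. For Sector IV, the formula (\ref{uasymptoticsIV3}) of Theorem \ref{solitonasymptoticsth} gives, after this shift, $u(x,t) \to 2\pi N_x - 2\pi \sum_{i=1}^\Lambda \sgn(\im c_i)$; on the other hand Theorem \ref{solitonexistenceth2} gives $u(0,t) = g_0(t) \to 2\pi N_t$, so forcing consistency yields the topological charge identity $N_x - N_t = \sum_{i=1}^\Lambda \sgn(\im c_i)$.

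The main obstacle is the last step: reconciling the two independent descriptions of the boundary behavior $u(0, t)$ (via the initial/boundary data from Theorem \ref{solitonexistenceth2}, and via the Sector IV asymptotic from Theorem \ref{solitonasymptoticsth}). Carrying this through requires a careful bookkeeping of the branch of $\arg$ through the $\Lambda$ narrow soliton sectors in Sector III: each crossing of a kink/antikink sector in (\ref{solitonuasymptoticsa}) shifts the constant background by $-2\pi \sgn(\im c_j)$, while breather sectors contribute no net shift, and one must confirm these shifts accumulate monotonically so that the constant reached in Sector IV is exactly $u_{\mathrm{const}}(\Lambda + 1)$. Once this monotonic accumulation is established, the topological charge formula is immediate from equating the two expressions for $\lim_{t\to\infty} g_0(t)$.
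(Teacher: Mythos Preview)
Your proposal is correct and follows essentially the same approach as the paper: verify the hypotheses of Theorem \ref{solitonasymptoticsth} from the Schwartz assumptions via Lemmas \ref{ablemma}--\ref{cdlemma} and the global relation, invoke Theorem \ref{solitonexistenceth2} for the initial/boundary values, and account for the $2\pi N_x$ shift coming from the branch choice in (\ref{ulim2}). The only remark is that what you flag as the ``main obstacle'' --- tracking the branch of $\arg$ through the soliton sectors --- is not an additional step here, since the accumulation formula $u_{const}(\Lambda+1) = -2\pi\sum_{i=1}^\Lambda \sgn(\im c_i)$ and the Sector IV asymptotic (\ref{uasymptoticsIV3}) are already conclusions of Theorem \ref{solitonasymptoticsth}; the topological charge identity then follows immediately by comparison with $g_0(t)\to 2\pi N_t$.
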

\begin{proof}
Lemmas \ref{ablemma} and \ref{ABlemma} together with Assumption \ref{solitonassumption1} imply that $r_1(k)$ and $h(k)$ satisfy Assumption \ref{r1hassumption2} except that $h(k)$ may have a finite number of simple poles in $D_2$ at the zeros of $a(k)$ and $d(k)$. The symmetries of $a,b,A,B$ imply that the constants $\{c_j\}_1^\mathcal{N}$, $\mathcal{N}:= \Lambda + n_1$, defined in (\ref{cjdef}) satisfy (\ref{cjassump}).
The global relation (\ref{GR}) implies that $r(k)$ vanishes to all orders at $k = \pm1$.   
Thus, by Theorem \ref{solitonasymptoticsth}, $u(x,t)$ is a smooth solution which satisfies the stated asymptotic formulas; the term $2\pi N_x$ has to be included in the asymptotic formulas to account for the fact that $u \to 2\pi N_x$ as $x \to \infty$. By Theorem \ref{solitonexistenceth2}, $u(x,t)$ obeys the initial and boundary conditions (\ref{initialboundaryvalues}).
\end{proof}

\bigskip
\noindent
{\bf Acknowledgement}  {\it The authors are grateful to Prof. V. P. Kotlyarov for valuable comments on a first version of the manuscript.
Support is acknowledged from the G\"oran Gustafsson Foundation, the European Research Council, Consolidator Grant No. 682537, the Swedish Research Council, Grant No. 2015-05430, and the National Science Foundation of China, Grant No. 11671095.}

\bibliographystyle{plain}
\bibliography{is}

\end{document}